\DeclareRobustCommand{\SkipTocEntry}[5]{}
\def\semicolon{;}
\def\applytolist#1{
  \expandafter\def\csname multi#1\endcsname##1{
    \def\multiack{##1}\ifx\multiack\semicolon
      \def\next{\relax}
    \else
      \csname #1\endcsname{##1}
      \def\next{\csname multi#1\endcsname}
    \fi
    \next}
  \csname multi#1\endcsname
}
\def\calc#1{\expandafter\def\csname c#1\endcsname{{\mathcal #1}}}
\def\bbc#1{\expandafter\def\csname #1#1\endcsname{{\mathbbm #1}}}
\DeclareMathAlphabet{\mathpzc}{OT1}{pzc}{m}{it}
\newcommand{\al}{\alpha}
\newcommand{\be}{\beta}
\newcommand{\ga}{\gamma}
\newcommand{\de}{\delta}
\newcommand{\epz}{\varepsilon}
\newcommand{\ka}{\kappa}
\newcommand{\la}{\lambda}
\newcommand{\si}{\sigma}
\newcommand{\ze}{\zeta}
\newcommand{\om}{\omega}
\newcommand{\ups}{\upsilon}
\newcommand{\Ga}{\Gamma}
\newcommand{\La}{\Lambda}
\newcommand{\De}{\Delta}
\newcommand{\Si}{\Sigma}
\newcommand{\Th}{\Theta}
\renewcommand{\theenumi}{\textit{\roman{enumi}}}
\renewcommand{\labelenumi}{\theenumi.}
\renewcommand{\theenumii}{\textit{\alph{enumii}}}
\renewcommand{\labelenumii}{\theenumii)}
\numberwithin{equation}{section} 
\numberwithin{figure}{section} 
\newcommand{\unloadamsthm}{ 
  \makeatletter
  \expandafter\let\csname ver@amsthm.sty\endcsname\relax
  \let\theoremstyle\relax
  \let\newtheoremstyle\relax
  \let\pushQED\relax
  \let\popQED\relax
  \let\qedhere\relax
  \let\mathqed\relax
  \let\openbox\relax
  \let\proof\relax\let\endproof\relax
  \makeatother
}
\newcommand{\unloadthmstyles}{
  \makeatletter
  \expandafter\let\csname th@plain\endcsname\relax
  \let\proofname\relax
  \makeatother
}
\renewcommand*{\backref}[1]{}
\renewcommand*{\backrefalt}[4]{%
  \ifcase #1 %
No citations.
  \or
(cit.\ on p.\ #2).%
  \else
(cit.\ on pp.\ #2).%
  \fi%
}
\newcommand{\clevertheorem}[3]{%
\expandafter\@ifundefined{#1}%
    {\newtheorem{#1}[equation]{#2}}%
    {\renewtheorem{#1}[equation]{#2}}%
  \crefname{#1}{#2}{#3}
}
\newcommand{\crefrangeconjunction}{ through~}
\theoremstyle{plain} 
\newtheorem{thm}[equation]{Theorem}  
\crefname{thm}{Theorem}{Theorems}   
\theoremstyle{definition} 
\crefname{equation}{}{} 
\newcommand\genatop[2]{\genfrac{}{}{0pt}{}{#1}{#2}}
\newcommand{\mh}{\mbox{-}} 
\newcommand{\cn}{\colon}
\newcommand{\bacn}{\mathrel{:}\mkern-.25\thinmuskip} 
\newcommand{\inv}{{-1}}
\newcommand{\wt}{\widetilde}
\newcommand{\wh}{\widehat}
\newcommand{\ol}{\overline}
\DeclareMathOperator{\Set}{\mathpzc{Set}}
\DeclareMathOperator{\Ab}{\mathpzc{Ab}}
\DeclareMathOperator{\Cat}{\mathpzc{Cat}}
\DeclareMathOperator{\cat}{\mathpzc{Cat}}
\newcommand{\hty}{\simeq}
\newcommand{\iso}{\cong}
\newcommand{\txprod}{{\textstyle\prod}}
\newcommand{\andspace}{\quad \text{and} \quad}
\newcommand{\orspace}{\quad \text{or} \quad}
\newcommand{\inspace}{\quad \text{in} \quad}
\newcommand{\forspace}{\quad \text{for} \quad}
\newcommand{\foreachspace}{\quad \text{for each} \quad}
\newcommand{\withspace}{\quad \text{with} \quad}
\newcommand{\sothatspace}{\quad \text{so that} \quad}
\pgfplotsset{compat=1.13}
\tikzset{/tikz/commutative diagrams/arrow style=tikz,>=stealth} 
\tikzset{0cell/.style={commutative diagrams/every diagram,
    every cell,
    execute at begin node=$\displaystyle, execute at end node=$, 
    nodes={scale=#1},
  }
}
\tikzset{1cell/.style={commutative diagrams/.cd,
    every arrow,
    every label,
    execute at begin node=$\displaystyle, execute at end node=$,
    font={\small},
    nodes={scale=#1},
  }
}
\tikzset{xarrow/.style={
    scale=\labelscale, 
  }
}
\tikzset{2cell/.style={commutative diagrams/every diagram,
    every cell,
    execute at begin node=$\displaystyle, execute at end node=$, 
    font={\small},
    nodes={scale=#1},
  }
}
\tikzset{equal/.style={commutative diagrams/equal}}
\tikzset{
  between/.style args={#1 and #2 at #3}{
    at = ($(#1)!#3!(#2)$)
  },
}
\tikzset{
  2label/.style args={#1,#2}{
    inner sep=0mm, label={[inner sep=0pt]#1:#2}
  },
}
\tikzset{vcenter/.style={baseline=(current bounding box.center)}}
\newlength{\arrowlen}
\newcommand{\arrbegin}{\hspace{-.1cm}\begin{tikzpicture}[baseline={(0,-0.1175)}]}
\newcommand{\arrend}{\end{tikzpicture}\hspace{-.1cm}}
\newcommand{\genarrow}[2]{
  \arrbegin
  \node[0cell] (A) at (0,0) {}; \node[0cell] (B) at (#1*\arrowlen,0) {}; \draw[1cell,#2] (A) to (B);
  \arrend
}
\renewcommand{\to}{\genarrow{1}{->}}
\renewcommand{\hookrightarrow}{\genarrow{1.3}{right hook->}}
\tikzset{mapto/.style={decoration={markings,mark=at position 0 with %
    {\arrow[yscale=.75]{|}}},postaction={decorate}}}
\renewcommand{\mapsto}{\genarrow{0.9}{mapto}}
\newlength{\arrowtoplablen}
\newlength{\arrowbotlablen}
\newcommand{\computearrowlength}[2][]{%
  \settowidth{\arrowtoplablen}{$#2$}%
  \settowidth{\arrowbotlablen}{$#1$}%
  \ifdim\arrowtoplablen<\arrowbotlablen
  \arrowtoplablen=\arrowbotlablen \fi%
}
\newcommand{\labelscale}{.85}
\newcommand{\xarrowpadding}{2.1em}
\renewcommand{\xrightarrow}[2][]{
  \computearrowlength[#1]{#2}
  \arrbegin
  \node[0cell] (A) at (0,0) {}; \node[0cell] (B) at (\labelscale*\arrowtoplablen+\xarrowpadding,0) {};
  \draw[1cell] (A) to node[scale=\labelscale]{#2}node[swap,scale=\labelscale]{#1} (B);
  \arrend
}
\newcommand{\fto}{\xrightarrow}
\newcommand{\lradj}{
  \arrbegin
  \node[0cell] (A) at (0,0) {}; \node[0cell] (B) at (1.3*\arrowlen,0) {};
  \draw[1cell,shift left] (A) to (B);
  \draw[1cell,shift left] (B) to (A);
  \arrend
}
\tikzset{smallhead/.style={decoration={markings,mark=at position 1 with %
    {\arrow[scale=.7,>=stealth]{>}}},shorten >=.5pt,postaction={decorate}}}
\tikzset{downar/.style={commutative diagrams/.cd,
    every arrow,
    every label,
    font={\small},
  }
}
\newcommand{\urladdrhref}[1]{\urladdr{\url{#1}}}
\newcommand{\authorGurski}{
\author{Nick Gurski}
\affiliation{Department of Mathematics, Applied Mathematics, and Statistics,
Case Western Reserve University}
\email{nick.gurski@case.edu}
\homepage{https://mathstats.case.edu/faculty/nick-gurski/}
\orcid{0000-0001-9691-1981}
}
\newcommand{\authorJohnson}{
\author{Niles Johnson}
\affiliation{Department of Mathematics,
The Ohio State University Newark}
\email{niles@math.osu.edu}
\homepage{https://nilesjohnson.net}
\orcid{0000-0002-4838-4651}
}
\title[Universal pseudomorphisms]{Universal pseudomorphisms, with applications to diagrammatic coherence for braided and symmetric monoidal functors}
\newcommand{\printkwds}{functor coherence, braided monoidal, symmetric monoidal, pseudomorphism coherence, pseudomorphism classifier}
\keywords{\printkwds} 
\newcommand{\printmsc}{18C15 (Primary); 18D20, 18M05, 18M15, 18N15, 19D23 (Secondary)}
\tikzset{stdbraidstyle/.style = {
      braid/every strand/.style={thick},
      braid/height=-3mm,
      braid/width=5mm,
      braid/gap=.25, 
      braid/control factor=0.6, 
      braid/nudge factor=0.1, 
}}
\definecolor{brcola}{HTML}{EC4300} 
\definecolor{brcolb}{HTML}{4C1D9F} 
  \def\startlen{\pgfdecoratedpathlength/2-2*\pgfdecorationsegmentlength}
    \def\zzw{\pgfdecorationsegmentlength/4}
    \def\zza{\pgfdecorationsegmentamplitude}
\def\midzzSegLen{2pt}
\def\midzzTotLen{6pt} 
\def\midzzAmp{1.5pt}
\tikzset{zz1cell/.style={1cell, decorate, decoration={midzzdecor,segment length=\midzzSegLen,amplitude=\midzzAmp}}} 
\newcommand{\zzfto}[2][]{
  \computearrowlength[#1]{#2}
  \arrbegin
  \node[0cell] (A) at (0,0) {}; \node[0cell] (B) at (\labelscale*\arrowtoplablen+\xarrowpadding,0) {};
  \draw[zz1cell] (A) to node[scale=\labelscale]{#2}node[swap,scale=\labelscale]{#1} (B);
  \arrend
}
\newcommand{\zzto}{\zzfto{}}
\newcommand{\ang}[1]{\langle #1 \rangle}
\newcommand{\bang}[1]{\bigl\langle #1 \bigr\rangle}
\newcommand{\abs}[1]{|#1|}
\newcommand{\scs}{\;,\;} 
\newcommand{\sss}{\;;\;} 
\newcommand{\ob}{\mathsf{ob}}
\newcommand{\disc}{\mathsf{disc}}
\newcommand{\indisc}{\mathsf{indisc}}
\DeclareMathOperator{\Mon}{\mathpzc{Mon}}
\newcommand*\bigcdot@[2]{\mathbin{\vcenter{\hbox{\scalebox{#2}{$\m@th#1\bullet$}}}}}
\newcommand*\bcdot{\mathpalette\bigcdot@{.5}}
\newcommand{\bcdots}{\bcdot\bcdot\bcdot}
\newcommand{\K}{\mathpzc{K}} 
\newcommand{\Kzero}{\K_{\;0}} 
\newcommand{\zA}{\mathpzc{A}} 
\newcommand{\zAzero}{\zA_{\,0}} 
\newcommand{\zB}{\mathpzc{B}} 
\newcommand{\zBzero}{\zB_{\,0}} 
\newcommand{\zL}{\mathpzc{L}} 
\newcommand{\zLzero}{\zL_{\,0}} 
\newcommand{\bii}{\mathbbm{2}}
\newcommand{\bfone}{\mathbf{1}}
\definecolor{usfcolor}{HTML}{981959} 
\newcommand{\usf}{{\color{usfcolor}\mathsf{u}}}
\definecolor{ittcolor}{HTML}{12622A} 
\newcommand{\itt}{{\color{ittcolor}\mathtt{i}}}
\newcommand{\ssf}{\mathsf{s}}
\newcommand{\qsf}{\mathsf{q}}
\newcommand{\ess}{\chi}
\newcommand{\T}{\mathsf{T}} 
\newcommand{\Talg}{\T\mh\operatorname{\mathsf{Alg}}}
\newcommand{\Talgz}{\Talg_0}
\newcommand{\Talgs}{\T\mh\operatorname{\mathsf{Alg_s}}}
\newcommand{\Talgsz}{\Talg_{s0}}
\newcommand{\Talgii}{\Talg^{\bii,s}}
\newcommand{\Xmul}{x}
\newcommand{\Ymul}{y}
\newcommand{\Wmul}{w}
\newcommand{\Zmul}{z}
\newcommand{\Q}{\mathsf{Q}}
\renewcommand{\P}{\mathsf{P}}
\newcommand{\M}{\mathsf{M}}
\renewcommand{\S}{\mathsf{S}} 
\newcommand{\B}{\mathsf{B}}
\newcommand{\Mg}{\M^{\mathsf{g}}}
\newcommand{\Sg}{\S^{\mathsf{g}}}
\newcommand{\Bg}{\B^{\mathsf{g}}}
\newcommand{\Tg}{\T^{\mathsf{g}}}
\newcommand{\Malg}{\M\mh\operatorname{\mathsf{Alg}}}
\newcommand{\Malgs}{\M\mh\operatorname{\mathsf{Alg_s}}}
\newcommand{\Salgs}{\S\mh\operatorname{\mathsf{Alg_s}}}
\newcommand{\pidt}{\pi^{\scriptscriptstyle \wt{D}}}
\theoremstyle{plain}
\newcommand{\sgn}{\mathsf{sgn}}
\newcounter{artpart}
\renewcommand{\theartpart}{\Roman{artpart}}
\newcommand{\artpart}[1]{%
  \refstepcounter{artpart}%
  \addtocontents{toc}{\smallskip}%
   \addtocontents{toc}{
    \protect\contentsline{section}{
       \protect\numberline{}{\large Part \theartpart: #1}
     }{}{}
  }
  \section*{Part \theartpart: #1}}
\begin{document}

\begin{abstract}
  This work introduces a general theory of universal pseudomorphisms and develops their connection to diagrammatic coherence. The main results give hypotheses under which pseudomorphism coherence is equivalent to the coherence theory of strict algebras. Applications include diagrammatic coherence for plain, symmetric, and braided monoidal functors. The final sections include a variety of examples.
\end{abstract}

\maketitle
\tableofcontents 

\section{Introduction}

The main results of this paper are coherence theorems for pseudomorphisms between algebras over a 2-monad $\T$.
For example, $\T$ may be the 2-monad for plain, symmetric, or braided monoidal categories.
Coherence theorems for pseudomorphisms are, in these cases, coherence theorems for plain, symmetric, or braided strong monoidal functors.
Our main interest is what we call \emph{diagrammatic} coherence: general conditions that guarantee commutativity of (formal) diagrams.

\begin{example}\label{ex:main}
Consider the following diagram \cref{eq:mystery1-intro} for a braided monoidal functor
\[
  f\cn (A,+,\be) \to (A',\bcdot,\be),
\]
where $A$ and $A'$ are braided strict monoidal categories with braid isomorphisms $\be$ and monoidal products $+$ and $\bcdot$, respectively.
The two composites around the diagram apply different combinations of braidings $\beta$ and monoidal constraints $f_2$.
\begin{equation}\label{eq:mystery1-intro}
  \begin{tikzpicture}[x=25ex,y=8ex,vcenter,xscale=1.2]
    \draw[0cell] 
    (0,0) node (s1) {f(a) \bcdot f(a) \bcdot f(a)}
    (s1)+(1,0) node (s2) {f(a+a) \bcdot f(a)}
    (s2)++(1,0) node (s3) {f(a) \bcdot f(a+a)}
    (s3)++(0,-1) node (s4) {f(a+a+a)}
    (s1)++(0,-1) node (t2) {f(a+a+a)}
    (t2)++(1,0) node (t3) {f(a+a+a)}
    ;
    \draw[1cell] 
    (s1) edge node {f_2 \bcdot 1} (s2)
    (s2) edge node {\beta} (s3)
    (s3) edge node {f_2} (s4)
    (s1) edge['] node {f_2} (t2)
    (t2) edge node {f(1 + \beta)} (t3)
    (t3) edge node {f(\beta +1)} (s4)
    ;
  \end{tikzpicture}
\end{equation} 
This diagram satisfies a condition called \emph{formal} because it is determined entirely by the monoidal functor and braided monoidal category data of $f$, $A$, and $A'$.

Our diagrammatic coherence determines commutativity of formal diagrams like \cref{eq:mystery1-intro} by converting them to different---often simpler---formal diagrams that do not depend on the structure morphisms $f_2$.
The latter are called \emph{dissolution diagrams}, and the dissolution diagram for \cref{eq:mystery1-intro} is given as follows.
(See \cref{example:mystery1} for further explanation.)
\begin{center}
  \begin{tikzpicture}[x=28ex,y=10ex,vcenter,xscale=1.2]
    \draw[0cell] 
    (0,0) node (s1) {\Bigl(\; f(a) \scs f(a) \scs f(a) \;\Bigr)}
    (s1)+(.84,0) node (s2) {\Bigl(\; f(a) \scs f(a) \scs f(a) \;\Bigr)}
    (s1)++(2,0) node (s3) {\Bigl(\; f(a) \scs f(a) \scs f(a) \;\Bigr)}
    (s3)++(0,-1) node (s4) {\Bigl(\; f(a) \scs f(a) \scs f(a) \;\Bigr)}
    (s1)++(0,-1) node (t2) {\Bigl(\; f(a) \scs f(a) \scs f(a) \;\Bigr)}
    (t2)++(1,0) node (t3) {\Bigl(\; f(a) \scs f(a) \scs f(a) \;\Bigr)}
    ;
    \draw[1cell] 
    (s1) edge node {1} (s2)
    (s2) edge node {\beta_{(\,f(a),f(a)\,)\scs f(a)}} (s3)
    (s3) edge node {1} (s4)
    (s1) edge['] node {1} (t2)
    (t2) edge node {\bigl(\, 1 \scs \beta \,\bigr)} (t3)
    (t3) edge node {\bigl(\, \beta \scs 1 \,\bigr)} (s4)
    ;
    \node[between=s1 and s4 at .5, scale=.5] {
      \begin{tikzpicture}[baseline=(E.base)]
        \pic[
        stdbraidstyle,
        braid/every strand/.style={ultra thick},
        braid/width=7mm,
        braid/gap=.25, 
        braid/control factor=0.5, 
        braid/nudge factor=0.05, 
        braid/number of strands = 3,
        name prefix=braid0,
        braid/strand 1/.style={brcola},
        braid/strand 2/.style={brcola},
        braid/strand 3/.style={brcola},
        ] at (0,0)
        {braid={s_2 s_1}}
        ;
        \node (E) at (0,0) {}; 
      \end{tikzpicture}
    };
  \end{tikzpicture}
\end{center}
The composites around the above diagram have the same underlying braid, drawn in the center, and hence the diagram commutes in the free braided monoidal category on the object $f(a)$.
Our main result, \cref{thm:main1}, shows that the original diagram \cref{eq:mystery1-intro} therefore commutes in $A'$.
\end{example}

In the particular example above, one can also use naturality of $f_2$ along with axioms for $f$ and $\be$ to determine commutativity of \cref{eq:mystery1-intro} directly.
Indeed, every formal diagram for $f$ is amenable to such an approach.
The purpose of the diagrammatic coherence results in this work is to provide a general theory that eliminates the need to determine, for each diagram, \emph{which} combination of axioms is necessary.


\subsection*{General Overview}
Suppose $\K$ is a 2-category and $\T$ is a 2-monad on $\K$.
Under the respective hypotheses, our results reduce coherence for $\T$-algebra pseudomorphisms, such as $f$ in \cref{ex:main} above, to that of individual $\T$-algebras, such as the free algebra on $f(a)$ in \cref{ex:main}.
Indeed, the conclusions of \cref{thm:main1,thm:main2} are that coherence for $\T$-algebra pseudomorphisms is \emph{equivalent} to that of $\T$-algebras, in the following sense.

Assuming the hypotheses of \cref{thm:main1,thm:main2}, each 1-cell
\[
  \phi \cn C \to C' \inspace \K
\]
has an associated $\T$-algebra $\T(C',\phi)$ and a \emph{universal pseudomorphism}
\begin{equation}\label{eq:intro-wtphi}
  \wt{\phi}\cn \T C \to \T(C',\phi) 
\end{equation}
together with an equivalence of $\T$-algebras
\begin{equation}\label{eq:intro-De}
  \De \cn \T(C',\phi) \fto{\hty} \T C'.
\end{equation}
The universality of $\wt{\phi}$ and the construction of $\De$ are explained in \cref{sec:upc}.

In the case $\K = \Cat$, the 2-category of small categories, the universality of $\wt{\phi}$ gives a notion of \emph{formal diagrams} for a $\T$-algebra pseudomorphism $f$.
%
Then, the equivalence \cref{eq:intro-De} means that diagrams in $\T(C',\phi)$ commute if and only if the corresponding diagrams in $\T C'$ commute.
Thus, the universality of $\wt{\phi}$ and the equivalence $\De$ are the essential technical channels by which the coherence theory for pseudomorphisms reduces to that of $\T$-algebras.


\subsection*{Main applications}

We provide three statements of main results.
The first, \cref{thm:main1}, is the simplest.
It is formulated using overly-broad hypotheses that nevertheless hold in many applications of interest.
It follows as a special case of our third statement, \cref{thm:main2} below.
Recall that a 2-monad $\T$ is \emph{finitary} if it preserves all filtered colimits.
\begin{thm}[Finitary Pseudomorphism Coherence]\label{thm:main1}
  Suppose $\T$ is a finitary 2-monad on a 2-category $\K$ that is both complete and cocomplete.
  Then $\T$ admits universal pseudomorphisms
  \[
    \wt{\phi} \cn\T C \to \T(C',\phi) \forspace \phi\cn C \to C' \inspace \K
  \]
  such that, for each $\phi$, the induced strict morphism of $\T$-algebras \cref{eq:Delta}
  \begin{equation}\label{eq:De-intro}
    \De \cn \T(C',\phi) \to\T C'
  \end{equation}
  is a surjective equivalence in $\Talgs$ (\cref{defn:adj-surj-equiv}).
\end{thm}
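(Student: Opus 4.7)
The plan is to derive \cref{thm:main1} as a special case of \cref{thm:main2}, whose hypotheses one expects to concern existence of certain 2-categorical colimits together with preservation properties of $\T$ that enter into the construction of $\T(C',\phi)$ and the universal pseudomorphism $\wt\phi$. The entire content of the proof therefore reduces to checking that finitariness of $\T$ together with bicompleteness of $\K$ are strong enough to imply those hypotheses.

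More concretely, I would construct $\T(C',\phi)$ as a 2-colimit in $\Talgs$ presenting the free $\T$-algebra containing $\T C'$ and equipped with a universal pseudomorphism from $\T C$ covering $\phi$. A natural candidate is a coinserter (or coinverter) built from the diagram defined by $\T\phi$, freely adjoining the invertible 2-cell that encodes the pseudomorphism structure. Since $\K$ is bicomplete and $\T$ is finitary, the classical transfer theorems of Blackwell--Kelly--Power ensure that $\Talgs$ is bicomplete and the forgetful functor creates filtered colimits; in particular the required presentation exists in $\Talgs$. The universal property of this presentation delivers both the universal pseudomorphism $\wt\phi\cn \T C \to \T(C',\phi)$ and, by applying it to the tautological strict data on $\T\phi$ itself, the induced strict morphism $\De\cn \T(C',\phi) \to \T C'$. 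Surjectivity of $\De$ in the sense of \cref{defn:adj-surj-equiv} is then immediate, because by construction $\De$ is the identity on the copy of $\T C'$ used to present $\T(C',\phi)$.

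The main obstacle is showing that $\De$ is an equivalence in $\Talgs$, meaning a strict-morphism equivalence, rather than merely an equivalence in $\Talg$. I would argue this by identifying $\T(C',\phi)$ with a parametrized version of the pseudomorphism classifier and invoking Lack's coherence theorem for finitary 2-monads on bicomplete 2-categories, which yields the required strict quasi-inverse together with strict unit and counit. Finitariness is needed to make the transfinite construction of this quasi-inverse converge; bicompleteness is needed to house the coherence data. The delicate bookkeeping, and what I expect to occupy most of the technical work, is verifying that all of this coherence data assembles naturally in $\phi$, so that the fiberwise strict equivalences are realized by the single strict morphism $\De$ asserted in the theorem, rather than forcing a strictification after the fact.
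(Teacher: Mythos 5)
Your high-level strategy---specialize \cref{thm:main2} and check that finitariness of $\T$ plus bicompleteness of $\K$ supply its hypotheses (via the Blackwell--Kelly--Power transfer theorems for completeness and cocompleteness of $\Talgs$)---is exactly the paper's route, and your identification of ``$\De$ is an equivalence in $\Talgs$'' as the main obstacle is also correct. However, the two load-bearing steps are not actually carried out. First, the construction of $\T(C',\phi)$ is left unspecified: the paper does not freely adjoin an invertible $2$-cell by a coinserter/coinverter, but instead takes the pushout in $\Talgs$ of $\ze^\flat \cn \T C \to \Q\T C$ along $\T\phi \cn \T C \to \T C'$ \cref{eq:udc-pushout}, where $\Q$ is the pseudomorphism classifier (existence is \cref{thm:BKP313}) and $\ze^\flat$ is the \emph{strict} replacement of the unit $\ze_{\T C}$ furnished by flexibility of free algebras (\cref{lem:zeflat}). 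That last point is the subtlety your sketch skips: strict maps out of $\Q\T C$ classify pseudomorphisms out of $\T C$, but to form a pushout in $\Talgs$ one must glue along a strict leg, and producing one requires \cref{lem:zeflat}. If you insist on a coinserter/coinverter/coequifier presentation instead, you must still verify the $1$-categorical universal property \cref{eq:univprop}, which your proposal does not attempt.

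Second, and more seriously, the argument that $\De$ is an equivalence is wrong as stated. You claim Lack's coherence theorem ``yields the required strict quasi-inverse together with strict unit and counit'': if both unit and counit were strict identities, $\De$ would be an isomorphism, which it is not---the theorem asserts only a surjective equivalence, i.e.\ $\De\ka = 1_{\T C'}$ strictly and $\ka\De \iso 1_{\T(C',\phi)}$ via an invertible algebra $2$-cell. Moreover, the Power/Lack coherence theorems compare pseudoalgebras with strict algebras and do not directly manufacture an inverse for $\De$; the paper explicitly distinguishes its results from theirs. The mechanism that actually works is: $(\ze^\flat,\de_{\T C},\Theta^\flat)$ is an adjoint surjective equivalence in $\Talgs$ (\cref{lem:zeflat}), and this structure is transported across the pushout using its $2$-dimensional universal property to produce $\Psi \cn \ka\om \iso 1$ with $\om = \De$ (\cref{rmk:alt-udc}); alternatively one uses cotensors $\{\II,-\}$ and \cref{lem:222} as in \cref{thm:udc}. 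There is also no ``naturality in $\phi$'' issue to resolve: the statement is fiberwise, one $\phi$ at a time. As written, the proposal correctly frames the problem but does not close it.
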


Our second statement of main results, \cref{thm:main-msb}, is an application with $\K = \Cat$, the 2-category of small categories.
We explain some notation, terminology, and motivation before stating \cref{thm:main-msb}.
The hypotheses of \cref{thm:main1} hold when $\T$ is one of the three 2-monads $\{\Mg,\Sg,\Bg\}$ for plain, symmetric, or braided monoidal structure on categories (\cref{notn:monoidal-variants}).
In this notation, the superscript $\mathsf{g}$ indicates general monoidal structure, in contrast to the strictly associative and unital structure that we will later discuss.
In these cases we have the following:
\begin{itemize}
\item In the plain monoidal case $\T = \Mg$, an $\Mg$-algebra is a monoidal category and a pseudomorphism is a strong monoidal functor.
\item In the symmetric case $\T = \Sg$, an $\Sg$-algebra is a symmetric monoidal category and a pseudomorphism is a symmetric strong monoidal functor.
\item In the braided case $\T = \Bg$, a $\Bg$-algebra is a braided monoidal category and a pseudomorphism is a braided strong monoidal functor.
\end{itemize}

The statement of \cref{thm:main-msb} uses the following terms explained further in \cref{sec:formal-diagrams}.
\begin{itemize}
\item A \emph{diagram} in a $\T$-algebra $X'$ is a pair $(\DD,D)$ consisting of a small category $\DD$ and a functor
  \[
    \DD \fto{D} X'
  \]
  in $\Cat$.
\item A \emph{formal diagram for a pseudomorphism $f$} is a diagram that lifts through a canonical strict morphism of $\T$-algebras defined in \cref{eq:La}:
  \[
    \La \cn \T(\ob X',\phi) \to X',
  \]
  where $\phi = f_\ob$ denotes the restriction of $f$ to objects.
\item Each formal diagram $(\DD,D)$ for $f$ has a \emph{dissolution} diagram in the free algebra $\T(\ob X')$:
  \[
    \DD \fto{\abs{D}} \T(\ob X'),
  \]
  obtained by composing with $\De$ \cref{eq:De-intro}.
\end{itemize}
The dissolution diagram $\abs{D}$ is generally simpler that the original diagram $D$.
Indeed, for $\T \in \{\Mg,\Sg,\Bg\}$, \cref{defn:De-application}~\cref{defn:De-application-iv} shows that $\De$ sends monoidal and unit constraints of $f$ to \emph{identities} in $\T(\ob X')$.
\Cref{ex:main} from the beginning of this introduction shows a specific formal diagram \cref{eq:mystery1-intro} followed by its corresponding dissolution diagram.


In general, we have the following by \cref{thm:main1}.
\begin{thm}[Strong Monoidal Functor Coherence]\label{thm:main-msb}
  Suppose $\T$ is one of the three 2-monads $\{\Mg,\Sg,\Bg\}$ for plain, symmetric, or braided monoidal structure on $\K = \Cat$.
  Suppose given $\T$-algebras $X$ and $X'$, together with a $\T$-algebra pseudomorphism
  \[
    f \cn X \to X'
  \]
  and a diagram
  \[
    \DD \fto{D} X'.
  \]
  If $(\DD,D)$ is a formal diagram for $f$ such that the dissolution $\abs{D}$ commutes in $\T(\ob X')$, then the diagram $(\DD,D)$ commutes in $X'$.
\end{thm}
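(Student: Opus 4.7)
The plan is to reduce to \cref{thm:main1} by unpacking the definitions of ``formal diagram'' and ``dissolution.'' The hypotheses of \cref{thm:main1} hold for each $\T \in \{\Mg, \Sg, \Bg\}$ on $\Cat$: these three 2-monads are finitary, and $\Cat$ is both complete and cocomplete. Therefore \cref{thm:main1} applies and yields that
\[
  \De \cn \T(\ob X', \phi) \to \T(\ob X')
\]
is a surjective equivalence in $\Talgs$, where $\phi = f_\ob$. In particular, the underlying functor of $\De$ is an equivalence of categories, and hence faithful on morphism sets.

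Next, the hypothesis that $(\DD, D)$ is a formal diagram for $f$ provides, by the definition recalled in the introduction, a factorization of $D$ through $\La \cn \T(\ob X', \phi) \to X'$; that is, a functor
\[
  \wh{D} \cn \DD \to \T(\ob X', \phi) \sothatspace \La \circ \wh{D} = D.
\]
By the definition of dissolution, $\abs{D} = \De \circ \wh{D}$.

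The assumed commutativity of $\abs{D} = \De \circ \wh{D}$ in $\T(\ob X')$ says that any two parallel composites of arrows arising from $\wh{D}$ have equal $\De$-images. Faithfulness of $\De$ forces these parallel composites already to be equal in $\T(\ob X', \phi)$, so $\wh{D}$ commutes. Applying the underlying functor of $\La$, which preserves equalities of morphisms, transports the commutativity to $D = \La \circ \wh{D}$ in $X'$, as required. The only conceptual point beyond definition-chasing is the step ``surjective equivalence in $\Talgs$ implies faithfulness of the underlying functor,'' and I expect this to be the mildly delicate bookkeeping: it follows because the forgetful functor $\Talgs \to \Cat$ preserves strict morphisms and their equivalences, and every equivalence in $\Cat$ is fully faithful. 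Given that, the theorem is an immediate consequence of \cref{thm:main1}.
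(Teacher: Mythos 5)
Your proposal is correct and follows essentially the same route as the paper, which derives \cref{thm:main-msb} directly from \cref{thm:main1} together with the definitions in \cref{sec:formal-diagrams} and the observation in \cref{rmk:what-it-means} that an equivalence $\De$ reflects equality of parallel morphisms, so commutativity of the dissolution transfers back to the lift and then, via $\La$, to the original diagram. The one step you flag as delicate---that a surjective equivalence in $\Talgs$ has a faithful underlying functor---is handled the same way in the paper: the forgetful 2-functor sends the internal equivalence to an equivalence of categories, which is fully faithful.
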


The assertions of \cref{thm:main-msb} may be summarized informally as follows.
\begin{slogan}\label{slogan:msb}
  In the cases $\T \in \{\Mg,\Sg,\Bg\}$, commutativity of formal diagrams for $f$ reduces to checking commutativity of the simpler dissolution diagrams, in which the monoidal and unit constraints of $f$ are replaced by identities.
\end{slogan}
The definitions of $\T(\ob X',\phi)$, $\La$, and $\De$ explain precisely how such a replacement of monoidal and unit constraints can be done.
We give a variety of detailed examples and further discussion in \cref{sec:other,sec:doub-quad}.
The interested reader is invited to skip forward for additional motivation, and then back to the relevant definitions and constructions as needed.

\subsection*{Main technical result}
Our third statement of main results, \cref{thm:main2}, is the most general and technical.
It identifies more precisely how the different features of our work rely on a collection of interrelated hypotheses.
In particular, \cref{thm:main2} states explicitly how the existence of universal pseudomorphisms $\wt{\phi}$ \cref{eq:intro-wtphi} relates to existence of a \emph{pseudomorphism classifier} $\Q$ for the 2-monad $\T$.
\Cref{sec:bg-psmor-class,sec:Qi-strictification} review those aspects of pseudomorphism classifiers that will be necessary in this work.

A pseudomorphism classifier can arise under various hypotheses, e.g., those discussed in \cite{BKP1989Two,power1989coherence,Lac02Codescent}.
One aim of our treatment is to explore the relationship between existence of a pseudomorphism classifier $\Q$, however it may arise, and existence of universal pseudomorphisms $\wt{\phi}$.

The proof of \cref{thm:main2} is included here.
It combines the essential results from the technical heart of this work, and serves as a high-level summary.
Here, we use the following notation.
\begin{itemize}
\item $\Talg$ and $\Talgs$ denote the 2-categories of $\T$-algebras with
  pseudomorphisms and strict morphisms, respectively, (\cref{defn:Talg-Talgs}).
\item $\bii$ and $\II$ denote the small categories consisting of two objects and a single nonidentity morphism, respectively isomorphism (\cref{notn:arrow}).
\item For each $C \in \K$, we write $\P C = \T(C,1_C)$ (\cref{defn:QTC-TC1}).
\end{itemize}
Further review of 2-monads, and of the limits and colimits necessary for this work, is given in \cref{sec:bg-2-monads,sec:bg-lim-colim}.
\begin{thm}[Pseudomorphism Coherence]\label{thm:main2}
  Suppose $\T$ is a 2-monad on a 2-category $\K$ and suppose that
  \begin{enumerate}
    \renewcommand{\theenumi}{\arabic{enumi}}
    \renewcommand{\labelenumi}{(\theenumi)}
  \item\label{it:hyp1} $\K$ admits pseudolimits of 1-cells;
  \item\label{it:hyp2} $\K$ admits cotensors
    \begin{enumerate}
      \renewcommand{\labelenumii}{(\theenumii)}
    \item\label{it:hyp2a} of the form $\{\bii, C\}$ for $C \in \K$ and 
    \item\label{it:hyp2b} of the form $\{\II, C\}$ for $C \in \K$;
    \end{enumerate}
  \item\label{it:hyp3} $\Talgs$ admits pushouts; and
  \item\label{it:hyp4} $\Talgs$ admits coequalizers of $\P$-free pairs (\cref{defn:P-free-fork}).
  \end{enumerate}
  Then the following two conditions are equivalent.
  \begin{enumerate}
    \renewcommand{\theenumi}{\Alph{enumi}}
    \renewcommand{\labelenumi}{(\theenumi)}
  \item\label{it:propA} $\T$ admits a pseudomorphism classifier $(\Q,\itt,\ze,\de)$.
  \item\label{it:propB} $\T$ admits universal pseudomorphisms $\wt{\phi}$.
  \end{enumerate}
  Moreover, in this case, the following hold for each $\T$-algebra $Y$ and each 1-cell $\phi \cn C \to C'$ in $\K$.
  \begin{enumerate}
    \renewcommand{\theenumi}{\Alph{enumi}}
    \renewcommand{\labelenumi}{(\theenumi)}
    \setcounter{enumi}{2}
  \item\label{it:propC} The components $\ze_Y$ and $\de_Y$ are part of an adjoint surjective equivalence.
  \item\label{it:propD} The induced strict morphism of $\T$-algebras \cref{eq:Delta} $\De\cn \T(C',\phi) \to \T C'$ is a surjective equivalence in $\Talgs$.
  \end{enumerate}
\end{thm}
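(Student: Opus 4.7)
The plan is to treat this as a synthesis theorem that collates the paper's technical development. The equivalence \cref{it:propA}~$\Leftrightarrow$~\cref{it:propB} is established by constructing one universal object from the other, and the refinements \cref{it:propC,it:propD} are then read off from the resulting constructions. The four hypotheses play distinct roles: \cref{it:hyp1} supplies the pseudolimit data used to build equivalences in $\K$; \cref{it:hyp2} provides cotensor objects that encode pseudomorphism 2-cells and their invertibility strictly; \cref{it:hyp3} assembles $\T(C',\phi)$ via a pushout in $\Talgs$; and \cref{it:hyp4} carries the strictification step that identifies this pushout as universal.

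For \cref{it:propA}~$\Rightarrow$~\cref{it:propB}, I would construct $\T(C',\phi)$ as a pushout in $\Talgs$ built from $\T C$, the free strict morphism $\T\phi\cn \T C \to \T C'$, and the classifier $\Q\T C'$ equipped with its universal pseudomorphism $\ze_{\T C'}\cn \Q\T C' \to \T C'$. The universal pseudomorphism $\wt{\phi}$ would then arise as the composite of a classifier-induced 1-cell with one leg of the pushout, and its universal property would be inherited from that of $\Q$ via hypothesis~\cref{it:hyp4}. For the converse, I would recover the classifier $\Q Y$ from a universal pseudomorphism taken at an appropriate identity morphism associated to~$Y$, and then use hypotheses \cref{it:hyp1,it:hyp2} to produce the inclusion $\itt_Y$ and the counit 2-cell $\de_Y$ together with their coherence data.

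The assertion \cref{it:propC} should then follow from the classifier's universal property combined with \cref{it:hyp2}: the cotensor $\{\II,Y\}$ encodes invertible 2-cells as strict algebra data, so the defining 2-cell $\de_Y\cn \ze_Y \itt_Y \Rightarrow 1_Y$ promotes to an adjoint equivalence by a standard 2-categorical argument, while surjectivity is a direct consequence of $\itt_Y$ being a section of $\ze_Y$ on underlying data. For \cref{it:propD}, the pushout description of the first step expresses $\De$ as a pushout of $\ze_{\T C'}$ along a strict morphism; combined with \cref{it:propC} and closure of surjective equivalences in $\Talgs$ under such pushouts---a fact I would expect to be established earlier in the paper using \cref{it:hyp3}---this yields that $\De$ is itself a surjective equivalence.

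The principal obstacle will be verifying that the pushout really defines a \emph{universal} pseudomorphism in the sense required, rather than merely an object equipped with some pseudomorphism from $\T C$. This amounts to showing that, for every $\T$-algebra $Y$ and every pseudomorphism $f\cn \T C \to Y$ whose restriction to objects factors strictly through $\phi$, there is a unique strict morphism $\T(C',\phi) \to Y$ compatible with $f$ and the factorization. The verification combines the $\Q$-adjunction with the coequalizer hypothesis \cref{it:hyp4}, which is precisely what allows such extensions to be built strictly in $\Talgs$ rather than only up to a coherent 2-cell in $\Talg$; the bulk of the paper's technical lemmas feed directly into this step.
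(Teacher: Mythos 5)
Your overall architecture does match the paper's: \cref{thm:main2} is proved by collating earlier results, with \cref{it:propA}$\Rightarrow$\cref{it:propC} coming from the effectiveness theorem of \cite{BKP1989Two}, \cref{it:propA}$\Rightarrow$\cref{it:propB} from a pushout in $\Talgs$, \cref{it:propB}$\Rightarrow$\cref{it:propA} from certain coequalizers, and \cref{it:propD} from the pushout description. But the pushout you propose is the wrong one, and this is a genuine gap rather than a notational slip. The paper forms $\T(C',\phi)$ as the pushout of $\ze^\flat \cn \T C \to \itt\Q\T C$ along $\T\phi \cn \T C \to \T C'$, where $\ze^\flat$ is the \emph{strict} morphism isomorphic to the classifier unit $\ze_{\T C}$; producing $\ze^\flat$ is exactly where effectiveness (\cref{it:propC}), and hence hypotheses \cref{it:hyp1} and \cref{it:hyp2a}, enter, since without it one cannot even write down the span in $\Talgs$ ($\ze_{\T C}$ is only a pseudomorphism). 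Your span instead involves $\Q\T C'$ (and you write its unit with the arrow reversed). That cannot deliver the universal property \cref{eq:univprop}: given a $\T$-map $f \cn X \zzto Y$ and $(R,S)\cn \phi \to \usf f$, the only mechanism for absorbing the algebra constraint of $f$ is the strict mate $f^\bot \cn \Q X \to Y$ precomposed with $\Q\ol{R}$, so the classifier leg must sit over the \emph{source} $\T C$. From $C'$ one only has the 1-cell $S \cn C' \to \usf Y$ in $\K$, which induces a strict map $\T C' \to Y$ with no pseudomorphism data to classify; a pushout built from $\Q\T C'$ answers the wrong question.

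You also misplace hypothesis \cref{it:hyp4}. It plays no role in \cref{it:propA}$\Rightarrow$\cref{it:propB} or in verifying universality of the pushout; that verification uses only the pushout's universal property in $\Talgs$ together with uniqueness of mates for $\T \dashv \usf$ and $\Q \dashv \itt$. Hypothesis \cref{it:hyp4} is used exclusively for \cref{it:propB}$\Rightarrow$\cref{it:propA}, and there the classifier is \emph{not} simply the universal pseudomorphism at an identity: setting $\P C = \T(C,1_C)$, one must define $\Q X$ as the coequalizer in $\Talgs$ of the $\P$-free pair $(\P\mu, \P\T\Xmul)$ from $\P\T X$ to $\P X$; the object $\P(\usf X)$ alone does not have the correct universal property, and this coequalizer is the step your sketch omits. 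Two smaller corrections: for \cref{it:propC}, $\de_Y$ is a 1-cell (the counit component), not a 2-cell, and the substantive content is the invertible 2-cell $\Theta \cn \ze_Y\de_Y \iso 1$, which in the cited argument comes from pseudolimits of 1-cells rather than from the cotensor $\{\II,Y\}$; for \cref{it:propD}, the paper proves no closure of surjective equivalences under pushout, but instead either transports the adjoint surjective equivalence $(\ze^\flat,\de,\Theta^\flat)$ across the pushout via its 2-dimensional universal property, or uses cotensors $\{\II,-\}$ and \cref{lem:222}.
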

\begin{proof} 
  \begin{description}
  \item[\cref{thm:BKP42} {\cite{BKP1989Two}}:]
    Suppose $\K$ satisfies \cref{it:hyp1,it:hyp2a}.
    Then \cref{it:propA} implies \cref{it:propC}.
  \item[\cref{thm:finitary-udc}:]
    Suppose $\Talgs$ satisfies \cref{it:hyp3}.
    Then \cref{it:propA,it:propC} together imply \cref{it:propB}, with $\T(C',\phi)$ constructed as a pushout \cref{eq:udc-pushout} in $\Talgs$.
  \item[\cref{thm:udc}:]
    Suppose $\K$ satisfies \cref{it:hyp2b}.
    Then \cref{it:propA,it:propB,,it:propC} together imply \cref{it:propD}.
  \item[\cref{rmk:alt-udc}:] Alternate proof that \cref{it:propA,it:propB,,it:propC} together imply \cref{it:propD}, under the assumption that $\T(C',\phi)$ is the pushout \cref{eq:udc-pushout} in $\Talgs$.
  \item[\cref{prop:UDC-Qi-adj}:]
    Suppose $\K$ satisfies \cref{it:hyp2a} and $\Talgs$ satisfies \cref{it:hyp4}.
    Then \cref{it:propB} implies \cref{it:propA}.
  \end{description}
\end{proof}

\subsection*{Relation to literature}

Our approach via universal pseudomorphisms in \cref{sec:upc} is based on the approach to coherence for monoidal functors in \cite[Theorem~1.7]{JS1993Braided} and for pseudofunctors between bicategories in \cite[Theorem~2.21]{Gurski13Coherence}.
Our use of pseudomorphism classifiers is motivated by their appearance in the 2-monadic approaches to coherence in \cite{BKP1989Two,power1989coherence,Lac02Codescent}.

It is important to note that this work focuses on pseudomorphism coherence rather than the more general \emph{lax} morphism coherence.
Certain special cases of the latter are treated in work of Epstein \cite{Eps1966Functors}, Lewis \cite{Lew1974Coherence}, and Malkiewich-Ponto \cite{MP2021Coherence}.
These coherence theorems focus on plain and symmetric monoidal structures, with Malkiewich-Ponto extending to bicategorical applications.
The following example due to Lewis illustrates the potential subtlety of lax morphisms.
\begin{nonexample}[{\cite[Pages~5--6]{Lew1974Coherence}}]\label{example:Lewis}
  Suppose given monoidal categories $A = (A,\bcdot,I)$ and $A' = (A',\bcdot, I')$ with monoidal products denoted $\bcdot$ and monoidal units denoted $I$ and $I'$, respectively.
  Suppose $f\cn A \to A'$ is a lax monoidal functor.
  The following diagram in $A'$ does not necessarily commute.
  \begin{equation}\label{eq:Lewis}
    \begin{tikzpicture}[x=22ex,y=10ex,vcenter]
      \draw[0cell] 
      (0,0) node (a) {f(I)}
      (a)++(1,0) node (b) {I' \bcdot f(I)}
      (a)++(0,-1) node (c) {f(I) \bcdot I'}
      (c)++(1,0) node (d) {f(I) \bcdot f(I)}
      ;
      \draw[1cell] 
      (a) edge node {\la^\inv} (b)
      (a) edge['] node {\rho^\inv} (c)
      (b) edge node {f_0 \bcdot 1} (d)
      (c) edge node {1 \bcdot f_0} (d)
      ;
    \end{tikzpicture}
  \end{equation}
  In the above diagram, $\la$ and $\rho$ are the left and right unit isomorphisms for $A'$, respectively, and $f_0$ is the monoidal unit constraint for $f$.

  For a specific case where \cref{eq:Lewis} does not commute, let $f$ be the forgetful functor from the category of abelian groups $A = (\Ab, \otimes, \ZZ)$, to the category of sets $A' = (\Set, \times, \bfone)$.
  This functor is lax monoidal, and the function $f_0 \cn \bfone \to \ZZ$ is given by sending the unique element of $\bfone$ to $1 \in \ZZ$.
  Then the two composites around the diagram are given by the functions $n \mapsto (1,n)$ for the top/right composite and $n \mapsto (n,1)$ for the left/bottom composite.
\end{nonexample}
Thus, the theory of coherence for lax monoidal functors is \emph{not} equivalent to that of monoidal categories, where every formal diagram commutes.

In contrast, our results show that often the coherence for $\T$-algebra pseudomorphisms is equivalent to that of strict $\T$-algebras.
Thus, the context for our work is restricted to pseudomorphisms, but broadened to a general 2-monad $\T$.
\Cref{rmk:no-lax} provides further details on a key step where our restriction to pseudomorphisms is required.

Our results are related to, but somewhat different from, coherence theorems for pseudo\emph{algebras} such as those of Power \cite{power1989coherence}, Hermida \cite{Her2001Coherent}, and Lack \cite{Lac02Codescent}.
The latter are formulated to show that there is a left adjoint to the inclusion
\[
  \Talgs \to \mathsf{Ps}\mh\Talg,
\]
such that the components of the unit are equivalences in $\mathsf{Ps}\mh\Talg$.
Here, $\mathsf{Ps}\mh\Talg$ is the 2-category of pseudoalgebras and pseudomorphisms for $\T$.
Such coherence results show that pseudoalgebras and pseudomorphisms for $\T$ can be replaced with equivalent strict algebras and strict morphisms.
They do not directly address the diagrammatic coherence questions that are resolved by \cref{thm:main-msb} for pseudomorphisms.

\subsection*{Outline}

This work is organized into three parts.
Part I consists of \cref{sec:bg-2-monads,sec:bg-lim-colim,sec:bg-psmor-class,sec:Qi-strictification} and reviews relevant parts of 2-monad theory.
\Cref{sec:bg-2-monads,sec:bg-lim-colim} recall basic definitions, limits, and colimits.
\Cref{sec:bg-psmor-class,sec:Qi-strictification} recall essential parts of the theory of pseudomorphism classifiers.

Part II consists of
\cref{sec:upc,sec:finitary-udc,sec:De1-equiv,sec:udc-implies-Qi} and contains the core technical work.
The definition of universal pseudomorphisms $\wt{\phi}$ and their basic properties are given in \cref{sec:upc}.
\Cref{sec:finitary-udc} gives a construction of $\T(C',\phi)$ as a pushout of a pseudomorphism classifier $\Q$, in the case that $\Talgs$ admits pushouts.
\Cref{sec:De1-equiv} proves that $\De$ is an equivalence in each of two separate results with slightly different hypotheses.
\Cref{sec:udc-implies-Qi} identifies hypotheses under which the existence of universal pseudomorphisms $\wt{\phi}$ implies the existence of a pseudomorphism classifier $\Q$.

Part III contains applications to diagrammatic coherence for 2-monads over $\Cat$.
\Cref{sec:formal-diagrams} gives a general definition of formal diagrams for such 2-monads $\T$, and the remaining sections focus on three special cases for plain, symmetric, and braided monoidal structures.
\Cref{sec:Mv-and-Tv} recalls the relevant definitions and the standard coherence theorems in those cases.
\Cref{sec:symm-coh} contains a novel simplification in the symmetric monoidal case.
\cref{sec:psmorclass,sec:applications} give detailed explanations of the abstract constructions from Part II for plain, symmetric, and braided monoidal structures.

\cref{sec:other} contains a number of examples that apply the results above to check commutativity of various diagrams for symmetric and braided strong monoidal functors.
\cref{sec:doub-quad} treats two specific monoidal functors and a diagram \cref{eq:cursed-cyclic} that is \emph{not} simplified by the dissolution approach developed in this work.
Both \cref{sec:other,sec:doub-quad} have been written to minimize explicit dependence on the preceding theory, and to be read as independently as possible.
Some readers may find it interesting to read those sections immediately after this introduction, and then follow the references from there back to the main body as necessary.

\artpart{Background}

\section{2-monads}
\label{sec:bg-2-monads}

For basic theory of categories and 2-categories, we refer the reader to
\cite{ML98Categories,Lac10Companion,Gurski13Coherence,JY212Dim}.
\begin{convention}\label{convention:2cats}
  Throughout this work, we let $\K$ denote a 2-category.
  We denote 1-cells as
  \[
    \phi\cn C \to C' \orspace \psi\cn D \to D'.
  \]
  We use a \emph{relative dimension convention} and denote 2-cells as
  \[
    \Ga\cn \phi \to \phi'
    \orspace
    \begin{tikzpicture}[x=12ex,y=10ex,vcenter]
      \draw[0cell] 
      (0,0) node (c) {C}
      (1,0) node (c') {C'.}
      ;
      \draw[1cell] 
      (c) edge[bend left] node (P) {\phi} (c')
      (c) edge[',bend right] node (P') {\phi'} (c')
      ;
      \draw[2cell] 
      node[between=P and P' at .5, rotate=270, 2label={below,\Ga}] {\Rightarrow}
      ;
    \end{tikzpicture}
  \]
\end{convention}

\begin{defn}\label{defn:2-monad}
  Suppose $\K$ is a 2-category.
  A \emph{2-monad} on $\K$ is a triple $(\T,\mu,\eta)$ consisting of
  \begin{itemize}
  \item a 2-functor $\T \cn \K \to \K$,
  \item a 2-natural transformation $\mu\cn \T^2 \to \T$, and
  \item a 2-natural transformation $\eta\cn 1_\K \to \T$.
  \end{itemize}
  These data are required to make the following unity and associativity diagrams commute.
  \[
    \begin{tikzpicture}[x=12ex,y=7ex,xscale=1.2]
      \draw[0cell] 
      (0,0) node (a) {\T^3}
      (a)++(1,0) node (b) {\T^2}
      (a)++(0,-1) node (c) {\T^2}
      (b)++(0,-1) node (d) {\T}
      ;
      \draw[1cell] 
      (a) edge node {1_\T * \mu} (b)
      (c) edge node {\mu} (d)
      (a) edge['] node {\mu * 1_\T} (c)
      (b) edge node {\mu} (d)
      ;
    \end{tikzpicture}
    \qquad
    \begin{tikzpicture}[x=12ex,y=7ex,xscale=1.2]
      \draw[0cell] 
      (0,0) node (a) {1_\K \T}
      (a)++(1,0) node (b) {\T^2}
      (b)++(1,0) node (z) {\T 1_\K}
      (a)++(0,-1) node (c) {\T}
      (b)++(0,-1) node (d) {\T}
      (z)++(0,-1) node (w) {\T}
      ;
      \draw[1cell] 
      (a) edge node {\eta * 1_\T} (b)
      (z) edge['] node {1_\T * \eta} (b)
      (c) edge[equal] node {} (d)
      (a) edge[',equal] node {} (c)
      (b) edge node {\mu} (d)
      (z) edge[equal] node {} (w)
      (w) edge[',equal] node {} (d)
      ;
    \end{tikzpicture}
  \]
 We often write a 2-monad as $\T$, leaving $\mu, \eta$ implicit.
\end{defn}

\begin{defn}\label{defn:t-alg}
  Suppose $\T$ is a 2-monad on $\K$.
  A \emph{$\T$-algebra} is a pair $(X,x)$ consisting of
  \begin{itemize}
  \item an object $X \in \K$ and
  \item a structure 1-cell $x\cn \T X \to X$ in $\K$
  \end{itemize}
  such that the following two diagrams commute.
  \begin{equation}\label{zeta-theta-def}
  \begin{tikzpicture}[x=12ex,y=8ex,vcenter,xscale=1.2]
    \draw[0cell] 
    (0,0) node (x) {X}
    (0,-1) node (tx) {\T X}
    (1,-1) node (x') {X}
    ;
    \draw[1cell] 
    (x) edge[swap] node {\eta_X} (tx)
    (x) edge node (id) {1_X} (x')
    (tx) edge[swap] node {\Xmul} (x')
    ;
  \end{tikzpicture}
  \qquad\qquad
  \begin{tikzpicture}[x=12ex,y=8ex,vcenter,xscale=1.2]
    \draw[0cell] 
    (0,0) node (t2x) {\T^2 X}
    (1,0) node (tx) {\T X}
    (0,-1) node (tx') {\T X}
    (1,-1) node (x) {X}
    ;
    \draw[1cell] 
    (t2x) edge[swap] node {\T \Xmul} (tx')
    (t2x) edge node {\mu_X} (tx)
    (tx) edge node (id) {\Xmul} (x)
    (tx') edge[swap] node {\Xmul} (x)
    ;
  \end{tikzpicture} 
  \end{equation}
  \ 
\end{defn}

\begin{defn}\label{defn:t-maps}
  Suppose $(X,x)$ and $(Y,y)$ are $\T$-algebras for a 2-monad $\T$ on $\K$.
  A \emph{$\T$-algebra pseudomorphism}, or \emph{$\T$-map}, is a pair
  \[
    (f,f_{\bullet}) \cn (X,x) \to (Y,y)
  \]
   consisting of
  \begin{itemize}
  \item a 1-cell $f \cn X \to Y$ in $\K$ called the \emph{underlying 1-cell} and
  \item an invertible 2-cell $f_{\bullet}$ in $\K$ as shown below, called the \emph{algebra constraint} of $f$.
    \begin{equation}\label{eq:Tmap-2cell}
      \begin{tikzpicture}[x=11ex,y=9ex,vcenter,xscale=1.2]
        \draw[0cell] 
        (0,0) node (tx) {\T X}
        (1,0) node (tx') {\T Y}
        (0,-1) node (x) {X}
        (1,-1) node (x') {Y}
        ;
        \draw[1cell] 
        (tx) edge node {\T f} (tx')
        (tx) edge['] node {\Xmul} (x)
        (tx') edge node {\Ymul} (x')
        (x) to['] node {f} (x')
        ;
        \draw[2cell] 
        node[between=x and tx' at {.5}, rotate=225, 2label={below,f_\bullet}, 2label={above,\iso}] {\Rightarrow}
        ;
      \end{tikzpicture}
    \end{equation} 
  \end{itemize}

  These data are required to satisfy unit and multiplication axioms indicated by the two equalities of pasting diagrams below.
  In these diagrams, the unlabeled regions commute because $X$ and $Y$ are assumed to be $\T$-algebras.
  \begin{equation}\label{lax-morphism-unit-axiom}
    \begin{tikzpicture}[x=12ex,y=10ex,baseline=(tx.base),xscale=1.2]
      \newcommand{\boundary}{
        \draw[0cell] 
        (0,0) node (tx) {\T X}
        (0,-1) node (x) {X}
        (1,-1) node (x') {Y}
        (tx) ++(45:1) node (a) {X}
        (a) ++(1,0) node (b) {Y}
        ;
        \draw[1cell] 
        (tx) edge['] node {\Xmul} (x)
        (a) edge['] node (etax) {\eta_X} (tx)
        (b) edge[bend left] node (1x') {1_{Y}} (x') 
        (x) to['] node {f} (x')
        (a) to node {f} (b)
        ;
      }
      \draw[font=\Large] (2.25,0) node {=};
      \begin{scope}[shift={(0,0)}]\
        \boundary
        \draw[0cell] 
        (1,0) node (tx') {\T Y}
        ;
        \draw[1cell] 
        (tx) edge node {\T f} (tx')
        (tx') edge node {\Ymul} (x')
        (b) edge['] node {\eta_{Y}} (tx')
        ;
        \draw[2cell] 
        node[between=x and tx' at {.5}, rotate=225, 2label={below,f_\bullet}] {\Rightarrow}
        ;          
      \end{scope}
      \begin{scope}[shift={(3,0)}]
        \boundary
        \draw[0cell] 
        ;
        \draw[1cell] 
        (a) edge[bend left] node (1x) {1_{X}} (x) 
        ;
        \draw[2cell] 
        ;
      \end{scope}
    \end{tikzpicture}
  \end{equation}

  \begin{equation}\label{lax-morphism-structure-axiom}
    \begin{tikzpicture}[x=.4ex,y=.4ex,xscale=1.2]
      \def\boundary{
        \draw[0cell] 
        (0,0) node (LL) {\T^2X}
        (20,20) node (TL) {\T X}
        (20,-20) node (BL) {\T^2Y}
        (50,20) node (TR)  {X}
        (50,-20) node (BR) {\T Y}
        (70,0) node (RR)  {Y}
        ;
        \draw[1cell] 
        (LL) edge node {\mu_{X}} (TL)
        (TL) edge node {\Xmul} (TR)
        (TR) edge node {f} (RR)
        (LL) edge[swap] node {\T^2 f} (BL)
        (BL) edge[swap] node {\T \Ymul} (BR)
        (BR) edge[swap] node {\Ymul} (RR)
        ;
      }
      
      \begin{scope}
        \boundary
        \draw[0cell]
        (30,0) node (CC)  {\T X}
        ;
        \draw[1cell] 
        (LL) edge node {\T \Xmul} (CC)
        (CC) edge node {\Xmul} (TR)
        (CC) edge node {\T f} (BR)
        ;
        \draw[2cell] 
        node[between=BL and CC at .5, rotate=60, 2label={below,\T f_\bullet}] {\Rightarrow}
        node[between=CC and RR at .55, rotate=90, 2label={below,f_\bullet}] {\Rightarrow}
        ;
        \draw (82.5,2.5) node[font=\LARGE] {=};
      \end{scope}

      \begin{scope}[shift={(95,0)}]
        \boundary
        \draw[0cell]
        (40,0) node (CC)  {\T Y}
        ;
        \draw[1cell] 
        (TL) edge[swap] node {\T f} (CC)
        (BL) edge node {\mu_{Y}} (CC)
        (CC) edge node {\Ymul} (RR)
        ;
        \draw[2cell] 
        node[between=CC and TR at .5, rotate=60, 2label={above,f_\bullet}] {\Rightarrow}
        ;
      \end{scope}
    \end{tikzpicture}
  \end{equation}
  We often abbreviate the pair $(f,f_\bullet)$ as $f$.
  We say that $f$ is a \emph{strict $\T$-map} if $f_\bullet$ is an identity 2-cell, so that \cref{eq:Tmap-2cell} commutes.
  We will sometimes say ``map'' or ``strict map'' when $\T$ is clear from context.
\end{defn}
\begin{rmk}\label{rmk:strictTmaps-underlying}
  In the context of \cref{defn:t-maps}, let $\K_0$ denote the underlying 1-category of $\K$ and let $\T_0$ denote the monad on $\K_0$ underlying $\T$.
  Suppose that $(X,\Xmul)$ and $(Y,\Ymul)$ are $\T$-algebras.
  Then a 1-cell $f\cn X \to Y$ in $\K$ is a strict $\T$-map if and only if $f$ is a morphism of $\T_0$-algebras.
\end{rmk}

\begin{rmk}\label{rmk:Tmaps}
  Our terms ``$\T$-map'', respectively ``strict $\T$-map,'' are convenient abbreviations for what are called
  \emph{pseudo} or \emph{strong $\T$-morphism}, respectively \emph{strict $\T$-morphism},
  in the literature.
  The more general notion of \emph{lax $\T$-morphism}, where $f_\bullet$ is not assumed to be invertible, will not be used in this present work.
\end{rmk}

\begin{defn}\label{defn:T2cell}
  Suppose $(f,f_\bullet)$ and $(g,g_\bullet)$ are two $\T$-maps $(X,\Xmul) \to (Y,\Ymul)$ for $\T$-algebras $X$ and $Y$ in $\K$.
  A \emph{$\T$-algebra 2-cell}
  \[
    \al\cn f \to g
  \]
  is a 2-cell $\al\cn f \to g$ in $\K$ such that the following equality holds.
  \[
    \begin{tikzpicture}[x=12ex,y=10ex,xscale=1.2]
      \newcommand\boundary{
        \draw[0cell] 
        (0,0) node (tx) {\T X}
        (1,0) node (tx') {\T Y}
        (0,-1) node (x) {X}
        (1,-1) node (x') {Y}
        ;
        \draw[1cell] 
        (tx) edge[bend left] node (Tf1) {\T f} (tx')
        (x) edge[',bend right] node (f2) {g} (x')
        (tx) edge['] node {\Xmul} (x)
        (tx') edge node {\Ymul} (x')
        ;
      }
      \draw[font=\Large] (1.75,-.4) node {=};
      \begin{scope}[shift={(0,0)}]
        \boundary
        \draw[1cell] 
        (x) edge[bend left] node {f} (x')
        ;
        \draw[2cell] 
        node[between=Tf1 and f2 at {.4}, rotate=225, 2label={below,f_\bullet}] {\Rightarrow}
        node[between=x and x' at {.46}, rotate=-90, 2label={above,\,\alpha}] {\Rightarrow}
        ;
      \end{scope}
      \begin{scope}[shift={(2.5,0)}]
        \boundary
        \draw[1cell] 
        (tx) edge[',bend right] node {\T g} (tx')
        ;
        \draw[2cell] 
        node[between=Tf1 and f2 at {.66}, rotate=225, 2label={below,g_\bullet}] {\Rightarrow}
        node[between=tx and tx' at {.44}, rotate=-90, 2label={above,\T \alpha}] {\Rightarrow}
        ;
      \end{scope}
    \end{tikzpicture}
  \]
  We will also say that $\al$ is an \emph{algebra 2-cell} when $\T$ is clear from context.
\end{defn}

\begin{defn}\label{defn:Tmap-comp}
  The composite of $\T$-maps
  \[
    X \fto{f} X' \fto{f'} X''
  \]
  is defined as follows.
  \begin{itemize}
  \item The underlying 1-cell of $f' \circ f$ is the composite of underlying 1-cells.
  \item The algebra constraint $(f' \circ f)_\bullet$ is given by the pasting in $\K$ indicated below.
    \begin{equation}\label{eq:Tmap-comp-2cell}
      \begin{tikzpicture}[x=11ex,y=9ex,vcenter,xscale=1.2]
        \draw[0cell] 
        (0,0) node (tx) {\T X}
        (1,0) node (tx') {\T Y}
        (2,0) node (tx'') {\T Z}
        (0,-1) node (x) {X}
        (1,-1) node (x') {Y}
        (2,-1) node (x'') {Z}
        ;
        \draw[1cell] 
        (tx) edge node {\T f} (tx')
        (tx') edge node {\T f'} (tx'')
        (tx) edge['] node {\Xmul} (x)
        (tx') edge node {\Ymul} (x')
        (tx'') edge node {\Zmul} (x'')
        (x) to['] node {f} (x')
        (x') to['] node {f'} (x'')
        ;
        \draw[2cell] 
        node[between=x and tx' at {.5}, rotate=225, 2label={below,f_\bullet}, 2label={above,\iso}] {\Rightarrow}
        node[between=x' and tx'' at {.5}, rotate=225, 2label={below,f'_\bullet}, 2label={above,\iso}] {\Rightarrow}
        ;
      \end{tikzpicture}
    \end{equation} 
    That is, 
    \begin{equation}\label{eq:Tmap-comp-formula}
      (f' \circ f)_\bullet = (f' * f_\bullet) \circ (f'_\bullet * \T f).
    \end{equation}
  \end{itemize}
  Horizontal and vertical composition of algebra 2-cells is given by that of the underlying 2-category, $\K$.
\end{defn}

\begin{defn}\label{defn:Talg-Talgs}
  Suppose $\K$ is a 2-category and $\T$ is a 2-monad on $\K$.
  We use the notations
  \[
    \Talg \andspace \Talgs
  \]
  to denote the 2-categories consisting of
  \begin{itemize}
  \item $\T$-algebras as 0-cells,
  \item $\T$-maps, respectively strict $\T$-maps, as 1-cells, and
  \item $\T$-algebra 2-cells as 2-cells.
  \end{itemize}
  Because every strict $\T$-map is a $\T$-map with identity algebra constraints,
  there is an identity-on-objects, locally full and faithful inclusion denoted
  \begin{equation}\label{eq:incl-i}
    \itt \cn \Talgs \hookrightarrow \Talg.  
  \end{equation}
  Moreover, each $\T$-algebra, $\T$-map, or $\T$-algebra 2-cell has an underlying object, 1-cell, or 2-cell in $\K$, respectively.
  We let $\usf$ denote the forgetful 2-functors as indicated in the following diagram, with $\usf = \usf \circ \itt$.
  \begin{equation}\label{eq:iuu}
    \begin{tikzpicture}[x=8ex,y=8ex,vcenter,xscale=1.2]
      \draw[0cell] 
      (0,0) node (k) {\K}
      (k)++(-1,1) node (tas) {\Talgs}
      (k)++(1,1) node (ta) {\Talg}
      ;
      \draw[1cell] 
      (tas) edge[right hook->] node {\itt} (ta)
      (ta) edge node {\usf} (k)
      (tas) edge['] node {\usf} (k)
      ;
    \end{tikzpicture}
  \end{equation} 
  \ 
\end{defn}

\begin{convention}\label{convention:ignorei}
  The 2-functor $\itt \cn \Talgs \hookrightarrow \Talg$ \cref{eq:incl-i} is the identity on objects, 1-cells, and 2-cells. 
  Therefore, we will sometimes leave $\itt$ implicit and omit it from the notation.
  For example, any time that a strict $\T$-map is composed with a general $\T$-map, there may be an implicit usage of $\itt$.
\end{convention}

\begin{defn}\label{defn:Tu-adj}
  In the context of \cref{defn:Talg-Talgs}, we use the notations
  \begin{equation}\label{eq:Tu-adj}
    \begin{tikzpicture}[x=15ex,y=8ex,vcenter]
      \draw[0cell] 
      (0,0) node (x) {\K}
      (1,0) node (y) {\Talgs}
      ;
      \draw[1cell] 
      (x) edge[bend left,transform canvas={yshift=.7mm},out=21,in=167] node (L) {\T} (y) 
      (y) edge[bend left,transform canvas={yshift=-.7mm},in=160,out=13] node (R) {\usf} (x) 
      ;
      \draw[2cell] 
      node[between=L and R at .5] {\bot}
      ;
    \end{tikzpicture}
  \end{equation}
  for the free-forgetful 2-adjunction with left 2-adjoint $\T$ and right 2-adjoint $\usf$.
  We let $\eta$ and $\epz$ denote, respectively, the unit and counit of $\T \dashv \usf$.
  For each $\T$-algebra $(X,\Xmul)$,
  \begin{itemize}
  \item the unit component $\eta_X$ is the unit of the $\T$-algebra structure on $X$ and
  \item the counit component $\epz_X$ is the algebra structure cell $\Xmul\cn \T X \to X$.
  \end{itemize}
\end{defn}

\begin{convention}\label{rmk:zzto}
  Beginning here, and throughout the rest of this document, we will write
  \[
    f \cn X \zzto Y,
  \]
  using a zigzag arrow, to denote that $f$ is the 1-cell part of a $\T$-map $(f,f_\bullet)$.
  If $f_\bullet$ is known to be an identity, so that $f$ is a strict $\T$-map, we use a straight arrow and write
  \[
    f \cn X \to Y.
  \]
\end{convention}

\begin{rmk}[Uniqueness of mates]\label{rmk:LRadj-uniqueness}
  The following elementary detail about 2-adjunctions will be useful below.
  Suppose given a 2-adjunction 
  \[
    \begin{tikzpicture}[x=15ex,y=8ex,vcenter]
      \draw[0cell] 
      (0,0) node (x) {\K}
      (1,0) node (y) {\zA}
      ;
      \draw[1cell] 
      (x) edge[bend left=19] node (L) {L} (y) 
      (y) edge[bend left=19] node (R) {R} (x) 
      ;
      \draw[2cell] 
      node[between=L and R at .5] {\bot}
      ;
    \end{tikzpicture}
  \]
  between 2-categories $\K$ and $\zA$, with unit $\eta$ and counit $\epz$.
  For objects $C \in \K$ and $Y \in \zA$, the isomorphism of categories
  \begin{equation}\label{eq:LRadj-uniqueness}
    \zA(LC,Y) \fto{\iso} \K(C,RY)
  \end{equation}
  is given by the right adjoint $R$ and composition or whiskering with $\eta$:
  \begin{equation}\label{eq:eta-whisk}
    \begin{aligned}
      f & \mapsto Rf \circ \eta\\
      \al & \mapsto R\al * \eta
    \end{aligned}
  \end{equation}
  where $\al \cn f \to f'$ in $\zA(LC,Y)$.
  In particular, if $f$ and $g$ are two 1-cells in $\zA(LC,Y)$ such that $Rf \circ \eta = Rg \circ \eta$ as 1-cells in $\K$, then $f$ and $g$ are equal as 1-cells in $\K$.
\end{rmk}

\section{Cotensors and coequalizers}
\label{sec:bg-lim-colim}

Completeness and cocompleteness for 2-categories generally refers to the $\cat$-enriched sense, meaning not just conical limits and colimits but also including all small $\cat$-weighted limits and colimits.
The only non-conical such we will employ, in \cref{sec:upc,sec:De1-equiv}, is that of a cotensor (also called a power).
Below, we recall their defining property and a key application.
For the more general theory of 2-dimensional limits and colimits, we refer the reader to \cite{Kel89Elementary,borceux2}.

Later in this section we discuss various coequalizers and their relation to $\T$-algebra structures.
These will be used in \cref{sec:udc-implies-Qi}.

\begin{defn}\label{defn:cotensor}
Suppose $\K$ is a 2-category, $X$ is an object of $\K$, and $C$ is a small category.
The \emph{cotensor} of $C$ and $X$ is an object of $\K$, denoted $\{C, X\}$, equipped with a 2-natural isomorphism
\begin{equation}\label{eq:cotensor}
  \cat \bigl( C, \K(-, X) \bigr) \cong \K\bigl( -, \{C, X\} \bigr) 
\end{equation}
of 2-functors $\K^{op} \to \cat$.
If the cotensor $\{C, X\}$ exists in $\K$ for every object $X$ and every small category $C$, we say that \emph{$\K$ has all cotensors}.
\end{defn}

\begin{rmk}\label{rmk:cotensor-for-Talgs}
If $\K = \Talgs$ or $\K = \Talg$ for a 2-monad $\T$ on $\cat$, then $\{C, X \}$ will be the ordinary functor category $\cat \bigl(C, \usf X\bigr)$ equipped with the pointwise $\T$-algebra structure.
\end{rmk}

\begin{notn}\label{notn:underlying}
  If $\K$ is a 2-category, we let $\Kzero$ denote the underlying category of $\K$.
  If $F \cn \K \to \zL$ is a 2-functor, we let $F_0 \cn \Kzero \to \zLzero$ denote the functor obtained by restricting $F$ to the underlying categories.
\end{notn}

\begin{notn}\label{notn:arrow}
  We let $\bii = \{0 \to 1\}$ denote the free arrow category, consisting of two objects and one non-identity morphism.
  Similarly, let $\II = \{0 \iso 1\}$ denote the free isomorphism category, consisting of two objects and an isomorphism between them.
\end{notn} 

\begin{rmk}\label{rmk:special-cotensors}
  Most of our work below will depend only on cotensors of the form $\{\bii,X\}$ and $\{\II,X\}$.
  Unpacking \cref{defn:cotensor,notn:arrow} in these cases gives the following direct descriptions of \cref{eq:cotensor} on 1-cells.
  \begin{enumerate}
  \item 1-cells $f\cn W \to \{\bii,X\}$ in $\K$ are in bijection with triples $(f_1,f_2,\al)$ where $f_1, f_2 \cn W \to X$ are 1-cells and $\alpha \cn f_1 \to f_2$ is a 2-cell in $\K$.
  \item 1-cells $f\cn W \to \{\II,X\}$ in $\K$ are described similarly, with $\alpha$ being an isomorphism.
  \end{enumerate}
\end{rmk}

Recall from \cref{eq:iuu} the forgetful functors 
$\usf$ from $\Talgs$ and $\Talg$ to $\K$ and the inclusion
$\itt$ from $\Talgs$ to $\Talg$.
We need the following two facts about cotensor products; proofs of both can be found in {\cite{BKP1989Two}}.
\begin{prop}\label{prop:cotensor-facts}
  \ 
\begin{enumerate}
\item\label{it:cotensor-facts-i} {\cite[Proposition~2.5]{BKP1989Two}}
  Suppose $\K$ is a 2-category, and $\T$ is a 2-monad on $\K$.
  If $C$ is a small category and $\K$ admits all cotensors of the form $\{C, X\}$, then so do $\Talgs$ and $\Talg$.
  Moreover, the inclusion $\itt$ and both forgetful functors $\usf$ preserve those cotensors.
\item\label{it:cotensor-facts-ii} {\cite[Proposition~3.1]{BKP1989Two}}
  Suppose $\zA$ and $\zB$ are 2-categories such that $\zA$ admits cotensors of the form $\{ \bii, X \}$.
  Suppose $V \cn \zA \to \zB$ is a 2-functor that preserves those cotensors.
  Then the underlying functor $V_0 \cn \zAzero \to \zBzero$ has a left adjoint if and only if $V$ has a left 2-adjoint.
\end{enumerate}
\end{prop}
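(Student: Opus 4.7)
The plan is to address the two parts separately. Part~\cref{it:cotensor-facts-i} is a structural verification that the cotensor $\{C,X\}$ in $\K$ lifts along the forgetful 2-functors, and part~\cref{it:cotensor-facts-ii} is the standard promotion of a 1-adjunction to a 2-adjunction using cotensors with $\bii$ to encode 2-cells.

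For part~\cref{it:cotensor-facts-i}, I would first construct the $\T$-algebra structure on $\{C,X\}$ for an algebra $(X,x)$ by invoking the universal property of the cotensor. The identity 1-cell on $\{C,X\}$ corresponds via \cref{eq:cotensor} to a functor $e\cn C\to \K(\{C,X\},X)$. Applying $\T$ and then postcomposing with $x$ yields a functor $C\to \K(\T\{C,X\},X)$, which in turn corresponds to the desired structure 1-cell $\hat{x}\cn \T\{C,X\}\to \{C,X\}$. The $\T$-algebra axioms \cref{zeta-theta-def} can then be verified because equalities of 1-cells into $\{C,X\}$ are detected by postcomposing with $e$, after which the axioms reduce componentwise to the algebra axioms already satisfied by $x$.

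Next, I would verify the universal property in $\Talgs$ and $\Talg$. Given a $\T$-algebra $W$, a 1-cell $W\to\{C,X\}$ in $\K$ corresponds under \cref{eq:cotensor} to a functor $C\to\K(W,X)$, and the strict compatibility with $\hat{x}$ translates pointwise into the statement that this functor factors through $\Talgs(W,X)$. For pseudomorphisms the data additionally includes an invertible algebra constraint 2-cell, which under the cotensor bijection becomes a natural family of algebra constraints indexed by objects of $C$, satisfying the pseudomorphism axioms of \cref{defn:t-maps} pointwise; this gives exactly a functor $C\to\Talg(W,X)$. An analogous argument on 2-cells, combined with \cref{rmk:special-cotensors} to represent them as 1-cells into $\{\bii,X\}$, completes the hom-category isomorphism. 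Preservation by $\itt$ and $\usf$ is then immediate from the construction, since the underlying object of the cotensor in $\Talg$ and $\Talgs$ is $\{C,X\}$ itself.

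For part~\cref{it:cotensor-facts-ii}, the forward direction is trivial: the underlying functor of a 2-adjoint is a 1-adjoint. For the converse, suppose $L_0\dashv V_0$ is an ordinary adjunction with unit $\eta$. I would show the hom-set bijection $\zA_0(L_0B,A)\iso \zB_0(B,V_0A)$ extends to an isomorphism of hom-categories. By \cref{rmk:special-cotensors}, 2-cells in $\zA(L_0B,A)$ are in bijection with 1-cells $L_0B\to\{\bii,A\}$ in $\zA$. The 1-adjunction sends these bijectively to 1-cells $B\to V_0\{\bii,A\}$, and since $V$ preserves $\{\bii,-\}$, these are 1-cells $B\to\{\bii,V_0A\}$, which are exactly 2-cells in $\zB(B,V_0A)$. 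This upgrades the hom-set bijection to a hom-category isomorphism, 2-natural in both variables by tracing the correspondence through the universal property of the cotensor.

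The main obstacle lies in part~\cref{it:cotensor-facts-ii}: checking that the induced bijection on 2-cells respects vertical composition, identities, and is 2-natural in both $B$ and $A$, so that the resulting data actually assembles to a 2-adjunction rather than merely a pointwise isomorphism of categories. These compatibilities are routine but not automatic; each reduces, via the universal property of $\{\bii,-\}$, to the corresponding compatibility already present in the underlying 1-adjunction together with the naturality of \cref{eq:cotensor}.
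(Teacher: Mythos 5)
The paper offers no proof of this proposition at all: it is stated as a pair of recalled facts with explicit citations to \cite[Propositions~2.5 and~3.1]{BKP1989Two}, so your proposal is supplying an argument where the paper simply defers to the literature. Your sketch reproduces the standard proofs correctly in outline. For part~\textit{i}, constructing $\hat{x}$ as the mate of $x \circ \T(-)$ applied to the evaluation $e\cn C \to \K(\{C,X\},X)$, checking the axioms by postcomposition with $e$, and identifying strict/pseudo morphisms into $\{C,X\}$ with functors from $C$ into the relevant hom-category is exactly the BKP argument. For part~\textit{ii}, the cotensor trick for upgrading the hom-set bijection to a hom-category isomorphism is also the right idea. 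The one place your sketch is thinner than it should be is the compatibility you flag at the end: with only cotensors by $\bii$ available, you cannot directly encode vertical composition of 2-cells via a cotensor (that would want $\{0\to 1\to 2\}$). The clean repair is to observe that once the unit $\eta$ of the 1-adjunction exists, the 2-cell bijection you construct via $\{\bii,-\}$ necessarily coincides with $\alpha \mapsto V\alpha * \eta_B$ (compare \cref{eq:eta-whisk}), which is functorial in $\alpha$ automatically because $V$ is a 2-functor and whiskering preserves vertical composition; the cotensor argument is then only needed to show this assignment is \emph{bijective} on 2-cells. With that adjustment your argument is complete and agrees with the cited source.
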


Now we turn to a discussion of various coequalizers and their relation to $\T$-algebra structures.
\begin{defn}[Split coequalizers and $\usf$-split pairs]\label{defn:split-coeq}
Suppose $C$ is a category, and $\usf \cn C \to C'$ is a functor.
\begin{enumerate}
\item A \emph{split coequalizer} in $C$ is a diagram of the form below,
  \begin{equation}\label{eq:split-coeq}
    \begin{tikzpicture}[x=15ex,y=8ex,vcenter]
      \def\halfsep{.7mm}
      \draw[0cell] 
      (0,0) node (a) {X}
      (1,0) node (b) {Y}
      (2,0) node (c) {Z}
      ;
      \draw[1cell] 
      (a) edge[transform canvas={yshift=\halfsep}] node (f) {f} (b)
      (a) edge[swap,transform canvas={yshift=-\halfsep}] node (g) {g} (b)
      (b) edge[swap] node (h) {h} (c)
      (c) edge[swap, bend right=30] node (s) {s} (b)
      (b) edge[swap, bend right=45] node (t) {t} (a)
      ;
    \end{tikzpicture}
  \end{equation}
  such that the following equations hold.
  \begin{equation}\label{eq:split-coeq-axioms}
    \begin{split}
      \begin{aligned}
        hf & = hg \\
        hs & = 1_Z
      \end{aligned}
      \qquad\qquad
      \begin{aligned}
        sh & = gt \\
        ft & = 1_Y
      \end{aligned}
    \end{split}
  \end{equation}
  In this case, $h$ is said to be \emph{a split coequalizer} of $f$ and $g$.
\item Suppose $f, g \cn X \to Y$ are parallel arrows in $C$.
This pair is called a \emph{$\usf$-split pair} if there exists an object $Z'$ together with morphisms $h'$, $s'$, and $t'$ in $C'$ such that
\begin{equation}\label{eq:u-split-coeq}
    \begin{tikzpicture}[x=15ex,y=8ex,vcenter]
      \def\halfsep{.7mm}
      \draw[0cell] 
      (0,0) node (a) {\usf X}
      (1,0) node (b) {\usf Y}
      (.96,0.1) node (bt) {}
      (.96,-0.1) node (bb) {}
      (2,0) node (c) {Z'}
      ;
      \draw[1cell] 
      (a) edge[transform canvas={yshift=\halfsep}] node (f) {\usf f} (b)
      (a) edge[swap,transform canvas={yshift=-\halfsep}] node (g) {\usf g} (b)
      (b) edge[swap] node (h) {h'} (c)
      (c) edge[swap, bend right=30] node (s) {s'} (b)
      (b) edge[swap, bend right=45] node (t) {t'} (a)
      ;
    \end{tikzpicture}
  \end{equation}
  is a split coequalizer in $C'$.
\end{enumerate}
\end{defn}

\begin{rmk}[Split coequalizers are coequalizers]\label{rmk:splits}
Suppose given a split coequalizer as in \cref{eq:split-coeq} and a morphism $p \cn Y \to W$ such that $pf = pg$.
Then the unique morphism $\wt{p} \cn Z \to W$ such that $p = \wt{p} h$ is given by the formula
\[
\wt{p} = ps.
\]
Therefore, $h$ is the coequalizer of $f$ and $g$.
\end{rmk}

\begin{rmk}[Split coequalizers are absolute]\label{rmk:absolute}
Suppose given a split coequalizer in $C$ as in \cref{eq:split-coeq}, and a functor $F \cn C \to D$.
Then applying $F$ to the entire diagram gives a split coequalizer in $D$.
\end{rmk}

\begin{example}[The canonical $\usf$-split pair for a $\T$-algebra]\label{example:usplit}
Suppose $\T$ is a monad on a category $C$, and $x \cn \T X \to X$ is a $\T$-algebra structure on an object $X$.
Then the pair $\mu, \T x \cn \T^2X \to \T X$ has $x \cn \T X \to X$ as its coequalizer in $\Talgs$, and is $\usf$-split for $\usf$ the forgetful functor from $\Talgs$ back to $C$.
An explicit splitting in $C$, with the forgetful functor $\usf$ suppressed, is given below.
\begin{equation}\label{eq:u-split-alg}
    \begin{tikzpicture}[x=15ex,y=8ex,vcenter]
      \def\halfsep{.7mm}
      \draw[0cell] 
      (0,0) node (a) {\T^2X}
      (1,0) node (b) {\T X}
      (2,0) node (c) {X}
      ;
      \draw[1cell] 
      (a) edge[transform canvas={yshift=\halfsep}] node (f) {\mu} (b)
      (a) edge[swap,transform canvas={yshift=-\halfsep}] node (g) {\T x} (b)
      (b) edge[swap] node (h) {x} (c)
      (c) edge[swap, bend right=30] node (s) {\eta_X} (b)
      (b) edge[swap, bend right=45] node (t) {\eta_{\T X}} (a)
      ;
    \end{tikzpicture}
  \end{equation}
  This observation is a key component of Beck's Monadicity Theorem \cite{Bec1967Triples} and related variants.
  See \cite[Section~VI.7]{ML98Categories} and \cite[Section~5.5]{Rie2017CTC}.
\end{example}

We require an analogue of the previous example in the 2-category $\Talg$ for a 2-monad $\T$ on a 2-category $\K$.
\begin{lem}\label{lem:2d-splitcoeq}
Suppose $\K$ is a 2-category, and that
  \begin{equation}\label{eq:split-coeq-lem}
    \begin{tikzpicture}[x=15ex,y=8ex,vcenter]
      \def\halfsep{.7mm}
      \draw[0cell] 
      (0,0) node (a) {X}
      (1,0) node (b) {Y}
      (2,0) node (c) {Z}
      ;
      \draw[1cell] 
      (a) edge[transform canvas={yshift=\halfsep}] node (f) {f} (b)
      (a) edge[swap,transform canvas={yshift=-\halfsep}] node (g) {g} (b)
      (b) edge[swap] node (h) {h} (c)
      (c) edge[swap, bend right=30] node (s) {s} (b)
      (b) edge[swap, bend right=45] node (t) {t} (a)
      ;
    \end{tikzpicture}
  \end{equation}
  is a split coequalizer in $\K_0$, the underlying category of $\K$.
  Then $Z$ is also the $\cat$-enriched colimit of the same diagram, meaning it also satisfies the following 2-dimensional universal property.
  \begin{description}
  \item[2-dimensional universality of split coequalizers:] 
    Suppose given 1-cells
    \[
      p, q \cn Y \to W
    \]
    such that $pf = pg$ and $qf = qg$.
    Let $\wt{p}, \wt{q} \cn Z \to W$ be the unique 1-cells induced by the universal property of $h$ as the coequalizer of $f, g$ in $\K_0$.
    Then the functions given by whiskering with $h$ and $s$
    \[
      (-* h) \cn \K(Z, W)(\wt{p}, \wt{q}) \lradj \K(Y, W)(p, q) \bacn (- * s)
    \]
    induce inverse bijections between the set of 2-cells $\wt{\al}\cn \wt{p} \to \wt{q}$ and the subset
    \[
      \bigl\{ \alpha \cn p \to q \, \big|  \, \alpha * f = \alpha * g \bigr\}
      \subseteq
      \K(Y, W)(p, q).
    \]
  \end{description}
\end{lem}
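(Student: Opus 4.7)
The plan is to verify directly that the whiskering operations $(-*h)$ and $(-*s)$ are mutually inverse bijections between the two indicated sets of 2-cells; the split coequalizer identities \cref{eq:split-coeq-axioms} are arranged precisely so that the calculations go through, so the work is largely a sequence of bookkeeping steps.

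The first thing I would establish is that the operations land in the claimed sets. Using $hs = 1_Z$ together with the defining property $\wt{p}\,h = p$, one sees that $\wt{p} = \wt{p} \circ 1_Z = \wt{p} \circ h \circ s = p \circ s$, and similarly $\wt{q} = q\circ s$. Hence whiskering with $s$ sends a 2-cell $\alpha \cn p \to q$ to a 2-cell $\alpha * s \cn \wt{p} \to \wt{q}$. In the other direction, for any 2-cell $\wt{\al} \cn \wt{p} \to \wt{q}$, the whiskered 2-cell $\wt{\al} * h \cn p \to q$ automatically satisfies the equalizing condition, because $(\wt{\al}*h)*f = \wt{\al}*(hf) = \wt{\al}*(hg) = (\wt{\al}*h)*g$ by $hf = hg$.

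Next I would check mutual inverseness. In one direction, for $\wt{\al}\cn \wt{p} \to \wt{q}$,
\[
  (\wt{\al} * h) * s = \wt{\al} * (hs) = \wt{\al} * 1_Z = \wt{\al}.
\]
In the other direction, for $\al \cn p \to q$ with $\al * f = \al * g$,
\[
  (\al * s) * h = \al * (sh) = \al * (gt) = (\al * g) * t = (\al * f) * t = \al * (ft) = \al * 1_Y = \al,
\]
using in turn associativity of whiskering, the identity $sh = gt$, associativity, the equalizing hypothesis on $\al$, associativity, and $ft = 1_Y$. This second computation is the only place where the equalizing hypothesis is used, and it simultaneously consumes each of the remaining split coequalizer axioms.

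I do not expect any genuine obstacle: the split coequalizer equations are exactly tailored to make both calculations collapse to identities. The only conceptual point worth flagging is that the 1-dimensional universal property and the identification $\wt{p} = ps$, $\wt{q} = qs$ together ensure the two whiskerings are well-defined between the precise sets appearing in the statement, so the argument produces an honest $\cat$-enriched colimit rather than merely a 1-categorical one.
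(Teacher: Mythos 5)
Your proposal is correct and follows essentially the same route as the paper: both arguments rest on the identification $\wt{p} = ps$, $\wt{q} = qs$, the computation $\al * s * h = \al * gt = \al * ft = \al$ using the equalizing hypothesis, and the computation $\be * hs = \be$ for the other composite (which the paper phrases as uniqueness of the preimage under $- * h$ rather than as the second inverse identity, but the content is identical). No gaps.
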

\begin{proof}
Suppose $\alpha \cn p \to q$ such that $ \alpha * f = \alpha * g$.
Recall (\cref{rmk:splits}) that $\wt{p} = ps$ and $\wt{q} = qs$, and define $\wt{\alpha} \cn \wt{p} \to \wt{q}$ to be $\alpha * s$.
Then
 \begin{equation}\label{eq:split-coeq-2d}
\wt{\alpha} * h = \alpha * sh = \alpha * gt = \alpha * ft = \alpha * 1_Y = \alpha
  \end{equation}
  by the definition of $\wt{\alpha}$, the assumption $ \alpha * f = \alpha * g$, and the equations in \cref{eq:split-coeq-axioms}.
  It remains to prove that $\alpha * s$ is the only 2-cell $\beta \cn \wt{p} \to \wt{q}$ such that $\beta * h = \alpha$.
  Indeed, if $\beta * h = \alpha$, then 
  \[
  \beta = \beta * 1_Z = \beta * hs = \alpha * s = \wt{\alpha}.
  \]
\end{proof}

\begin{rmk}\label{rmk:split-coeq-2d}
  Note, in the context of \cref{lem:2d-splitcoeq} above, that the 2-cell
  \[
    \al = \wt{\al} * h
  \]
  is invertible if and only if $\wt{\al}$ is invertible.
  This follows because the inverse bijection to $(- * h)$ is $(- * s)$ and whiskering preserves invertibility of 2-cells.
\end{rmk}

We adopt the following temporary notation to distinguish between the two different versions of $\usf$ for a 2-monad $\T$.

\begin{notn}\label{notn:u-s}
Suppose $\T$ is a 2-monad on a 2-category $\K$.
We write $\usf_s \cn \Talgs \to \K$ for the forgetful functor when considering only the strict $\T$-maps, and $\usf \cn \Talg \to \K$ when considering all $\T$-maps.
In this notation, the commutative diagram \cref{eq:iuu} is an equality $\usf \circ \itt = \usf_s$ as 2-functors $\Talgs \to \K$.
\end{notn}

\begin{prop}\label{prop:hstrict}
  Suppose $f,g\cn (X,\Xmul) \to (Y,\Ymul)$ is a $\usf_s$-split pair of strict $\T$-maps.
  Let $h \cn Y \to Z$ be the split coequalizer of $\usf_s f, \usf_s g$ in $\K$.
  Then $h$ is the underlying 1-cell of a strict $\T$-map, also denoted $h$, and is the coequalizer in $\Talgs$ of the pair $f,g$.
\end{prop}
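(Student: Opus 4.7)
The approach is to mirror the classical $1$-categorical argument from Beck's monadicity theorem for $\usf_s$-split coequalizers, then promote to the $2$-dimensional universal property by invoking \cref{lem:2d-splitcoeq}. The key preliminary observation is that applying the $2$-functor $\T$ to the split coequalizer \cref{eq:split-coeq-lem} in $\K_0$ yields another split coequalizer (\cref{rmk:absolute})
\[
  \T X \rightrightarrows \T Y \to \T Z,
\]
with splittings $\T s$ and $\T t$, so $\T h$ is in particular a split epimorphism in $\K_0$ and $\T^2 h$ similarly.

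First I would construct the $\T$-algebra structure on $Z$. Using that $f$ and $g$ are strict $\T$-maps,
\[
  h \circ \Ymul \circ \T f = h \circ f \circ \Xmul = h \circ g \circ \Xmul = h \circ \Ymul \circ \T g,
\]
so the $1$-dimensional universal property (\cref{rmk:splits}) produces a unique $1$-cell $\Zmul \cn \T Z \to Z$ with $\Zmul \circ \T h = h \circ \Ymul$. This equation simultaneously witnesses $h$ as a strict $\T$-map, once the algebra axioms for $\Zmul$ are verified. Those axioms follow by chasing the corresponding axioms for $Y$ along $\T h$ (respectively $\T^2 h$), using naturality of $\eta$ and $\mu$, and then cancelling split epimorphisms. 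The $1$-dimensional universal property in $\Talgs$ is then quick: given a strict $\T$-map $p \cn Y \to W$ with $p f = p g$, the unique factorization $\wt{p} \cn Z \to W$ in $\K_0$ is automatically strict, since
\[
  \wt{p} \circ \Zmul \circ \T h = p \circ \Ymul = \Wmul \circ \T p = \Wmul \circ \T\wt{p} \circ \T h,
\]
and $\T h$ cancels on the right.

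The main obstacle is the $2$-dimensional universal property, which is precisely where \cref{lem:2d-splitcoeq} is essential. Suppose $\al \cn p \to q$ is an algebra $2$-cell between strict $\T$-maps satisfying $\al * f = \al * g$. Applied to the split coequalizer in $\K_0$, \cref{lem:2d-splitcoeq} yields a unique $2$-cell $\wt{\al} \cn \wt{p} \to \wt{q}$ in $\K$ with $\wt{\al} * h = \al$. To confirm that $\wt{\al}$ lifts to a $2$-cell in $\Talgs$, I must verify the compatibility $\wt{\al} * \Zmul = \Wmul * \T\wt{\al}$. Applied now to the split coequalizer $\T X \rightrightarrows \T Y \to \T Z$, \cref{lem:2d-splitcoeq} shows that whiskering with $\T h$ is injective on $2$-cells $\wt{p} \circ \Zmul \to \wt{q} \circ \Zmul$. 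Both candidates agree after whiskering with $\T h$:
\begin{align*}
(\wt{\al} * \Zmul) * \T h &= \wt{\al} * h * \Ymul = \al * \Ymul,\\
(\Wmul * \T\wt{\al}) * \T h &= \Wmul * \T(\wt{\al} * h) = \Wmul * \T\al,
\end{align*}
and these are equal because $\al$ is an algebra $2$-cell. Cancelling $\T h$ yields the required identity and completes the verification that $h$ is the coequalizer of $f$ and $g$ in $\Talgs$.
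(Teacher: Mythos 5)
Your proof is correct, but it is considerably more detailed than the paper's, which disposes of the proposition in one line by citing the standard $1$-categorical result that the forgetful functor from algebras over a ($1$-)monad creates coequalizers of $\usf$-split pairs, together with \cref{rmk:strictTmaps-underlying} identifying strict $\T$-maps with morphisms of algebras over the underlying $1$-monad $\T_0$. Your explicit construction of $\Zmul$, the verification of the algebra axioms by cancelling the split epimorphisms $\T h$ and $\T^2 h$, and the check that the induced $\wt{p}$ is strict are exactly the content of that cited result, so on the $1$-dimensional level the two arguments coincide. Where you genuinely go beyond the paper is the final step: you also verify the $2$-dimensional universal property in $\Talgs$, showing via \cref{lem:2d-splitcoeq} applied to $\T h$ that the induced $2$-cell $\wt{\al}$ is an algebra $2$-cell. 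The paper's citation only yields the coequalizer in the underlying $1$-category $\Talgsz$, which is all that is used downstream (in \cref{lem:wtk-Tmap,prop:itt-and-u-split-coeq} only the $1$-cell-level statement and the strictness of $h$ are invoked); your extra paragraph is therefore not needed for the paper's purposes, but it is a correct strengthening, and your whiskering computation $(\wt{\al} * \Zmul) * \T h = \al * \Ymul = \Wmul * \T\al = (\Wmul * \T\wt{\al}) * \T h$ followed by cancellation of $\T h$ is exactly the right way to obtain it.
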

\begin{proof}
  This follows from the analogous standard result for 1-monads, e.g., \cite[Proposition~5.4.9]{Rie2017CTC}, and \cref{rmk:strictTmaps-underlying}.
\end{proof}

\begin{lem}\label{lem:wtk-Tmap}
  Suppose given $f$, $g$, and $h$ as in \cref{prop:hstrict} and
  suppose given a 1-cell $\wt{k}$ and a 2-cell $\wt{k}_\bullet$ in $\K$
  \[
    \wt{k} \cn Z \to W 
    \andspace
    \wt{k}_{\bullet} \cn \Wmul \circ \T \wt{k} \to \wt{k} \circ \Zmul
  \]
  for some $\T$-algebra $(W,\Wmul)$.
  Then $(\wt{k},\wt{k}_\bullet)$ is a $\T$-map $(Z,\Zmul) \zzto (W,\Wmul)$ if and only if the composite
  \begin{equation}\label{eq:wtk-h}
    (k,k_\bullet) = (\wt{k},\wt{k}_\bullet) \circ h = (\wt{k} \circ h, \wt{k}_\bullet * \T h)
  \end{equation}
  is a $\T$-map $(Y,\Ymul) \zzto (W,\Wmul)$.
\end{lem}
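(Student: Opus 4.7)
My plan is to handle the two directions separately. The forward direction follows directly from the composition formula \cref{eq:Tmap-comp-formula}: since $h$ is a strict $\T$-map by \cref{prop:hstrict}, its algebra constraint is the identity, so the composite of $(\wt{k}, \wt{k}_\bullet)$ and $h$ has algebra constraint $\wt{k}_\bullet * \T h$, matching the definition of $(k, k_\bullet)$ in \cref{eq:wtk-h}. Hence if $(\wt{k}, \wt{k}_\bullet)$ is a $\T$-map, so is $(k, k_\bullet)$.

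For the nontrivial reverse direction, the key observation is that, since $f$ and $g$ form a $\usf_s$-split pair, $h$ underlies a split coequalizer in $\K_0$. I would invoke absoluteness of split coequalizers (\cref{rmk:absolute}) to conclude that, after applying the 2-functor $\T$ once or twice, both $\T h$ and $\T^2 h$ are again split coequalizers in $\K_0$. Then \cref{lem:2d-splitcoeq} provides the 2-dimensional universal property for these split coequalizers in $\K$; in particular, whiskering from the right by $\T h$ or $\T^2 h$ is injective on the appropriate hom-categories of 2-cells.

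Assuming $(k, k_\bullet)$ is a $\T$-map, I would then verify each of the three required properties of $(\wt{k}, \wt{k}_\bullet)$. For invertibility, \cref{rmk:split-coeq-2d} applied to the split coequalizer $\T h$ shows that $\wt{k}_\bullet$ is invertible if and only if $\wt{k}_\bullet * \T h = k_\bullet$ is invertible, which holds by hypothesis. For the unit axiom \cref{lax-morphism-unit-axiom} of $(\wt{k}, \wt{k}_\bullet)$, I would whisker its pasting equation on the right by $h$ and show, using 2-naturality of $\eta$ (so $\eta_Z \circ h = \T h \circ \eta_Y$) and strictness of $h$ (so $\Zmul \circ \T h = h \circ \Ymul$), that the result is exactly the unit axiom for $(k, k_\bullet)$. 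Since that holds by assumption and whiskering by $h$ is injective, the original axiom for $\wt{k}$ follows. The multiplication axiom \cref{lax-morphism-structure-axiom} would be handled analogously, whiskering instead by $\T^2 h$ and invoking 2-naturality of $\mu$.

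The main obstacle will be the careful pasting calculation that identifies the whiskered axiom equations for $\wt{k}$ with those for $k$. This is routine but requires tracking several simultaneous applications of 2-functoriality of $\T$, naturality of $\eta$ and $\mu$, strictness of $h$, and the interchange law for horizontal and vertical composition in $\K$.
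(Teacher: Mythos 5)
Your proposal is correct and follows essentially the same route as the paper's proof: the forward direction via the composition formula and strictness of $h$, and the reverse direction by whiskering the unit axiom with $h$ and the multiplication axiom with $\T^2 h$, using absoluteness of split coequalizers, the two-dimensional universal property of \cref{lem:2d-splitcoeq}, and \cref{rmk:split-coeq-2d} for invertibility. The pasting computation you defer is exactly the content of the diagram chase in the paper, carried out with the same ingredients (naturality of $\eta$ and $\mu$, strictness of $h$, and 2-functoriality of $\T$).
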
 
\begin{proof}
  If $(\wt{k},\wt{k}_\bullet)$ is a $\T$-map, then the composite $(\wt{k},\wt{k}_\bullet) \circ h$ is a $\T$-map.
  In this case, the composition formula \cref{eq:Tmap-comp-formula} simplifies to the right hand side of \cref{eq:wtk-h} because $h$ is a strict $\T$-map.

  For the reverse implication, let $(k,k_\bullet)$ be defined via the formula on the right hand side of \cref{eq:wtk-h}.
  Since $h$ is a split coequalizer, recall from \cref{rmk:absolute} that $\T h$ is too.
  Therefore, applying \cref{rmk:split-coeq-2d} to $\T h$, invertibility of $k_\bullet$ implies that of $\wt{k}_\bullet$.
  Now it remains to show that the $\T$-map axioms \cref{lax-morphism-unit-axiom,lax-morphism-structure-axiom} for $(k,k_\bullet)$ imply those for $(\wt{k},\wt{k}_\bullet)$.
  This verification uses the hypothesis that $h$ is a split coequalizer in $\K$ and, separately, the implication that $\T^2h$ is also a split coequalizer in $\K$ by \cref{rmk:absolute}.
  The applications of both of these facts use the 2-dimensional universality from  \cref{lem:2d-splitcoeq}.

  For the unit axiom \cref{lax-morphism-unit-axiom}, we must verify that $\wt{k}_\bullet * \eta_Z = 1_{\wt{k}}$.
  Note that the source and target of $\wt{k}_\bullet * \eta_Z$ are both equal to $\wt{k}$ by naturality of $\eta$ and the unit axioms for $(Z, z)$ and $(W, w)$, respectively:
  \[
    \begin{aligned}
      z \circ \eta_Z & = 1_Z, \\
      w \circ \eta_W & = 1_W.
    \end{aligned}
  \]
  The two-dimensional part of the universal property of the split coequalizer $h \cn Y \to Z$ (\cref{lem:2d-splitcoeq}) implies that the 2-cell $\wt{k}_\bullet * \eta_Z$ is an identity if and only if it is the identity $1_k$ after applying $- * h$.
  The following computation uses naturality of $\eta$, the defining equality $k_\bullet = \wt{k}_\bullet * \T h$ \cref{eq:wtk-h}, and the unit axiom for $(k, k_\bullet)$, respectively:
  \begin{align*}
    \wt{k}_\bullet * \eta_Z * h & = \wt{k}_\bullet * \T h * \eta_Y \\
                                & = k_\bullet * \eta_Y \\ 
                                & = 1_k.
  \end{align*}
  This verifies the unit axiom \cref{lax-morphism-unit-axiom} for $(\wt{k},\wt{k}_\bullet)$.

  For the multiplication axiom \cref{lax-morphism-structure-axiom}, we must check the equality of pastings below.
  \begin{equation}\label{wtk-morphism-structure-axiom}
    \begin{tikzpicture}[x=.4ex,y=.4ex,xscale=1.2]
      \def\boundary{
        \draw[0cell] 
        (0,0) node (LL) {\T^2Z}
        (20,20) node (TL) {\T Z}
        (20,-20) node (BL) {\T^2W}
        (50,20) node (TR)  {Z}
        (50,-20) node (BR) {\T W}
        (70,0) node (RR)  {W}
        ;
        \draw[1cell] 
        (LL) edge node {\mu_{Z}} (TL)
        (TL) edge node {\Zmul} (TR)
        (TR) edge node {\wt{k}} (RR)
        (LL) edge[swap] node {\T^2 \wt{k}} (BL)
        (BL) edge[swap] node {\T \Wmul} (BR)
        (BR) edge[swap] node {\Wmul} (RR)
        ;
      }
      \begin{scope}
        \boundary
        \draw[0cell]
        (30,0) node (CC)  {\T Z}
        ;
        \draw[1cell] 
        (LL) edge node {\T \Zmul} (CC)
        (CC) edge node {\Zmul} (TR)
        (CC) edge node {\T \wt{k}} (BR)
        ;
        \draw[2cell] 
        node[between=BL and CC at .5, rotate=60, 2label={below,\T \wt{k}_\bullet}] {\Rightarrow}
        node[between=CC and RR at .55, rotate=90, 2label={below,\wt{k}_\bullet}] {\Rightarrow}
        ;
        \draw (85,2.5) node {and};
      \end{scope}
      \begin{scope}[shift={(103,0)}]
        \boundary
        \draw[0cell]
        (40,0) node (CC)  {\T W}
        ;
        \draw[1cell] 
        (TL) edge[swap] node {\T \wt{k}} (CC)
        (BL) edge node {\mu_{W}} (CC)
        (CC) edge node {\Wmul} (RR)
        ;
        \draw[2cell] 
        node[between=CC and TR at .5, rotate=60, 2label={above,\wt{k}_\bullet}] {\Rightarrow}
        ;
      \end{scope}
    \end{tikzpicture}
  \end{equation}
  Once again using that $h$ is a split coequalizer, and therefore $\T^2 h$ is also (\cref{rmk:absolute}), the desired equality holds if and only if it holds after applying $-* \T^2h$.

  {
    \def\tmpxscale{.49ex}
    \def\tmpyscale{.47ex}
    \def\tmppicscale{.75}
    \def\nodescale{.7}
    \def\onescale{.7}
    \def\twoscale{.8}
    \def\hshift{135}
    \def\vshift{0}
    \def\requals{\draw (93,8) node[font=\Large,rotate=0] {=};}
    \def\lequals{\draw (-40,-10) node[font=\Large,rotate=15] {=};}
    \def\boundary{
      \draw[0cell=\nodescale] 
      (-30,0) node (TTY) {\T^2Y}
      (-10,25) node (TYa) {\T Y}
      (20,35) node (Y) {Y}
      (60,25) node (Z)  {Z}
      (20,-20) node (TTW) {\T^2W}
      (60,-20) node (TWa) {\T W}
      (80,0) node (W)  {W}
      ;
      \draw[1cell=\onescale] 
      (TTY) edge node {\mu_Y} (TYa)
      (TYa) edge node {\Ymul} (Y)
      (Y) edge node {h} (Z)
      (TTY) edge['] node {\T^2k} (TTW)
      (Z) edge node {\wt{k}} (W)
      (TTW) edge[swap] node {\T \Wmul} (TWa)
      (TWa) edge[swap] node {\Wmul} (W)
      ;
    }
    \def\drawTYb{(15,15) node (TYb) {\T Y}}
    \def\drawTZb{(50,0) node (TZb) {\T Z}}
    \def\drawTTZ{(8,0) node (TTZ) {\T^2Z}}
    \def\drawTWb{(35,0) node (TWb) {\T W}}
    \def\drawTZc{(5,8) node (TZc) {\T Z}}
    \def\drawTTZc{(-4,-3) node (TTZc) {\T^2Z}}

    Whiskering the left pasting diagram in \cref{wtk-morphism-structure-axiom} with $\T^2h$ gives the left diagram below, where the additional regions commute because $h$ is a strict $\T$-map by \cref{prop:hstrict}, $k = \wt{k}h$ by definition \cref{eq:wtk-h}, $\mu$ is 2-natural, and $\T$ is 2-functorial.
    The equality of pastings is immediate as the only difference between the diagrams is how commutative regions are displayed.
    \[
      \begin{tikzpicture}[x=\tmpxscale,y=\tmpyscale,scale=\tmppicscale]
        \begin{scope}
          \boundary
          \draw[0cell=\nodescale]
          (20,20) node (TZa) {\T Z}
          \drawTTZ
          \drawTZb
          ;
          \draw[1cell=\onescale] 
          (TTZ) edge node {\T \Zmul} (TZb)
          (TZb) edge node {\Zmul} (Z)
          (TZb) edge node {\T \wt{k}} (TWa)
          (TTY) edge node[scale=.9,pos=.65] {\T^2h} (TTZ)
          (TTZ) edge[] node {\T^2 \wt{k}} (TTW)
          (TZa) edge node {\Zmul} (Z)
          (TYa) edge node {\T h} (TZa)
          (TTZ) edge node {\mu_{Z}} (TZa)
          ;
          \draw[2cell=\twoscale] 
          node[between=TTW and TZb at .5, rotate=60, 2label={below,\T \wt{k}_\bullet}] {\Rightarrow}
          node[between=TZb and W at .45, shift={(0,3)}, rotate=90, 2label={below,\wt{k}_\bullet}] {\Rightarrow}
          ;
          \requals
        \end{scope}
        \begin{scope}[shift={(\hshift,-.5*\vshift)}]
          \boundary
          \draw[0cell=\nodescale]
          \drawTTZ
          \drawTYb
          \drawTZb
          ;
          \draw[1cell=\onescale] 
          (TTY) edge node[scale=.9,pos=.65] {\T^2h} (TTZ)
          (TTZ) edge[] node {\T^2 \wt{k}} (TTW)
          (TTZ) edge node {\T \Zmul} (TZb)
          (TZb) edge node {\Zmul} (Z)
          (TZb) edge node {\T \wt{k}} (TWa)
          (TTY) edge node {\T \Ymul} (TYb)
          (TYb) edge node {\Ymul} (Y)
          (TYb) edge[] node {\T h} (TZb)
          ;
          \draw[2cell=\twoscale] 
          node[between=TTW and TZb at .5, rotate=60, 2label={below,\T \wt{k}_\bullet}] {\Rightarrow}
          node[between=TZb and W at .45, shift={(0,3)}, rotate=90, 2label={below,\wt{k}_\bullet}] {\Rightarrow}
          ;
        \end{scope}
      \end{tikzpicture} 
    \]
    The pasting in the diagram at right above is equal to that of the diagram at left below by applying $\T$ to the defining equality $k_\bullet = \wt{k}_\bullet * \T h$ \cref{eq:wtk-h}.
    Another application of the same equality shows that the two pastings below are equal.
    \[
      \begin{tikzpicture}[x=\tmpxscale,y=\tmpyscale,scale=\tmppicscale]
        \begin{scope}
          \boundary
          \draw[0cell=\nodescale]
          \drawTYb
          \drawTZb
          ;
          \draw[1cell=\onescale] 
          (TZb) edge node {\Zmul} (Z)
          (TZb) edge node {\T \wt{k}} (TWa)
          (TTY) edge node {\T \Ymul} (TYb)
          (TYb) edge node {\Ymul} (Y)
          (TYb) edge[] node {\T h} (TZb)
          (TYb) edge['] node {\T k} (TWa)
          ;
          \draw[2cell=\twoscale] 
          node[between=TTW and TYb at .5, rotate=30, 2label={above,\T k_\bullet}] {\Rightarrow}
          node[between=TZb and W at .45, shift={(0,3)}, rotate=90, 2label={below,\wt{k}_\bullet}] {\Rightarrow}
          ;
          \requals
        \end{scope}
        \begin{scope}[shift={(\hshift,-.5*\vshift)}]
          \boundary
          \draw[0cell=\nodescale]
          \drawTYb
          ;
          \draw[1cell=\onescale] 
          (TTY) edge node {\T \Ymul} (TYb)
          (TYb) edge node {\Ymul} (Y)
          (TYb) edge['] node {\T k} (TWa)
          (Y) edge[] node {k} (W)
          ;
          \draw[2cell=\twoscale] 
          node[between=TTW and TYb at .5, rotate=30, 2label={above,\T k_\bullet}] {\Rightarrow}
          node[between=TYb and W at .55, shift={(0,-5)}, rotate=90, 2label={below,k_\bullet}] {\Rightarrow}
          ;
        \end{scope}
      \end{tikzpicture}
    \]
    Lastly, the pasting in the diagram at right above is equal to that of the diagram at left below by the multiplication axiom \cref{lax-morphism-structure-axiom} for $(k,k_\bullet)$.
    Equality of the two pastings below holds by another application of \cref{eq:wtk-h}.
    \[
      \begin{tikzpicture}[x=\tmpxscale,y=\tmpyscale,scale=\tmppicscale]
        \begin{scope}
          \boundary
          \draw[0cell=\nodescale]
          \drawTWb
          \drawTZc
          \drawTTZc
          ;
          \draw[1cell=\onescale] 
          (Y) edge[] node {k} (W)
          (TYa) edge node {\T k} (TWb)
          (TTW) edge['] node {\mu_W} (TWb)
          (TWb) edge['] node {\Wmul} (W)
          (TTY) edge node[pos=.3] {\T^2h} (TTZc)
          (TTZc) edge node[pos=.4] {\T^2\wt{k}} (TTW)
          (TYa) edge['] node {\T h} (TZc)
          (TZc) edge['] node[pos=.6] {\T \wt{k}} (TWb)
          (TTZc) edge['] node[scale=.8] {\mu_Z} (TZc)
          ;
          \draw[2cell=\twoscale] 
          node[between=TWb and Y at .5, rotate=90, 2label={below,k_\bullet}] {\Rightarrow}
          ;
          \requals
        \end{scope}
        \begin{scope}[shift={(\hshift,-.5*\vshift)}]
          \boundary
          \draw[0cell=\nodescale]
          \drawTWb
          \drawTZc
          \drawTTZc
          ;
          \draw[1cell=\onescale] 
          (TTW) edge['] node {\mu_W} (TWb)
          (TWb) edge['] node {\Wmul} (W)
          (TTY) edge node[pos=.3] {\T^2h} (TTZc)
          (TTZc) edge node[pos=.4] {\T^2\wt{k}} (TTW)
          (TYa) edge['] node {\T h} (TZc)
          (TZc) edge['] node[pos=.6] {\T \wt{k}} (TWb)
          (TTZc) edge['] node[scale=.8] {\mu_Z} (TZc)
          (TZc) edge node {\Zmul} (Z)
          ;
          \draw[2cell=\twoscale] 
          node [between=TWb and Z at .45, rotate=60, 2label={below,\wt{k}_\bullet}] {\Rightarrow}  
          ;
        \end{scope}
      \end{tikzpicture}
    \]
    The final pasting at right above is the whiskering of the right hand diagram in \cref{wtk-morphism-structure-axiom} with $\T^2h$.
  }
  
  This shows that the two sides of \cref{wtk-morphism-structure-axiom} are equal after applying $- * \T^2h$, and hence completes the proof that the two pastings in \cref{wtk-morphism-structure-axiom} are equal.
  This completes the proof that $(\wt{k},\wt{k}_\bullet)$ satisfies the axioms of a $\T$-map.
\end{proof}

\begin{prop}\label{prop:itt-and-u-split-coeq}
Suppose $\T$ is a 2-monad on a 2-category $\K$.
The 2-functor $\itt \cn \Talgs \to \Talg$ sends coequalizers of $\usf_s$-split pairs to coequalizers of $\usf$-split pairs.
\end{prop}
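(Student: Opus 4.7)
The plan is to leverage the strict coequalizer produced by \cref{prop:hstrict} in $\Talgs$, and then verify that its image under $\itt$ satisfies both the 1- and 2-dimensional universal properties of a coequalizer in $\Talg$. Since $\usf \circ \itt = \usf_s$ (\cref{notn:u-s}), every $\usf_s$-split pair is automatically $\usf$-split after applying $\itt$, so the ``split'' part of the conclusion is free. What must be verified is that the strict coequalizer $h \cn Y \to Z$ produced by \cref{prop:hstrict}, now regarded as a general $\T$-map via $\itt$, is universal among $\T$-maps $(k, k_\bullet) \cn Y \zzto W$ satisfying $kf = kg$ and among 2-cells between such $\T$-maps.

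For the 1-dimensional universal property, I would begin with a $\T$-map $(k, k_\bullet)$ satisfying $kf = kg$ and use strictness of $f, g$ together with the composition formula \cref{eq:Tmap-comp-formula} to extract two consequences: the underlying equation $\usf k \circ \usf f = \usf k \circ \usf g$ in $\K$, and the 2-cell equation $k_\bullet * \T f = k_\bullet * \T g$. The first provides the underlying 1-cell $\wt{k} \cn Z \to W$ via the 1-dimensional universal property of the split coequalizer $h$ in $\K$. For the algebra constraint, I would invoke \cref{rmk:absolute} to note that $\T h$ is again a split coequalizer and then apply the 2-dimensional universal property \cref{lem:2d-splitcoeq} to produce a unique 2-cell
\[
  \wt{k}_\bullet \cn \Wmul \circ \T \wt{k} \to \wt{k} \circ \Zmul
  \sothatspace
  \wt{k}_\bullet * \T h = k_\bullet,
\]
which is invertible by \cref{rmk:split-coeq-2d}. \Cref{lem:wtk-Tmap} then promotes $(\wt{k}, \wt{k}_\bullet)$ to a $\T$-map whose composite with $h$ recovers $(k, k_\bullet)$.

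For the 2-dimensional universal property, I would apply the same pattern one categorical level down. Given $\T$-maps $\wt{k}_1, \wt{k}_2 \cn Z \zzto W$ and a $\T$-algebra 2-cell $\alpha \cn \wt{k}_1 h \to \wt{k}_2 h$ satisfying $\alpha * f = \alpha * g$, the 2-dimensional part of \cref{lem:2d-splitcoeq} yields a unique underlying 2-cell $\wt{\alpha} \cn \wt{k}_1 \to \wt{k}_2$ in $\K$ with $\wt{\alpha} * h = \alpha$. To promote $\wt{\alpha}$ to a $\T$-algebra 2-cell (\cref{defn:T2cell}), I would whisker the desired pasting equality by $\T h$, reducing it to the $\T$-algebra 2-cell axiom for $\alpha$; injectivity of $- * \T h$ on 2-cells, via the bijection in \cref{lem:2d-splitcoeq} applied to the split coequalizer $\T h$, then delivers the desired equality upstairs.

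The main obstacle is the construction and verification of the algebra constraint $\wt{k}_\bullet$, but this is precisely packaged by \cref{lem:wtk-Tmap}. The indispensable input making that lemma applicable is the equation $k_\bullet * \T f = k_\bullet * \T g$, which in turn relies crucially on $f$ and $g$ being strict so that the composition formula \cref{eq:Tmap-comp-formula} collapses to a direct identification of $(kf)_\bullet$ and $(kg)_\bullet$.
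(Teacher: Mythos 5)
Your proposal is correct and follows essentially the same route as the paper's proof: obtain the underlying 1-cell $\wt{k}$ from the split coequalizer in $\K$, use strictness of $f,g$ together with \cref{eq:Tmap-comp-formula} to get $k_\bullet * \T f = k_\bullet * \T g$, produce $\wt{k}_\bullet$ via the two-dimensional universality of $\T h$ (\cref{rmk:absolute,lem:2d-splitcoeq}), and invoke \cref{lem:wtk-Tmap} to assemble the $\T$-map. The only departure is that you also sketch the two-dimensional universal property of the coequalizer in $\Talg$, which the paper's proof does not explicitly verify; that addition is sound and uses the same whiskering-by-$\T h$ technique.
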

\begin{proof}
  Suppose $h\cn (Y,\Ymul) \to (Z,\Zmul)$ is the coequalizer in $\Talgs$ of a $\usf_s$-split pair $f,g\cn (X,\Xmul) \to (Y,\Ymul)$.
  Let $h' \cn Y \to Z'$ be the split coequalizer in $\K$ of $\usf_s f$ and $\usf_s g$.
  By \cref{prop:hstrict}, $h'$ is the underlying 1-cell of a strict $\T$-map, so by uniqueness of coequalizers we assume $Z' = Z$ and $h' = \usf_s h$.
 
  Thus, there are 1-cells $s$ and $t$ in $\K$ such that the following is a split coequalizer in $\K$.
  \begin{equation}\label{eq:usplit-coeq-Talgs}
    \begin{tikzpicture}[x=17ex,y=8ex,vcenter,xscale=1.2]
      \def\halfsep{.7mm}
      \draw[0cell] 
      (0,0) node (a) {\usf_s (X,\Xmul)}
      (1,0) node (b) {\usf_s (Y,\Ymul)}
      (2,0) node (c) {\usf_s (Z,\Zmul)}
      ;
      \draw[1cell] 
      (a) edge[transform canvas={yshift=\halfsep}] node (f) {\usf_s f} (b) 
      (a) edge[swap,transform canvas={yshift=-\halfsep}] node (g) {\usf_s g} (b) 
      (b) edge[swap] node (h) {h' = \usf_s h} (c)
      (c) edge[swap, bend right=30] node (s) {s} (b)
      (b) edge[swap, bend right=45] node (t) {t} (a)
      ;
    \end{tikzpicture}
  \end{equation}
  We will show that $\itt h$ is the coequalizer of $\itt f$ and $\itt g$ in $\Talg$.
  Since $\usf_s = \usf \circ \itt$, the same $s$ and $t$ will then make $\itt f, \itt g$ a $\usf$-split pair.

  To prove that $\itt h$ is the coequalizer of $\itt f$ and $\itt g$ in $\Talg$,
  suppose given a $\T$-map
  \[
    (k, k_\bullet) \cn (Y, y) \zzto (W, w)
  \]
  such that
  \begin{equation}\label{eq:k-hypothesis}
    (k, k_\bullet) \circ \itt f = (k, k_\bullet) \circ \itt g.
  \end{equation}
  We will show that there exists a unique $\T$-map 
  \begin{equation}\label{eq:wtk}
    (\wt{k}, \wt{k}_\bullet) \cn (Z, \Zmul) \zzto (W, \Wmul)
  \end{equation}
  such that 
  \begin{equation}\label{eq:wtk-conclusion}
    (k, k_\bullet) = (\wt{k}, \wt{k}_\bullet) \circ \itt h.
  \end{equation}

  Applying $\usf$ to \cref{eq:k-hypothesis}, we have $k f = k g$.
  Since $h$ is the coequalizer of $f, g$ in $\K$, we define $\wt{k}$ as the unique 1-cell in $\K$ induced by the universal property of the coequalizer.
  Thus, we have an equality in $\K$:
  \begin{equation}\label{eq:wtk-1cell-conclusion}
    k = \wt{k} \circ h.
  \end{equation}

  Next we note that, because \cref{eq:k-hypothesis} is an equality of $\T$-maps, the two sides have the same algebra constraints.
  Recalling the formula \cref{eq:Tmap-comp-formula} for algebra constraints of a composite, we have
  \begin{equation}\label{eq:kbullet3}
    k_\bullet *\T f = k_\bullet *\T g
  \end{equation}
  because both $f$ and $g$ are strict $\T$-maps.
  The algebra constraint $k_\bullet$ is shown in the rectangle below, where each of the triangles commutes by the equality \cref{eq:wtk-1cell-conclusion}.
  \begin{equation}\label{eq:kbullet2}
    \begin{tikzpicture}[x=15ex,y=10ex,vcenter]
      \draw[0cell] 
      (-1,0) node (TY) {\T Y}
      (0,.5) node (TZ) {\T Z}
      (1,0) node (TW) {\T W}
      (-1,-1) node (Y) {Y}
      (0,-1.5) node (Z) {Z}
      (1,-1) node (W) {W}
      ;
      \draw[1cell] 
      (TY) edge[] node (th) {\T h} (TZ) 
      (TZ) edge[] node (twtk) {\T \wt{k}} (TW) 
      (TW) edge[] node (w) {w} (W)
      (TY) edge[swap] node (y) {\Ymul} (Y) 
      (Y) edge['] node (h) {h} (Z) 
      (Z) edge['] node (wtk) {\wt{k}} (W) 
      (TY) edge node {\T k} (TW)
      (Y) edge['] node {k} (W)
      ;
      \draw[2cell] 
      node[between=TZ and Z at {.5}, rotate=225, 2label={below,k_\bullet}] {\Rightarrow}
      ;
    \end{tikzpicture}
  \end{equation}
  Since $h$ is a strict $\T$-map, we have $\Zmul \circ \T h = h \circ \Ymul$ and, therefore, $k_\bullet$ has target 
  \begin{equation}\label{eq:kbullet4}
    \wt{k} \circ h \circ \Ymul = \wt{k} \circ \Zmul \circ \T h.
  \end{equation}

  Since $h$ is a split coequalizer in $\K$, so is $\T h$ by \cref{rmk:absolute}.
  Therefore, by \cref{lem:2d-splitcoeq}, $\T h$ satisfies an additional two-dimensional aspect to its universal property: the whiskering function $- * \T h$ induces an isomorphism between the set of 2-cells $\K(\T Z, W)(\Wmul \circ \T \wt{k}, \wt{k} \circ \Zmul)$ and the subset
  \begin{align*}
    S &  = \bigl\{ \alpha \cn \Wmul \circ \T \wt{k} \circ \T h \to  \wt{k} \circ \Zmul \circ \T h \, \big|  \, \alpha * \T f = \alpha * \T g \bigr\}\\
    & \subseteq \K\bigl(\T Y, W\bigr)\bigl(\Wmul \circ \T \wt{k} \circ \T h, \wt{k} \circ \Zmul \circ \T h\bigr).
  \end{align*}

  Combining \cref{eq:kbullet3,eq:kbullet2,eq:kbullet4} shows that the algebra constraint $k_\bullet$ is a member of the subset $S$.
  Therefore, by the two-dimensional aspect of the universal property for $\T h$, there is a unique 2-cell in $\K$
  \[
    \wt{k}_\bullet\cn w \circ \T \wt{k} \to \wt{k} \circ \Zmul
  \] 
  such that 
  \begin{equation}\label{eq:wtkbullet1}
    k_\bullet = \wt{k}_\bullet * \T h.
  \end{equation}
  Since $(k,k_\bullet)$ is a $\T$-map, the equalities \cref{eq:wtk-1cell-conclusion,eq:wtkbullet1} imply, by \cref{lem:wtk-Tmap}, that $(\wt{k},\wt{k}_\bullet)$ is a $\T$-map.
  
  The calculation above verifies that there is a unique $\T$-map $(\wt{k}, \wt{k}_\bullet)$ such that 
  \[
    (k, k_\bullet) = (\wt{k}, \wt{k}_\bullet) \circ \itt h.
  \]
  This completes the proof that $\itt h$ is the coequalizer of $\itt f$ and $\itt g$, as desired.
\end{proof}

\section{Pseudomorphism classifiers}
\label{sec:bg-psmor-class}

For many 2-monads $\T$ of interest, the inclusion \cref{eq:incl-i}
\[
  \itt \cn \Talgs \hookrightarrow \Talg
\]
has a left 2-adjoint.
In such cases, the left 2-adjoint can be used to develop strictification and coherence results, as we will do in \cref{sec:finitary-udc}.

This section and the next recall the basic terminology and related properties.
Much of this content comes from \cite{BKP1989Two}, and we refer the reader there for further development.
Examples, in the special case of monads that encode strict monoidal structures, are explained in \cref{sec:psmorclass}.

\begin{defn}[Pseudomorphism Classifier]
  Suppose given a 2-monad $\T$ on a 2-category $\K$.
  A \emph{pseudomorphism classifier} for $\T$ is a left 2-adjoint $\Q \dashv \itt$ as shown below.
  \begin{equation}\label{eq:Qi-adj}
    \begin{tikzpicture}[x=15ex,y=8ex,vcenter]
      \draw[0cell] 
      (0,0) node (x) {\Talg}
      (1,0) node (y) {\Talgs}
      ;
      \draw[1cell] 
      (x) edge[bend left=12,transform canvas={yshift=.7mm}] node (L) {\Q} (y) 
      (y) edge[bend left=12,transform canvas={yshift=-.7mm}] node (R) {\itt} (x) 
      ;
      \draw[2cell] 
      node[between=L and R at .5] {\bot}
      ;
    \end{tikzpicture}
  \end{equation}
  The unit $\zeta\cn 1 \to \itt \Q$ has components that are $\T$-maps
  \[
    \zeta_X \cn X \zzto \itt \Q X \forspace X \in \Talg.
  \]
  The counit $\delta\cn \Q \itt \to 1$ has components that are \emph{strict} $\T$-maps 
  \[
    \delta_Y \cn \Q \itt Y \to Y \forspace Y \in \Talgs.
  \]
\end{defn}

The unit and counit of a pseudomorphism classifier $\Q$ satisfy triangle identities that lead to a 2-natural isomorphism of categories
\[
  \Talgs(\Q X, Y) \cong \Talg(X, \itt Y)
\]
for every pair of $\T$-algebras $X$ and $Y$.
This is the standard translation between the hom-set and unit/counit expressions for an adjunction.
In this context, we use the following notation.
\begin{defn}\label{defn:fbot}
  For each $\T$-map $f \cn X \zzto \itt Y$, let $f^\bot \cn \Q X \to Y$ be the strict $\T$-map that is the mate of $f$.
  Thus, $f$ factors uniquely as follows.
  \begin{equation}\label{eq:fbot}
    \begin{tikzpicture}[x=12ex,y=8ex,vcenter,xscale=1.2]
      \draw[0cell] 
      (0,0) node (iqx) {\itt \Q X}
      (iqx)++(0,-1) node (x) {X}
      (iqx)++(1,0) node (x') {Y}
      ;
      \draw[1cell] 
      (iqx) edge node {f^\bot} (x')
      ;
      \draw[zz1cell]
      (x) to['] node {f} (x')
      ;
      \draw[zz1cell]
      (x) to node {\ze_X} (iqx)
      ;
    \end{tikzpicture}
  \end{equation}
  \ 
\end{defn}

\begin{rmk}\label{rmk:Qi-triang}
  The triangle identities for $\Q \dashv \itt$ consist of the following equalities for each $Y \in \Talg$ and $X \in \Talgs$:
  \[
    \itt\de_Y \circ \ze_{\itt Y} = 1_{\itt Y} \andspace
    \de_{\Q X} \circ \Q \ze_X = 1_{\Q X}.
  \]
  Thus, omitting the inclusion $\itt$, as discussed in \cref{convention:ignorei}, we have $\de_Y \ze_Y = 1_Y$ for each $\T$-algebra $Y$.

  The composite $\ze_Y \de_Y$ is generally not equal to $1_Y$, but it often has other useful structure.
  This additional structure is described in \cref{defn:adj-surj-equiv,thm:BKP42} below.
\end{rmk}

We will use the following terminology in the 2-categories $\zA = \Talg$ and $\zA = \Talgs$.
\begin{defn}\label{defn:adj-surj-equiv}
  Suppose given a pair of 1-cells
  \[
    \ze \cn Y \to Z \andspace \de \cn Z \to Y
  \]
  in a 2-category $\zA$.
  \begin{description}
  \item[Surjective equivalence:] We say that $(\ze,\de)$ is a \emph{surjective equivalence in $\zA$} if $\de$ is a retraction, so that $\de \ze = 1_Y$, and there is 2-cell isomorphism
    \[
      \Theta \cn \ze \de \fto{\iso} 1_Z \inspace \zA.
    \]
    Thus, $(\ze,\de)$ is a surjective equivalence in $\zA$ if and only if there is a 2-cell isomorphism $\Theta$ such that $(\ze,\de,1_{1_Y},\Theta)$ is an internal equivalence in $\zA$.
    We say that $\de$ is a surjective equivalence if it has a section $\ze$ such that $(\ze,\de)$ is a surjective equivalence.
  \item[Adjoint surjective equivalence:] We say that $(\ze,\de,\Theta)$ is an \emph{adjoint surjective equivalence} if $(\ze,\de)$ is a surjective equivalence with $\Theta \cn \ze \de \iso 1_Z$ such that $\Theta * \ze = 1_\ze$.
    Thus, $(\ze,\de,\Theta)$ is an adjoint surjective equivalence if and only if $(\ze,\de,1_{1_Y},\Theta)$ is an internal adjoint equivalence in $\zA$.
  \end{description}
\end{defn}

\begin{defn}\label{defn:effectiveQi}
  Suppose $\T$ has a pseudomorphism classifier $(\Q,\itt,\ze,\de)$.
  We say that $(\Q,\itt)$ is \emph{effective} if, for each $\T$-algebra $Y$, there is a $\T$-algebra 2-cell isomorphism
  \[
    \Theta\cn \ze_Y \de_Y \fto{\iso} 1_{\Q Y}  
  \]
  such that $(\ze_Y,\de_Y,\Theta)$ is an adjoint surjective equivalence in $\Talg$.
  In this case, $\Theta$ is sometimes called the \emph{efficacy} of $(\Q,\itt)$.
\end{defn}

\begin{thm}[{\cite[Theorem~3.13]{BKP1989Two}}]\label{thm:BKP313}
  Suppose that $\K$ is a complete and cocomplete 2-category and suppose that $\T$ is a finitary monad on $\K$.
  Then $\T$ has a pseudomorphism classifier.
\end{thm}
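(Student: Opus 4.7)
The plan is to apply Proposition~\ref{prop:cotensor-facts}\cref{it:cotensor-facts-ii} to the inclusion 2-functor $\itt\cn \Talgs \hookrightarrow \Talg$. This reduces the construction of the left 2-adjoint $\Q$ to two tasks: first, verifying that $\Talgs$ admits cotensors of the form $\{\bii, X\}$ and that $\itt$ preserves them; and second, producing a left adjoint for the underlying 1-functor $\itt_0 \cn \Talgs_0 \to \Talg_0$.

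The first task is immediate from the hypotheses. Since $\K$ is complete, it admits cotensors $\{\bii, C\}$ for all $C \in \K$, so Proposition~\ref{prop:cotensor-facts}\cref{it:cotensor-facts-i} supplies the required cotensors in both $\Talgs$ and $\Talg$ with $\itt$ preserving them. For the second task, I would proceed via an adjoint functor theorem. Since $\T$ is finitary and $\K$ is complete and cocomplete, standard results on categories of algebras for finitary monads give that $\Talgs_0$ is locally finitely presentable (in particular complete and cocomplete), and the forgetful functor to $\K_0$ creates filtered colimits. The functor $\itt_0$ preserves limits, since limits in both $\Talg$ and $\Talgs$ are computed in $\K$ and a limit of strict $\T$-maps is strict. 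An adjoint functor theorem for locally presentable categories (or Freyd's general theorem after establishing a solution set via finitariness) then supplies the required left adjoint $\Q_0 \dashv \itt_0$, which by Proposition~\ref{prop:cotensor-facts}\cref{it:cotensor-facts-ii} automatically upgrades to a 2-adjunction $\Q \dashv \itt$.

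As an alternative, more constructive route, one could build $\Q X$ for each $\T$-algebra $(X, x)$ as an explicit colimit in $\Talgs$. The idea is to take a coequalizer of a canonical reflexive pair of strict $\T$-maps between free $\T$-algebras, built from $X$, $\T X$, and the free $\T$-algebra on the cotensor $\{\II, \T X\}$, where the latter parametrizes an invertible 2-cell $f_\bullet$ satisfying the $\T$-map axioms~\cref{lax-morphism-unit-axiom,lax-morphism-structure-axiom}. The universal property then follows by unpacking the data of a $\T$-map. The main obstacle in either approach lies in ensuring that the colimit structure interacts correctly with the $\T$-algebra structure: in the abstract approach, this means verifying presentability or solution-set conditions using finitariness; in the explicit approach, this means identifying the correct diagram and matching its cocones with pseudo $\T$-maps by a careful calculation. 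Finitariness is essential in both routes, as it controls the size of the colimits required to encode the algebra constraint freely.
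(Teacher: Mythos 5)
First, a point of reference: the paper gives no proof of \cref{thm:BKP313}; it is imported verbatim from \cite{BKP1989Two}, so your proposal should be measured against the Blackwell--Kelly--Power argument. Your second, constructive route is essentially theirs: one first establishes that $\Talgs$ is cocomplete (this is where completeness, cocompleteness, and finitariness are all genuinely spent), and then builds $\Q X$ from free algebras on $X$, $\T X$, and $\T^2 X$ by an iso-coinserter that freely adjoins the invertible constraint $f_\bullet$, followed by coequifiers imposing the axioms \cref{lax-morphism-unit-axiom,lax-morphism-structure-axiom}; \cref{prop:cotensor-facts}~\cref{it:cotensor-facts-ii} then upgrades the resulting 1-adjunction to a 2-adjunction exactly as you say. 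Two corrections to that sketch: the object parametrizing an invertible 2-cell \emph{out of} $\T X$ is the tensor (copower) $\II \cdot \T X$, not the cotensor $\{\II, \T X\}$---maps into the cotensor classify 2-cells between maps \emph{into} $\T X$---and the construction is not a single reflexive coequalizer but an iterated colimit (iso-coinserter, then coequifiers), though each stage is built from copowers and pushouts in $\Talgs$.

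The genuine gap is in your first route. Applying the general adjoint functor theorem to $\itt_0 \cn \Talgsz \to \Talgz$ requires $\itt_0$ to preserve limits, and your justification---that limits in both 2-categories are computed in $\K$---is false for $\Talg$: the category $\Talgz$ of algebras and pseudomorphisms is not complete in general (the equalizer in $\K$ of two pseudomorphisms does not inherit an algebra structure, since the relevant composites agree only up to the constraint isomorphisms), and the limits $\Talg$ does admit are the PIE limits, not arbitrary conical ones. Beyond that, the solution-set condition, or alternatively the accessibility of $\itt_0$ needed for the locally-presentable adjoint functor theorem, is precisely where the finitariness hypothesis must do its work, and you assert it rather than prove it; since $\Talgz$ is the poorly behaved side of the adjunction, this is not a routine verification. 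I would discard the first route and develop the second in full.
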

\begin{rmk}
  The hypotheses of \cref{thm:BKP313} are convenient, but not necessary.
  See \cite[Remark~3.14]{BKP1989Two} for a discussion of the completeness hypothesis.
  The results of Power \cite{power1989coherence} and Lack \cite{Lac02Codescent} give an alternate approach under varying hypotheses, studying a more general coherence for pseudo-algebras.
  \Cref{rmk:power-lack} below discusses aspects of their work in relation to the applications in \cref{sec:psmorclass}.
\end{rmk}

\begin{thm}[{\cite[Theorem~4.2]{BKP1989Two}}]\label{thm:BKP42}
  Suppose $\T$ is a 2-monad on a 2-category $\K$ and suppose that $\K$ admits pseudolimits of 1-cells.
  If $\T$ has a pseudomorphism classifier $(\Q,\itt,\ze,\de)$ then it is an effective pseudomorphism classifier in the sense of \cref{defn:effectiveQi}.
\end{thm}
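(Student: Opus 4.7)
The goal is to construct, for each $\T$-algebra $Y$, an invertible $\T$-algebra 2-cell $\Theta\cn \ze_Y \de_Y \iso 1_{\Q Y}$ satisfying the adjoint triangle identity $\Theta * \ze_Y = 1_{\ze_Y}$. The other triangle, $\de_Y \ze_Y = 1_Y$, is already given by the triangle identities of $\Q \dashv \itt$ (\cref{rmk:Qi-triang}). My plan is to extract $\Theta$ from the universal property of a suitable pseudolimit in $\Talg$, combined with the factorization property of the pseudomorphism classifier.

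First I would verify that pseudolimits of 1-cells in $\K$ lift to both $\Talg$ and $\Talgs$, preserved by $\itt$ and the forgetful 2-functors $\usf$, with the universal property holding in both 2-categories. This is the analogue, for pseudolimits of 1-cells, of \cref{prop:cotensor-facts} for cotensors, and it follows by standard 2-monadic arguments.

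With this in hand, form the pseudolimit $L$ of $\ze_Y\cn Y \zzto \Q Y$ in $\Talg$, with projections $\pi_1\cn L \to Y$, $\pi_2\cn L \to \Q Y$, and invertible 2-cell $\lambda\cn \ze_Y \pi_1 \iso \pi_2$. The universal property says $\T$-maps $X \zzto L$ correspond to triples $(g_1\cn X \zzto Y,\, g_2\cn X \zzto \Q Y,\, \al\cn \ze_Y g_1 \iso g_2)$, with strict maps corresponding to strict triples, and 2-cells corresponding to compatible 2-cells of triples. The triple $(1_Y,\, \ze_Y,\, 1_{\ze_Y})$ defines a canonical pseudo map $C\cn Y \zzto L$. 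By the adjunction $\Q \dashv \itt$, $C$ factors uniquely as $C = \ol{C} \circ \ze_Y$ for a strict map $\ol{C}\cn \Q Y \to L$. Expressing $\ol{C}$ as a strict triple $(h_1,\, h_2,\, \beta)$, with $h_1\cn \Q Y \to Y$, $h_2\cn \Q Y \to \Q Y$, and $\beta\cn \ze_Y h_1 \iso h_2$, and projecting the equality $C = \ol{C} \circ \ze_Y$, yields
\[
h_1 \ze_Y = 1_Y, \quad h_2 \ze_Y = \ze_Y, \quad \beta * \ze_Y = 1_{\ze_Y}.
\]
By the uniqueness of strict mates in $\Q \dashv \itt$, the equation $\de_Y \ze_Y = 1_Y$ forces $h_1 = \de_Y$, and the trivial equality $1_{\Q Y} \ze_Y = \ze_Y$ forces $h_2 = 1_{\Q Y}$. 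Thus $\beta$ is an invertible algebra 2-cell $\ze_Y \de_Y \iso 1_{\Q Y}$ with $\beta * \ze_Y = 1_{\ze_Y}$, and setting $\Theta = \beta$ gives the adjoint surjective equivalence in $\Talg$.

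The main obstacle is the first step: establishing that pseudolimits of 1-cells lift from $\K$ to $\Talg$ and $\Talgs$ with full universal properties, including the 2-dimensional aspect used implicitly when identifying a strict map with its strict triple. Once that infrastructure is in place, the argument is a clean application of two universal properties---that of $L$ and that of $\Q$---with invertibility of $\Theta$ inherited from the invertibility of the structural 2-cell $\lambda$.
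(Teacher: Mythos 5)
The paper does not prove this statement itself; it is imported verbatim as \cite[Theorem~4.2]{BKP1989Two}, so there is no in-paper proof to compare against. Your argument is a correct reconstruction of the original BKP proof: form the pseudolimit of the arrow $\ze_Y$ in $\Talg$, factor the canonical cone $(1_Y,\ze_Y,1_{\ze_Y})$ through the classifier to get a strict map $\ol{C}\cn \Q Y \to L$, and identify its two legs with $\de_Y$ and $1_{\Q Y}$ by uniqueness of strict mates, so that the whiskered pseudolimit 2-cell is the desired $\Theta$ with $\Theta * \ze_Y = 1_{\ze_Y}$. The only infrastructure you rightly flag --- that pseudolimits of arrows lift from $\K$ to $\Talg$ with \emph{strict} projections, which is what makes $h_1$ and $h_2$ strict --- is exactly BKP's Section~2 material, so the proposal is complete modulo that standard input.
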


\begin{rmk}\label{note:delta-Talg}
  We note that even though $\delta_Y$ in \cref{defn:effectiveQi,thm:BKP42} is a strict $\T$-map, it is not guaranteed to have a strict $\T$-map for a pseudoinverse, a condition that would make $\delta_Y$ an equivalence in $\Talgs$.
  When there is a strict $\T$-map that is pseudoinverse to $\de$, then $Y$ is said to be a \emph{flexible} $\T$-algebra.
  While the full theory of flexible algebras will not be necessary in this work, we will use several results related to flexibility of free algebras from \cite[Section~4]{BKP1989Two}.
  These results are described in \cref{sec:Qi-strictification}.
\end{rmk}

\section{Effective pseudomorphism classifiers}\label{sec:Qi-strictification}

Throughout this section we suppose that $\T$ has an effective pseudomorphism classifier $(\Q,\itt,\ze,\de)$ in the sense of \cref{defn:effectiveQi}.
So, for each $\T$-algebra $Y$ there is a $\T$-algebra 2-cell isomorphism
\[
  \Theta\cn \ze_Y \de_Y \fto{\iso} 1_{\Q Y}  
\]
such that the following equalities hold, making $(\ze_Y,\de_Y,\Theta)$ an adjoint surjective equivalence in $\Talg$:
\begin{equation}\label{eq:ze-de-surjeq}
  \de_Y\ze_Y = 1_Y
  \andspace
  \Theta * \ze_Y = 1_{\ze_Y}.
\end{equation}
In this section, we prove a number of elementary properties that will be used in \cref{sec:finitary-udc,sec:De1-equiv}.

\begin{lem}\label{lem:zeflat}
  Suppose $C$ is an object of $\K$.
  There is a strict $\T$-map
  \[
    \ze_{\T C}^\flat \cn \T C \to \Q\T C
  \]
  together with an isomorphism
  \[
    \Theta^\flat \cn \ze_{\T C}^\flat \de_{\T C} \fto{\iso} 1_{\Q\T C}
  \]
  such that $(\ze_{\T C}^\flat,\de_{\T C},\Theta^\flat)$ is an adjoint surjective equivalence in $\Talgs$.
\end{lem}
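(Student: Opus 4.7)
The plan is to use the free-forgetful 2-adjunction $\T \dashv \usf\cn \Talgs \to \K$ from \cref{defn:Tu-adj} to strictify the component $\ze_{\T C}$ of the unit of $\Q \dashv \itt$, and then to modify the efficacy $\Theta$ so that the resulting data satisfies the adjoint triangle identity in $\Talgs$.

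First I would define $\ze_{\T C}^\flat \cn \T C \to \Q\T C$ as the unique strict $\T$-map corresponding, under $\T \dashv \usf$, to the 1-cell $\ze_{\T C} \circ \eta_C \cn C \to \usf\Q\T C$ in $\K$, so that $\ze_{\T C}^\flat \circ \eta_C = \ze_{\T C} \circ \eta_C$ in $\K$.  The equality $\de_{\T C} \circ \ze_{\T C}^\flat = 1_{\T C}$ in $\Talgs$ is then immediate: both sides are strict $\T$-maps, so by \cref{rmk:LRadj-uniqueness} applied to $\T \dashv \usf$ it suffices to check equality after precomposing with $\eta_C$, and this reduces to the triangle identity $\de_{\T C} \circ \ze_{\T C} = 1_{\T C}$ from \cref{rmk:Qi-triang} together with the defining equation of $\ze_{\T C}^\flat$.

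To produce $\Theta^\flat$, I would first build an invertible $\T$-algebra 2-cell $\Psi \cn \ze_{\T C}^\flat \Rightarrow \ze_{\T C}$ in $\Talg$.  Unfolding the transpose under $\T \dashv \usf$, the underlying 1-cell of $\ze_{\T C}^\flat$ equals $q \circ \T\ze_{\T C} \circ \T\eta_C$, where $q$ denotes the algebra structure of $\Q\T C$.  Whiskering the algebra constraint $(\ze_{\T C})_\bullet \cn q \circ \T\ze_{\T C} \Rightarrow \ze_{\T C} \circ \mu_C$ by $\T\eta_C$, and then applying the 2-monad unit law $\mu_C \circ \T\eta_C = 1_{\T C}$, yields the desired invertible 2-cell $\Psi$.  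A routine check using the $\T$-map axioms \cref{lax-morphism-unit-axiom,lax-morphism-structure-axiom} for $\ze_{\T C}$ together with 2-naturality of $\mu$ and $\eta$ confirms that $\Psi$ is a $\T$-algebra 2-cell.

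A first approximation is then $\Theta^\flat_0 := \Theta \circ (\Psi * \de_{\T C}) \cn \ze_{\T C}^\flat \de_{\T C} \iso 1_{\Q\T C}$, an invertible $\T$-algebra 2-cell in $\Talgs$ since $\ze_{\T C}^\flat$ and $\de_{\T C}$ are strict.  The main obstacle is that the adjoint triangle identity $\Theta^\flat * \ze_{\T C}^\flat = 1_{\ze_{\T C}^\flat}$ does not hold on the nose for $\Theta^\flat_0$: interchange together with $\de_{\T C} \circ \ze_{\T C}^\flat = 1_{\T C}$ give $\Theta^\flat_0 * \ze_{\T C}^\flat = (\Theta * \ze_{\T C}^\flat) \circ \Psi$, which is an invertible but generally non-identity 2-cell $A \cn \ze_{\T C}^\flat \iso \ze_{\T C}^\flat$.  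I would remedy this via the standard adjoint-equivalence improvement trick: set $\Theta^\flat := \Theta^\flat_0 \circ (A^{-1} * \de_{\T C})$.  Another application of interchange together with $\de_{\T C} \circ \ze_{\T C}^\flat = 1_{\T C}$ then gives $\Theta^\flat * \ze_{\T C}^\flat = A \circ A^{-1} = 1_{\ze_{\T C}^\flat}$, completing the construction; invertibility and the $\T$-algebra 2-cell property of $\Theta^\flat$ are preserved because all of its constituents enjoy them.
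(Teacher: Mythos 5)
Your proposal is correct. The first half coincides with the paper's proof: you define $\ze_{\T C}^\flat$ as the mate of $\usf\ze_{\T C}\circ\eta_C$ under $\T\dashv\usf$ and deduce $\de_{\T C}\ze_{\T C}^\flat=1_{\T C}$ from uniqueness of mates and the triangle identity $\de\ze=1$; the paper reaches the same equality by a direct computation with naturality of $\epz$, but the two arguments are interchangeable.

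Where you genuinely diverge is in the construction of the comparison 2-cell between $\ze_{\T C}$ and $\ze_{\T C}^\flat$. The paper takes $\Ga_\ze:=\Theta*\ze^\flat\cn\ze\fto{\iso}\ze^\flat$ in \cref{eq:Ga-ze}, i.e., it extracts the comparison from the efficacy itself; with that choice, setting $\Theta^\flat=\Theta\circ(\Ga_\ze^\inv*\de)$ as in \cref{eq:Thetaflat} makes the triangle identity $\Theta^\flat*\ze^\flat=1_{\ze^\flat}$ collapse in one line, since $(\Theta*\ze^\flat)\circ(\Ga_\ze^\inv*\de\ze^\flat)=\Ga_\ze\circ\Ga_\ze^\inv$. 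You instead build $\Psi=(\ze_{\T C})_\bullet*\T\eta_C\cn\ze^\flat\Rightarrow\ze$ from the algebra constraint of $\ze_{\T C}$ and the unit law $\mu_C\circ\T\eta_C=1$; this is a valid invertible algebra 2-cell (your ``routine check'' is exactly the multiplication axiom \cref{lax-morphism-structure-axiom} for $\ze_{\T C}$ whiskered by $\T^2\eta_C$), but since $\Psi$ need not equal $\Theta^\inv*\ze^\flat$, your first candidate $\Theta^\flat_0$ only satisfies the triangle identity up to the automorphism $A$, and you correctly repair this with the standard adjoint-equivalence improvement. The net effect is the same adjoint surjective equivalence; the paper's choice buys a shorter verification and, as a side benefit, produces the specific $\Ga_\ze$ with $\usf\Ga_\ze*\eta_C=1$ that is reused later in \cref{lem:flex}, whereas your construction is more self-contained in that it does not lean on the identity $\Theta*\ze=1_\ze$ to produce the comparison cell, only to have an efficacy to correct.
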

\begin{proof}
  Consider the composite
  \begin{equation}\label{eq:pre-zetaflat}
    C \fto{\eta_C} \usf \T C \zzfto{\usf \ze_{\T C}} \usf \itt \Q\T C
  \end{equation}
  and define $\ze_{\T C}^\flat$ as the indicated composite in the following diagram.
  \begin{equation}\label{eq:zeflat}
    \begin{tikzpicture}[x=18ex,y=8ex,vcenter]
      \draw[0cell] 
      (0,0) node (a) {\T C}
      (a)++(.5,-1) node (c) {\T \usf \T C}
      (c)++(1.3,0) node (d) {\T \usf \itt \Q \T C}
      (d)++(.5,1) node (b) {\itt \Q\T C}
      ;
      \draw[1cell] 
      (a) edge['] node {\T \eta_C} (c)
      (c) edge node {\T \usf \ze_{\T C}} (d)
      (d) edge['] node {\epz_{\itt \Q\T C}} (b)
      (a) edge node {\ze_{\T C}^\flat} (b)
      ;
    \end{tikzpicture}
  \end{equation}
  That is, $\ze_{\T C}^\flat$ is the mate of \cref{eq:pre-zetaflat} under the adjunction $\T \dashv \usf$ \cref{eq:Tu-adj}.
  For the remainder of this proof, we omit the subscripts $\T C$ on $\de_{\T C}$, $\ze_{\T C}$, and $\ze_{\T C}^\flat$.
  
  Next we consider the composite $\de \ze^\flat$.
  Using the definition of $\ze^\flat$ in \cref{eq:zeflat}, naturality of $\epz$ with respect to the strict $\T$-map $\de$ gives the first equality below.
  The second follows from 2-functoriality of $\usf \T$, the left hand side of \cref{eq:ze-de-surjeq} with $Y = \T C$, and a triangle identity for $\eta$ and $\epz$.
  \begin{equation}\label{eq:de-zeflat}
    \de \ze^\flat = \epz_{\T C} \circ (\T \usf \de) \circ (\T \usf \ze) \circ (\T \eta_C) = 1_{\T C}.
  \end{equation}

  Next, define
  \begin{equation}\label{eq:Ga-ze}
    \Ga_{\ze} = \Theta * \ze^\flat\cn \ze \fto{\iso} \ze^\flat
  \end{equation}
  as shown in the following diagram.
  Here and below, we omit the notation $\itt$, as discussed in \cref{convention:ignorei}.
  \[
    \begin{tikzpicture}[x=18ex,y=8ex,xscale=1.2]
      \draw[0cell] 
      (0,0) node (a) {\T C}
      (a)++(1,0) node (b) {\Q\T C}
      (b)++(-.5,-1) node (c) {\T C}
      (c)++(1,0) node (d) {\Q\T C}
      ;
      \draw[1cell] 
      (a) edge node {\ze^\flat} (b)
      (b) edge['] node {\de} (c)
      (a) edge['] node {1} (c)
      (b) edge node (one) {1} (d)
      ;
      \draw[zz1cell]
      (c) -- node['] (zeta) {\ze} (d)
      ;
      \draw[2cell]
      node[between=b and zeta at .5, rotate=35, 2label={above,\Theta}] {\Rightarrow}
      ;
    \end{tikzpicture}
  \]
  The $\T$-algebra 2-cell isomorphism $\Ga_\ze$ has the following two properties.
  \begin{enumerate}
  \item The following diagram commutes in $\K$.
    \begin{equation}\label{eq:Ga-ze-eta-1}
      \begin{tikzpicture}[x=12ex,y=8ex,vcenter,xscale=1.2]
        \draw[0cell] 
        (0,0) node (a) {C}
        (0,-1) node (b) {\usf \T C}
        (1,0) node (c) {\usf \T C}
        (1,-1) node (d) {\usf \Q\T C}
        ;
        \draw[1cell] 
        (a) edge['] node {\eta_C} (b)
        (a) edge node {\eta_C} (c)
        (c) edge node {\usf \ze^\flat} (d)
        ;
        \draw[zz1cell]
        (b) to node {\usf \ze} (d)
        ;
      \end{tikzpicture}
    \end{equation}
    This holds by definition of $\ze^\flat$ as the mate of $\usf \ze \circ \eta_C$ \cref{eq:pre-zetaflat}.
    Let $\ess$ denote the two equal composites in \cref{eq:Ga-ze-eta-1}:
    \begin{equation}\label{eq:s-square}
      \ess = \usf \zeta \circ \eta_C = \usf \ze^\flat \circ \eta_C.
    \end{equation}
  \item The whiskering $\usf \Ga_\ze * \eta_C$ is equal to the identity 2-cell in $\K$ of the 1-cell $\ess$ \cref{eq:s-square}.
    This follows from the definition of $\Ga_\ze$ \cref{eq:Ga-ze}, the commutativity of \cref{eq:Ga-ze-eta-1}, and the right hand side of \cref{eq:ze-de-surjeq}:
    \begin{equation}\label{eq:Ga-ze-eta-2}
      \begin{split}
      \usf \Ga_\ze * \eta_C
      & = \usf \Theta * (\ze^\flat \circ \eta_C)\\
      & = \usf \Theta * (\ze \circ \eta_C)\\
      & = 1_\ze * \eta_C = 1_\ess.
    \end{split}
  \end{equation}
  \end{enumerate}
  
  Now we define
  \[
    \Theta^\flat = \Theta \circ \bigl( \Ga^\inv_{\ze} * \de \bigr)
    = \Theta \circ \bigl( \Theta^\inv_{\ze} * (\ze^\flat \de) \bigr)
    \cn \ze^\flat \de \fto{\iso} 1_{\Q\T C}
  \]
  as shown in the following diagram.
  \begin{equation}\label{eq:Thetaflat}
    \begin{tikzpicture}[x=19ex,y=15ex,vcenter,xscale=1.2]
      \draw[0cell] 
      (0,0) node (a) {\Q\T C}
      (a)++(.25,1) node (b) {\T C}
      (b)++(1,0) node (c) {\Q\T C}
      (c)++(.25,-1) node (d) {\Q\T C}
      (a)++(.75,.5) node (x) {\T C}
      ;
      \draw[1cell] 
      (a) edge node {\de} (b)
      (b) edge node {\ze^\flat} (c)
      (c) edge node (cd) {1} (d)
      (a) edge['] node (ad) {1} (d)
      (a) edge node {\de} (x)
      (b) edge['] node {1} (x)
      (c) edge['] node {\de} (x)
      ;
      \draw[zz1cell]
      (x) -- node {\ze} (d)
      ;
      \draw[2cell=.9] 
      node[between=x and cd at .6, rotate=180, 2label={below,\ \Theta^\inv}] {\Rightarrow}
      node[between=x and ad at .5, rotate=270, 2label={above,\,\Theta}] {\Rightarrow}
      ;
    \end{tikzpicture}
  \end{equation}
  Using the definition of $\Theta^\flat$ and \cref{eq:de-zeflat}, we have
  \begin{align*}
    \Theta^\flat * \ze^\flat
    & = \bigl( \Theta * \ze^\flat \bigr) \circ \bigl( \Theta^\inv * (\ze^\flat \de \ze^\flat) \bigr)\\
    & =  \bigl( \Theta * \ze^\flat \bigr) \circ \bigl( \Theta^\inv * \ze^\flat \bigr)\\
    & = 1_{\ze^\flat}.
  \end{align*}
  This completes the proof that $(\ze^\flat, \de, \Theta^\flat)$ is an adjoint surjective equivalence in $\Talgs$.
\end{proof}

\begin{rmk}\label{rmk:free-flex}
  Recalling \cref{note:delta-Talg}, the conclusion of \cref{lem:zeflat} implies that each free $\T$-algebra $\T C$ is flexible.
  Beyond this, it identifies the adjoint surjective equivalence $(\ze_{\T C}^\flat, \de_{\T C}, \Theta^\flat)$ that will be necessary in \cref{sec:finitary-udc,sec:De1-equiv} below.
  Moreover, \Cref{lem:flex} makes use of $\Ga_\ze$ and the two properties noted in \cref{eq:Ga-ze-eta-1,eq:Ga-ze-eta-2}.
\end{rmk}

\begin{lem}\label{lem:flex}
  Suppose given an object $C \in \K$ and a $\T$-algebra $Y$ together with a $\T$-map
  \[
    \psi \cn \T C \zzto Y.
  \]
  Then there is a unique pair $(\psi^\flat, \Ga_\psi)$ consisting of a strict $\T$-map $\psi^\flat$ together with an invertible $\T$-algebra 2-cell $\Ga_\psi$
  \[
    \psi^\flat \cn \T C \to Y
    \andspace
    \Ga_{\psi} \cn \psi \fto{\iso} \psi^\flat
  \]
  such that the following statements hold.
  \begin{enumerate}
  \item\label{lem:flex:it1} The following diagram commutes in $\K$;
    \begin{equation}\label{eq:Ga-ze-eta-1-psi}
      \begin{tikzpicture}[x=10ex,y=8ex,vcenter,xscale=1.2]
        \draw[0cell] 
        (0,0) node (a) {C}
        (0,-1) node (b) {\usf \T C}
        (1,0) node (c) {\usf \T C}
        (1,-1) node (d) {\usf Y}
        ;
        \draw[1cell] 
        (a) edge['] node {\eta_C} (b)
        (a) edge node {\eta_C} (c)
        (c) edge node {\usf \psi^\flat} (d)
        ;
        \draw[zz1cell]
        (b) to node {\usf \psi} (d)
        ;
      \end{tikzpicture}
    \end{equation}
    let $\ess_\psi$ denote either of the two equal composites in \cref{eq:Ga-ze-eta-1-psi}:
    \begin{equation}\label{eq:s-square-psi}
      \ess_\psi = \usf \psi \circ \eta_C = \usf \psi^\flat \circ \eta_C.
    \end{equation}
  \item\label{lem:flex:it2} The whiskering $\usf \Ga_\psi * \eta_C$ is equal to the identity 2-cell in $\K$ of the 1-cell $\ess_\psi$ \cref{eq:s-square-psi}.
  \end{enumerate} 
\end{lem}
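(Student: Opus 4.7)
The plan is to define $\psi^\flat$ and $\Ga_\psi$ by composing $\ze_{\T C}^\flat$ and $\Ga_\ze$ (from \cref{lem:zeflat} and \cref{eq:Ga-ze}) with the mate of $\psi$ under the pseudomorphism classifier adjunction. Concretely, let $\psi^\bot \cn \Q\T C \to Y$ be the strict $\T$-map determined by $\psi = \psi^\bot \circ \ze_{\T C}$ (\cref{defn:fbot}), and set
\[
  \psi^\flat = \psi^\bot \circ \ze_{\T C}^\flat
  \andspace
  \Ga_\psi = \psi^\bot * \Ga_\ze.
\]
Since $\ze_{\T C}^\flat$ and $\psi^\bot$ are strict $\T$-maps, so is $\psi^\flat$. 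Since $\Ga_\ze \cn \ze_{\T C} \fto{\iso} \ze_{\T C}^\flat$ is an invertible $\T$-algebra 2-cell and whiskering with a strict $\T$-map preserves this, $\Ga_\psi \cn \psi \fto{\iso} \psi^\flat$ is an invertible $\T$-algebra 2-cell.

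Properties \cref{lem:flex:it1} and \cref{lem:flex:it2} are immediate from the corresponding properties for $\ze_{\T C}^\flat$ established in \cref{lem:zeflat}. For \cref{lem:flex:it1}, whiskering \cref{eq:Ga-ze-eta-1} with $\usf \psi^\bot$ gives
\[
  \usf \psi^\flat \circ \eta_C
  = \usf \psi^\bot \circ \usf \ze_{\T C}^\flat \circ \eta_C
  = \usf \psi^\bot \circ \usf \ze_{\T C} \circ \eta_C
  = \usf \psi \circ \eta_C,
\]
with common value $\ess_\psi = \usf \psi^\bot \circ \ess$. For \cref{lem:flex:it2}, whiskering \cref{eq:Ga-ze-eta-2} with $\usf\psi^\bot$ yields $\usf \Ga_\psi * \eta_C = \usf\psi^\bot * 1_\ess = 1_{\ess_\psi}$.

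For uniqueness, suppose $(\psi', \Ga')$ is another such pair. Since $\psi^\flat$ and $\psi'$ are both strict, they correspond under the isomorphism \cref{eq:LRadj-uniqueness} for the free-forgetful 2-adjunction $\T \dashv \usf$ on $\Talgs$ (\cref{rmk:LRadj-uniqueness}) to the 1-cells $\usf \psi^\flat \circ \eta_C$ and $\usf \psi' \circ \eta_C$ in $\K(C, \usf Y)$. By \cref{lem:flex:it1} applied to each, both equal $\ess_\psi$, hence $\psi^\flat = \psi'$. Now $\Ga'' = \Ga' \circ \Ga_\psi^\inv \cn \psi^\flat \to \psi^\flat$ is a 2-cell in $\Talg$ between strict $\T$-maps, hence a 2-cell in $\Talgs$ because $\itt$ is locally full and faithful. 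Applying the 2-dimensional part of \cref{eq:LRadj-uniqueness} to $\Ga''$, property \cref{lem:flex:it2} for both $\Ga_\psi$ and $\Ga'$ gives
\[
  \usf \Ga'' * \eta_C = (\usf \Ga' * \eta_C) \circ (\usf \Ga_\psi * \eta_C)^\inv = 1_{\ess_\psi} \circ 1_{\ess_\psi} = 1_{\ess_\psi},
\]
so $\Ga'' = 1_{\psi^\flat}$ and therefore $\Ga' = \Ga_\psi$.

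The only mildly subtle step is the uniqueness argument for $\Ga_\psi$: one has to notice that even though $\psi$ is not strict, the 2-cell $\Ga'' = \Ga' \circ \Ga_\psi^\inv$ lives between two strict $\T$-maps, so the 2-dimensional universal property of the adjunction $\T \dashv \usf \cn \Talgs \to \K$ applies. Everything else is formal manipulation of mates and whiskerings against the data supplied by \cref{lem:zeflat}.
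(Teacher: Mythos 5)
Your construction ($\psi^\flat = \psi^\bot \circ \ze_{\T C}^\flat$, $\Ga_\psi = \psi^\bot * \Ga_{\ze_{\T C}}$) and the verification of properties \cref{lem:flex:it1} and \cref{lem:flex:it2} by whiskering \cref{eq:Ga-ze-eta-1,eq:Ga-ze-eta-2} with $\usf\psi^\bot$ are exactly the paper's argument. The one difference is that the paper's proof stops there and never explicitly addresses the word ``unique'' in the statement; your uniqueness argument is correct and fills that in: the 1-cell part follows from uniqueness of mates (\cref{rmk:LRadj-uniqueness}) applied to two strict $\T$-maps agreeing after $\eta_C$, and your observation that $\Ga' \circ \Ga_\psi^\inv$ is a 2-cell between strict $\T$-maps---hence lives in $\Talgs$ because $\itt$ is locally full and faithful, so the 2-dimensional part of the isomorphism \cref{eq:LRadj-uniqueness} applies---is precisely the point that needs care.
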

\begin{proof}
  In the case $Y = \Q\T C$ and $\psi = \ze_{\T C}\cn \T C \zzto \Q\T C$, the proof of \cref{lem:zeflat} defines $\ze_{\T C}^\flat$ and $\Ga_{\ze_{\T C}}$ in \cref{eq:zeflat,eq:Ga-ze}.
  The desired conditions are \cref{eq:Ga-ze-eta-1,eq:Ga-ze-eta-2}.

  For general $\psi\cn \T C \zzto Y$, let $\psi^\bot \cn \Q\T C \to Y$ be the strict $\T$-map factoring $\psi$, as in \cref{eq:fbot}.
  This provides the commutative triangle at right in the diagram below.
  \[
    \begin{tikzpicture}[x=17ex,y=12ex,vcenter,xscale=1.2]
      \draw[0cell] 
      (0,0) node (iqx) {\itt \Q\T C}
      (iqx)++(0,-1) node (x) {\T C}
      (iqx)++(1,0) node (x') {Y}
      ;
      \draw[1cell] 
      (iqx) edge node {\psi^\bot} (x')
      (x) edge[bend left=31,looseness=1.5] node[pos=.52] (zef) {\ze_{\T C}^\flat} (iqx)
      ;
      \draw[zz1cell]
      (x) to['] node {\psi} (x')
      ;

      \draw (x) ++(.25,.5) ++(-3pt,0) node (M0) {};
      \draw[1cell]
      (x) to [bend right=5] (M0)
      decorate [decoration={zigzag, segment length=\midzzSegLen, amplitude=\midzzAmp}] 
      { to ++(0,\midzzTotLen) } node[right,shift={(0,2pt)}] (ze) {\ze_{\T C}} to [bend right=8,in=179] (iqx)
      ;

      \draw[2cell]
      node[between=zef and ze at .5, shift={(0,-1ex)}, rotate=180, 2label={below,\Ga_{\ze_{\T C}}}] {\Rightarrow}
      ;
    \end{tikzpicture}
  \]
  We now define
  \[
    \psi^\flat = \psi^\bot \circ \ze_{\T C}^\flat
    \andspace
    \Ga_{\psi} = \psi^\bot * \Ga_{\ze_{\T C}}.   
  \]
  Thus, $\Ga_\psi$ provides a $\T$-algebra 2-cell isomorphism
  \[
    \psi = \psi^\bot \ze_{\T C} \fto[\iso]{\Ga_{\psi}} \psi^\bot \ze_{\T C}^\flat = \psi^\flat
  \]
  as desired.
  The required conditions for $\psi^\flat$ and $\Ga_\psi$ now follow from the corresponding ones for $\ze^\flat$ and $\Ga_\ze$ in \cref{eq:Ga-ze-eta-1,eq:Ga-ze-eta-2}:
  \begin{equation*}
    \begin{split}
      \bigl(\usf \psi^\flat\bigr) \, \eta_C
      & = \bigl(\usf \psi^\bot\bigr) \, \bigl(\usf \ze^\flat\bigr) \, \eta_C\\
      & = \bigl(\usf \psi^\bot\bigr) \, \bigl(\usf \ze\bigr) \, \eta_C\\ 
      & = \bigl(\usf \psi\bigr) \, \eta_C\\ 
    \end{split}
    \qquad
    \andspace
    \qquad
    \begin{split}
      \bigl(\usf \Ga_\psi\bigr) * \eta_C
      & = \bigl(\usf \psi^\bot\bigr) * \bigl(\usf \Ga_{\ze_{\T C}}\bigr) * \eta_C\\
      & = \bigl(\usf \psi^\bot\bigr) * 1_\ess\\ 
      & = 1_{\ess_\psi},\\ 
    \end{split}
  \end{equation*}
  where $\ess$ and $\ess_\psi$ are the composites in \cref{eq:s-square,eq:s-square-psi}, respectively.
  This completes the proof.
\end{proof}

\artpart{Universal pseudomorphisms}

\section{Universal pseudomorphisms}\label{sec:upc}

In this section we provide the definition and basic properties of universal pseudomorphisms
\[
  \wt{\phi} \cn \T C \to \T(C',\phi)
\]
for 1-cells $\phi \cn C \to C'$ in $\K$.
Recall from \cref{notn:arrow} that $\bii = \{0 \to 1\}$ denotes the free arrow category.

\begin{defn}\label{defn:Talgii}
  Suppose $\T = (\T,\mu,\eta)$ is a 2-monad on a 2-category $\K$.
  Recall the free/forgetful adjunction $\T \dashv \usf$ from \cref{eq:Tu-adj}.
  \begin{description}
  \item[Arrow category:]
    The \emph{arrow category} of $\K$ is denoted $\K^\bii$.
    Its objects are 1-cells $\phi \cn C \to C'$ in $\K$ and its morphisms $(R,S)\cn \phi \to \psi$ are pairs of 1-cells such that $\psi R = S \phi$ in $\K$, as in the diagram at left in \cref{eq:arrowcats} below.
  \item[Strict arrow category of $\T$-maps:]
    The \emph{strict arrow category of $\T$-maps} is denoted $\Talgii$. 
    Its objects are $\T$-maps $f\cn X \zzto X'$ in $\Talg$ and its morphisms $(j,k)\cn f \to g$ are pairs of strict $\T$-maps such that $jf = gk$ in $\Talg$, as in the diagram at right in \cref{eq:arrowcats} below.
  \end{description}
  \begin{equation}\label{eq:arrowcats}
    \begin{tikzpicture}[x=13ex,y=8ex,vcenter]
      \draw[0cell] 
      (0,0) node (c) {C}
      (c)++(1,0) node (c') {C'}
      (0,-1) node (d) {D}
      (d)++(1,0) node (d') {D'}
      ;
      \draw[1cell] 
      (c) edge node {\phi} (c')
      (d) edge node {\psi} (d')
      (c) edge['] node {R} (d)
      (c') edge node {S} (d')
      ;
    \end{tikzpicture}
    \qquad \qquad
    \begin{tikzpicture}[x=13ex,y=8ex,vcenter]
      \draw[0cell] 
      (0,0) node (c) {X}
      (c)++(1,0) node (c') {X'}
      (0,-1) node (d) {Y}
      (d)++(1,0) node (d') {Y'}
      ;
      \draw[1cell] 
      (c) edge['] node {j} (d)
      (c') edge node {k} (d')
      ;
      \draw[zz1cell]
      (c) to node {f} (c')
      ;
      \draw[zz1cell]
      (d) to node {g} (d')
      ;
    \end{tikzpicture}
  \end{equation}
  The forgetful $\usf \cn \Talg \to \K$ induces a functor on arrow categories that we also denote
  \begin{equation}\label{eq:usfbii}
    \usf\cn \Talgii \to \K^\bii.
  \end{equation}
\end{defn}

\begin{rmk}\label{rmk:Kbii-Talgii-2cats}
  Both $\K^\bii$ and $\Talgii$ are the underlying 1-categories of 2-categories, with 2-cells given by pairs of 2-cells in $\K$ and $\Talgs$,
  \[
    (\Gamma,\Omega)\cn (R,S) \to (R',S')
    \andspace
    (\al,\ga)\cn (j,k) \to (j',k'),
  \]
  respectively, that satisfy equalities as in \cref{eq:arrowcats}:
  \[
    \Omega * \phi = \psi * \Gamma
    \andspace
    \ga * f = g * \al.
  \]
  Most of our discussion below will restrict to the underlying 1-categories as written in \cref{defn:Talgii}, but we will refer to the ambient 2-categories using the same notation in \cref{lem:2Dupc} below.
\end{rmk}

\begin{defn}\label{defn:udc}
  In the context of \Cref{defn:Talgii}, we say that $\T$ admits \emph{universal pseudomorphisms} if, for each 1-cell
  \[
    \phi \cn C \to C' \inspace \K
  \]
  there is a $\T$-algebra $\T(C',\phi)$ and $\T$-map
  \begin{equation}\label{eq:phitilde}
    \wt{\phi} \cn \T C \zzto \T(C',\phi) \inspace \Talg
  \end{equation}
  together with a \emph{unit morphism} in $\K^{\bii}$
  \begin{equation}\label{eq:etakappa}
    (\eta_C,\ka_{\phi})\cn \phi \to \usf\wt{\phi}
    \inspace \K^\bii,
  \end{equation}
  where $\eta$ is the unit structure transformation of $\T = (\T,\mu,\eta)$, such that the following holds.
  \begin{description}
  \item[Universal property:]
    For each $\T$-map $f\cn X \zzto Y$ there is a bijection of sets
    \begin{equation}\label{eq:univprop}
      \Talgii(\wt{\phi},f)
      \fto{\iso}
      \K^\bii(\phi,\usf f)
    \end{equation}
    induced by $\usf$ and composition with $(\eta_C,\ka_{\phi})$.
  \end{description}
  In this case, we say that $\wt{\phi}\cn \T C \zzto \T(C',\phi)$ is the \emph{universal pseudomorphism} for $\phi$.
\end{defn}

\begin{rmk}\label{rmk:udc}
  In the context of \cref{defn:udc}, the universal property \cref{eq:univprop} is equivalent to the following.
  For each $f\cn X \zzto X'$ in $\Talg$ and each pair of 1-cells $R$ and $S$ such that $(R,S)\cn \phi \to \usf f$ in $\K^\bii$, there are unique strict $\T$-maps $\ol{R}$ and $\ol{S}$ so that $(\ol{R},\ol{S})\cn \wt{\phi} \to f$ in $\Talgii$ and the diagram below commutes in $\K$.
  \begin{equation}\label{eq:rmk-udc}
    \begin{tikzpicture}[x=34ex,y=8ex,vcenter,xscale=1.2]
      \draw[0cell] 
      (0,0) node (c) {C}
      (c)++(1,0) node (c') {C'}
      (c)++(.3,-1) node (kc) {\usf \T C}
      (c')++(-.3,-1) node (kc') {\usf \T (C',\phi)}
      (c)++(0,-2) node (x) {\usf X}
      (c')++(0,-2) node (x') {\usf X'}
      ;
      \draw[1cell] 
      (c) edge node {\phi} (c')
      (c) edge node[pos=.55] {\eta_C} (kc)
      (c') edge['] node[pos=.6] {\ka_{\phi}} (kc')
      (c) edge['] node {R} (x)
      (c') edge node {S} (x')
      (kc) edge[',dashed] node[pos=.3] {\usf \ol{R}} node[',pos=.3] {\exists !} (x)
      (kc') edge[dashed] node[pos=.3] {\usf \ol{S}} node[',pos=.3] {\exists !} (x')
      ;
      \draw[zz1cell]
      (x) to node {\usf f} (x')
      ;
      \draw[zz1cell]
      (kc) to node {\usf \wt{\phi}} (kc')
      ;
    \end{tikzpicture}
  \end{equation}
  Observe that uniqueness and commutativity of the triangle at left above implies that $\ol{R}$ depends only on $R$.
  In contrast, $\ol{S}$ depends on both $S$ and $\phi$.
\end{rmk}

\begin{rmk}\label{rmk:set-bij}
  For a universal pseudomorphism $\wt{\phi}$, the bijection of sets \cref{eq:univprop} implies a certain 1-categorical adjunction that we explain in \cref{lem:etat-epzt} below.
  Then, \cref{lem:2Dupc} shows, under mild additional hypotheses, that the adjunction extends to a 2-adjunction.
  However, as we explain further in \cref{rmk:2Dupc}, such an extension is (a) not needed for this work and (b) more difficult to verify in practice.
  These are the reasons that the universal property \cref{eq:univprop} is defined as a mere bijection of sets.
\end{rmk}

\begin{notn}\label{rmk:ka-adj}
  In the context of \cref{defn:udc,rmk:udc}, the mate of $\ka_\phi$ under the adjunction $\T \dashv \usf$ is denoted $\ka$ and is uniquely determined such that the following commutes.
  \begin{equation}\label{eq:ka-adj}
    \begin{tikzpicture}[x=18ex,y=12ex,vcenter,xscale=1.2]
      \draw[0cell] 
      (0,0) node (a) {C'}
      (a)++(1,0) node (c) {\usf \T(C',\phi)}
      (a)++(.5,-.5) node (b) {\usf \T C'}
      ;
      \draw[1cell] 
      (a) edge['] node {\eta_{C'}} (b)
      (b) edge['] node {\usf \ka} (c)
      (a) edge node {\ka_\phi} (c)
      ;
    \end{tikzpicture}
  \end{equation}
  \ 
\end{notn}

Recall from \cref{defn:Tu-adj} that $\eta$ and $\epz$ denote, respectively, the unit and counit of the adjunction $\T \dashv \usf$.
\begin{lem}\label{lem:etat-epzt}
  Suppose 
   \begin{itemize}
  \item $C,C'$ are objects of $\K$,
  \item $\phi \cn C \to C'$ is an object of $\K^\bii$,
  \item $X,X'$ are objects of $\Talg$, and
  \item $f\cn X \zzto X'$ is an object of $\Talgii$.
  \end{itemize}
  In the context of \cref{defn:udc}, the assignment
  \[
    \phi \mapsto \wt{\phi}
  \]
  is functorial with respect to morphisms in $\K^\bii$ and is left adjoint to the forgetful $\usf\cn \Talgii \to \K^\bii$ from \cref{eq:usfbii}.
  The unit and counit of the adjunction $\wt{(-)} \dashv \usf$ are given, respectively, by
  \begin{equation}\label{eq:etat-epzt}
    \wt{\eta}_{\phi} = (\eta_C,\ka_\phi)
    \andspace
    \wt{\epz}_{f} = (\epz_{X},\ol{1_{\usf X'}}).
  \end{equation}
\end{lem}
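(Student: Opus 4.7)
The plan is to apply the standard ``parameter theorem'' for adjunctions: a functor admits a left adjoint precisely when every object has a universal arrow into it. Here $\usf\cn \Talgii \to \K^\bii$ is the prospective right adjoint, and for each $\phi \in \K^\bii$ the pair $\bigl(\wt{\phi},\, (\eta_C, \ka_\phi)\bigr)$ constitutes a universal arrow by \cref{eq:univprop}. I would not reprove the abstract machinery but instead use it to extract the concrete formulas claimed in the lemma.

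First I would extend $\wt{(-)}$ to a functor. Given $(R,S)\cn \phi \to \psi$ in $\K^\bii$, I would apply \cref{eq:univprop} at the $\T$-map $\wt{\psi}$ to the composite morphism $(\eta_D, \ka_\psi) \circ (R,S)\cn \phi \to \usf \wt{\psi}$, obtaining the unique morphism $\wt{(R,S)}\cn \wt{\phi} \to \wt{\psi}$ in $\Talgii$ whose image under $\usf$ postcomposed with $(\eta_C, \ka_\phi)$ equals that composite. Preservation of identities and composites would follow by the uniqueness clause of the same universal property, since both sides of each functoriality equation produce the same morphism in $\K^\bii$. Naturality of \cref{eq:univprop} in the variable $f$ is immediate because $\usf$ is functorial; naturality in $\phi$ is essentially the defining equation of $\wt{(R,S)}$. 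These observations promote \cref{eq:univprop} to a natural bijection of hom-sets, yielding the adjunction $\wt{(-)} \dashv \usf$ with unit $\wt{\eta}_\phi = (\eta_C, \ka_\phi)$.

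To identify the counit I would invoke the standard translation between the hom-set and unit/counit forms of an adjunction: $\wt{\epz}_f \cn \wt{\usf f} \to f$ is the morphism in $\Talgii$ corresponding under \cref{eq:univprop} to the identity $(1_{\usf X}, 1_{\usf X'})\cn \usf f \to \usf f$ in $\K^\bii$. Applying the reformulation in \cref{eq:rmk-udc} to $\phi = \usf f$ with $R = 1_{\usf X}$ and $S = 1_{\usf X'}$ yields the pair $(\ol{1_{\usf X}},\, \ol{1_{\usf X'}})$. The first component is the unique strict $\T$-map $\T \usf X \to X$ whose underlying 1-cell satisfies $\usf \ol{1_{\usf X}} \circ \eta_{\usf X} = 1_{\usf X}$; the triangle identity for the 2-adjunction $\T \dashv \usf$ \cref{eq:Tu-adj} exhibits $\epz_X$ as a strict $\T$-map satisfying the same equation, so by uniqueness $\ol{1_{\usf X}} = \epz_X$. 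This produces the claimed formula $\wt{\epz}_f = (\epz_X, \ol{1_{\usf X'}})$.

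I do not expect a substantial obstacle. The slight delicacy is that \cref{eq:univprop} is stated only as a bijection of underlying sets, so naturality must be inferred from the construction of $\wt{(-)}$ on morphisms rather than postulated; once functoriality is pinned down by uniqueness, however, both naturalities are routine. The identification $\ol{1_{\usf X}} = \epz_X$ is purely a triangle-identity unpacking, and the second counit component is by design left in its abstract form $\ol{1_{\usf X'}}$, since no such simplification is available in general.
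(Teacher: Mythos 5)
Your proposal is correct and follows essentially the same route as the paper: both define $\wt{(-)}$ on morphisms by applying the universal property to $(\eta_D,\ka_\psi)\circ(R,S)$, derive functoriality and naturality from the uniqueness clause, and obtain the counit as the mate of the identity, with the first component collapsing to $\epz_X$ by uniqueness of mates. The only cosmetic difference is that you package the adjunction via the universal-arrow/hom-set formulation while the paper checks the triangle identities for $\wt{\eta}$ and $\wt{\epz}$ directly; these are equivalent and your explicit identification $\ol{1_{\usf X}}=\epz_X$ is a detail the paper leaves implicit.
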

\begin{proof}
  First we define $\wt{(-)}$ on morphisms of $\K^\bii$.
  Suppose that
  \[
    (R,S)\cn \phi \to \psi
  \]
  is a morphism of $\K^\bii$, where
  \begin{align*}
    \phi \cn C \to C', 
    & \quad \psi \cn D \to D',\\
    R\cn C \to D,
    & \andspace S \cn C' \to D'
  \end{align*}
  are 1-cells of $\K$.
  Recall from \cref{eq:etakappa} the unit morphisms for $\phi$ and $\psi$ are
  \[
    (\eta_C,\ka_\phi) \cn \phi \to \wt{\phi}
    \andspace
    (\eta_D,\ka_\psi)\cn \psi \to \wt{\psi}.
  \]
  We now use the universal property \cref{eq:univprop} of $\wt{(-)}$, in the form described in \cref{rmk:udc}.
  Composition in $\K^\bii$ yields the outer vertical morphisms in the diagram below, and the universal property gives the two dashed extensions such that the diagram commutes in $\K$.
  \begin{equation}\label{eq:RStilde}
    \begin{tikzpicture}[x=43ex,y=6ex,vcenter,xscale=1.2]
      \draw[0cell] 
      (0,0) node (c) {C}
      (c)++(1,0) node (c') {C'}
      (c)++(.35,-1.3) node (kc) {\usf \T C}
      (c')++(-.35,-1.3) node (kc') {\usf \T (C',\phi)}
      (c)++(0,-3) node (x) {\usf \T D}
      (c')++(0,-3) node (x') {\usf \T(D',\psi)}
      (c)++(0,-2) node (d) {D}
      (c')++(0,-2) node (d') {D'}
      ;
      \draw[1cell] 
      (c) edge node {\phi} (c')
      (c) edge node[pos=.55] {\eta_C} (kc)
      (c') edge['] node[pos=.6] {\ka_{\phi}} (kc')
      (c) edge['] node {R} (d)
      (c') edge node {S} (d')
      (d) edge['] node {\eta_D} (x)
      (d') edge node {\ka_\psi} (x')
      (kc) edge[',dashed] node[pos=.3] {\usf (\ol{\eta_D R})} node[',pos=.3] {\exists !} (x)
      (kc') edge[dashed] node[pos=.3] {\usf (\ol{\ka_\psi S})} node[',pos=.3] {\exists !} (x')
      ;
      \draw[zz1cell]
      (x) to node {\usf \wt{\psi}} (x')
      ;
      \draw[zz1cell]
      (kc) to node {\usf \wt{\phi}} (kc')
      ;
    \end{tikzpicture}
  \end{equation}
  By uniqueness, we have $\ol{\eta R} = \T R$.
  Thus, $\wt{(-)}$ is defined on morphisms by
  \[
    \wt{R} = \ol{\eta_D R} = \T R
    \andspace
    \wt{S} = \ol{\ka_\psi S}.
  \]
  Uniqueness shows that this assignment is functorial, and commutativity of the triangles at left and right of \cref{eq:RStilde} shows that the components $(\eta_C,\ka_\phi)$ define a natural transformation
  \[
    1_{\K^\bii} \to \usf \wt{(-)}.
  \]
  This justifies the name \emph{unit} for $(\eta_C,\ka_\phi)$ in \cref{defn:udc} and we define
  \[
    \wt{\eta}_\phi = (\eta_C,\ka_{\phi}).
  \]
  
  If $f\cn X \zzto X'$ is a $\T$-map, we define the counit component
  \[
    \wt{\epz}_f = (\epz_X,\ol{1_{\usf X'}}).
  \]
  Naturality of $\wt{\epz}$ with respect to morphisms $(j,k)\cn f \to g$ in $\Talgii$ follows from uniqueness in the universal property \cref{eq:univprop}.

  The triangle identities for $\wt{\eta}$ and $\wt{\epz}$ follow from the definitions and the triangle identities for $\eta$ and $\epz$.
  This completes the proof that there is an adjunction $\wt{(-)} \dashv \usf$ with unit and counit given by \cref{eq:etat-epzt}.
\end{proof}

\begin{defn}\label{defn:source}
  Define the \emph{source functors}
  \[
    \ssf\cn \K^{\bii} \to \K
    \andspace
    \ssf\cn \Talgii \to \Talgs 
  \]
  by the assignments
  \begin{align*}
    \ssf(\phi)
    = C \qquad
    & \ssf(R,S)
    = R
    \\
    \ssf(f)
    = X \qquad
    & \ssf(j,k)
    = j
  \end{align*}
  where
  \[
    \phi \cn C \to C', \qquad (R,S)\cn \phi \to \psi
  \]
  are 0-, respectively 1-cells in $\K^\bii$, and 
  \[
    f\cn X \zzto X', \qquad (j,k)\cn f \to g
  \]
  are 0-, respectively 1-cells in $\Talgii$.
\end{defn}

The next result follows from \cref{lem:etat-epzt} along with \cref{defn:Talgii,defn:udc,defn:source}.
\begin{prop}\label{prop:source}
  In the context of \cref{defn:udc}, the following diagram of adjunctions serially commutes.
  \[
    \begin{tikzpicture}[x=20ex,y=13ex]
      \draw[0cell] 
      (0,0) node (a) {\Talgii}
      (a)+(1,0) node (b) {\Talgs}
      (a)+(0,-1) node (c) {\K^\bii}
      (b)+(0,-1) node (d) {\K}
      ;
      \draw[1cell] 
      (a) edge node {\ssf} (b)
      (c) edge node {\ssf} (d)
      (c) edge[bend left=15,transform canvas={xshift=-.7mm}] node (L) {\wt{(-)}} (a) 
      (a) edge[bend left=15,transform canvas={xshift=.7mm}] node (R) {\usf} (c) 
      (d) edge[bend left=15,transform canvas={xshift=-.7mm}] node (L') {\T} (b) 
      (b) edge[bend left=15,transform canvas={xshift=.7mm}] node (R') {\usf} (d) 
      ;
      \draw[2cell] 
      node[between=L and R at .5, rotate=0] {\dashv}
      node[between=L' and R' at .5, rotate=0] {\dashv}
      ;
    \end{tikzpicture}
  \]
  That is, the following equalities hold: 
  \begin{align*}
    \ssf \usf
    & = \usf \ssf
    & \ssf \wt{(-)}
    & = \T \ssf \\
    \ssf * \wt{\eta}
    & = \eta * \ssf
    & \ssf * \wt{\epz}
    & = \epz * \ssf.
  \end{align*}
\end{prop}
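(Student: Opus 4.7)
The plan is to verify each of the four equalities directly by unpacking the relevant definitions, and observing that every functor, unit, and counit in sight has been defined componentwise in a way that makes the source coordinate behave tautologically. No deep property of $\T$, $\wt{(-)}$, or the universal property \cref{eq:univprop} is really needed beyond the formulas that already appear in \cref{defn:Talgii,defn:udc,defn:source} and \cref{lem:etat-epzt}.

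For the two functor equalities, I would argue as follows. Given a $\T$-map $f\cn X \zzto X'$, the forgetful \cref{eq:usfbii} produces $\usf f\cn \usf X \to \usf X'$, whose source in $\K^\bii$ is $\usf X$ by \cref{defn:source}; on the other hand $\ssf(f) = X$ in $\Talgs$, and applying the forgetful $\usf\cn \Talgs \to \K$ also yields $\usf X$. The same comparison on a morphism $(j,k)\cn f \to g$ in $\Talgii$ gives $\usf j$ on both sides, establishing $\ssf\usf = \usf\ssf$. For $\ssf\wt{(-)} = \T\ssf$, the universal pseudomorphism \cref{eq:phitilde} has source $\T C = \T(\ssf \phi)$; on morphisms $(R,S)\cn \phi \to \psi$ in $\K^\bii$, the proof of \cref{lem:etat-epzt} (diagram \cref{eq:RStilde}) shows $\wt{R} = \T R$ by uniqueness, so $\ssf(\wt{R},\wt{S}) = \T R = \T(\ssf(R,S))$.

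For the two transformation equalities, the unit and counit of $\wt{(-)} \dashv \usf$ were pinned down in \cref{eq:etat-epzt} as
\[
  \wt{\eta}_\phi = (\eta_C,\ka_\phi) \andspace \wt{\epz}_f = (\epz_X,\ol{1_{\usf X'}}).
\]
Applying $\ssf$ to the first coordinate gives $\eta_C = \eta_{\ssf \phi}$ and $\epz_X = \epz_{\ssf f}$, which are exactly the components of $\eta * \ssf$ at $\phi$ and $\epz * \ssf$ at $f$, respectively. Hence $\ssf * \wt{\eta} = \eta * \ssf$ and $\ssf * \wt{\epz} = \epz * \ssf$ as natural transformations.

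The only obstacle is notational rather than conceptual: the symbols $\ssf$ and $\usf$ each denote two different (but parallel) functors, and one must keep careful track of which instance is being composed at each step. Once that bookkeeping is in place, serial commutativity of the diagram of adjunctions follows directly from the definitions, and no further appeal to the universal property of $\wt{\phi}$ is required beyond what was already used to set up \cref{lem:etat-epzt}.
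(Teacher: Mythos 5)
Your proof is correct and follows the same route as the paper, which simply asserts that the result follows from \cref{lem:etat-epzt} together with \cref{defn:Talgii,defn:udc,defn:source}; your write-up is just a fully unpacked version of that check, including the key identification $\wt{R} = \T R$ from the uniqueness argument in \cref{eq:RStilde} and the explicit formulas \cref{eq:etat-epzt} for $\wt{\eta}$ and $\wt{\epz}$.
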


For the next result, we let $\K^\bii$ and $\Talgii$ denote the ambient 2-categories, as described in \cref{rmk:Kbii-Talgii-2cats}.
The following 2-dimensional extension is included for completeness and context, but will not be necessary in our further work; \cref{rmk:2Dupc} gives further explanation.
\begin{lem}\label{lem:2Dupc}
  In the context of \cref{defn:udc}, suppose furthermore that $\K$ admits cotensors of the form $\{\bii,-\}$. 
  Then the adjunction $\wt{(-)} \dashv \usf$ of \cref{lem:etat-epzt} extends to a 2-adjunction.
\end{lem}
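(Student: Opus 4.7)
The plan is to apply Proposition~\ref{prop:cotensor-facts}\cref{it:cotensor-facts-ii} to the 2-functor $\usf \cn \Talgii \to \K^\bii$ from \cref{eq:usfbii}. Since \cref{lem:etat-epzt} already provides the underlying left adjoint $\wt{(-)}$ to $\usf_0 \cn \Talgii_0 \to \bigl(\K^\bii\bigr)_0$, it suffices to check that $\Talgii$ admits cotensors of the form $\{\bii, f\}$ for every object $f$, and that $\usf$ preserves them.

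For existence, I would use that $\Talg$ admits cotensors $\{\bii, Y\}$ by Proposition~\ref{prop:cotensor-facts}\cref{it:cotensor-facts-i} and apply the resulting 2-functor $\{\bii, -\} \cn \Talg \to \Talg$ to $f \cn X \zzto X'$ to obtain the candidate $\T$-map
\[
\{\bii, f\} \cn \{\bii, X\} \zzto \{\bii, X'\}.
\]
To verify its universal property in $\Talgii$, fix another object $g \cn Y \zzto Y'$ of $\Talgii$. A 1-cell $g \to \{\bii, f\}$ in $\Talgii$ consists of a pair of strict $\T$-maps $j_\bullet \cn Y \to \{\bii, X\}$ and $k_\bullet \cn Y' \to \{\bii, X'\}$ together with the square commutation $\{\bii, f\} \circ j_\bullet = k_\bullet \circ g$ in $\Talg$. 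By \cref{rmk:special-cotensors} applied in $\Talgs$, the strict maps $j_\bullet$ and $k_\bullet$ decompose into triples $(j_0, j_1, \alpha)$ and $(k_0, k_1, \gamma)$ with the $j_i$ and $k_i$ strict and $\alpha, \gamma$ algebra 2-cells. Composing with $\{\bii, f\}$ and $g$ and using 2-naturality of the cotensor correspondence, the single commutativity condition decomposes into the two 1-cell equations $f j_i = k_i g$ (for $i = 0, 1$) together with the 2-cell equation $f * \alpha = \gamma * g$. The first two say precisely that $(j_0, k_0)$ and $(j_1, k_1)$ are 1-cells $g \to f$ in $\Talgii$; the third is the 2-cell condition of \cref{rmk:Kbii-Talgii-2cats} (with the roles of $f$ and $g$ swapped, since our 1-cells go $g \to f$) making $(\alpha, \gamma)$ a 2-cell between them. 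This data is exactly a morphism of the arrow category $\cat(\bii, \Talgii(g, f))$, and a parallel unpacking on 2-cells of $\Talgii$ provides the full 2-natural isomorphism of \cref{defn:cotensor}.

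Preservation of the cotensor by $\usf \cn \Talgii \to \K^\bii$ is then immediate: the underlying 1-cell of $\{\bii, f\}$ is the cotensor $\{\bii, \usf f\}$ in $\K^\bii$ computed analogously, since $\usf \cn \Talg \to \K$ preserves $\{\bii, -\}$ cotensors by \cref{prop:cotensor-facts}\cref{it:cotensor-facts-i} and cotensors in arrow 2-categories are computed levelwise.

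With both conditions verified, \cref{prop:cotensor-facts}\cref{it:cotensor-facts-ii} upgrades the 1-adjunction of \cref{lem:etat-epzt} to a 2-adjunction, as required. The main technical obstacle is the combinatorial bookkeeping in the cotensor universal property for $\Talgii$: one must carefully track how the single commutation equation $\{\bii, f\} \circ j_\bullet = k_\bullet \circ g$ decomposes under the cotensor correspondence into a pair of 1-cell equations and a 2-cell equation, and then recognize the resulting triple as a morphism in $\cat(\bii, \Talgii(g, f))$, all while respecting that 0-cells of $\Talgii$ are general $\T$-maps whereas 1-cells are strictly commuting squares of strict $\T$-maps.
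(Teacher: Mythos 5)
Your proposal is correct and follows essentially the same route as the paper: both reduce the statement to \cref{prop:cotensor-facts}~\cref{it:cotensor-facts-ii} by observing that the cotensors $\{\bii,-\}$ in $\Talg$ and $\Talgs$ (supplied by \cref{prop:cotensor-facts}~\cref{it:cotensor-facts-i}) induce cotensors in $\Talgii$ computed pointwise, which $\usf\cn\Talgii\to\K^\bii$ preserves. The paper states the pointwise-cotensor claim without proof, whereas you verify it explicitly by unpacking the universal property; that verification is sound (modulo the small slip that the data you identify is an \emph{object} of $\cat(\bii,\Talgii(g,f))$, i.e.\ a functor out of $\bii$, rather than a morphism of that category).
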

\begin{proof}
  The hypothesis that $\K$ admits cotensors $\{\bii,-\}$ implies, by \cref{prop:cotensor-facts}~\cref{it:cotensor-facts-i} that $\Talg$ and $\Talgs$ both admit those cotensors and that the functors $\itt$ and $\usf$ preserve them.
  The cotensors $\{\bii,-\}$ in $\K$ induce the cotensors $\{\bii,-\}$ in $\K^\bii$ pointwise, and the cotensors $\{\bii,-\}$ in $\Talg$ and $\Talgs$ induce the cotensors $\{\bii,-\}$ in $\Talgii$ pointwise.
  Moreover, $\usf\cn \Talgii \to \K^\bii$ preserves those cotensors.
  Therefore, by \cref{prop:cotensor-facts}~\cref{it:cotensor-facts-ii}, the universal pseudomorphism functor $\wt{(-)}$ extends uniquely to a left 2-adjoint of $\usf$.
\end{proof}

\begin{rmk}\label{rmk:2Dupc}
  As written in \cref{defn:udc}, the universal property of a universal pseudomorphism is a 1-categorical property.
  The 2-categorical extension that appears in \cref{lem:2Dupc} is not needed for any of our work below.
  It does not appear to simplify any of the proofs of the results used in  \cref{thm:main2}.
  Furthermore, the 1-categorical version is simpler to verify in cases where one proves that a 2-monad has universal pseudomorphisms.
  This occurs, for example, in the proof of \cref{thm:finitary-udc}.
\end{rmk}

Recall from \cref{notn:arrow} that $\II$ denotes the category consisting of two objects and an isomorphism between them.
The following is a generalization of \cite[Lemma~2.22]{Gurski13Coherence}.
\begin{lem}\label{lem:222}
  Suppose that $\T$ is a 2-monad on a 2-category $\K$.
  Suppose that $\T$ admits universal pseudomorphisms (\cref{defn:udc}) and suppose given $C, C' \in \K$ and $X,X' \in \Talg$ together with
  \begin{align*}
    \phi \cn C \to C' & \inspace \K, \\
    R \cn C \to \usf X & \inspace \K, \\
    \be \cn S_1 \to S_2 & \inspace \K(C',\usf X'), \andspace \\
    \al \cn f_1 \to f_2 & \inspace \Talg(X,X')
  \end{align*}
  as shown at left in \cref{eq:lem222} below, such that
  \[
    \be * \phi = (\usf \al) * R.
  \]
  \begin{equation}\label{eq:lem222}
    \begin{tikzpicture}[x=17ex,y=13ex,vcenter,xscale=1.2]
      \draw[0cell] 
      (0,0) node (a) {C}
      (a)++(1,0) node (b) {C'}
      (a)++(0,-1) node (c) {\usf X}
      (b)++(0,-1) node (d) {\usf X'}
      ;
      \draw[1cell] 
      (a) edge node {\phi} (b)
      (a) edge['] node {R} (c)
      (b) edge[bend left=25] node (s2) {S_2} (d)
      (b) edge[',bend right=25] node (s1) {S_1} (d)
      ;

      \draw (c) ++(.5,.2) ++(0pt,0) node (M0) {};
      \draw (c) ++(.5,-.2) ++(0pt,0) node (M1) {}; 
      \draw[1cell]
      (c) to [bend left=5] (M0)
      decorate [decoration={zigzag, segment length=\midzzSegLen, amplitude=\midzzAmp}] 
      { to ++(\midzzTotLen,0) } node[above] (f1) {\usf f_1} to [bend left=5] (d)
      ;
      \draw[1cell]
      (c) to [bend right=5] (M1)
      decorate [decoration={zigzag, segment length=\midzzSegLen, amplitude=\midzzAmp}] 
      { to ++(\midzzTotLen,0) } node[below] (f2) {\usf f_2} to [bend right=5] (d)
      ;

      \draw[2cell]
      node[between=f1 and f2 at .5, shift={(-3pt,0)}, rotate=-90, 2label={above,\usf\al}] {\Rightarrow}
      node[between=s1 and s2 at .5, rotate=0, 2label={above,\be}] {\Rightarrow}
      ;
    \end{tikzpicture}
    \qquad\qquad
    \begin{tikzpicture}[x=17ex,y=13ex,vcenter,xscale=1.2]
      \draw[0cell] 
      (0,0) node (a) {\T C}
      (a)++(1,0) node (b) {\T (C',\phi)}
      (a)++(0,-1) node (c) {X}
      (b)++(0,-1) node (d) {X'}
      ;
      \draw[1cell] 
      (a) edge['] node {\ol{R}} (c)
      (b) edge[bend left=25] node (s2) {\ol{S}_2} (d)
      (b) edge[',bend right=25] node (s1) {\ol{S}_1} (d)
      ;

      \draw[zz1cell]
      (a) to node {\wt{\phi}} (b)
      ;

      \draw (c) ++(.5,.2) ++(0pt,0) node (M0) {};
      \draw (c) ++(.5,-.2) ++(0pt,0) node (M1) {}; 
      \draw[1cell]
      (c) to [bend left=5] (M0)
      decorate [decoration={zigzag, segment length=\midzzSegLen, amplitude=\midzzAmp}] 
      { to ++(\midzzTotLen,0) } node[above] (f1) {\!f_1} to [bend left=5] (d)
      ;
      \draw[1cell]
      (c) to [bend right=5] (M1)
      decorate [decoration={zigzag, segment length=\midzzSegLen, amplitude=\midzzAmp}] 
      { to ++(\midzzTotLen,0) } node[below] (f2) {\!f_2} to [bend right=5] (d)
      ;

      \draw[2cell]
      node[between=f1 and f2 at .5, shift={(-2pt,0)}, rotate=-90, 2label={above,\al}] {\Rightarrow}
      node[between=s1 and s2 at .5, rotate=0, 2label={above,\ol{\be}}] {\Rightarrow}
      ;
    \end{tikzpicture}
  \end{equation}
  Then the following statements hold.
  \begin{enumerate}
  \item\label{it:222-bii} If $\K$ admits cotensors of the form $\{\bii,-\}$, then there is a unique $\T$-algebra 2-cell $\ol{\be}\cn \ol{S}_1 \to \ol{S}_2$ at right in \cref{eq:lem222} such that
  \[
    \ol{\be} * \wt{\phi} = \al * \ol{R}.
  \]
  Here, for $i = 1,2$, 
  \[
    (\ol{R},\ol{S}_i)\cn \wt{\phi} \to f_i
  \]
  is the pair of unique strict $\T$-maps determined by the universal property \cref{eq:univprop} of $\wt{\phi}$.
\item\label{it:222-II} If $\K$ admits cotensors of the form $\{\II, -\}$, and if $\al$ and $\be$ are invertible, then there is a unique $\T$-algebra 2-cell $\ol{\be}$ as above, and $\ol{\be}$ is invertible.
  \end{enumerate}
\end{lem}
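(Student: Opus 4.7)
The plan is to reduce both parts to the $1$-categorical universal property \cref{eq:univprop} of $\wt{\phi}$ by encoding the $2$-cells $\al$ and $\be$ as $1$-cells into cotensors. For part \cref{it:222-bii} we use $\{\bii,-\}$; for part \cref{it:222-II} we use $\{\II,-\}$ in exactly the same way. The point is that, although \cref{eq:univprop} is only $1$-categorical, the cotensor trick lets it control the desired $2$-cells as well, and this is precisely why no 2-dimensional strengthening of the universal property (\cref{lem:2Dupc}) is needed.

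For part \cref{it:222-bii}, the hypothesis combined with \cref{prop:cotensor-facts}~\cref{it:cotensor-facts-i} provides the cotensor $\{\bii, X'\}$ in both $\Talg$ and $\Talgs$, preserved by $\usf$ and $\itt$. The $\T$-algebra $2$-cell $\al\cn f_1 \to f_2$ then corresponds to a $\T$-map
\[
  \wh{\al}\cn X \zzto \{\bii, X'\} \inspace \Talg,
\]
and the $2$-cell $\be\cn S_1 \to S_2$ corresponds to a $1$-cell
\[
  \wh{\be}\cn C' \to \{\bii, \usf X'\} \inspace \K.
\]
Because $\usf$ preserves cotensors, the hypothesis $\be * \phi = (\usf \al) * R$ becomes the equality $\wh{\be} \circ \phi = (\usf \wh{\al}) \circ R$, i.e., a morphism $(R, \wh{\be})\cn \phi \to \usf \wh{\al}$ in $\K^{\bii}$. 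Applying \cref{eq:univprop} to the $\T$-map $\wh{\al}$ yields a unique pair of strict $\T$-maps $(\ol{R}, \ol{\wh{\be}})\cn \wt{\phi} \to \wh{\al}$ in $\Talgii$. By \cref{rmk:udc}, this $\ol{R}$ agrees with the one produced by applying \cref{eq:univprop} to either $f_i$ separately, since it depends only on $R$ and $\phi$.

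It then remains to transpose $\ol{\wh{\be}}$ back. Using the cotensor $\{\bii, X'\}$ in $\Talgs$, the strict $\T$-map $\ol{\wh{\be}}\cn \T(C',\phi) \to \{\bii, X'\}$ corresponds to a pair of strict $\T$-maps $T_1, T_2\cn \T(C',\phi) \to X'$ together with a $\T$-algebra $2$-cell $\ol{\be}\cn T_1 \to T_2$. The equation $\ol{\wh{\be}} \circ \wt{\phi} = \wh{\al} \circ \ol{R}$ transposes to $T_i \circ \wt{\phi} = f_i \circ \ol{R}$ together with $\ol{\be} * \wt{\phi} = \al * \ol{R}$, while the unit factorization $\usf \ol{\wh{\be}} \circ \ka_\phi = \wh{\be}$ transposes to $\usf T_i \circ \ka_\phi = S_i$. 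Hence $(\ol{R}, T_i)$ satisfies the characterizing property of $(\ol{R}, \ol{S}_i)$ for each $f_i$, so the uniqueness clause of \cref{eq:univprop} forces $T_i = \ol{S}_i$, and uniqueness of $\ol{\be}$ is immediate from uniqueness of $\ol{\wh{\be}}$. For part \cref{it:222-II}, the identical argument with $\{\II,-\}$ in place of $\{\bii,-\}$ produces $\ol{\be}$, and it is invertible because $1$-cells into $\{\II, X'\}$ classify invertible $2$-cells. The main obstacle is keeping the transpositions between $\K$, $\K^{\bii}$, $\Talg$, and $\Talgii$ straight; once they are in hand, the proof is essentially formal.
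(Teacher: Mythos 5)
Your proposal is correct and follows essentially the same route as the paper: both encode the 2-cells $\al$ and $\be$ as 1-cells into the cotensors $\{\bii,X'\}$ (respectively $\{\II,X'\}$), invoke \cref{prop:cotensor-facts} so that $\usf$ and $\itt$ preserve them, apply the 1-categorical universal property \cref{eq:univprop} to the resulting morphism of $\K^\bii$, and transpose back. Your extra check that the components $T_i$ of the transposed strict $\T$-map coincide with the individually defined $\ol{S}_i$ is left implicit in the paper's ``unpacking'' step, but it is a correct and worthwhile detail to make explicit.
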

\begin{proof}
  We begin with the first assertion.
  Recalling \cref{prop:cotensor-facts}~\cref{it:cotensor-facts-i}, the assumption that $\K$ admits cotensors $\{\bii, -\}$ implies the same for both $\Talgs$ and $\Talg$.
  Furthermore, the inclusion $\itt$ and the forgetful functors $\usf$ preserve those cotensors.
  
  Using the fact that $\usf$ preserves cotensor products and unpacking the definition of cotensors for $\{\bii, X'\}$, as in \cref{rmk:special-cotensors}, the diagrams in \cref{eq:lem222} correspond to the diagrams in \cref{eq:lem222-2} below, with $F$ and $\usf F$ corresponding, respectively, to the triples $(f_1, f_2, \al)$ and $(\usf f_1, \usf f_2, \usf \al)$.
  Likewise, $S$ and $\ol{S}$ correspond, respectively, to the triples $(S_1, S_2, \be)$ and $(\ol{S}_1, \ol{S}_2, \ol{\be})$.
  \begin{equation}\label{eq:lem222-2}
    \begin{tikzpicture}[x=12ex,y=8ex,vcenter,xscale=1.2]
      \draw[0cell] 
      (0,0) node (a) {C}
      (a)++(1,0) node (b) {C'}
      (a)++(0,-1) node (c) {\usf X}
      (b)++(0,-1) node (d) {\{\bii,\usf X'\}}
      ;
      \draw[1cell] 
      (a) edge node {\phi} (b)
      (a) edge['] node {R} (c)
      (b) edge node {S} (d)
      ;
      \draw[zz1cell]
      (c) to node {\usf F} (d)
      ;
    \end{tikzpicture}
    \qquad\qquad
    \begin{tikzpicture}[x=11ex,y=8ex,vcenter,xscale=1.2]
      \draw[0cell] 
      (0,0) node (a) {\T C}
      (a)++(1,0) node (b) {\T (C',\phi)}
      (a)++(0,-1) node (c) {X}
      (b)++(0,-1) node (d) {\{\bii,X'\}}
      ;
      \draw[1cell] 
      (a) edge['] node {\ol{R}} (c)
      (b) edge node {\ol{S}} (d)
      ;
      \draw[zz1cell]
      (a) to node {\wt{\phi}} (b)
      ;
      \draw[zz1cell]
      (c) to node {F} (d)
      ;
    \end{tikzpicture}
  \end{equation}

  With this reformulation, the assertion \cref{it:222-bii} follows by applying the universal property \cref{eq:RStilde} to $(R,S)$ at left in \cref{eq:lem222-2}: there is a unique morphism $(\overline{R}, \overline{S}) \cn \wt{\phi} \zzto F$ in $\Talgii$, at right in \cref{eq:lem222-2}, such that
  \[
    (R, S) = \usf(\overline{R}, \overline{S}) \circ (\eta_C, \kappa_{\phi}).
  \]
  Unpacking the above equation, via \cref{rmk:special-cotensors} and a similar description of 2-cells in \cref{eq:cotensor}, yields the data $(\ol{S_1},\ol{S_2},\ol{\be})$ at right in \cref{eq:lem222} satisfying the desired equations.
  Using cotensors with the category $\II = \{ 0 \cong 1 \}$ instead of $\bii$ yields the second assertion, in which all of the 2-cells are invertible.
\end{proof}

\begin{defn}\label{defn:Delta}
  Suppose $\T$ is a 2-monad on $\K$ that admits universal pseudomorphisms.
  For each 1-cell $\phi\cn C \to C'$ in $\K$, define a strict $\T$-map
  \begin{equation}\label{eq:Delta}
    \De = \ol{\eta_{C'}} \cn \T(C',\phi) \to \T C'
  \end{equation}
  as follows.
  The unit $\eta$ defines a morphism
  \[
    (\eta_C,\eta_{C'})\cn \phi \to \usf \T\phi
    \inspace \K^\bii.
  \]
  Therefore, by the universal property \cref{eq:univprop} there is a unique morphism $(\ol{\eta_C},\ol{\eta_{C'}})$ in $\Talgii$ as shown in the diagram below.
  By uniqueness, $\ol{\eta_C}$ is the identity $1_{\T C}$.
  \begin{equation}\label{eq:Deltadiagram}
    \begin{tikzpicture}[x=43ex,y=8ex,vcenter,xscale=1.2]
      \draw[0cell] 
      (0,0) node (c) {C}
      (c)++(1,0) node (c') {C'}
      (c)++(.3,-1) node (kc) {\usf \T C}
      (c')++(-.3,-1) node (kc') {\usf \T (C',\phi)}
      (c)++(0,-2) node (x) {\usf \T C}
      (c')++(0,-2) node (x') {\usf \T C'}
      ;
      \draw[1cell] 
      (c) edge node {\phi} (c')
      (x) edge node {\usf \T\phi} (x')
      (c) edge node[pos=.55] {\eta_C} (kc)
      (c') edge['] node[pos=.6] {\ka_{\phi}} (kc')
      (c) edge['] node {\eta_C} (x)
      (c') edge node {\eta_{C'}} (x')
      (kc) edge[dashed] node[pos=.3] {\usf \ol{\eta_C} = 1} node[',pos=.3] {\exists !} (x)
      (kc') edge[',dashed] node[pos=.3] {\usf\De = \usf \ol{\eta_{C'}}} node[',pos=.3] {\exists !} (x')
      ;
      \draw[zz1cell]
      (kc) to node {\usf \wt{\phi}} (kc')
      ;
    \end{tikzpicture}
  \end{equation}
  Define $\De = \ol{\eta_{C'}}$.
\end{defn}

\section{Universal pseudomorphisms via pushouts}
\label{sec:finitary-udc}

Throughout \cref{sec:finitary-udc,sec:De1-equiv}, $\T$ is assumed to have an effective pseudomorphism classifier (\cref{defn:effectiveQi}).
The goal of this section is to show that universal pseudomorphisms for $\T$ (\cref{defn:udc}) can be constructed as pushouts in $\Talgs$.
In \cref{sec:applications} we explain applications in the case that $\T$ is one of the 2-monads for strict monoidal structures (\cref{notn:monoidal-variants}).

\begin{defn}\label{defn:finitary-phitilde}
  Suppose $\T$ is a 2-monad on a 2-category $\K$ such that $\T$ has an effective pseudomorphism classifier and $\Talgs$ admits pushouts.
  For each 1-cell $\phi \cn C \to C'$ in $\K$, define a $\T$-algebra $\T(C',\phi)$ together with
  \begin{itemize}
  \item a $\T$-map $\wt{\phi} \cn \T C \zzto \T(C',\phi)$ and
  \item a 1-cell $\ka_\phi \cn C' \to \usf \T(C',\phi)$ in $\K$.
  \end{itemize}
  as follows.

  The unit of \cref{eq:Qi-adj} is a $\T$-map
  \begin{equation}\label{eq:zetaTC}
    \ze_{\T C}\cn \T C \zzto \itt \Q\T C.
  \end{equation}
  By \cref{lem:flex}, $\ze_{\T C}$ is isomorphic to a unique strict $\T$-map
  \begin{equation}\label{eq:olzetaTC}
    \ze^\flat \cn \T C \to \itt \Q\T C
  \end{equation}
  such that the diagram below commutes.
  \begin{equation}\label{eq:olze}
    \begin{tikzpicture}[x=14ex,y=8ex,vcenter,xscale=1.2]
      \draw[0cell] 
      (0,0) node (a) {C}
      (a)++(0,-1) node (b) {\usf \T C}
      (a)++(1,0) node (c) {\usf \T C}
      (b)++(1,0) node (d) {\usf \itt \Q\T C}
      ;
      \draw[1cell] 
      (a) edge['] node {\eta_C} (b)
      (a) edge node {\eta_C} (c)
      (c) edge node {\usf \ze^\flat} (d)
      ;
      \draw[zz1cell]
      (b) to node {\usf \ze_{\T C}} (d)
      ;
    \end{tikzpicture}
  \end{equation}

  Define $\T(C',\phi)$ as the pushout in $\Talgs$ of $\ze^\flat$ and $\T\phi$, with structure morphisms $\wh{\phi}$ and $\ka$ as shown in the square below.
  \begin{equation}\label{eq:udc-pushout}
    \begin{tikzpicture}[x=14ex,y=8ex,vcenter,xscale=1.2]
      \draw[0cell] 
      (0,0) node (a) {\T C}
      (a)++(1,0) node (b) {\T C'}
      (a)++(0,-1) node (c) {\itt \Q\T C}
      (c)++(1,0) node (d) {\T(C',\phi)}
      ;
      \draw[1cell] 
      (a) edge node {\T\phi} (b)
      (a) edge['] node {\ze^\flat} (c)
      (c) edge node {\wh{\phi}} (d)
      (b) edge node {\ka} (d)
      ;
    \end{tikzpicture}
  \end{equation}
  Moreover, define $\wt{\phi}$ and $\ka_\phi$ by the following composites in $\Talg$ and $\K$, respectively.
  \begin{equation}\label{eq:wtphi-kaphi}
    \begin{tikzpicture}[x=18ex,y=12ex,vcenter,xscale=1.2,yscale=1.1]
      \draw[0cell] 
      (0,0) node (a) {\T C}
      (a)++(1,0) node (c) {\T(C',\phi)}
      (a)++(.5,-.5) node (b) {\itt \Q\T C}
      ;
      \draw[1cell] 
      (b) edge['] node {\itt\wh{\phi}} (c)
      ;
      \draw[zz1cell]
      (a) to['] node {\ze_{\T C}} (b)
      ;
      \draw[zz1cell]
      (a) to node {\wt{\phi}} (c)
      ;
    \end{tikzpicture}
    \quad\quad
    \begin{tikzpicture}[x=18ex,y=12ex,vcenter]
      \draw[0cell] 
      (0,0) node (a) {C'}
      (a)++(1,0) node (c) {\usf \T(C',\phi)}
      (a)++(.5,-.5) node (b) {\usf \T C'}
      ;
      \draw[1cell] 
      (a) edge['] node {\eta_{C'}} (b)
      (b) edge['] node {\usf \ka} (c)
      (a) edge node {\ka_\phi} (c)
      ;
    \end{tikzpicture}
  \end{equation} 
  This completes the definition of
  \[
    \wt{\phi}\cn \T C \to \T(C',\phi) \inspace \Talg
  \]
  and the unit
  \[
    (\eta_C,\ka_\phi) \cn \phi \to \usf \wt{\phi} \inspace \K^\bii.
  \]
  We show that these satisfy the universal property \cref{eq:univprop} in \cref{thm:finitary-udc} below.
\end{defn}

In the following, we use \cref{convention:ignorei} and implicitly apply the inclusion $\itt$ to compose a general $\T$-map with a strict one.
\begin{lem}\label{lem:Sbar-unique}
  In the context of \cref{defn:finitary-phitilde}, suppose given
  \begin{itemize}
  \item a $\T$-map $f\cn X \zzto X'$ in $\Talg$,
  \item 1-cells $R \cn C \to \usf X$ and $S \cn C' \to \usf X'$ in $\K$, and
  \item strict $\T$-maps
    \begin{align*}
      \ol{R} & \cn \T C \to X\\
      \ol{S}_1,\,\ol{S}_2 & \cn \T(C',\phi) \to X'
    \end{align*}
  \end{itemize}
  such that, for each $i = 1,2$,
  \begin{equation}\label{eq:talg-trapezoid}
    \ol{S}_i \, \wt{\phi} = f \, \ol{R}\cn \T C \to X' \inspace \Talg
  \end{equation}
  and the following diagram commutes in $\K$.
  \begin{equation}\label{eq:Sbar-unique}
    \begin{tikzpicture}[x=34ex,y=8ex,vcenter,xscale=1.2]
      \draw[0cell] 
      (0,0) node (c) {C}
      (c)++(1,0) node (c') {C'}
      (c)++(.3,-1) node (kc) {\usf \T C}
      (c')++(-.3,-1) node (kc') {\usf \T (C',\phi)}
      (c)++(0,-2) node (x) {\usf X}
      (c')++(0,-2) node (x') {\usf X'}
      ;
      \draw[1cell] 
      (c) edge node {\phi} (c')
      (c) edge node[pos=.55] {\eta_C} (kc)
      (c') edge['] node[pos=.6] {\ka_{\phi}} (kc')
      (c) edge['] node {R} (x)
      (c') edge node {S} (x')
      (kc) edge['] node[pos=.3] {\usf \ol{R}} (x)
      (kc') edge node[pos=.3] {\usf \ol{S}_i} (x')
      ;
      \draw[zz1cell]
      (x) to node {\usf f} (x')
      ;
      \draw[zz1cell]
      (kc) to node {\usf \wt{\phi}} (kc')
      ;
    \end{tikzpicture}
  \end{equation}
  Then $\ol{S}_1 = \ol{S}_2$ in $\Talgs$.
\end{lem}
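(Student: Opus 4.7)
The plan is to exploit the defining pushout property of $\T(C',\phi)$ in \cref{eq:udc-pushout}. Since $\ol{S}_1$ and $\ol{S}_2$ are strict $\T$-maps with common domain this pushout, their equality reduces to checking agreement after precomposition with each of the two pushout legs $\wh{\phi}$ and $\ka$. I would therefore separately establish
\[
  \ol{S}_1 \wh{\phi} = \ol{S}_2 \wh{\phi}
  \andspace
  \ol{S}_1 \ka = \ol{S}_2 \ka
\]
in $\Talgs$, and then conclude by the pushout universal property.

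For the first equality, I would use the adjunction $\Q \dashv \itt$ of \cref{eq:Qi-adj} together with \cref{rmk:LRadj-uniqueness}: two parallel strict $\T$-maps out of $\Q\T C$ are equal in $\Talgs$ if and only if their mates in $\Talg(\T C, X')$ agree. The mate of $\ol{S}_i \wh{\phi}$ is obtained by whiskering with the unit component $\ze_{\T C}$, giving $\ol{S}_i \circ \wh{\phi} \circ \ze_{\T C} = \ol{S}_i \circ \wt{\phi}$ by the definition of $\wt{\phi}$ in \cref{eq:wtphi-kaphi}. By hypothesis \cref{eq:talg-trapezoid}, both mates equal $f \ol{R}$ in $\Talg$, so the equality follows. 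For the second equality, I would apply the analogous idea with the free/forgetful adjunction $\T \dashv \usf$ of \cref{eq:Tu-adj}: strict $\T$-maps out of the free algebra $\T C'$ are determined by precomposing their underlying 1-cells with $\eta_{C'}$. Using the factorization $\ka_\phi = \usf\ka \circ \eta_{C'}$ from \cref{eq:wtphi-kaphi}, commutativity of the right trapezoid in \cref{eq:Sbar-unique} gives $\usf(\ol{S}_i \ka) \circ \eta_{C'} = \usf \ol{S}_i \circ \ka_\phi = S$ for both $i$, so the second equality follows as well.

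Both reductions are essentially formal manipulations with the two adjunctions once the pushout decomposition is in place, so I expect no technical obstacle. The only subtlety worth flagging is that the hypothesis \cref{eq:talg-trapezoid} must genuinely be invoked as an equality in $\Talg$ rather than merely in $\K$: because $\wt{\phi}$ carries a nontrivial algebra constraint, the composites $\ol{S}_i \circ \wt{\phi}$ inherit that constraint, and the bijection of \cref{rmk:LRadj-uniqueness} under $\Q \dashv \itt$ compares full $\T$-maps, not just their underlying 1-cells.
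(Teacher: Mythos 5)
Your proposal is correct and follows essentially the same argument as the paper: reduce to the two pushout legs, handle $\ol{S}_i\ka$ via uniqueness of mates for $\T \dashv \usf$ using $\ka_\phi = \usf\ka \circ \eta_{C'}$, and handle $\ol{S}_i\wh{\phi}$ via uniqueness of mates for $\Q \dashv \itt$ using $\wt{\phi} = \wh{\phi}\,\ze_{\T C}$ and the hypothesis \cref{eq:talg-trapezoid}. Your closing remark that \cref{eq:talg-trapezoid} must be used as an equality in $\Talg$ is also consistent with how the paper invokes it.
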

\begin{proof}
  To use the universal property of the pushout \cref{eq:udc-pushout} defining $\T(C',\phi)$, we will show
  \begin{equation}\label{eq:olS1-olS2}
    \ol{S}_1 \ka = \ol{S}_2 \ka
    \andspace
    \ol{S}_1 \wh{\phi} = \ol{S}_2 \wh{\phi}.
  \end{equation} 
  For the first of these, we obtain
  \[
    \usf \bigl( \ol{S}_1 \ka \bigr) \circ \eta_{C'} = S = \usf \bigl( \ol{S}_2 \ka \bigr) \circ \eta_{C'}
  \]
  using 2-functoriality of $\usf$, the definition $\ka_\phi = \usf \ka \circ \eta_{C'}$ from \cref{eq:wtphi-kaphi}, and commutativity of the triangle at right in \cref{eq:Sbar-unique}.
  Then the uniqueness of mates noted in \cref{rmk:LRadj-uniqueness} implies that $\ol{S}_1 \ka = \ol{S}_2 \ka$.

  For the second desired equality in \cref{eq:olS1-olS2}, we obtain
  \[
    \bigl( \ol{S}_1 \wh{\phi} \bigr) \circ \ze_{\T C} = f \, \ol{R} = \bigl( \ol{S}_2 \wh{\phi} \bigr) \circ \ze_{\T C}
  \]
  using the associativity of 1-cell composition, the definition $\wt{\phi} = \wh{\phi} \ze_{\T C}$ in \cref{eq:wtphi-kaphi}, and the hypothesis \cref{eq:talg-trapezoid}.
  Then uniqueness of mates, for the adjunction $(\Q,\itt,\ze,\de)$, implies that $\ol{S}_1 \wh{\phi} = \ol{S}_2 \wh{\phi}$.
  The result $\ol{S}_1 = \ol{S}_2$ then follows from the universal property of the pushout \cref{eq:udc-pushout}.
\end{proof}

\begin{thm}\label{thm:finitary-udc}
  In the context of \cref{defn:finitary-phitilde}, the pushouts $\T(C',\phi)$ in \cref{eq:udc-pushout} determine universal pseudomorphisms for $\T$.
\end{thm}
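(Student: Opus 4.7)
The plan is to verify the universal property \cref{eq:univprop} directly. Given a $\T$-map $f\cn X \zzto X'$ and a morphism $(R,S)\cn \phi \to \usf f$ in $\K^\bii$, I will construct a pair $(\ol R,\ol S)\cn \wt\phi \to f$ in $\Talgii$ satisfying $\usf \ol R \circ \eta_C = R$ and $\usf \ol S \circ \ka_\phi = S$, and show that it is the unique such pair. The morphism $\ol R \cn \T C \to X$ is forced to be the mate of $R$ under the free-forgetful adjunction $\T \dashv \usf$ from \cref{eq:Tu-adj}, which delivers both existence and uniqueness of $\ol R$ at once.

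To construct $\ol S$, I will exploit the pushout presentation \cref{eq:udc-pushout} of $\T(C',\phi)$. This reduces the problem to producing strict $\T$-maps $g_1\cn \T C' \to X'$ and $g_2\cn \itt \Q\T C \to X'$ with $g_1 \circ \T\phi = g_2 \circ \ze^\flat$. I will take $g_1$ to be the mate of $S$ under $\T \dashv \usf$, and set $g_2 = \itt(f\ol R)^\bot$, the mate of the $\T$-map $f \ol R\cn \T C \zzto X'$ under the pseudomorphism classifier adjunction $\Q \dashv \itt$ (see \cref{defn:fbot}). The compatibility condition will be checked by appealing to uniqueness of mates for $\T \dashv \usf$ (\cref{rmk:LRadj-uniqueness}): after precomposing with $\eta_C$, the left-hand side becomes $S \phi$ by naturality of $\eta$, while \cref{eq:olze} lets me replace $\usf \ze^\flat \circ \eta_C$ by $\usf \ze_{\T C} \circ \eta_C$, so the right-hand side reduces to $\usf f \circ R$. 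The hypothesis $\usf f \circ R = S \phi$ then closes the verification, and the pushout property produces a unique $\ol S$ with $\ol S \ka = g_1$ and $\ol S \wh\phi = g_2$.

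Finally I will verify that $(\ol R, \ol S)$ maps to $(R,S)$ and forms a morphism in $\Talgii$. The identity $\usf \ol S \circ \ka_\phi = S$ follows from $\ka_\phi = \usf \ka \circ \eta_{C'}$ \cref{eq:wtphi-kaphi} together with $\ol S \ka = g_1$. The $\Talgii$-condition $\ol S \circ \wt\phi = f \circ \ol R$ in $\Talg$ then reduces to the chain
\[
  \ol S \wt\phi = \ol S \wh\phi \circ \ze_{\T C} = g_2 \circ \ze_{\T C} = \itt(f\ol R)^\bot \circ \ze_{\T C} = f \ol R,
\]
using associativity and the defining factorization of $(f \ol R)^\bot$ from \cref{eq:fbot}. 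Uniqueness of $\ol S$ given $(R, S)$ and $\ol R$ will follow from \cref{lem:Sbar-unique}. The main delicate point---and the chief potential obstacle---is the compatibility $g_1 \T\phi = g_2 \ze^\flat$, which requires the mates under two different adjunctions to agree on the strict pushout leg; this agreement is exactly what is engineered by replacing $\ze_{\T C}$ with its flexible lift $\ze^\flat$ from \cref{lem:zeflat,lem:flex}, and recorded in the identity \cref{eq:olze}.
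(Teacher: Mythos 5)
Your proposal is correct and follows essentially the same route as the paper's proof: $\ol R$ as the mate of $R$ under $\T\dashv\usf$, $\ol S$ induced by the pushout \cref{eq:udc-pushout} from the mate of $S$ and a strict mate of $f\ol R$, compatibility checked via \cref{eq:olze} and uniqueness of mates, and uniqueness of $\ol S$ delegated to \cref{lem:Sbar-unique}. The only cosmetic difference is that you take the second pushout leg to be $(f\ol R)^\bot$ where the paper uses $f^\bot\circ\Q\ol R$; these coincide by naturality of $\ze$ and uniqueness of mates for $\Q\dashv\itt$.
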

\begin{proof}
  We show that $\wt{\phi}$ and $\ka_\phi$, as defined in \cref{eq:wtphi-kaphi}, satisfy the universal property \cref{eq:univprop} for each 1-cell
  \[
    \phi \cn C \to C' \inspace \K
  \]
  and each $\T$-map
  \[
    f\cn X \zzto X' \inspace \Talg.
  \]
  For this purpose, suppose given 1-cells $R$ and $S$ in $\K$, as in the outer diagram \cref{eq:univ-toshow} below.
  Following \Cref{rmk:udc}, we will show that there are unique strict $\T$-maps $\ol{R}$ and $\ol{S}$ such that
  \[
    \ol{S} \, \wt{\phi} = f \, \ol{R} \cn \T C \zzto X' \inspace \Talg  
  \]
  and the following diagram commutes in $\K$.
  \begin{equation}\label{eq:univ-toshow}
    \begin{tikzpicture}[x=34ex,y=8ex,vcenter,xscale=1.2]
      \draw[0cell] 
      (0,0) node (c) {C}
      (c)++(1,0) node (c') {C'}
      (c)++(.3,-1) node (kc) {\usf \T C}
      (c')++(-.3,-1) node (kc') {\usf \T (C',\phi)}
      (c)++(0,-2) node (x) {\usf X}
      (c')++(0,-2) node (x') {\usf X'}
      ;
      \draw[1cell] 
      (c) edge node {\phi} (c')
      (c) edge node[pos=.55] {\eta_C} (kc)
      (c') edge['] node[pos=.6] {\ka_{\phi}} (kc')
      (c) edge['] node {R} (x)
      (c') edge node {S} (x')
      (kc) edge[',dashed] node[pos=.3] {\usf \ol{R}} node[',pos=.3] {\exists !} (x)
      (kc') edge[dashed] node[pos=.3] {\usf \ol{S}} node[',pos=.3] {\exists !} (x')
      ;
      \draw[zz1cell]
      (x) to node {\usf f} (x')
      ;
      \draw[zz1cell]
      (kc) to node {\usf \wt{\phi}} (kc')
      ;
    \end{tikzpicture}
  \end{equation}

  Recall from \cref{defn:t-alg} that $x\cn \T X \to X$ denotes the $\T$-algebra structure 1-cell for $X$.
  We define
  \[
    \ol{R} = x \circ \T R \cn \T C \to X \inspace \Talgs 
  \]
  and note that each of the following diagrams commutes by naturality of $\eta$ and the unit condition \cref{zeta-theta-def} for $X$.
  \begin{equation}\label{eq:RSsetup}
    \begin{tikzpicture}[x=15ex,y=10ex,baseline={(0,-1)},xscale=1.2,yscale=1.1]
      \draw[0cell] 
      (0,0) node (c) {C}
      (c)++(0,-1) node (tc) {\usf \T C}
      (c)++(1,0) node (x) {\usf X}
      (x)++(0,-1) node (tx) {\usf \T X}
      (tx)++(.7,0) node (xb) {\usf X}
      ;
      \draw[1cell] 
      (c) edge node {R} (x)
      (x) edge node {1_{\usf X}} (xb)
      (c) edge['] node (etac) {\eta_C} (tc)
      (tc) edge[bend right=30] node {\usf \ol{R}} (xb)
      (tc) edge node {\usf \T R} (tx)
      (x) edge['] node {\eta_X} (tx)
      (tx) edge node {\usf x} (xb)
      ;
    \end{tikzpicture}
    \quad\quad
    \begin{tikzpicture}[x=15ex,y=10ex,baseline={(0,-1)},xscale=1.2,yscale=1.1]
      \draw[0cell] 
      (0,0) node (c) {C'}
      (c)++(0,-1) node (tc) {\usf \T C'}
      (c)++(1,0) node (x) {\usf X'}
      (x)++(0,-1) node (tx) {\usf \T X'}
      (tx)++(.7,0) node (xb) {\usf X'}
      ;
      \draw[1cell] 
      (c) edge node {S} (x)
      (x) edge node {1_{\usf X'}} (xb)
      (c) edge['] node (etac) {\eta_{C'}} (tc)
      (tc) edge node {\usf \T S} (tx)
      (x) edge['] node {\eta_{X'}} (tx)
      (tx) edge node {\usf x'} (xb)
      ;
    \end{tikzpicture}
  \end{equation}
  The diagram at left above shows that the triangle at left in \cref{eq:univ-toshow} commutes.
  Uniqueness of $\ol{R}$ follows from the uniqueness of mates noted in \cref{rmk:LRadj-uniqueness}. 

  Next, the strict $\T$-map $\ol{S}$ will be defined using the universal property of the pushout \cref{eq:udc-pushout}.
  Consider the following diagram in $\K$, explained below.
  \begin{equation}\label{eq:ustar-eta}
    \begin{tikzpicture}[x=16ex,y=10ex,vcenter,xscale=1.2]
      \draw[0cell] 
      (0,0) node (tc) {\usf \T C}
      (tc)++(1,0) node (tc') {\usf \T C'}
      (tc)++(0,-1) node (iqtc) {\usf \itt \Q\T C}
      (tc')++(0,-1) node (tc'phi) {\star}
      (tc'phi)++(1,0) node (ty) {\usf \T X'}
      (iqtc)++(-1,0) node (tcb) {\usf \T C}
      (tcb)++(0,-2) node (tx) {\usf \T X}
      (ty)++(0,-1) node (y) {\usf X'}
      (iqtc)++(1,-1) node (iqx) {\usf \itt \Q X}
      (iqx)++(0,-1) node (x) {\usf X}
      (tc)++(-1,1) node (c) {C}
      (tc')++(-1,1) node (c') {C'}
      ;
      \draw[1cell] 
      (tc) edge node {\usf \T\phi} (tc')
      (tc) edge['] node {\usf \ze^\flat} (iqtc)
      (tc') edge node {\usf \T S} (ty)
      (ty) edge node {\usf x'} (y)
      (tcb) edge['] node {\usf \T R} (tx)
      (tx) edge['] node {\usf x} (x)
      (iqtc) edge['] node {\usf \itt \Q\ol{R}} (iqx)
      (iqx) edge['] node[pos=.33] {\usf f^{\bot}} (y)
      (tcb) edge['] node {\usf \ol{R}} (x)
      (c) edge['] node {\eta_C} (tc)
      (c') edge node {\eta_{C'}} (tc')
      (c) edge node {\phi} (c')
      (c) edge['] node {\eta_C} (tcb)
      ;
      \draw[zz1cell]
      (x) to node {\usf \ze_X} (iqx)
      ;
      \draw[zz1cell]
      (x) to['] node {\usf f} (y)
      ;
      \draw[zz1cell]
      (tcb) to node {\usf \ze_{\T C}} (iqtc)
      ;
      \draw[1cell]
      (c') -| node[pos=.7] {S} ($(y)+(.4,0)$) -- (y)
      ;
      \draw[1cell]
      (c) -- +(-.5,0) |- node[',pos=.27] {R} ($(x)+(0,-.5)$) -- (x)
      ;
    \end{tikzpicture}
  \end{equation}
  In the above diagram, the two upper-left quadrilateral regions commute by \cref{eq:olze} and naturality of $\eta$, respectively.
  The lower left triangle commutes by definition of $\ol{R}$ in \cref{eq:RSsetup}.
  In the lower right triangle, $f^\bot$ is the strict mate of $f$ in \cref{eq:fbot} and hence the triangle commutes by definition.
  The lower trapezoid region commutes by naturality of $\zeta$, and the two outer regions commute by \cref{eq:RSsetup}.
  The outer diagram commutes by the hypothesis $\usf f \circ R = S \circ \phi$ in \cref{eq:univ-toshow}.

  Referring to the region $\star$ in  \cref{eq:ustar-eta} above, let
  \[
    h_1 = f^\bot \circ (\Q\ol{R}) \circ \ze^\flat
    \andspace
    h_2 = x' \circ (\T S) \circ (\T\phi).
  \]
  The above argument, together with 2-functoriality of $\usf$, shows that $\usf h_1 \circ \eta_{C} = \usf h_2 \circ \eta_C$.
  Therefore, because $h_1 $ and $h_2$ are strict $\T$-maps, we conclude $h_1 = h_2$ by the uniqueness of mates noted in \cref{rmk:LRadj-uniqueness}.

  The strict $\T$-maps $h_1$ and $h_2$ are the two composites around the boundary of the diagram in $\Talgs$ shown below.
  Since these are equal, there is a unique strict $\T$-map $\ol{S}$ induced by the universal property of the pushout \cref{eq:udc-pushout}.
  \begin{equation}\label{eq:olS-pushout}
    \begin{tikzpicture}[x=16ex,y=10ex,vcenter,xscale=1.2]
      \draw[0cell] 
      (0,0) node (tc) {\T C}
      (tc)++(1,0) node (tc') {\T C'}
      (tc)++(0,-1) node (iqtc) {\Q\T C}
      (tc')++(0,-1) node (tc'phi) {\T(C',\phi)}
      (tc'phi)++(1,0) node (ty) {\T X'}
      (ty)++(0,-1) node (y) {X'}
      (iqtc)++(1,-1) node (iqx) {\Q X}
      ;
      \draw[1cell] 
      (tc) edge node {\T\phi} (tc')
      (tc) edge['] node {\ze^\flat} (iqtc)
      (tc') edge['] node {\ka} (tc'phi)
      (iqtc) edge node {\wh{\phi}} (tc'phi)
      (tc') edge node {\T S} (ty)
      (ty) edge node {x'} (y)
      (iqtc) edge['] node {\Q\ol{R}} (iqx)
      (iqx) edge['] node {f^{\bot}} (y)
      (tc'phi) edge[dashed] node {\ol{S}} node['] {\exists !}(y)
      ;
    \end{tikzpicture}
  \end{equation}

  The construction of $\ol{S}$ then shows the following two equalities required for $\ol{S}$.
  First, using the definition $\wt{\phi} = \wh{\phi} \ze_{\T C}$ in \cref{eq:wtphi-kaphi}, the lower left parallelogram in \cref{eq:olS-pushout}, naturality of $\ze$, and the equality $f^\bot \ze_C = f$ in \cref{eq:fbot}, we have
  \begin{align*}
    \ol{S} \, \wt{\phi}
    & = \ol{S} \, \wh{\phi} \, \ze_{\T C} \\
    & = f^\bot \, (\Q \ol{R}) \, \ze_{\T C} \\
    & = f^\bot \, \ze_X \, \ol{R} \\
    & = f \, \ol{R}.
  \end{align*}
  Second, using the definition $\ka_\phi = \usf \ka \circ \eta_{C'}$ from \cref{eq:wtphi-kaphi}, 2-functoriality of $\usf$, the lower right parallelogram in \cref{eq:olS-pushout}, and the equality $S = (\usf x') \circ (\usf \T S) \circ \eta_{C'}$ from the diagram at right in \cref{eq:RSsetup}, we have
  \begin{align*}
    (\usf \ol{S}) \, \ka_\phi
    & = (\usf \ol{S}) \, (\usf \ka) \, \eta_{C'}\\
    & = (\usf x') \, (\usf \T S) \, \eta_{C'}\\
    & = S.
  \end{align*}

  This completes the construction of $\ol{S}$ and the proof that it satisfies the required equalities.
  Uniqueness of $\ol{S}$ is proved in \cref{lem:Sbar-unique}.
  This completes the proof.
\end{proof}

\section{The equivalence \texorpdfstring{$\Delta$}{∆}}
\label{sec:De1-equiv}

In this section we assume that
\begin{itemize}
\item $\T$ has an effective pseudomorphism classifier (\cref{defn:effectiveQi}) and 
\item $\T$ admits universal pseudomorphisms (\cref{defn:udc}).
\end{itemize}
This section contains two results showing that the canonical comparison \cref{eq:Delta}
\[
  \De \cn \T(C',\phi) \to \T C'
\]
is a surjective equivalence in $\Talgs$.
Its inverse is the strict $\T$-map in \cref{rmk:ka-adj}
\[
  \ka \cn \T C' \to \T(C',\phi),
\]
defined as the mate of $\ka_\phi \cn C' \to \usf\T(C',\phi)$.

\begin{thm}\label{thm:udc}
  Suppose $\T$ is a 2-monad on $\K$ that admits an effective pseudomorphism classifier $(\Q,\itt,\ze,\de)$ and universal pseudomorphisms $\wt{\phi}$.
  Suppose, moreover, that $\K$ admits cotensors of the form $\{\II,-\}$.
  Then the strict $\T$-map
  \[
    \De = \ol{\eta_{C'}} \cn \T(C',\phi) \to \T C'
  \]
  in \cref{eq:Delta} is a surjective equivalence in $\Talgs$ with inverse
  \[
    \ka \cn \T  C' \to \T(C',\phi)
  \]
  in \cref{eq:ka-adj}.
\end{thm}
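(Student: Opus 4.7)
The plan is to verify the two conditions of \cref{defn:adj-surj-equiv} for the pair $(\ka, \De)$: first that $\De \circ \ka = 1_{\T C'}$, and second that there is an invertible $\T$-algebra 2-cell $\Theta \cn \ka \De \Rightarrow 1_{\T(C',\phi)}$. Since both sides of the second equality are strict $\T$-maps, any such $\Theta$ will automatically live in $\Talgs$, as required.

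For the first identity, I would compute the mate of $\De \ka$ under $\T \dashv \usf$. Using the definition of $\ka$ in \cref{eq:ka-adj} and the right-hand triangle of \cref{eq:Deltadiagram}, the composite
\[
  \usf(\De \ka) \circ \eta_{C'} = \usf \De \circ \ka_\phi = \eta_{C'}
\]
coincides with the mate of $1_{\T C'}$. The uniqueness of mates (\cref{rmk:LRadj-uniqueness}) then forces $\De \ka = 1_{\T C'}$ in $\Talgs$.

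For the invertible 2-cell, the crux is the preliminary equality $\De \wt{\phi} = \T \phi$ in $\Talg$. This drops out of the universal property \cref{eq:univprop} applied to the $\T$-map $\T \phi$ and the pair $(\eta_C, \eta_{C'}) \cn \phi \to \usf \T \phi$: by construction, the unique extension is $(1_{\T C}, \De)$ from \cref{eq:Deltadiagram}, and the fact that this is a morphism in $\Talgii$ gives precisely $\De \wt{\phi} = \T \phi$. Hence $\ka \De \wt{\phi} = \ka \T \phi$. Next I would apply \cref{lem:flex} to $\psi = \wt{\phi}$ to obtain a strict $\T$-map $\wt{\phi}^\flat$ with $\usf \wt{\phi}^\flat \circ \eta_C = \ka_\phi \circ \phi$, together with an invertible algebra 2-cell $\Ga_{\wt{\phi}} \cn \wt{\phi} \Rightarrow \wt{\phi}^\flat$ satisfying $\usf \Ga_{\wt{\phi}} * \eta_C = 1$. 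Since $\ka \T \phi$ is a strict $\T$-map with $\usf(\ka \T \phi) \circ \eta_C = \usf \ka \circ \eta_{C'} \circ \phi = \ka_\phi \circ \phi$, uniqueness of mates forces $\wt{\phi}^\flat = \ka \T \phi = \ka \De \wt{\phi}$, so $\Ga_{\wt{\phi}}$ becomes an invertible algebra 2-cell $\wt{\phi} \Rightarrow \ka \De \wt{\phi}$.

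I would then invoke \cref{lem:222}~\cref{it:222-II}, the variant using cotensors of the form $\{\II,-\}$, with the data $f_1 = \ka \De \wt{\phi}$, $f_2 = \wt{\phi}$, $\al = \Ga_{\wt{\phi}}^{-1}$, $R = \eta_C$, $S_1 = S_2 = \ka_\phi$, and $\be = 1_{\ka_\phi}$. The compatibility $\be * \phi = \usf \al * R$ reduces to the identity on $\ka_\phi \circ \phi$, thanks to the second property of $\Ga_{\wt{\phi}}$ in \cref{lem:flex}. The induced extensions are forced to be $\ol R = 1_{\T C}$, $\ol{S}_1 = \ka \De$, and $\ol{S}_2 = 1_{\T(C',\phi)}$, and the lemma then produces the unique invertible algebra 2-cell $\Theta \cn \ka \De \Rightarrow 1_{\T(C',\phi)}$. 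The main bookkeeping obstacle will be the identification of $\ol{S}_1$ and $\ol{S}_2$: both rely on the equality $\De \wt{\phi} = \T \phi$ together with the mate identity $\usf \ka \circ \eta_{C'} = \ka_\phi$ and the uniqueness clause of the universal property of $\wt{\phi}$. Once those identifications are in place, invertibility of $\Theta$ is automatic from \cref{lem:222}~\cref{it:222-II}.
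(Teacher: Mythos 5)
Your proposal is correct and follows essentially the same route as the paper's proof: the retraction identity via uniqueness of mates, then \cref{lem:flex} applied to $\wt{\phi}$ together with the identification $\wt{\phi}^\flat = \ka \circ \T\phi$, and finally \cref{lem:222}~\cref{it:222-II} to produce the invertible algebra 2-cell. The only differences are cosmetic: the paper orients the 2-cell as $1 \iso \ka\De$ rather than $\ka\De \iso 1$, and identifies $\ka\De$ with the extension $\ol{\ka_\phi}$ by exhibiting two fillings of the same outer diagram instead of your (equally valid) direct use of $\De\wt{\phi} = \T\phi$ and the uniqueness clause.
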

\begin{proof}
  This argument consists of the following two steps.
  \begin{enumerate}
  \item Show that $\De \ka = 1_{\T C'}$.
  \item Define an invertible $\T$-algebra 2-cell
    \[
      \ol{\be}\cn \ka \De \iso 1_{\T(C',\phi)}.
    \]
  \end{enumerate}
  To begin, recall $\ka_\phi$ from \cref{eq:etakappa} is part of the unit morphism for $\wt{\phi}$.
  The strict $\T$-map $\ka$ is uniquely determined such that the outer triangle of the following diagram commutes in $\K$.
  \begin{equation}\label{eq:E}
    \begin{tikzpicture}[x=18ex,y=14ex,vcenter,xscale=1.2]
      \def\w{.05}
      \draw[0cell] 
      (0,0) node (a) {C'}
      (a)++(1,0) node (c) {\usf \T(C',\phi)}
      (a)++(.5,-.5) node (b) {\usf \T C'}
      ;
      \draw[1cell] 
      (a) edge['] node {\eta_{C'}} (b)
      (b) edge[',transform canvas={shift={(180:-\w)}}] node {\usf \ka} (c)
      (a) edge node {\ka_\phi} (c)
      (c) edge[',transform canvas={shift={(180:\w)}}] node {\usf \De} (b)
      ;
    \end{tikzpicture}
  \end{equation}
  The definition of $\De$ \cref{eq:Deltadiagram} implies that the inner triangle above also commutes in $\K$.
  Together these give the following equalities:
  \begin{align*}
    \eta_{C'} & = \usf \De \circ \ka_\phi\\
              & = \usf \De \circ \usf \ka \circ \eta_{C'} \\
              & = \usf (\De \circ \ka) \circ \eta_{C'}.
  \end{align*}
  Since $\De$ and $\ka$ are both strict $\T$-maps, the uniqueness of mates (\cref{rmk:LRadj-uniqueness}) implies
  \begin{equation}\label{eq:De-ka-1}
    \De \circ \ka = 1_{\T C'}
  \end{equation}
  as desired.

  Now we give the construction of $\be$.
  By hypothesis, there is a $\T$-map \cref{eq:phitilde}
  \[
    \wt{\phi} \cn \T C \zzto \T(C',\phi) \inspace \Talg
  \]
  satisfying the universal property \cref{eq:univprop}.
  Applying \cref{lem:flex} gives an isomorphism
  \begin{equation}\label{eq:Gamma}
    \Ga\cn \wt{\phi} \fto{\iso} \wt{\phi}^\flat
  \end{equation}
  such that $\usf \Ga * \eta_C = 1$.
  Now consider the following computation, beginning with \cref{lem:flex}~\cref{lem:flex:it1} and continuing with the indicated justifications.
  \begin{equation}\label{eq:sigma-ETphi-computation}
    \begin{aligned}
      \usf\wt{\phi}^\flat \circ \eta_C
      & = \usf\wt{\phi} \circ \eta_C
      & & \\
      & = \ka_\phi \circ \phi
      & & \mathrm{by}~\cref{eq:rmk-udc}~\mathrm{top}\\
      & = \usf \ka \circ \eta_{C'} \circ \phi
      & & \mathrm{by}~\cref{eq:E}\\
      & = \usf \ka \circ \usf \T\phi \circ \eta_C
      & & \mathrm{by~naturality~of~} \eta\\
      & = \usf (\ka \circ \T\phi) \circ \eta_C
      & & \mathrm{by~functoriality~of~} \usf
    \end{aligned}
  \end{equation}
  Hence, uniqueness of mates implies 
  \begin{equation}\label{eq:sigma-ETphi}
    \wt{\phi}^\flat = \ka \circ \T\phi.
  \end{equation}

  The equalities
  \[
    \usf\wt{\phi}^\flat \circ \eta_C
    = \usf\wt{\phi} \circ \eta_C
    = \ka_\phi \circ \phi
  \]
  in \cref{eq:sigma-ETphi-computation} also show that $(\eta_C,\ka_\phi)$ defines a morphism in $\K^\bii$ from $\phi$ to $\usf \wt{\phi}^\flat$.
  Applying the universal property of $\wt{\phi}$ \cref{eq:univprop} determines a morphism $(1,\ol{\ka_\phi})$ in $\Talgii$, as shown in the following diagram.
  \begin{equation}\label{eq:wtkadiagram}
    \begin{tikzpicture}[x=48ex,y=9ex,vcenter,xscale=1.2]
      \draw[0cell] 
      (0,0) node (c) {C}
      (c)++(1,0) node (c') {C'}
      (c)++(.3,-1) node (kc) {\usf \T C}
      (c')++(-.3,-1) node (kc') {\usf \T (C',\phi)}
      (c)++(0,-2) node (x) {\usf \T C}
      (c')++(0,-2) node (x') {\usf \T (C',\phi)}
      ;
      \draw[1cell] 
      (c) edge node {\phi} (c')
      (x) edge node {\usf \wt{\phi}^\flat} (x')
      (c) edge node[pos=.55] {\eta_C} (kc)
      (c') edge['] node[pos=.6] {\ka_{\phi}} (kc')
      (c) edge['] node {\eta_C} (x)
      (c') edge node {\ka_\phi} (x')
      (kc) edge[',dashed] node[pos=.4] {1} node[',pos=.3] {\exists !} (x)
      (kc') edge[dashed] node[pos=.5] {\usf \ol{\ka_\phi}} node[',pos=.3] {\exists !} (x')
      ;
      \draw[zz1cell]
      (kc) to node {\usf \wt{\phi}} (kc')
      ;
    \end{tikzpicture}
  \end{equation}

  Now observe that the outer diagram above can also be filled as below. 
  \begin{equation}\label{eq:wtkadiagramB}
    \begin{tikzpicture}[x=48ex,y=9ex,vcenter,xscale=1.2]
      \draw[0cell] 
      (0,0) node (c) {C}
      (c)++(1,0) node (c') {C'}
      (c)++(.3,-1) node (kc) {\usf \T C}
      (c')++(-.3,-1) node (kc') {\usf \T (C',\phi)}
      (c)++(0,-2) node (x) {\usf \T C}
      (c')++(0,-2) node (x') {\usf \T (C',\phi)}
      (x')++(-.3,.3) node (x'') {\usf \T C'}
      ;
      \draw[1cell] 
      (c) edge node {\phi} (c')
      (x) edge['] node {\usf \wt{\phi}^\flat} (x')
      (c) edge node[pos=.55] {\eta_C} (kc)
      (c') edge['] node[pos=.6] {\ka_{\phi}} (kc')
      (c) edge['] node {\eta_C} (x)
      (c') edge node {\ka_\phi} (x')
      (kc) edge[',dashed] node[pos=.4] {1} node[',pos=.3] {\exists !} (x)
      (kc') edge['] node {\usf \De} (x'')
      (x'') edge node[pos=.4] {\usf \ka} (x')
      (x) edge node[pos=.7] {\usf \T \phi} (x'')
      (c') edge[bend left=8] node {\eta_{C'}} (x'')
      ;
      \draw[zz1cell]
      (kc) to node {\usf \wt{\phi}} (kc')
      ;
    \end{tikzpicture}
  \end{equation}
  The triangles at right and bottom commute by \cref{eq:E} and \cref{eq:sigma-ETphi}, respectively.
  The remaining interior is that of \cref{eq:Deltadiagram} defining $\De$.
  By universality of $\wt{\phi}$, \cref{eq:univprop}, we conclude
  \[
    \ka \circ \De = \ol{\ka_\phi}.
  \]

  Finally, we use the hypothesis that $\K$ admits cotensors of the form $\{\II,-\}$ and apply \cref{lem:222}~\cref{it:222-II} to the diagram at left below, where $\be$ is the identity 2-cell of $\ka_{\phi}$.
  This application yields a 2-cell $\ol{\be}$ as shown in the diagram at right below.
  \[
    \begin{tikzpicture}[x=20ex,y=13ex,xscale=1.2]
      \draw[0cell] 
      (0,0) node (a) {C}
      (a)++(1,0) node (b) {C'}
      (a)++(0,-1) node (c) {\usf \T C}
      (b)++(0,-1) node (d) {\usf \T(C',\phi)}
      ;
      \draw[1cell] 
      (a) edge node {\phi} (b)
      (c) edge[',bend right=25,out=-20,in=195] node[shift={(3pt,0)}] (f2) {\usf \wt{\phi}^\flat} (d)
      (a) edge['] node {\eta} (c)
      (b) edge[bend left=25] node (s2) {\ka_\phi} (d)
      (b) edge[',bend right=25] node (s1) {\ka_\phi} (d)
      ;

      \draw (c) ++(.5,.2) ++(-3pt,0) node (M0) {};
      \draw[1cell]
      (c) to [bend left=5] (M0)
      decorate [decoration={zigzag, segment length=\midzzSegLen, amplitude=\midzzAmp}] 
      { to ++(\midzzTotLen,0) } node[above,shift={(-2pt,0)}] (f1) {\usf \wt{\phi}} to [bend left=8,in=179] (d)
      ;

      \draw[2cell]
      node[between=f1 and f2 at .5, shift={(-3pt,-1pt)}, rotate=-90, 2label={above,\,\usf\Ga}] {\Rightarrow}
      node[between=s1 and s2 at .5, rotate=0, 2label={above,\be}] {\Rightarrow}
      ;
    \end{tikzpicture}
    \qquad\qquad
    \begin{tikzpicture}[x=20ex,y=13ex]
      \draw[0cell] 
      (0,0) node (a) {\T C}
      (a)++(1,0) node (b) {\T (C',\phi)}
      (a)++(0,-1) node (c) {\T C}
      (b)++(0,-1) node (d) {\T(C',\phi)}
      ;
      \draw[1cell] 
      (c) edge[',bend right=25,in=200] node[shift={(3pt,0)}] (f2) {\wt{\phi}^\flat} (d)
      (a) edge['] node {1} (c)
      (b) edge[bend left=25] node (s2) {\ol{\ka_\phi}} (d)
      (b) edge[',bend right=25] node (s1) {1} (d)
      ;

      \draw[zz1cell]
      (a) to node {\wt{\phi}} (b)
      ;

      \draw (c) ++(.5,.2) ++(-3pt,0) node (M0) {};
      \draw[1cell]
      (c) to [bend left=5] (M0)
      decorate [decoration={zigzag, segment length=\midzzSegLen, amplitude=\midzzAmp}] 
      { to ++(\midzzTotLen,0) } node[above,shift={(-2pt,0)}] (f1) {\wt{\phi}} to [bend left=8,in=179] (d)
      ;

      \draw[2cell]
      node[between=f1 and f2 at .5, shift={(-1pt,0pt)}, rotate=-90, 2label={above,\,\Ga}] {\Rightarrow}
      node[between=s1 and s2 at .45, rotate=0, 2label={above,\ol{\be}}] {\Rightarrow}
      ;
    \end{tikzpicture}
  \]
  Since $\Ga$ is an isomorphism and $\be = 1$, the resulting $\ol{\be}$ is an invertible $\T$-algebra 2-cell
  \[
    \ol{\be}\cn 1 \iso \ol{\ka_\phi} = \ka \circ \De.
  \]
  This completes the proof that $\De$ and $\ka$ are inverse equivalences in $\Talgs$.
\end{proof}

\begin{thm}\label{rmk:alt-udc}
  Suppose $\T$ is a 2-monad on $\K$ that admits an effective pseudomorphism classifier $(\Q,\itt,\ze,\de)$ and universal pseudomorphisms $\wt{\phi}$.
  If $\T(C',\phi)$ is constructed as the pushout \cref{eq:udc-pushout} in $\Talgs$, then
  \[
    \De = \ol{\eta_{C'}} \cn \T(C',\phi) \to \T C'
  \]
  in \cref{eq:Delta} is an adjoint surjective equivalence in $\Talgs$.
\end{thm}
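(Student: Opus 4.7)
The plan is to reuse the retraction half of the proof of \cref{thm:udc} and then to build the required 2-cell by exploiting the specific pushout structure \cref{eq:udc-pushout} together with the adjoint equivalence data $(\ze^\flat, \de_{\T C}, \Theta^\flat)$ supplied by \cref{lem:zeflat}.

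The identity $\De \ka = 1_{\T C'}$ already appears inside the proof of \cref{thm:udc}, and it uses only the definition $\De = \ol{\eta_{C'}}$ together with uniqueness of mates for $\T \dashv \usf$; no cotensor hypothesis is required. The next step is an auxiliary identity $\De \wh{\phi} = \T\phi \circ \de_{\T C}$ between strict $\T$-maps $\Q\T C \to \T C'$: both composites, precomposed with $\ze_{\T C}$, reduce to $\T\phi$ --- for the former via $\wt{\phi} = \wh{\phi} \circ \ze_{\T C}$ from \cref{eq:wtphi-kaphi} together with $\De \wt{\phi} = \T\phi$ from \cref{eq:Deltadiagram}, and for the latter via the triangle identity $\de_{\T C} \circ \ze_{\T C} = 1_{\T C}$ of $\Q \dashv \itt$ --- so uniqueness of mates for $\Q \dashv \itt$ identifies them.

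Consequently $\ka \De \wh{\phi} = \ka \T\phi \de_{\T C} = \wh{\phi} \ze^\flat \de_{\T C}$, and whiskering $\Theta^\flat$ by $\wh{\phi}$ yields an invertible $\T$-algebra 2-cell $\wh{\phi} * \Theta^\flat \cn \ka \De \wh{\phi} \iso \wh{\phi}$. Pairing this with the identity $1_\ka \cn \ka \De \ka = \ka$ produces the two components of a candidate 2-cell $\Theta' \cn \ka \De \iso 1_{\T(C',\phi)}$, and their compatibility along the pushout relation $\wh{\phi} \ze^\flat = \ka \T\phi$ collapses, via the adjoint triangle identity $\Theta^\flat * \ze^\flat = 1_{\ze^\flat}$ of \cref{lem:zeflat}, to the tautology $1_{\ka \T\phi} = 1_{\wh{\phi} \ze^\flat}$.

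The main obstacle is that \cref{eq:udc-pushout} is only a 1-categorical pushout in $\Talgs$, whereas gluing the two 2-cells demands a 2-dimensional universal property. This enhancement is automatic here because $\ze^\flat$ is part of an adjoint equivalence in $\Talgs$: for every $W \in \Talgs$, whiskering by $\ze^\flat$ is an equivalence of hom-categories $\Talgs(\Q\T C, W) \simeq \Talgs(\T C, W)$, and pulling back along such an equivalence promotes a 1-categorical pullback of categories to a pseudo-pullback. Consequently $\Talgs(\T(C',\phi), W)$ realizes the pseudo-pullback of $\Talgs(\Q\T C, W)$ and $\Talgs(\T C', W)$ over $\Talgs(\T C, W)$, and the compatible pair above determines a unique invertible $\Theta' \cn \ka \De \iso 1_{\T(C',\phi)}$ in $\Talgs$ satisfying $\Theta' * \ka = 1_\ka$. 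This exhibits $(\ka, \De, \Theta')$ as the required adjoint surjective equivalence in the sense of \cref{defn:adj-surj-equiv}.
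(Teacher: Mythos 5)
Your argument follows essentially the same route as the paper's proof: establish $\De\ka = 1_{\T C'}$, identify $\De\wh{\phi}$ with $\T\phi\circ\de_{\T C}$, whisker $\Theta^\flat$ with $\wh{\phi}$ to get the component of the efficacy over the $\Q\T C$ leg and take the identity over the $\T C'$ leg, check compatibility via $\Theta^\flat * \ze^\flat = 1_{\ze^\flat}$, and glue using the two-dimensional universal property of the pushout. One small, genuinely nice variation: you pin down $\De\wh{\phi} = \T\phi\,\de_{\T C}$ directly by uniqueness of mates for $\Q \dashv \itt$, whereas the paper first defines an auxiliary map $\om$ out of the pushout and only identifies $\om = \De$ at the very end; your version is slightly more economical.

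The one place you diverge is the justification of the final gluing step, and there your argument does not hold up. The paper takes ``pushouts in $\Talgs$'' in the $\cat$-enriched sense (this is the standing convention announced at the start of \cref{sec:bg-lim-colim}), so the two-dimensional universal property---an isomorphism of hom-\emph{categories} $\Talgs(\T(C',\phi),W) \iso \Talgs(\Q\T C,W)\times_{\Talgs(\T C,W)}\Talgs(\T C',W)$---is part of the hypothesis, and the compatible pair $(\wh{\phi}*\Theta^\flat,\,1_\ka)$ glues on the nose, with the equation $\Theta'*\ka = 1_\ka$ coming for free. Your attempted derivation of this from a merely one-dimensional pushout is flawed: a pushout in $\Talgsz$ only identifies the \emph{sets of objects} of these hom-categories, so there is no ``$1$-categorical pullback of categories'' available to promote, and the $2$-cells out of $\T(C',\phi)$ are simply not controlled by a $1$-categorical colimit (nor does $\ze^\flat$ being an equivalence repair this). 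Moreover, even if one could produce an invertible $2$-cell $\ka\De \iso 1$ up to a pseudo-pullback comparison, that would not obviously deliver the strict triangle identity $\Theta'*\ka = 1_\ka$ required by \cref{defn:adj-surj-equiv}. The fix is simply to invoke the enriched pushout hypothesis rather than try to derive its two-dimensional aspect; with that reading, the rest of your proof is correct.
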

\begin{proof}
  Consider the following diagram, where the upper square is the pushout \cref{eq:udc-pushout} and $\om$ is described below.
  \begin{equation}\label{eq:om-diagram}
  \begin{tikzpicture}[x=14ex,y=8ex,vcenter,xscale=1.2]
    \draw[0cell] 
    (0,0) node (a) {\T C}
    (a)++(1,0) node (b) {\T C'}
    (a)++(0,-1) node (c) {\itt \Q\T C}
    (c)++(1,0) node (d) {\T(C',\phi)}
    (d)++(1,-1) node (y) {\T C'}
    (c)++(0,-1) node (x) {\T C}
    (x)++(0,-1) node (z) {\itt \Q\T C}
    (y)++(1,-1) node (w) {\T(C',\phi)}
    ;
    \draw[1cell] 
    (a) edge node {\T\phi} (b)
    (a) edge['] node {\ze^\flat} (c)
    (c) edge node {\wh{\phi}} (d)
    (b) edge node {\ka} (d)
    (c) edge['] node {\de} (x)
    (d) edge[dashed] node {\exists!} node['] {\om} (y)
    (b) edge[bend left=18] node {1} (y)
    (x) edge node {\T\phi} (y)
    (x) edge['] node {\ze^\flat} (z)
    (y) edge node {\ka} (w)
    (z) edge node {\wh{\phi}} (w)
    (c) edge[',bend right=45,out=-60,in=240] node {1} (z)
    ;
    \draw[2cell]
    (x)++(-.25,0) node[rotate=180, 2label={below,\Theta^\flat}] {\Rightarrow}
    ;
  \end{tikzpicture}
\end{equation}
Here, $(\ze^\flat,\de,\Theta^\flat)$ is the adjoint surjective equivalence of \cref{lem:zeflat,lem:flex}, with $\psi = \ze$ in the latter.
In particular, we have
\begin{equation}\label{eq:zeflat-Thetaflat}
  \de \ze^\flat = 1_{\T C}
  \andspace
  \Theta^\flat * \ze^\flat = 1_{\ze^\flat}.
\end{equation}
The left hand side of \cref{eq:zeflat-Thetaflat} implies that the two solid arrow composites from $\T C$ in the upper left to the lower right instance of $\T C'$ in \cref{eq:om-diagram} are equal.
Hence, we define $\om$ as the induced strict $\T$-map out of the pushout $\T(C',\phi)$, indicated by the dashed arrow in \cref{eq:om-diagram}.
Note that $\om \ka = 1$ by construction.

Next, whiskering $\wh{\phi}$ with the isomorphism $\Theta^\flat$ gives an isomorphism
\begin{equation}\label{eq:whphi-Thetaflat-whiskering}
  \ka \om \wh{\phi} = \wh{\phi} \ze^\flat \de \fto[\iso]{\wh{\phi} * \Theta^\flat} \wh{\phi}
  \withspace
  \bigl(\wh{\phi} * \Theta^\flat\bigr) * \ze^\flat  = \wh{\phi} * 1_{\ze^\flat} = 1_{\wt{\phi}},
\end{equation}
by the right hand side of \cref{eq:zeflat-Thetaflat} and the left hand side of \cref{eq:wtphi-kaphi}.
Thus, the two-dimensional aspect of the pushout implies that there is an isomorphism
\[
  \Psi \cn \ka\om \fto{\iso} 1_{\T(C',\phi)}
\]
such that $\wh{\phi} * \Psi = \Theta^\flat * \wh{\phi}$ and $\Psi * \ka = 1_\ka$.

This shows that $(\ka,\om,\Psi)$ is an adjoint surjective equivalence in $\Talgs$.
From uniqueness of $\De = \usf \ol{\eta_C'}$ in  \cref{eq:Deltadiagram}, it follows that $\om = \De$.
\end{proof}

\begin{rmk}\label{rmk:two-proofs-De}
  Note that \cref{rmk:alt-udc,thm:udc} require slightly different hypotheses.
  \Cref{thm:udc} requires certain limits in $\K$, in the form of cotensors, and \cref{rmk:alt-udc} requires certain colimits in $\Talgs$, in the form of pushouts.
\end{rmk}

\begin{rmk}[Consideration of lax coherence]\label{rmk:no-lax}
  The theory of pseudomorphism classifiers from \cref{sec:bg-psmor-class} has a parallel variant for \emph{lax morphism classifiers}, and some of the development in \cref{sec:Qi-strictification} can be generalized to the lax case.
  One can likewise generalize much of \cref{sec:upc} to a notion of \emph{universal lax morphism}.
  
  However, the efficacy $\Th$ for an effective lax morphism classifier is generally not invertible.
  The construction of $\Th^\flat$ in \cref{eq:Thetaflat} requires invertibility of $\Th$, and this is used in the proofs of \cref{lem:zeflat,lem:flex}.
  The proofs of \cref{thm:udc,rmk:alt-udc} above depend crucially on  \cref{lem:zeflat,lem:flex}, and hence do not apparently generalize to the lax case.
\end{rmk}

\section{Constructing \texorpdfstring{$\Q$}{Q} via universal pseudomorphisms}\label{sec:udc-implies-Qi}

Throughout this section, we suppose that $\T$ admits universal pseudomorphisms (\cref{defn:udc}).
The goal of this section is to show that this hypothesis determines a pseudomorphism classifier for $\T$ via certain coequalizers in $\Talgs$.
We first recall \emph{reflexive pairs} of morphisms, and then introduce the more specialized notion of \emph{$\P$-free pairs} in \cref{defn:P-free-fork}.

\begin{defn}\label{defn:reflexive-pair}
  A \emph{reflexive pair} in a category $C$ is a pair of parallel morphisms $f$ and $g$ with a common section $t$,
  \begin{equation}\label{eq:refl-pair}
    \begin{tikzpicture}[x=15ex,y=8ex,baseline=(a.base)]
      \def\halfsep{.7mm}
      \draw[0cell] 
      (0,0) node (a) {X}
      (1,0) node (b) {Y}
      ;
      \draw[1cell] 
      (a) edge[transform canvas={yshift=\halfsep}] node (f) {f} (b)
      (a) edge[swap,transform canvas={yshift=-\halfsep}] node (g) {g} (b)
      (b) edge[swap, bend right=45] node (t) {t} (a)
      ;
    \end{tikzpicture}
    \sothatspace 
    gt = ft = 1_Y.
  \end{equation}
\end{defn}

\begin{rmk}\label{rmk:mu-Tx-usplit}
Recall from \cref{example:usplit} that each $\T$-algebra $(X,x)$ is the coequalizer of a canonical $\usf$-split pair, with splittings below and the forgetful $\usf$ suppressed.
\[
  \begin{tikzpicture}[x=15ex,y=8ex]
    \def\halfsep{.7mm}
    \draw[0cell] 
    (0,0) node (a) {\T^2X}
    (1,0) node (b) {\T X}
    (2,0) node (c) {X}
    ;
    \draw[1cell] 
    (a) edge[transform canvas={yshift=\halfsep}] node (f) {\mu} (b)
    (a) edge[swap,transform canvas={yshift=-\halfsep}] node (g) {\T\Xmul} (b)
    (b) edge[swap] node (h) {\Xmul} (c)
    (c) edge[swap, bend right=30] node (s) {\eta_X} (b)
    (b) edge[swap, bend right=45] node (t) {\eta_{\T X}} (a)
    ;
  \end{tikzpicture}
\]
Furthermore, $\T\eta_X$ provides a common splitting for $\mu$ and $\T\Xmul$, so that the following is a reflexive pair in $\Talgs$.
\begin{equation}\label{eq:mu-Tx-reflexive}
  \begin{tikzpicture}[x=15ex,y=8ex]
    \def\halfsep{.7mm}
    \draw[0cell] 
    (0,0) node (a) {\T^2X}
    (1,0) node (b) {\T X}
    ;
    \draw[1cell] 
    (a) edge[transform canvas={yshift=\halfsep}] node (f) {\mu} (b)
    (a) edge[swap,transform canvas={yshift=-\halfsep}] node (g) {\T\Xmul} (b)
    (b) edge[swap, bend right=45] node (t) {\T\eta_{X}} (a)
    ;
  \end{tikzpicture}
\end{equation}
\end{rmk}

\begin{defn}\label{defn:QTC-TC1}
  For each object $C \in \K$, define
  \begin{equation}\label{eq:QTC-TC1}
    \P C = \T(C,1_C)
  \end{equation}
  as in \cref{eq:phitilde}, with $\phi = 1_C$.
  For a $\T$-map $f\cn \T C \zzto \T C'$, with $C,C' \in \K$, define
  \begin{equation}\label{eq:QTC-TC1-f}
    \P f = \ol{S}
    \forspace
    S = \wt{1_{C'}} \circ \bigl( \usf f \bigr) \circ \eta_C.
  \end{equation}
  That is, $\ol{S}$ is the unique strict $\T$-map determined by the universal property \cref{eq:rmk-udc}, as shown in the following diagram.
  \begin{equation}\label{eq:QTC-TC1-diagram}
    \begin{tikzpicture}[x=42ex,y=10ex,vcenter,xscale=1.2]
      \draw[0cell] 
      (0,0) node (c) {C}
      (c)++(1,0) node (c') {C}
      (c)++(.3,-1) node (kc) {\usf \T C}
      (c')++(-.3,-1) node (kc') {\usf \T (C,1_C)}
      (c)++(0,-2) node (x) {\usf \T C}
      (c')++(0,-2) node (x') {\usf \T(C',1_{C'})}
      (x)++(.5,0) node (z) {\usf \T C'}
      (c')++(0,-.66) node (c'1) {\usf \T C}
      (c'1)++(0,-.66) node (c'2) {\usf \T C'}
      ;
      \draw[1cell] 
      (c) edge node {1_C} (c')
      (c) edge node[pos=.55] {\eta_C} (kc)
      (c') edge['] node[pos=.6] {\ka_{1_C}} (kc')
      (c) edge['] node {\eta_C} (x)
      (c') edge node {\eta_C} (c'1)
      (c'2) edge node {\usf \wt{1_{C'}}} (x')
      (kc) edge[',dashed] node[pos=.3] {1} (x)
      (kc') edge[dashed] node[pos=.4] {\usf \ol{S}} node[',pos=.15] {\exists !} (x')
      ;
      \draw[zz1cell]
      (c'1) to node {\usf f} (c'2)
      ;
      \draw[zz1cell]
      (x) to node {\usf f} (z)
      ;
      \draw[zz1cell]
      (z) to node {\usf \wt{1_{C'}}} (x')
      ;
      \draw[zz1cell]
      (kc) to node {\usf \wt{1_C}} (kc')
      ;
    \end{tikzpicture}
  \end{equation}
  \ 
\end{defn}

\begin{notn}\label{notn:underlying-Talg}
  Recalling \cref{notn:underlying}, we use
  \[
    \Talgz \andspace \Talgsz
  \]
  below to denote the underlying 1-categories of $\Talg$ and $\Talgs$, respectively.
\end{notn}

\begin{defn}\label{defn:not-kleisli}
  Let $\cF$ denote the category whose objects are 0-cells of $\K$ and with hom sets
  \[
    \cF(C,C') = \Talgz(\T C,\T C') \forspace C,C' \in \K.
  \]
\end{defn}

\begin{rmk}\label{rmk:not-kleisli}
We note that $\cF$ is similar to the Kleisli category for the underlying monad $T_0$ on $\K_0$, but has $\T$-maps as morphisms instead of strict $\T$-maps.
\end{rmk}

\begin{prop}\label{prop:wt1:Id-P}
  Let $I\cn \cF \to \Talgz$ denote the functor given by $\T$ on objects and the identity on morphisms.
  Then, in the context of \cref{defn:QTC-TC1}, $\P$ defines a functor
  \[
    \P \cn \cF \to \Talgsz.
  \]
  Furthermore, the components $\wt{1}_C \cn \T C \to \T(C,1_C) = \P C$ in \cref{eq:QTC-TC1-diagram} define a natural transformation
  \[
    \wt{1}\cn I \to \itt\P.
  \]
\end{prop}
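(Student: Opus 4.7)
The plan is to derive both the functoriality of $\P$ and the naturality of $\wt{1}$ from the uniqueness clause in the universal property \cref{eq:univprop} of $\wt{1_C}$. The pivotal observation is that the defining diagram \cref{eq:QTC-TC1-diagram} for $\P f$, with $f \cn \T C \zzto \T C'$, automatically yields the equation $\P f \circ \wt{1_C} = \wt{1_{C'}} \circ f$ in $\Talg$, because $\P f = \ol{S}$ is precisely the strict $\T$-map whose composite with $\wt{1_C}$ factors $\wt{1_{C'}} \circ f$. Once $\P$ is shown to be a functor, this equation \emph{is} the naturality square for $\wt{1} \cn I \to \itt \P$, so the second assertion will follow from the first with no extra work.

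For preservation of identities, when $f = 1_{\T C}$ the data $(R,S)$ in \cref{eq:QTC-TC1-diagram} becomes $(\eta_C, \ka_{1_C})$, and the pair $(1_{\T C}, 1_{\P C})$ trivially satisfies the equation $1_{\P C} \circ \wt{1_C} = \wt{1_C}$ together with the two triangle conditions. Uniqueness in \cref{eq:univprop} therefore forces $\P(1_{\T C}) = 1_{\P C}$.

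For composition, given $f \cn \T C \zzto \T C'$ and $g \cn \T C' \zzto \T C''$ in $\cF$, two applications of the defining equation give
\[
(\P g \circ \P f) \circ \wt{1_C} = \P g \circ \wt{1_{C'}} \circ f = \wt{1_{C''}} \circ (g \circ f),
\]
which is exactly the equation characterizing $\P(g \circ f)$. A short unpacking, using $\usf \P f \circ \ka_{1_C} = \wt{1_{C'}} \circ \usf f \circ \eta_C$ and the corresponding equation for $g$, shows that the $\K^\bii$-datum associated to $(1_{\T C}, \P g \circ \P f)$ under the bijection \cref{eq:univprop} agrees with the $(R,S)$ used to define $\P(g \circ f)$; uniqueness then forces $\P(g \circ f) = \P g \circ \P f$.

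Assembling these two verifications produces the functor $\P \cn \cF \to \Talgsz$, and as observed above the naturality of $\wt{1}$ is the defining equation of each $\P f$. I do not anticipate a substantive obstacle; the only vigilance required is to keep track that $\cF$ uses \emph{pseudo} $\T$-morphisms as morphisms while $\P$ lands in \emph{strict} ones, and to note that the left triangle $\ol{R} = 1_{\T C}$ in \cref{eq:QTC-TC1-diagram} holds for free in every application here because the base 1-cell $\phi = 1_C$ is an identity.
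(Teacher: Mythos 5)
Your proposal is correct and follows essentially the same route as the paper: functoriality of $\P$ is extracted from the uniqueness clause of the universal property \cref{eq:univprop}, and naturality of $\wt{1}$ is read off from the lower trapezoid $\P f \circ \wt{1_C} = \wt{1_{C'}} \circ f$ in \cref{eq:QTC-TC1-diagram}. The paper's proof is just a two-line version of exactly this argument, so your additional detail (the identity and composition checks) is a faithful expansion rather than a different approach.
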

\begin{proof}
  Functoriality of $\P$ follows from uniqueness of $\ol{S}$ in \cref{eq:QTC-TC1-diagram}.
  Naturality of $\wt{1}$ follows from the commutativity of the lower trapezoid in \cref{eq:QTC-TC1-diagram}.
\end{proof}

\begin{defn}\label{defn:P-free-fork}
  Suppose that $(X,\Xmul)$ is a $\T$-algebra.
  Recall from \cref{eq:mu-Tx-reflexive} that $(\mu,\T\Xmul)$ is a reflexive pair in $\cF$. 
  The \emph{$\P$-free pair} associated to $(X,\Xmul)$ is the pair of strict $\T$-maps $(\P\mu,\P\T\Xmul)$:
  \begin{equation}\label{eq:P-free-fork}
    \begin{tikzpicture}[x=15ex,y=8ex,vcenter]
      \def\halfsep{.7mm}
      \draw[0cell] 
      (0,0) node (x) {\P\T X}
      (1,0) node (y) {\P X}
      ;
      \draw[1cell] 
      (x) edge[transform canvas={yshift=\halfsep}] node {\P\mu} (y)
      (x) edge[',transform canvas={yshift=-\halfsep}] node {\P\T x} (y)
      ;
    \end{tikzpicture}
  \end{equation}
  We say that $\Talgsz$ \emph{admits coequalizers of $\P$-free pairs} if there is a coequalizer of \cref{eq:P-free-fork} in $\Talgsz$ for each $\T$-algebra $(X,\Xmul)$.
\end{defn}

\begin{rmk}\label{rmk:P-free-fork}
  In the context of \cref{defn:P-free-fork}, the pair $(\mu,\T\Xmul)$ is a reflexive pair, and thus the same holds for $(\P\mu,\P\T\Xmul)$.
  Thus, if $\Talgsz$ admits coequalizers of reflexive pairs, then $\Talgsz$ admits coequalizers of $\P$-free pairs in particular.
\end{rmk}

\begin{defn}\label{defn:QX-coeq-mk2}
  Suppose that $\Talgsz$ admits coequalizers of $\P$-free pairs, and suppose $(X,\Xmul)$ is a $\T$-algebra.
  Define $\Q X$ as the following coequalizer in $\Talgsz$.
  \begin{equation}\label{eq:QX-coeq-mk2}
    \begin{tikzpicture}[x=15ex,y=8ex,vcenter]
      \def\halfsep{.7mm}
      \draw[0cell] 
      (0,0) node (x) {\P\T X}
      (1,0) node (y) {\P X}
      (1.8,0) node (z) {\Q X}
      ;
      \draw[1cell] 
      (x) edge[transform canvas={yshift=\halfsep}] node {\P\mu} (y)
      (x) edge[',transform canvas={yshift=-\halfsep}] node {\P\T x} (y)
      (y) edge[dashed] node {} (z)
      ;
    \end{tikzpicture}
  \end{equation}
\end{defn}

Recalling \cref{example:usplit,prop:itt-and-u-split-coeq}, each $\T$-algebra $(X,\Xmul)$ is the coequalizer, in both $\Talgs$ and $\Talg$ and their respective underlying categories, of the pair $(\mu,\T\Xmul)$.
\begin{defn}\label{defn:ze-coeq-mk2}
  For each $X \in \Talgz$, define a morphism $\ze_X$ to be the unique $\T$-map induced by the universal property of $(X,\Xmul)$ as the coequalizer in $\Talgz$, as shown in the following diagram with $\itt$ suppressed.
  The squares at left commute by naturality of $\wt{1}$ in \cref{prop:wt1:Id-P}.
  \begin{equation}\label{eq:zeX-mk2}
    \begin{tikzpicture}[x=15ex,y=10ex,vcenter]
      \def\halfsep{.7mm}
      \draw[0cell] 
      (0,0) node (x) {\T^2X}
      (1,0) node (y) {\T X}
      (1.8,0) node (z) {X}
      (0,-1) node (x') {\P\T X}
      (1,-1) node (y') {\P X}
      (1.8,-1) node (z') {\Q X}
      ;
      \draw[1cell] 
      (x) edge[transform canvas={yshift=\halfsep}] node {\mu} (y)
      (x) edge[',transform canvas={yshift=-\halfsep}] node {\T x} (y)
      (y) edge node {x} (z)
      (x') edge[transform canvas={yshift=\halfsep}] node {\P\mu} (y')
      (x') edge[',transform canvas={yshift=-\halfsep}] node {\P\T x} (y')
      (y') edge node {} (z')
      ;
      \draw[zz1cell]
      (x) -- node['] {\wt{1}} (x')
      ;
      \draw[zz1cell]
      (y) -- node {\wt{1}} (y')
      ;
      \draw[zz1cell,dashed]
      (z) -- node['] {\exists!} node {\ze_{X}} (z')
      ;
    \end{tikzpicture}
  \end{equation}
  \ 
\end{defn}

\begin{defn}\label{defn:de-De-mk2}
  For each $Y \in \Talgsz$, define a strict $\T$-map
  \[
    \de_Y \cn \Q Y \to Y
  \]
  as follows.
  Recall from \cref{eq:Delta} the strict $\T$-maps
  \[
    \De = \ol{\eta_{C'}} \cn \T(C',\phi) \to \T C' \forspace \phi \cn C \to C' \in \K.
  \]
  In the case $\phi = 1_C$, this gives a strict $\T$-map
  \begin{equation}\label{eq:DePC-mk2}
    \De_C \cn \P C \to \T C.
  \end{equation}
  Naturality of the components $\De_C$ with respect to strict $\T$-maps $h\cn \T C \to \T C'$ follows from the definition of $\P h$ \cref{eq:QTC-TC1-diagram} and uniqueness of $\ol{S}$ in the universal property \cref{eq:rmk-udc} with $\phi = 1_C$, $f = h$, and $S = \De \circ \wt{1} \circ h \circ \eta_C$.
  
  Define $\de_Y$ as the unique strict $\T$-map induced by the universal property of $\Q$ as the coequalizer in $\Talgsz$, as shown in the following diagram with $\usf$ suppressed.
  The squares at left commute by naturality of $\De$.
  \begin{equation}\label{eq:deY-mk2}
    \begin{tikzpicture}[x=15ex,y=10ex,vcenter]
      \def\halfsep{.7mm}
      \draw[0cell] 
      (0,-1) node (x) {\T^2Y}
      (1,-1) node (y) {\T Y}
      (1.8,-1) node (z) {Y}
      (0,0) node (x') {\P\T Y}
      (1,0) node (y') {\P Y}
      (1.8,0) node (z') {\Q Y}
      ;
      \draw[1cell] 
      (x) edge[transform canvas={yshift=\halfsep}] node {\mu} (y)
      (x) edge[',transform canvas={yshift=-\halfsep}] node {\T y} (y)
      (y) edge node {y} (z)
      (x') edge[transform canvas={yshift=\halfsep}] node {\P\mu} (y')
      (x') edge[',transform canvas={yshift=-\halfsep}] node {\P\T y} (y')
      (y') edge node {} (z')
      (x') edge['] node {\De_{\T Y}} (x)
      (y') edge['] node {\De_Y} (y)
      (z') edge[',dashed] node {\de_Y} (z)
      ;
    \end{tikzpicture}
  \end{equation}
  \ 
\end{defn}

\begin{lem}\label{lem:zeta-triangle-mk2}
  Given a $\T$-map $f\cn X \zzto X'$, there are unique strict $\T$-maps $\ol{f}$ and $f^\bot$ that make the following diagram commute in $\Talgz$, with $\usf$ and $\itt$ suppressed.
  \begin{equation}\label{eq:zeta-triangle-mk2}
    \begin{tikzpicture}[x=18ex,y=10ex,vcenter]
      \def\halfsep{.7mm}
      \draw[0cell] 
      (0,0) node (x) {\T^2X}
      (1,0) node (y) {\T X}
      (1.8,0) node (z) {X}
      (0,-1) node (x') {\P\T X}
      (1,-1) node (y') {\P X}
      (1.8,-1) node (z') {\Q X}
      (2.6,-1) node (w) {X'}
      ;
      \draw[1cell] 
      (x) edge[transform canvas={yshift=\halfsep}] node {\mu} (y)
      (x) edge[',transform canvas={yshift=-\halfsep}] node {\T x} (y)
      (y) edge node {x} (z)
      (x') edge[transform canvas={yshift=\halfsep}] node {\P\mu} (y')
      (x') edge[',transform canvas={yshift=-\halfsep}] node {\P\T x} (y')
      (y') edge node {} (z')
      (y') edge[dashed,bend right] node {\exists!} node['] {\ol{f}}(w)
      ;
      \draw[zz1cell]
      (x) -- node['] {\wt{1}} (x')
      ;
      \draw[zz1cell]
      (y) -- node {\wt{1}} (y')
      ;
      \draw[zz1cell]
      (z) -- node {f} (w)
      ;
      \draw[zz1cell]
      (z) to node['] {\ze_X} (z')
      ;
      \draw[1cell,dashed]
      (z') to node {f^\bot} node['] {\exists!}(w)
      ;
    \end{tikzpicture}
  \end{equation}
  Here,
  \begin{itemize}
  \item $\ol{f}$ is the unique strict $\T$-map such that $\ol{f} \circ \wt{1} = f \circ \Xmul$ and
  \item $f^\bot$ is the unique strict $\T$-map such that $f^\bot \circ \ze_X = f$.
  \end{itemize} 
  In particular, if $f = 1_Y$, then $f^\bot = \de_Y$ by uniqueness.
\end{lem}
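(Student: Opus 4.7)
The plan is to construct $\ol{f}$ via the universal property of the universal pseudomorphism $\wt{1}_X$, produce $f^\bot$ as the induced map out of the coequalizer $\Q X$, and finally identify $f^\bot$ with $\de_Y$ when $f = 1_Y$.

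To construct $\ol{f}$, observe that $(1_X, \usf f) \cn 1_X \to \usf f$ is a morphism in $\K^\bii$ since the defining square commutes trivially. Applying the universal property \cref{eq:rmk-udc} to $\wt{1}_X \cn \T X \zzto \P X$ yields unique strict $\T$-maps $\ol{R} \cn \T X \to X$ and $\ol{S} \cn \P X \to X'$ with $\ol{S} \circ \wt{1}_X = f \circ \ol{R}$ together with the two triangles. The algebra unit axiom $\usf \Xmul \circ \eta_X = 1_X$ and the uniqueness-of-mates principle from \cref{rmk:LRadj-uniqueness} force $\ol{R} = \Xmul$, so $\ol{f} := \ol{S}$ is a strict $\T$-map satisfying $\ol{f} \circ \wt{1}_X = f \circ \Xmul$. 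Uniqueness of $\ol{f}$ among strict $\T$-maps with this property is the uniqueness clause of the same universal property, applied to the pair $(\Xmul, \usf f)$ in $\K^\bii$.

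Next, let $q \cn \P X \to \Q X$ denote the coequalizer from \cref{defn:QX-coeq-mk2}. The key step is that $\ol{f}$ coequalizes the $\P$-free pair $(\P\mu, \P\T\Xmul)$. Using naturality of $\wt{1}$ from \cref{prop:wt1:Id-P} and the algebra associativity $\Xmul \circ \mu = \Xmul \circ \T\Xmul$, I obtain
\[
  \ol{f} \circ \P\mu \circ \wt{1}_{\T X} = f \circ \Xmul \circ \mu = f \circ \Xmul \circ \T\Xmul = \ol{f} \circ \P\T\Xmul \circ \wt{1}_{\T X}.
\]
To cancel $\wt{1}_{\T X}$ on the right, I reinvoke the bijection \cref{eq:univprop} for $\wt{1}_{\T X}$: any two strict $\T$-maps $\P\T X \to X'$ that agree as $\T$-maps after precomposition with $\wt{1}_{\T X}$ correspond under the bijection to the same pair in $\K^\bii$, hence are equal. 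This yields $\ol{f} \circ \P\mu = \ol{f} \circ \P\T\Xmul$, so the universal property of $q$ produces a unique strict $\T$-map $f^\bot \cn \Q X \to X'$ with $f^\bot \circ q = \ol{f}$. Then $f^\bot \circ \ze_X \circ \Xmul = f^\bot \circ q \circ \wt{1}_X = \ol{f} \circ \wt{1}_X = f \circ \Xmul$ by the defining equation of $\ze_X$ in \cref{defn:ze-coeq-mk2}. Cancelling the epimorphism $\Xmul$, which is a coequalizer by \cref{example:usplit,prop:itt-and-u-split-coeq}, gives $f^\bot \circ \ze_X = f$, and uniqueness of $f^\bot$ follows by the same two cancellations applied to any alternative candidate.

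For the final clause, when $f = 1_Y$ the characterization of $\De_Y$ in \cref{eq:Deltadiagram} with $\phi = 1_Y$ forces $\De_Y \circ \wt{1}_Y = 1_{\T Y}$ by uniqueness, so the defining equation $\de_Y \circ q = \Ymul \circ \De_Y$ from \cref{defn:de-De-mk2} gives $\de_Y \circ q \circ \wt{1}_Y = \Ymul$. Combining this with $\ze_Y \circ \Ymul = q \circ \wt{1}_Y$ and cancelling the epi $\Ymul$ yields $\de_Y \circ \ze_Y = 1_Y$, whence $f^\bot = \de_Y$ by uniqueness of $1_Y^\bot$. I expect the main obstacle to be the cancellation of $\wt{1}_{\T X}$ in the middle paragraph, since deducing equality of strict $\T$-maps from equality after whiskering with a non-strict universal pseudomorphism uses the explicit form of the bijection \cref{eq:univprop} crucially; once this is in hand, the rest reduces to standard coequalizer and epi-cancellation bookkeeping.
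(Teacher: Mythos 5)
Your proposal is correct and follows essentially the same route as the paper: construct $\ol{f}$ from the universal property of $\wt{1}_X$ applied to the pair $(1_X,\usf f)$, use the universal property of $\wt{1}_{\T X}$ (i.e.\ $\P\T X = \T(\T X,1_{\T X})$) to see that $\ol{f}$ coequalizes the $\P$-free pair, induce $f^\bot$ from the coequalizer $\Q X$, and obtain $f^\bot \circ \ze_X = f$ and uniqueness from the right-hand square together with the universality of $X$ as the coequalizer of $(\mu,\T\Xmul)$. Your cancellation of $\wt{1}_{\T X}$ is valid because, as you note, the two candidate strict maps fit into morphisms $(j,k_1),(j,k_2)\cn \wt{1}_{\T X} \to f$ in $\Talgii$ with the same image in $\K^\bii$ (the second components agree since $\ka_{1_{\T X}} = \usf\wt{1}_{\T X}\circ\eta$), which is exactly the uniqueness clause the paper invokes, and your more detailed verification of $f^\bot = \de_Y$ when $f = 1_Y$ matches the paper's terser ``by uniqueness.''
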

\begin{proof}
  The strict $\T$-map $\ol{f}$ in \cref{eq:zeta-triangle-mk2} is induced by the universal property \cref{eq:rmk-udc} with $(R,S) = (\usf 1_X,\usf f)$.
  The asserted uniqueness of $\ol{f}$ is that of \cref{eq:rmk-udc}.

  The universal property for $\P\T X = \T(\T X,1_{\T X})$ implies that $\ol{f}$ coequalizes $\P\mu$ and $\P\T x$.
  The strict $\T$-map $f^\bot$ is thus induced by universality of $\Q X$ as the coequalizer in $\Talgsz$.
  The equality $f^\bot \circ \ze_X = f$, in the triangle at right in \cref{eq:zeta-triangle-mk2}, follows by commutativity of the right-hand square in \cref{eq:zeta-triangle-mk2} and universality of $X$ as the coequalizer of $(\mu,\T\Xmul)$.

  The asserted uniqueness of $f^\bot$ follows from the uniqueness of $\ol{f}$ and uniqueness in the universal property of $\Q X$.
  Indeed, suppose $f^\dagger\cn \Q X \to X'$ is any strict $\T$-map such that $f^\dagger \circ \ze_X = f$, and let $\ell\cn \P X \to \Q X$ denote 
  the structure morphism in \cref{eq:zeta-triangle-mk2}.
  Commutativity of the triangle and square at right in \cref{eq:zeta-triangle-mk2} implies $f^\dagger \circ \ell \circ \wt{1} = f \circ \Xmul$, and so $f^\dagger \circ \ell$ is equal to $\ol{f}$ by uniqueness.
  This, in turn, implies $f^\dagger = f^\bot$ by uniqueness in the universal property of $\Q X$.
\end{proof}

\begin{defn}\label{defn:Qf-mk2}
  Given a $\T$-map $f \cn X \zzto X'$, define a strict $\T$-map
  \begin{equation}\label{eq:Qf-mk2}
    \Q f = \bigl( \ze_{X'} \circ f \bigr)^\bot \cn \Q X \to \Q X'
  \end{equation}
  as the unique strict $\T$-map of \cref{lem:zeta-triangle-mk2} associated to the composite $\ze_{X'} \circ f$.
  Thus, $\Q f$ is the unique strict $\T$-map such that the following diagram commutes in $\Talgz$.
  \begin{equation}\label{eq:Qf-triangle-mk2}
    \begin{tikzpicture}[x=20ex,y=10ex,vcenter,xscale=1.2]
      \def\halfsep{.7mm}
      \draw[0cell] 
      (0,0) node (x) {X}
      (.475,-.475) node (x') {X'}
      (0,-1) node (qx) {\Q X}
      (1,-1) node (qx') {\Q X'}
      ;
      \draw[zz1cell]
      (x) -- node {f} (x')
      ;
      \draw[zz1cell]
      (x) to node['] {\ze_X} (qx)
      ;
      \draw[zz1cell]
      (x') to node {\ze_{X'}} (qx')
      ;
      \draw[1cell,dashed]
      (qx) to node['] {\Q f = \bigl( \ze_{X'} \circ f \bigr)^\bot} node {\exists!} (qx')
      ;
    \end{tikzpicture}
  \end{equation}
  \ 
\end{defn}

\begin{prop}\label{prop:Qfun-mk2}
  There is a functor
  \begin{equation}\label{eq:Qfun-mk2}
    \Q \cn \Talgz \to \Talgsz
  \end{equation}
  with object and morphism assignments given respectively by \cref{eq:QX-coeq-mk2,eq:Qf-mk2}.
  Furthermore, the components of \cref{eq:zeX-mk2,eq:deY-mk2} define respective natural transformations
  \begin{equation}\label{eq:ze-de-nat-mk2}
    \ze\cn 1_{\Talgz} \to \itt \Q
    \andspace
    \de\cn \Q \itt \to 1_{\Talgsz}
  \end{equation}
\end{prop}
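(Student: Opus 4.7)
The plan is to deduce all three assertions from the uniqueness statement of \cref{lem:zeta-triangle-mk2}, which characterizes strict $\T$-maps out of $\Q X$ by their precomposition with $\ze_X$. In particular, I will repeatedly use that two strict $\T$-maps $g,g' \cn \Q X \to X'$ agree as soon as $g \circ \ze_X = g' \circ \ze_X$, since both are then identified with $(g \circ \ze_X)^\bot$.

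First I would verify naturality of $\ze$. Given a $\T$-map $f\cn X \zzto X'$, the defining equation \cref{eq:Qf-triangle-mk2} of $\Q f$ is precisely $\itt\Q f \circ \ze_X = \ze_{X'} \circ f$, which is the required naturality square. Next I would prove that $\Q$ is a functor. For the identity $1_X$, the strict $\T$-map $1_{\Q X}$ clearly satisfies $1_{\Q X} \circ \ze_X = \ze_X \circ 1_X$, so by uniqueness in \cref{lem:zeta-triangle-mk2} we obtain $\Q(1_X) = 1_{\Q X}$. For a composable pair of $\T$-maps $X \fto{f} X' \fto{g} X''$, the composite $\Q g \circ \Q f$ is strict and satisfies
\[
(\Q g \circ \Q f) \circ \ze_X = \Q g \circ \ze_{X'} \circ f = \ze_{X''} \circ g \circ f,
\]
by two applications of the naturality just established. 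Again by uniqueness, $\Q(g \circ f) = \Q g \circ \Q f$.

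Finally I would verify naturality of $\de$ with respect to a strict $\T$-map $h \cn Y \to Y'$. Since $\Q h$ arises by applying $\Q$ to the image of $h$ under $\itt$, the previous item already gives a well-defined strict $\T$-map $\Q h \cn \Q Y \to \Q Y'$, and both $\de_{Y'} \circ \Q h$ and $h \circ \de_Y$ are strict $\T$-maps $\Q Y \to Y'$. To show they agree, I would precompose with $\ze_Y$. The key ingredient is the identity $\de_Y \circ \ze_Y = 1_Y$, which follows from the final clause of \cref{lem:zeta-triangle-mk2} applied to $f = 1_Y$: indeed, $\de_Y$ is the unique $f^\bot$ with $f^\bot \circ \ze_Y = 1_Y$. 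Using this together with naturality of $\ze$:
\[
\de_{Y'} \circ \Q h \circ \ze_Y = \de_{Y'} \circ \ze_{Y'} \circ h = h, \qquad h \circ \de_Y \circ \ze_Y = h \circ 1_Y = h.
\]
Applying uniqueness in \cref{lem:zeta-triangle-mk2} to the $\T$-map $h\cn Y \zzto Y'$ then forces $\de_{Y'} \circ \Q h = h \circ \de_Y$, completing the naturality of $\de$.

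I expect the main subtlety to be bookkeeping around the distinction between a strict $\T$-map $h$ and its image under $\itt$ when feeding it into $\Q$, together with the fact that $\Q h$ is defined via the universal property for $\Q Y$ as a coequalizer, whereas the verification is cleanest via the induced universal property of $\ze_Y$ from \cref{lem:zeta-triangle-mk2}. Once that correspondence is in hand, every step reduces to the single cancellation $\de_Y \circ \ze_Y = 1_Y$ together with the defining equation of $\Q f$.
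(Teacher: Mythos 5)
Your proposal is correct and follows essentially the same route as the paper: naturality of $\ze$ is read off from the defining triangle \cref{eq:Qf-triangle-mk2}, functoriality of $\Q$ and naturality of $\de$ both reduce to the uniqueness clause of \cref{lem:zeta-triangle-mk2} together with the identity $\de_Y \circ \ze_Y = 1_Y$. The paper's proof is just a terser statement of the same three checks you spell out.
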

\begin{proof}
  Functoriality of $\Q$ follows from uniqueness of the strict $\T$-maps $\Q f = \bigl( \ze_{X'} \circ f \bigr)^\bot$ in \cref{eq:Qf-triangle-mk2}.
  Naturality of $\ze$ with respect to $\T$-maps $f$ holds by definition of $\Q f$, since the triangle \cref{eq:Qf-triangle-mk2} is the naturality square for $\ze$.
  Naturality of $\de$ with respect to strict $\T$-maps $g \cn Y \to Y'$ follows from naturality of $\ze$, the equality $\de_X \circ \ze_X = 1_X$ in \cref{lem:zeta-triangle-mk2}, and uniqueness of the strict $\T$-maps $f^\bot$ in \cref{eq:zeta-triangle-mk2}.
\end{proof}

\begin{thm}\label{prop:UDC-Qi-adj}
  Suppose $\T$ is a 2-monad on $\K$ that admits universal pseudomorphisms $\wt{\phi}$.
  Suppose that $\K$ admits cotensors of the form $\{\bii,X\}$ and suppose that $\Talgsz$ admits coequalizers of $\P$-free pairs (\cref{defn:P-free-fork}).
  Then the functor $\Q$, together with unit $\ze$ and counit $\de$, in \cref{prop:Qfun-mk2} extends to a 2-functor that is left 2-adjoint to $\itt$.
\end{thm}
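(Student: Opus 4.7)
The plan is to work in two stages: first verify the two triangle identities to obtain a 1-categorical adjunction $\Q \dashv \itt$ between the underlying 1-categories $\Talgz$ and $\Talgsz$, and then invoke \cref{prop:cotensor-facts}~\cref{it:cotensor-facts-ii} to upgrade this 1-adjunction to a full 2-adjunction. The functor $\Q$, unit $\ze$, and counit $\de$ between underlying 1-categories are already in hand from \cref{prop:Qfun-mk2}; the hypothesis on cotensors of the form $\{\bii,X\}$ in $\K$ is precisely what is needed to pass from 1-dimensional to 2-dimensional adjointness.

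For the first triangle identity, I would show that $\itt\de_Y \circ \ze_Y = 1_Y$ for each $Y \in \Talgsz$. This is immediate from the final assertion of \cref{lem:zeta-triangle-mk2}: specializing that lemma to the $\T$-map $f = 1_Y \cn Y \zzto Y$ identifies $f^\bot$ with $\de_Y$ and records the equality $\de_Y \circ \ze_Y = 1_Y$. For the second triangle identity, I would show $\de_{\Q X} \circ \Q \ze_X = 1_{\Q X}$ for each $X \in \Talgz$, again using \cref{lem:zeta-triangle-mk2}. The uniqueness portion of that lemma, applied to the $\T$-map $\ze_X \cn X \zzto \Q X$, states that $\ze_X^{\bot} = 1_{\Q X}$ is the unique strict $\T$-map $h$ with $h \circ \ze_X = \ze_X$. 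So it suffices to compute
\[
(\de_{\Q X} \circ \Q \ze_X) \circ \ze_X
= \de_{\Q X} \circ \ze_{\Q X} \circ \ze_X
= 1_{\Q X} \circ \ze_X
= \ze_X,
\]
where the first equality uses the defining property of $\Q \ze_X$ from \cref{eq:Qf-triangle-mk2} and the second uses the first triangle identity at the strict algebra $\Q X$.

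With both triangle identities in hand, $\Q \dashv \itt$ is a 1-adjunction between $\Talgz$ and $\Talgsz$. To promote this to a 2-adjunction, I would apply \cref{prop:cotensor-facts}~\cref{it:cotensor-facts-ii} with $V = \itt \cn \Talgs \to \Talg$. Its hypotheses are that $\Talgs$ admits cotensors of the form $\{\bii, X\}$ and that $\itt$ preserves them; both follow from \cref{prop:cotensor-facts}~\cref{it:cotensor-facts-i} together with the standing assumption that $\K$ admits those cotensors. The conclusion produces a left 2-adjoint to $\itt$, whose underlying 1-functor agrees with $\Q$ and whose unit and counit agree with $\ze$ and $\de$ by uniqueness of adjoints; thus $\Q$ extends canonically to a 2-functor with the required 2-adjointness. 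The only genuinely delicate step is the second triangle identity, where care is needed to apply the uniqueness clause of \cref{lem:zeta-triangle-mk2} with the correct choice of $\T$-map; the remainder of the proof is formal.
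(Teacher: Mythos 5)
Your proposal is correct and takes essentially the same approach as the paper: both reduce the 2-adjunction to a 1-adjunction of underlying categories via \cref{prop:cotensor-facts}~\cref{it:cotensor-facts-ii} with $V = \itt$, and both derive the 1-adjunction from the existence and uniqueness of the strict mates $f^\bot$ in \cref{lem:zeta-triangle-mk2}. The only difference is presentational: the paper packages \cref{lem:zeta-triangle-mk2} directly as the natural hom-set bijection $\Talgsz(\Q X, X') \iso \Talgz(X, \itt X')$ induced by $- \circ \ze_X$, whereas you verify the equivalent unit--counit triangle identities, which your argument does carry out correctly.
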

\begin{proof}
  Recalling \cref{prop:cotensor-facts}~\cref{it:cotensor-facts-i} and \cref{it:cotensor-facts-ii}, with $V = \itt$, it suffices to show $(\Q,\itt,\ze,\de)$ is an adjunction of underlying 1-categories.
  \[
    \begin{tikzpicture}[x=20ex,y=8ex,vcenter]
      \draw[0cell] 
      (0,0) node (x) {\Talgz}
      (1,0) node (y) {\Talgsz}
      ;
      \draw[1cell] 
      (x) edge[bend left=12,transform canvas={yshift=.7mm}] node (L) {\Q} (y) 
      (y) edge[bend left=12,transform canvas={yshift=-.7mm}] node (R) {\itt} (x) 
      ;
      \draw[2cell] 
      node[between=L and R at .5] {\bot}
      ;
    \end{tikzpicture}
  \]
  To do this, first recall from
  \cref{lem:zeta-triangle-mk2} that, for each $\T$-map $f \cn X \zzto X'$ there is a unique strict $\T$-map $f^\bot\cn \Q X \to X'$ such that $f^\bot \circ \ze_X = f$.
  The existence and uniqueness $f^\bot$ shows that composition with components of $\ze$ induces a bijection of morphism sets
  \[
    \Talgsz(\Q X,X') \fto[\iso]{- \circ \ze_X} \Talgz(X,\itt X')
  \]
  for each pair of $\T$-algebras $X$ and $X'$.
  Naturality of such a bijection follows from associativity of 1-cell composition and naturality of $\ze$.
  Therefore, $(\Q,\itt,\ze,\de)$ is an adjunction of underlying 1-categories, as desired.
\end{proof}

\artpart{Applications to strict monoidal structures}

\section{Formal diagrams}\label{sec:formal-diagrams}

This section develops the context for formal diagrams in the case $\K = \Cat$, the 2-category of small categories.
Recall, for a monad $\T$ that admits universal pseudomorphisms, the counit \cref{eq:etat-epzt} at a $\T$-map $f \cn X \zzto X'$ is
\[
  \wt{\epz}_f = (\epz_X,\ol{1_{X'}}).
\]
Here, $\epz_X = \Xmul$ is the algebra structure morphism for $X$ and $\ol{1_{X'}}$ is the unique strict $\T$-map such that the following diagram commutes.
\begin{equation}\label{eq:epzf-diagram}
  \begin{tikzpicture}[x=43ex,y=8ex,vcenter,xscale=1.2]
    \draw[0cell] 
    (0,0) node (c) {\usf X}
    (c)++(1,0) node (c') {\usf X'}
    (c)++(.3,-1) node (kc) {\usf \T X}
    (c')++(-.3,-1) node (kc') {\usf \T (X',f)}
    (c)++(0,-2) node (x) {\usf X}
    (c')++(0,-2) node (x') {\usf X'}
    ;
    \draw[1cell] 
    (c) edge node {\usf f} (c')
    (c) edge node[pos=.55] {\eta_X} (kc)
    (c') edge['] node[pos=.6] {\ka_{f}} (kc')
    (c) edge['] node {1_X} (x)
    (c') edge node {1_{X'}} (x')
    (kc) edge[dashed] node[pos=.3] {\usf \epz_X = \usf \Xmul} node[',pos=.3] {\exists !} (x)
    (kc') edge[',dashed] node[pos=.3] {\usf \ol{1_{X'}}} node[',pos=.3] {\exists !} (x')
    ;
    \draw[zz1cell]
    (x) to node {\usf f} (x')
    ;
    \draw[zz1cell]
    (kc) to node {\usf \wt{f}} (kc')
    ;
  \end{tikzpicture}
\end{equation}

Over $\K = \Cat$, each $\T$-algebra $X$ has an underlying set of objects, $\ob X$.
Thus, we have the following.
\begin{defn}\label{defn:La}
  Suppose $\T$ is a 2-monad on $\Cat$ that admits universal pseudomorphisms (\cref{defn:udc}).
  For each $\T$-map
  \[
    f\cn (X,\Xmul) \zzto (X',\Xmul'), 
  \]
  define a strict $\T$-map $\La$ as the composite below,
  \begin{equation}\label{eq:La}
    \begin{tikzpicture}[x=13ex,y=10ex,vcenter,xscale=1.2,yscale=1.1]
      \draw[0cell] 
      (0,0) node (a) {\T(\ob X', f_\ob)}
      (a)++(1.05,-.5) node (b) {\T(X',f)}
      (a)++(2,0) node (c) {X'}
      ;
      \draw[1cell] 
      (a) edge['] node {} (b)
      (b) edge['] node {\ol{1_{X'}}} (c)
      (a) edge node {\La} (c)
      ;
    \end{tikzpicture}
  \end{equation}
  where $f_\ob$ denotes the restriction of $f$ to objects, the unlabeled strict $\T$-map is induced by the inclusion of objects $\ob X' \hookrightarrow X'$, and $\ol{1_{X'}}$ is part of the counit $\wt{\epz}_f$ in \cref{eq:epzf-diagram}.
  Equivalently, $\La$ is the unique strict $\T$-map induced by the universal property \cref{eq:rmk-udc} in the following diagram, where the unlabeled arrows are induced by inclusion of objects.
  \begin{equation}\label{eq:La-diagram}
    \begin{tikzpicture}[x=48ex,y=7ex,vcenter,xscale=1.2]
      \draw[0cell] 
      (0,0) node (c) {\ob X}
      (c)++(1,0) node (c') {\ob X'}
      (c)++(.3,-1) node (kc) {\usf \T (\ob X)}
      (c')++(-.3,-1) node (kc') {\usf \T (\ob X',f_\ob)}
      (c)++(0,-3) node (x) {\usf X}
      (c')++(0,-3) node (x') {\usf X'}
      (kc)++(.05,-1.0) node (kc2) {\usf \T X}
      (kc')++(-.05,-1.0) node (kc'2) {\usf \T (X',f)}
      ;
      \draw[1cell] 
      (c) edge node {f_\ob} (c')
      (c) edge node[pos=.55] {\eta_X} (kc)
      (c') edge['] node[pos=.6] {\ka_{f}} (kc')
      (c) edge['] node {} (x)
      (c') edge node {} (x')
      (kc) edge[',dashed] node[',pos=.3] {\exists !} (x)
      (kc') edge[dashed] node[pos=.3] {\La} node[',pos=.3] {\exists !} (x')
      (kc) edge node {} (kc2)
      (kc2) edge node {\Xmul} (x)
      (kc') edge node {} (kc'2)
      (kc'2) edge['] node[scale=.8,pos=.3] {\ol{1_{X'}}} (x')
      ;
      \draw[zz1cell]
      (x) to node {\usf f} (x')
      ;
      \draw[zz1cell]
      (kc) to node {\usf \wt{f_\ob}} (kc')
      ;
      \draw[zz1cell]
      (kc2) to node {\usf \wt{f}} (kc'2)
      ;
    \end{tikzpicture}
  \end{equation}
  \ 
\end{defn}
\begin{rmk}\label{rmk:La-vs-De}
  Note, in the context of \cref{defn:La}, that
  \begin{equation}\label{eq:La-vs-De-1}
    \La \cn \T(\ob X',f_\ob) \to X'
  \end{equation}
  is generally distinct from the following composite of $\Xmul'$ with the canonical comparison $\De$ of \cref{eq:Delta}, where the unlabeled arrow is again induced by inclusion of objects:
  \begin{equation}\label{eq:La-vs-De-2}
    \T(\ob X', f_\ob ) \to \T(X',f) \fto{\De} \T X' \fto{\Xmul'} X'.
  \end{equation}
  Indeed, if $f$ is a strict $\T$-map, so that the algebra constraint $f_\bullet$ in \cref{eq:Tmap-2cell} is an identity, then uniqueness of $\La$ in \cref{eq:La-diagram} will imply that \cref{eq:La-vs-De-1} and \cref{eq:La-vs-De-2} are equal.
  In general however, they are distinct, and their difference is a key feature of our examples in \cref{sec:other}.
\end{rmk}

\begin{defn}\label{defn:formal-diagram}
  Suppose $\T$ is a 2-monad on $\Cat$ and $(X,\Xmul)$ is a $\T$-algebra.
  In the following, the unlabeled arrows are induced by inclusions of objects
  \[
    \ob X \hookrightarrow X
    \andspace
    \ob X' \hookrightarrow X'.
  \]
  \begin{description}
  \item[Diagram:] A \emph{diagram} $(\DD, D)$ in $X$ consists of a small category $\DD$ and a functor $D \cn \DD \to X$. 
    We consider a morphism $s \cn a \to b$ in $X$ as a diagram by taking $\DD = \bii$, with $D$ sending the unique morphism of $\bii$ to $s$.
  \item[Formal diagram:] 
    A diagram $(\DD, D)$ in $X$ is called a \emph{formal diagram for $X$} or an \emph{$X$-formal diagram} if there is a lift $\wt{D}$ such that the following commutes in $\Cat$.
    In this case, $\wt{D}$ is called an \emph{$X$-formal lift of $(\DD,D)$}.
    \begin{equation}\label{eq:wtD}
      \begin{tikzpicture}[x=15ex,y=7ex,vcenter,xscale=1.2]
        \draw[0cell] 
        (0,0) node (d) {\DD}
        (1,2) node (tox) {\T ( \ob X )}
        (1,1) node (tx) {\T X}
        (1,0) node (x) {X}
        ;
        \draw[1cell] 
        (d) edge[dashed] node {\wt{D}} (tox)
        (tox) edge node {} (tx)
        (d) edge node {D} (x)
        (tx) edge node {\Xmul} (x)
        ;
      \end{tikzpicture}
    \end{equation}
  \item[Formal diagram for a $\T$-map:]
    Suppose that $\T$ admits universal pseudomorphisms \cref{eq:univprop}, and suppose that $f \cn (X,\Xmul) \zzto (X',\Xmul')$ is a $\T$-map.
    A diagram $(\DD,D)$ in $X'$ is called a \emph{formal diagram for $f$} or an \emph{$f$-formal diagram} if there is a lift $\wt{D}$ such that the triangle at left below commutes in $\Cat$, where $f_\ob$ denotes the restriction of $f$ to objects and $\La$ is defined in \cref{eq:La}.
    In this case, $\wt{D}$ is called an \emph{$f$-formal lift of $(\DD,D)$}.
    \begin{equation}\label{eq:wtD-f}
      \begin{tikzpicture}[x=15ex,y=7ex,vcenter,xscale=1.2]
        \draw[0cell] 
        (0,0) node (d) {\DD}
        (d)++(1,0) node (x') {X'}
        (x')++(1,0) node (x'2) {X'}
        (x')++(0,1) node (tx'f) {\T(X',f)}
        (tx'f)++(0,1) node (tox'f) {\T(\ob X',f_\ob)}
        (tx'f)++(1,0) node (tx') {\T X'}
        (tox'f)++(1,0) node (tox') {\T (\ob X')}
        ;
        \draw[1cell] 
        (d) edge node {D} (x')
        (tox'f) edge node {} (tx'f)
        (tx'f) edge['] node {\ol{1_{X'}}} (x')
        (tox'f) edge[bend left=60] node {\La} (x')
        (d) edge[dashed] node {\wt{D}} (tox'f)
        (tox'f) edge node {\De} (tox')
        (tox') edge node {} (tx')
        (tx') edge node {\Xmul'} (x'2)
        ;
      \end{tikzpicture}
    \end{equation}
  \item[Dissolution:]
    If $(\DD,D)$ is a formal diagram for $f$ with lift $\wt{D}$ as in \cref{eq:wtD-f}, the \emph{dissolution} of $\wt{D}$, denoted $\abs{D}$, is the composite
    \[
      \abs{D} = \De \circ \wt{D} \cn \DD \fto{\hspace{5ex}} \T(\ob X').
    \]
  \item[Finite generation:]
    In the above contexts, a lift $\wt{D}$ for a formal diagram is said to be \emph{finitely generated} if there is a finite set of objects $G \subset \ob X$ such that $\wt{D}$ factors through, respectively, the strict $\T$-map
    \[
      \T G \to \T(\ob X)
      \orspace
      \T(G',f_G) \to \T(\ob X', f_{\ob}),
    \]
    induced by inclusion of objects, where $f_G$ denotes the restriction of $f_\ob$ to $G$.
  \end{description}
  In any of the above cases, we say that a diagram $(\DD,D)$ \emph{commutes} if we have $D(u) = D(v)$ for every parallel pair of morphisms $u$ and $v$ in $\DD$.
\end{defn}

\begin{rmk}[Using dissolution diagrams]\label{rmk:what-it-means}
  Suppose, in the context of \cref{defn:formal-diagram}, that $(\DD,D)$ is a formal diagram for $f$, with lift $\wt{D}$ to $\T(\ob X', f_\ob)$.
  Suppose, furthermore, that $\De$ is an equivalence, as in \cref{thm:main1,thm:main2}.

  Then, for each pair of parallel morphisms $u$ and $v$ in $\DD$,
  the lifts $\wt{D}(u)$ and $\wt{D}(v)$ are equal in $\T(\ob X',\phi)$ if and only if their dissolutions $\abs{D}(u)$ and $\abs{D}(v)$ are equal in $\T (\ob X')$.
  Hence, the diagram $(\DD,\wt{D})$ commutes in $\T(\ob X',\phi)$ if and only if the dissolution diagram $(\DD,\abs{D})$ commutes in $\T(\ob X')$.
  Furthermore, commutativity of $(\DD,\wt{D})$ implies that of the original diagram $(\DD,D)$.

  Note, however, that the distinction in \cref{rmk:La-vs-De} implies $D$ and $\abs{D}$ generally give \emph{distinct} diagrams in $X'$.
  That is, for each morphism $u$ in $\DD$, the morphisms in
  $X'$ determined by $D(u)$ and $\abs{D}(u)$---composing the latter along the right hand side of \cref{eq:wtD-f}---are generally not equal in $X'$.

  Thus, if $\De$ is an equivalence, the dissolution diagram $(\DD,\abs{D})$ is a diagram that is generally different from the given diagram $(\DD,D)$, and yet commutativity of the former implies that of the latter.
  \Cref{sec:other} contains a variety of examples that demonstrate this phenomenon.
\end{rmk}

\begin{rmk}[Formal diagrams that factor through $\ka$]\label{rmk:f-formal-vs-X-formal}
  In the context of \cref{defn:formal-diagram}, recall from \cref{eq:ka-adj} the strict $\T$-map
  \[
    \ka\cn \T(\ob X') \to \T(\ob X', f_\ob)
  \]
  is the mate of
  \[
    \ka_{f_\ob} \cn \ob X' \to \T(\ob X', f_\ob).
  \]
  Note that the composite $\De \circ \ka$ is equal to the identity $1_{\T(\ob X')}$, as in \cref{eq:De-ka-1}.

  Each $X'$-formal diagram is trivially an $f$-formal diagram by composing its lift $\wt{D}$ with $\ka$.
  In such a case, for the dissolution diagram $\abs{D}$ obtained by composing with $\De$, we have
  \[
    \abs{D} = \De \circ (\ka \circ \wt{D}) = \wt{D}.
  \]
  We will say that an $f$-formal lift \emph{reduces to an $X'$-formal lift} if it factors through $\ka$.
\end{rmk}

\section{Strict monoidal structures}\label{sec:Mv-and-Tv}
We use the following notations for the 2-monads on $\K=\Cat$ whose algebras are general or strict monoidal structures in the plain, symmetric, and braided monoidal cases.
For basic definitions and properties, we refer the reader to \cite[Chapter~XI]{ML98Categories}, \cite{JS1993Braided}, and \cite[Chapter~1]{JYringII}.

Here, we give a brief description of the relevant 2-monads.
See, e.g., \cite[Section~4]{Lac02Codescent}.
More detailed descriptions will not be required, but can be found in operadic presentations such as, e.g., \cite[Part~4]{Yau2021Infinity} or \cite[Chapters~11 and 12]{JYringIII}.
We use a superscript $\scriptstyle{\mathsf{g}}$ to denote the \emph{general} monoidal cases, and use unadorned notation for the strict monoidal cases.
\begin{notn}[Monads for monoidal structures]\label{notn:monoidal-variants}
  \ 
  \begin{description}
  \item[Plain monoidal:] Let $\Mg$ denote the 2-monad whose algebras are monoidal categories.
    Let $\M$ denote the 2-monad whose algebras are strict monoidal categories.

    For a category $C$, the free strict monoidal category $\M C$ has objects given by tuples $\ang{a} = (a_1,\ldots,a_n)$, for $n \ge 0$, with $a_i \in C$ for $i \in \{1,\ldots, n\}$.
    The morphisms of $\M C$ are tuples of morphisms, so that the underlying category of $\M C$ is $\coprod_n C^n$.
    The monoidal product is given by concatenation and the monoidal unit is the empty tuple. 
  \item[Symmetric monoidal:] Let $\Sg$ denote the 2-monad whose algebras are symmetric monoidal categories.
    Let $\S$ denote the 2-monad whose algebras are symmetric strict monoidal categories, also known as \emph{permutative categories}.

    For a category $C$, the free symmetric strict monoidal category $\S C$ has the same objects and monoidal structure as $\M C$.
    The morphisms of $\S C$ are generated by those of $\M C$, together with permutations of the tuples $\ang{a}$.
    In particular, for a single object $a$, the free symmetric strict monoidal category $\S\{a\}$ has an object for each natural number $n$, corresponding to the $n$-tuple $(a,\ldots,a)$. 
    The hom sets are given by
    \begin{equation}\label{eq:Tsa}
      \bigl( \S\{a\} \bigr)(m, n) \cong
      \begin{cases}
        \emptyset, & \text{if\ \ } m \neq n, \\
        \Sigma_m, & \text{if\ \ } m = n,
      \end{cases}
    \end{equation}
    where the symmetry isomorphism $\beta_{a,a}$ is identified with the transposition $(1\ 2)$.
    
  \item[Braided monoidal:] Let $\Bg$ denote the 2-monad whose algebras are braided monoidal categories.
    Let $\B$ denote the 2-monad whose algebras are braided strict monoidal categories.

    For a category $C$, the free braided strict monoidal category $\B C$ has the same objects and monoidal structure as $\M C$ and $\S C$.
    The morphisms of $\B C$ are generated by those of $\M C$ together with braidings of strands labeled by the entries of the tuples $\ang{a}$.
  \end{description}

  In the cases $\T = \M, \S, \B$, respectively, the $\Tg$-maps and $\T$-maps are plain, symmetric, and braided monoidal functors.
  These are also sometimes called plain/symmetric/braided \emph{strong} monoidal functors.
  We will suppress the additional adjective except where it is useful to emphasize the distinction with \emph{strict} $\T$- or $\Tg$-maps.
  The latter are the plain/symmetric/braided \emph{strict} monoidal functors, so they have identity monoidal and unit constraints.

  In both the symmetric and braided cases, a $\T$-map $f \cn A \zzto B$ satisfies an additional \emph{braid axiom}, expressed as commutativity of the following diagram for $a,a' \in A$.
  Here, $\bcdot$ and $\beta$ denote the monoidal products and symmetry/braid isomorphisms, respectively, in both $A$ and $B$.
  \begin{equation}\label{eq:braid-axiom}
    \begin{tikzpicture}[x=25ex,y=8ex,vcenter]
      \draw[0cell] 
      (0,0) node (a) {f(a)\bcdot f(a')}
      (a)++(1,0) node (b) {f(a')\bcdot f(a)}
      (a)++(0,-1) node (c) {f(a \bcdot a')}
      (b)++(0,-1) node (d) {f(a' \bcdot a)}
      ;
      \draw[1cell] 
      (a) edge node {\beta_{f(a),f(a')}} (b)
      (c) edge node {f(\beta_{a,a'})} (d)
      (a) edge['] node {f_2} (c)
      (b) edge node {f_2} (d)
      ;
    \end{tikzpicture}
  \end{equation}
  In all three cases $\T \in \{\M,\S,\B\}$, the $\T$-algebra 2-cells are monoidal transformations.
\end{notn}

In each case of \cref{notn:monoidal-variants}, algebras for the strict monoidal monads $\T$ are also algebras for the general monoidal monads $\Tg$, with $\T \in \{\M, \S, \B\}$.
There is a morphism of monads
\[
  \theta^{\T}\cn \Tg \to \T
\]
for each $\T$, and changing monad structure along this morphism is the forgetful functor from the strict to general variants.

The statements in the following result are equivalent to the general coherence theorems
\cite[VII.2, Corollary]{ML98Categories}, \cite[XI.1, Theorem~1]{ML98Categories}, and \cite[Corollary~2.6]{JS1993Braided}, respectively.
\begin{thm}[Monoidal Strictification]\label{thm:strictification}
  Suppose $C$ is a category.
  Each of 
  \begin{align*}
    \theta^{\M} & \cn \Mg C \to \M C\\
    \theta^{\S} & \cn \Sg C \to \S C\\
    \theta^{\B} & \cn \Bg C \to \B C
  \end{align*}
  is a plain, respectively symmetric, respectively braided, strict monoidal functor, and is an equivalence.
\end{thm}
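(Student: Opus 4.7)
The plan is to reduce each of the three cases to the classical coherence theorem cited immediately after the statement. The theorem packages two distinct assertions about $\theta^{\T}$ for $\T \in \{\M, \S, \B\}$: that it is a strict morphism of the appropriate type, and that it is an equivalence of underlying categories.

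First I would verify the strict morphism property. Each $\theta^{\T}$ arises as a morphism of 2-monads $\Tg \to \T$. A morphism of 2-monads preserves multiplication and unit on the nose, and applying this at the free algebra $\Tg C$ yields a functor that strictly preserves the monoidal product (multiplication) and monoidal unit (unit component of $\eta$). In the symmetric and braided cases, the symmetry or braiding natural transformations are part of the additional structure encoded by the 2-monads $\Sg, \S$ and $\Bg, \B$, respectively, and a morphism of 2-monads preserves them. Thus $\theta^{\T}$ satisfies the braid axiom \cref{eq:braid-axiom} with identity monoidal constraints, and is accordingly a plain, symmetric, or braided strict monoidal functor as claimed.

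Second I would establish that $\theta^{\T}$ is an equivalence of underlying categories. Essential surjectivity is immediate: every tuple $\ang{a_1,\ldots,a_n}$ in $\T C$ is the image under $\theta^{\T}$ of any fully parenthesized expression with the same underlying ordered list of entries, for example the left-associated expression $(\cdots((a_1 \otimes a_2) \otimes a_3) \otimes \cdots) \otimes a_n$ in $\Tg C$. Fullness and faithfulness are precisely the content of the classical coherence theorems: Mac Lane's theorem on associativity and unit coherence for $\M$, Mac Lane's symmetric coherence theorem for $\S$, and the Joyal--Street braided coherence theorem for $\B$. In each case, the classical statement identifies $\Tg C(X,Y)$ with $\T C(\theta^{\T} X, \theta^{\T} Y)$ via the canonical comparison, which is exactly the required fully faithful condition.

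The main obstacle, such as it is, amounts to bookkeeping: one must verify that the explicit descriptions of $\M C$, $\S C$, and $\B C$ recorded in \cref{notn:monoidal-variants} agree with the standard free strict monoidal, permutative, and braided strict monoidal categories appearing in \cite{ML98Categories,JS1993Braided}, and that the 2-monadic comparison $\theta^{\T}$ is identified with the classical flattening functor from the free general structure to the free strict structure. Once this identification is made, the strictification result is a direct translation of the cited coherence theorems into the 2-monadic formalism.
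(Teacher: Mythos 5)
Your proposal is correct and takes essentially the same approach as the paper, which offers no separate proof but simply notes that the three statements are equivalent to the classical coherence theorems of Mac Lane and Joyal--Street and cites them. Your expansion of that citation---strictness from the monad morphism $\theta^{\T}$, essential surjectivity by choosing any parenthesization, and full faithfulness as the content of the cited theorems---is a faithful unpacking of what the paper leaves implicit.
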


\begin{cor}\label{cor:str}
  For each monad $\T \in \{\M,\S,\B\}$, commutativity of a formal diagram $(\DD,D)$ with lift
  \[
    \wt{D} \cn \DD \to \Tg(\ob X)
  \]
  is determined by that of the composite
  \[
    \DD \fto{\wt{D}} \Tg(\ob X) \fto{\theta^{\T}} \T(\ob X).
  \]
\end{cor}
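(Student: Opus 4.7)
The plan is to derive this corollary directly from the Monoidal Strictification theorem by invoking the elementary fact that an equivalence of categories is faithful, and hence reflects equalities between parallel morphisms.

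First, I would unpack what it means for a formal diagram $(\DD, D)$ to have a lift $\wt{D} \cn \DD \to \Tg(\ob X)$. By \cref{defn:formal-diagram} (applied with $\Tg$ in place of $\T$), the diagram $D$ factors as the composite of $\wt{D}$ with the map $\Tg(\ob X) \to \Tg X \fto{\Xmul} X$ induced by the inclusion of objects and the algebra structure $\Xmul$. Consequently, if $\wt{D}$ commutes in $\Tg(\ob X)$, then $D$ commutes in $X$; conversely, any commutativity of the lifted diagram $\wt{D}$ implies that of $D$, so it suffices to detect commutativity at the level of $\wt{D}$.

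Next, for any parallel pair of morphisms $u, v$ in $\DD$, I would observe that since $\theta^{\T}$ is an equivalence by \cref{thm:strictification}, it is in particular faithful. Therefore
\[
  \wt{D}(u) = \wt{D}(v) \inspace \Tg(\ob X)
  \quad \iff \quad
  \theta^{\T}\bigl(\wt{D}(u)\bigr) = \theta^{\T}\bigl(\wt{D}(v)\bigr) \inspace \T(\ob X).
\]
This is the key content of the corollary: commutativity of $(\DD,\wt{D})$ in $\Tg(\ob X)$ is equivalent to commutativity of $(\DD, \theta^{\T}\circ \wt{D})$ in $\T(\ob X)$.

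Combining these two observations completes the argument: commutativity of the composite $\theta^{\T}\circ\wt{D}$ implies commutativity of $\wt{D}$, which in turn implies commutativity of the original formal diagram $(\DD, D)$. There is no real obstacle here; the work has already been done in \cref{thm:strictification}, and the corollary is essentially the translation of the strictification equivalence into the diagrammatic language of \cref{defn:formal-diagram}. The only mild subtlety is remembering that $\theta^{\T}$ is a strict monoidal functor (in each of the three cases), so that the composite $\theta^{\T}\circ \wt{D}$ is again a well-defined diagram valued in the free \emph{strict} algebra $\T(\ob X)$.
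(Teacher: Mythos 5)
Your proposal is correct and matches the intended argument: the paper gives no explicit proof of \cref{cor:str}, treating it as an immediate consequence of \cref{thm:strictification}, and your route---faithfulness of the equivalence $\theta^{\T}$ reflects equality of parallel morphisms in $\Tg(\ob X)$, after which commutativity of $\wt{D}$ passes to $D$ via the factorization in \cref{defn:formal-diagram}---is exactly the reasoning being invoked. No gaps.
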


\subsection*{Diagrammatic coherence for strict monoidal structures}

Our applications to coherence for strong monoidal functors in \cref{sec:other} will make use of the corresponding coherence theorems for monoidal structures on categories.
We recall these in \cref{thm:diagrcoh} below, making use of the following concepts.
\begin{defn}\label{defn:underlying}
  Suppose $G$ is a set, regarded as a discrete category.
  \begin{description}
  \item[Underlying braids:]
    Each morphism $s\cn \ang{a} \to \ang{b}$ in the braided strict monoidal category $\B G$ has an \emph{underlying braid} $\ups(s)$ determined as follows.
    \begin{itemize}
    \item For an identity morphism, $\ups(1) = 1$, the identity braid.
    \item For a composite, $\ups(s's) = \ups(s')\ups(s)$, the composition of braids.
    \item For a concatenation, $\ups(s' + s) = \ups(s') \oplus \ups(s)$, the block sum of braids.
    \item For the braid isomorphism, $\ups(\beta_{\ang{a},\ang{a'}})$ is the elementary block braid that passes the block of strands labeled by $\ang{a}$ under the block of strands labeled by $\ang{a'}$, without braiding within either block.
    \end{itemize}
  \item[Underlying permutations:]
    Each morphism $s \cn \ang{a} \to \ang{b}$ in the symmetric strict monoidal category $\S G$ has an \emph{underlying permutation} $\pi(s)$ defined as the underlying permutation of the underlying braid $\ups(s)$.
  \end{description}
  Underlying permutations, respectively braids, in the more general $\Sg G$, respectively $\Bg G$, are defined via the equivalences $\theta^{\S}$, respectively $\theta^{\B}$.
\end{defn}

\begin{notn}\label{notn:parallel-diagram}
  Let
  \begin{equation}\label{eq:parallel-diagram}
    \PP = \Bigl\{ 
    \begin{tikzpicture}[x=3em,baseline={(0,-3pt)}]
      \draw[0cell]
      (0,0) node (A) {0}
      (1,0) node (B) {1};
      \draw[1cell]
      (A) edge[transform canvas={yshift=2pt}] node {s} (B)
      (A) edge[transform canvas={yshift=-2pt}] node['] {t} (B)
      ;
    \end{tikzpicture}
    \Bigr\}
  \end{equation}
  denote the free parallel arrow category, consisting of two objects and two parallel morphisms, $s$ and $t$, between them.
\end{notn}

\begin{thm}[Monoidal Coherence]\label{thm:diagrcoh}
  Suppose $A$ is a monoidal, respectively symmetric monoidal, respectively braided monoidal category.
  Suppose $(\PP, D)$ is a formal diagram with lift $\wt{D}$, classifying a pair of parallel morphisms $Ds$ and $Dt$ in $A$.
  \begin{enumerate}
  \item\label{it:M} In the plain monoidal case, $\Mg(\ob A)$ has at most one morphism between any pair of objects, so $\wt{D}s = \wt{D}t$ and hence $Ds = Dt$ {\cite[VII.2]{ML98Categories}}.
  \item\label{it:S} In the symmetric case, if the underlying permutations $\pi(\wt{D}s)$ and $\pi(\wt{D}t)$ are equal, then $\wt{D}s = \wt{D}t$ and hence $Ds = Dt$ {\cite[XI.1]{ML98Categories}}.
  \item\label{it:B} In the braided case, if the underlying braids $\ups(\wt{D}s)$ and $\ups(\wt{D}t)$ are equal, then $\wt{D}s = \wt{D}t$ and hence $Ds = Dt$ {\cite[Corollary~2.6]{JS1993Braided}}.
  \end{enumerate}
\end{thm}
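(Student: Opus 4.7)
The plan is to reduce each of the three assertions to a hom-set computation in the free \emph{strict} monoidal structure on the discrete category $\ob A$, and then apply the classical coherence results cited in the statement. By \cref{cor:str}, commutativity of $\wt{D}s$ and $\wt{D}t$ in $\Tg(\ob A)$ is equivalent to commutativity of their images under $\theta^{\T}$ in $\T(\ob A)$, for $\T \in \{\M,\S,\B\}$. Since $\theta^{\T}$ is an equivalence by \cref{thm:strictification}, it reflects equalities of parallel morphisms, so it suffices to compare $\theta^\T \wt{D}s$ and $\theta^\T \wt{D}t$ inside $\T(\ob A)$. Finally, once $\wt{D}s = \wt{D}t$ is established, applying the algebra structure $\Xmul$ as in \cref{eq:wtD} yields $Ds = Dt$ in $A$.

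For the plain monoidal case, the description in \cref{notn:monoidal-variants} identifies the underlying category of $\M(\ob A)$ with $\coprod_{n \geq 0} (\ob A)^n$. Because $\ob A$ is a discrete category, each factor $(\ob A)^n$ is discrete, so the only morphisms of $\M(\ob A)$ are identities on tuples. In particular, any two parallel morphisms $\wt{D}s, \wt{D}t\cn \ang{a} \to \ang{b}$ must coincide. This recovers \cite[VII.2, Corollary]{ML98Categories}.

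For the symmetric and braided cases, I would establish the bijections
\[
  \S(\ob A)\bigl(\ang{a},\ang{b}\bigr) \iso \bigl\{\sigma \in \Sigma_n \,\big|\, a_i = b_{\sigma(i)}\bigr\}
  \andspace
  \B(\ob A)\bigl(\ang{a},\ang{b}\bigr) \iso \bigl\{\tau \in B_n \,\big|\, \ups(\tau)\text{ labels }\ang{a}\to\ang{b}\bigr\}
\]
given on the left by the underlying permutation $\pi$ and on the right by the underlying braid $\ups$ from \cref{defn:underlying}. Surjectivity follows directly from the generators and relations of the 2-monads $\S$ and $\B$, together with the fact that $\ob A$ is discrete so no morphisms of $A$ contribute further generators. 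Injectivity—the heart of the matter—is the classical content of Mac Lane's symmetric coherence and the Joyal--Street braided coherence, which I would cite rather than reprove. Once these bijections are in hand, the hypotheses $\pi(\wt{D}s) = \pi(\wt{D}t)$ and $\ups(\wt{D}s) = \ups(\wt{D}t)$ translate directly into $\wt{D}s = \wt{D}t$.

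The main obstacle is the injectivity in the braided case, which requires matching the relations imposed by the 2-monad $\B$ with the Artin presentation of the braid groups $B_n$. This comparison is delicate because it requires showing that the naturality squares and hexagon axioms, together with the strict monoidal axioms, generate exactly the braid relations and nothing more. Since this is handled in \cite[Corollary~2.6]{JS1993Braided}, and the symmetric analogue in \cite[XI.1, Theorem~1]{ML98Categories}, the proposal is simply to invoke these references after the reduction via $\theta^\T$.
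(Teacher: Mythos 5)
Your proposal is correct and matches the paper's treatment: the paper states this theorem without proof, relying on exactly the citations you invoke (Mac Lane for the plain and symmetric cases, Joyal--Street for the braided case), after the same reduction to the strict free algebras $\T(\ob A)$ via the equivalences $\theta^{\T}$ of \cref{thm:strictification,cor:str}. Your explicit hom-set descriptions for $\S(\ob A)$ and $\B(\ob A)$ are consistent with \cref{eq:Tsa} and \cref{defn:underlying}, and your correct identification that the only nontrivial content is the injectivity handled in the cited references means there is nothing further to add.
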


\section{Diagrammatic coherence in the symmetric case}\label{sec:symm-coh}
In the symmetric case $\T = \S$ in \cref{sec:Mv-and-Tv}, there is a simplification for formal diagrams that are finitely generated---a condition which holds in all diagrammatic coherence applications known to the authors.
The simplification makes use of the following result that finite coproducts and finite products of symmetric strict monoidal categories are equivalent.
\begin{thm}[{\cite[Theorem~14.27]{GJOsmbperm}}]\label{thm:smbperm1427}
  Suppose given symmetric strict monoidal categories $A_i$ for $i \in \{1,\ldots,n\}$.
  There is a symmetric strict monoidal functor $I$
  \begin{equation}\label{eq:I}
    \coprod_{i = 1}^n A_i \fto{I} \prod_{i = 1}^n A_i
  \end{equation}
  such that the following statements hold.
  \begin{enumerate}
  \item Each composite with the canonical morphisms
    \[
      A_i \to
      \coprod_{i = 1}^n A_i \fto{I} \prod_{i = 1}^n A_i
      \to A_j
    \]
    is the identity on $A_i$ if $i = j$ and constant at the monoidal unit of $A_j$ otherwise.
  \item $I$ is an equivalence of symmetric strict monoidal categories.
  \end{enumerate} 
\end{thm}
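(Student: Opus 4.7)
The plan is to construct $I$ via the universal property of the coproduct in the 2-category of permutative categories, verify the projection property immediately from the construction, and then prove the equivalence by addressing essential surjectivity (easy) and fully faithfulness (hard) separately.

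First I would build $I$ as follows. For each $j \in \{1,\ldots,n\}$, define a symmetric strict monoidal functor
\[
  \iota_j^{\txprod} \cn A_j \to \txprod_{i=1}^n A_i, \qquad a \mapsto (e_1,\ldots,e_{j-1},a,e_{j+1},\ldots,e_n),
\]
where $e_i$ denotes the monoidal unit of $A_i$. This is strictly monoidal because the monoidal product in the product category is componentwise and $e_i \otimes e_i = e_i$. Strictness depends crucially on the fact that the target is a strict monoidal category. The universal property of the coproduct in permutative categories then produces the unique symmetric strict monoidal functor $I$ whose composite with the canonical $\iota_j^{\coprod}\cn A_j \to \coprod_i A_i$ is $\iota_j^{\txprod}$. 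Postcomposing with the projection $p_k\cn \prod_i A_i \to A_k$ kills all factors except the $k$th, giving the asserted projection property in condition~(i).

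For essential surjectivity, observe that any tuple $(a_1,\ldots,a_n) \in \prod_i A_i$ factors as the monoidal product
\[
  (a_1,\ldots,a_n) = \iota_1^{\txprod}(a_1) \otimes \cdots \otimes \iota_n^{\txprod}(a_n),
\]
and hence equals $I\bigl( \iota_1^{\coprod}(a_1) \otimes \cdots \otimes \iota_n^{\coprod}(a_n) \bigr)$; in fact $I$ is surjective on objects. For fullness, a morphism $(f_1,\ldots,f_n)$ in $\prod_i A_i$ is the image under $I$ of the monoidal product $\iota_1^{\coprod}(f_1) \otimes \cdots \otimes \iota_n^{\coprod}(f_n)$.

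The main obstacle is faithfulness. The difficulty is that objects of $\coprod_i A_i$ are (equivalence classes of) words alternating among the $A_i$, so the objects $a_i b_j$ and $b_j a_i$ (with $a_i \in A_i$, $b_j \in A_j$, $i \neq j$) are distinct in the coproduct, whereas they collapse to the same tuple in the product. Moreover, in the coproduct there are nontrivial symmetry isomorphisms $\beta \cn a_i b_j \to b_j a_i$, and these all map to identity morphisms under $I$. The task is to show that every morphism in the coproduct admits a normal form: a monoidal product $f_1 \otimes \cdots \otimes f_n$ of morphisms $f_k$ in $A_k$, preceded and followed by a canonical permutation constructed from symmetries. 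Uniqueness of this normal form up to the symmetric monoidal coherence (specifically, \cref{thm:diagrcoh}\cref{it:S}) would then identify $\Hom_{\coprod}(x, y)$ with $\Hom_{\txprod}(I x, I y)$. To establish the normal form I would explicitly describe the coproduct using generators and relations analogous to the construction of $\S G$ in \cref{notn:monoidal-variants}: objects are tuples of pairs $(a_k, k)$ modulo strict unity, and morphisms are generated by componentwise morphisms and block symmetries, modulo the permutative axioms. The underlying permutation (\cref{defn:underlying}) acting on the labels gives the key bookkeeping tool, since a permutation that preserves the $k$-blocks for each $k$ corresponds exactly to data in $\prod_i A_i$.

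Finally, the fact that $I$ is an equivalence \emph{of symmetric strict monoidal categories} is automatic: $I$ was constructed as a symmetric strict monoidal functor, and the equivalence data (pseudoinverse and invertible 2-cells) can be chosen compatibly with the permutative structures using the normal form above.
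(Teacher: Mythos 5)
The paper does not prove this statement: it is imported verbatim from \cite[Theorem~14.27]{GJOsmbperm}, and the accompanying remark only records that the proof there ``depends on an analysis of coproducts for symmetric strict monoidal categories that specializes the Gray tensor product of 2-categories.'' So there is no internal proof to compare against; what can be assessed is whether your outline would actually close.

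Your construction of $I$ via the strict inclusions $a \mapsto (e_1,\ldots,a,\ldots,e_n)$ and the universal property of the coproduct in $\Salgs$ is correct, as are the projection property, surjectivity on objects, and fullness. The gap is exactly where you locate it, but you do not close it: faithfulness rests entirely on an explicit presentation of $\coprod_i A_i$ by generators and relations, and that presentation is asserted, not established. The delicate point is not the list of generators (componentwise morphisms and block symmetries) but the precise relations among them --- in particular how a symmetry $\beta_{a_i,b_j}$ interacts with morphisms $f\cn a_i \to a_i'$ and $g\cn b_j \to b_j'$ coming from \emph{different} factors. These interchange-type relations are what make the coproduct a Gray-tensor-like object, and they are precisely what must be pinned down before one can prove that every morphism has a normal form $(\text{symmetry}) \circ (f_1 \otimes \cdots \otimes f_n) \circ (\text{symmetry})$ and, more importantly, that two such normal forms are identified only when forced. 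Without verifying that your proposed presentation actually satisfies the universal property of the coproduct in $\Salgs$ (a colimit in a 2-category of algebras, which need not look like the naive free construction), the normal-form and uniqueness claims are circular: you would be computing hom-sets in a category you have defined to have the hom-sets you want. The final paragraph is fine in substance --- a strict monoidal functor that is an equivalence of underlying categories admits a strong (not strict) monoidal pseudoinverse by the standard doctrinal argument, consistent with the paper's \cref{thm:smbperm1427} remark --- but calling it ``automatic'' hides that same standard argument.
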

\begin{rmk}
  In \cref{thm:smbperm1427}, $I$ is a symmetric strict monoidal functor, and it is an equivalence, but it does not have a strict monoidal inverse.
  See \cite[Remark~14.25]{GJOsmbperm} for further explanation of this point.
  The proof of \cref{thm:smbperm1427} depends on an analysis of coproducts for symmetric strict monoidal categories that specializes the Gray tensor product of 2-categories.
\end{rmk} 

Recall that $\S$ is left adjoint to the forgetful $\usf$, and therefore commutes with colimits, particularly coproducts.
\begin{defn}\label{defn:ola}
  Suppose $G$ is a finite set.
  Define a strict monoidal functor $\wt{I}$, and
  strict monoidal functors $I_a$ for each $a\in G$, as the composites described below.
  \begin{equation}\label{eq:Ia}
    \begin{tikzpicture}[x=23ex,y=8ex,vcenter]
      \draw[0cell] 
      (0,0) node (a) {
        \S G
      }
      (a)++(0,1) node (b) {
        \S \Bigl( \coprod_{b \in G} \{b\} \Bigr)
      }
      (b)++(.8,0) node (c) {
        \coprod_{b \in G} \S\{b\}
      }
      (c)++(1.2,0) node (d) {
        \prod_{b \in G} \S\{b\}
      }
      (d)++(0,-1) node (e) {
        \S\{a\}
      }
      ;
      \draw[1cell] 
      (a) edge[equal] node {} (b)
      (b) edge node {\iso} (c)
      (c) edge node {I} node['] {\hty} (d)
      (d) edge node {} (e)
      (a) edge['] node {I_a} (e)
      (a) edge['] node {\wt{I}} (d)
      ;
    \end{tikzpicture}
  \end{equation}
  In the above diagram, the isomorphism is given by commuting $\S$ with coproducts, the equivalence $I$ is that of \cref{eq:I}, and the unlabeled arrow is projection from the product.
\end{defn}

Recall from \cref{defn:formal-diagram} that a finitely generated formal diagram is one that factors through a free algebra on a finite set.
\begin{defn}\label{defn:aperm}
  Suppose $A$ is a symmetric strict monoidal category and suppose that $(\DD,D)$ is a diagram in $A$ that is formal and finitely generated, with lift $\wt{D} \cn \DD \to \S G$ for a finite set $G \subset \ob A$.
  For each morphism $s$ in $\DD$ and each $a \in G$, define the permutation $\pidt_a(s)$ as the underlying permutation of the image of $s$ in $\S\{a\}$.
  That is,
  \[
    \pidt_a(s) = \pi\bigl((I_a\wt{D})(s)\bigr).
  \]
  We call $\pidt_a(s)$ the \emph{$a$-permutation of $s$} or the \emph{self-permutation of $a$}.
\end{defn}

\begin{thm}\label{thm:diagrcoh-s}
  Suppose $(\PP, D)$ is a formal diagram classifying a pair of parallel morphisms $Ds$ and $Dt$ in a symmetric strict monoidal category $A$.
  Suppose, moreover, that there is a finitely generated lift $\wt{D}$, factoring through $\S G$ for a finite set $G$, such that
  \begin{equation}\label{eq:pispit}
    \pidt_a(s)=\pidt_a(t) \foreachspace a \in G.
  \end{equation}
Then $Ds = Dt$ in $A$.
\end{thm}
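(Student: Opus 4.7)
The plan is to exploit the equivalence $\wt{I}\cn \S G \fto{\hty} \prod_{a \in G} \S\{a\}$ from \cref{defn:ola} together with the explicit description \cref{eq:Tsa} of hom sets in $\S\{a\}$. Rather than verifying equality of full underlying permutations in $\S G$ (which would be required by a direct appeal to \cref{thm:diagrcoh}~\cref{it:S}), the hypothesis on self-permutations lets us check equality componentwise in the finite product.

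First I would note that $s$ and $t$ are parallel in $\PP$, so $\wt{D}s$ and $\wt{D}t$ are parallel in $\S G$ and, for each $a \in G$, the morphisms $I_a\wt{D}s$ and $I_a\wt{D}t$ are parallel in $\S\{a\}$. Since $\S\{a\}$ is the free symmetric strict monoidal category on a single object, \cref{eq:Tsa} shows that morphisms between any fixed source and target are in bijection with elements of a symmetric group via the underlying permutation. The hypothesis $\pidt_a(s) = \pidt_a(t)$ is precisely the equality $\pi(I_a\wt{D}s) = \pi(I_a\wt{D}t)$, and therefore yields $I_a\wt{D}s = I_a\wt{D}t$ in $\S\{a\}$ for every $a \in G$.

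Next, because $I_a$ factors through $\wt{I}$ followed by the projection to the $a$-th factor, and because morphisms in a finite product category are determined by their components, componentwise equality gives $\wt{I}\wt{D}s = \wt{I}\wt{D}t$ in $\prod_{a \in G} \S\{a\}$. By \cref{thm:smbperm1427}, $\wt{I}$ is a symmetric strict monoidal equivalence, hence in particular a faithful functor; therefore $\wt{D}s = \wt{D}t$ in $\S G$. Commutativity of the formal-diagram triangle \cref{eq:wtD} with $X = A$ then gives $Ds = Dt$ in $A$, as desired.

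The essential technical ingredient is \cref{thm:smbperm1427}; the remainder is a routine translation of the hypothesis through the factorization $I_a = p_a \circ \wt{I}$, so I do not anticipate a significant obstacle. The main conceptual point worth emphasizing is that tracking only self-permutations suffices despite their carrying strictly less information than the full underlying permutation in $\S G$: the missing information about how strands of \emph{different} colors interleave is already pinned down by the requirement that $\wt{D}s$ and $\wt{D}t$ share a common source and target in $\S G$, and the equivalence $\wt{I}$ converts this separation of colors into a literal decomposition of the hom set as a product.
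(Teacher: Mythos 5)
Your proof is correct and follows the same route as the paper: project along $\wt{I}$ to $\prod_{a\in G}\S\{a\}$, use the hypothesis on self-permutations to get componentwise equality, and conclude by faithfulness of the equivalence $\wt{I}$ from \cref{thm:smbperm1427}. The only difference is that you spell out, via \cref{eq:Tsa}, why equality of $a$-permutations forces $I_a\wt{D}s = I_a\wt{D}t$, a step the paper leaves implicit.
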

\begin{proof}
  The hypotheses of the theorem establish the following context, where the left hand triangle is that of the formal diagram $(\PP,D)$ and the finitely generated lift $\wt{D}$.
  The right hand triangle is \cref{eq:Ia}.
  Recall that $\wt{I}$ is an equivalence by \cref{thm:smbperm1427}.
  \[
    \begin{tikzpicture}[x=-5ex,y=15ex,rotate=90,xscale=1.15,yscale=1.3]
      \draw[0cell] 
      (0,0) node (tg) {\S G}
      (tg)++(1.14,0) node (toa) {\S (\ob A)}
      (toa)++(.93,0) node (ta) {\S A}
      (ta)++(.93,0) node (a) {A}
      (tg)++(1.4,-1) node (tx) {\S \{a\}}
      (tg)++(0,-1) node (p) {\prod_{b \in G} \S \{b\}}
      (a)++(0,1) node (dd) {\PP}
      ;
      \draw[1cell] 
      (dd) edge node {D} (a)
      (dd) edge node {\wt{D}} (tg)
      (tg) edge['] node {} (toa)
      (toa) edge node {} (ta)
      (ta) edge node {} (a)
      (tg) edge node[pos=.6] {\wt{I}} node[',pos=.6] {\hty} (p) 
      (tg) edge['] node[pos=.7] {I_a} (tx)
      ;
      \draw[1cell]
      (p) edge node {} (tx)
      ;
    \end{tikzpicture}
  \]
  By the universal property of the product, the equalities \cref{eq:pispit} imply that the morphisms $\wt{I}\wt{D}s$ and $\wt{I}\wt{D}t$ are equal in $\txprod_{b \in G} \S \{b\}$.
  Since $\wt{I}$ is an equivalence, we have
  \[
    \wt{D}s = \wt{D}t \inspace \S G,
  \]
  and hence $Ds = Dt$ as desired.
\end{proof}

\begin{rmk}\label{rmk:components}
  It is instructive to compare the statement of \cref{thm:diagrcoh-s} with the more familiar statement for vectors in a vector space $V$ over a field $k$.
  If $V$ has finite dimension $n$, then choosing a basis for $V$ provides an isomorphism $V \iso k^{\oplus n}$.
  Thus, two vectors $v,w \in V$ are equal if an only if their components in $k^{\oplus n}$ are equal.
  The self-permutations $\pidt_a(s)$ provide the same condition: $\wt{I}$ is an equivalence and, therefore, two underlying permutations $\pidt(s)$ and $\pidt(t)$ are equal if and only if their $a$-permutations are equal for each generating object $a$.
\end{rmk}

Several examples of \cref{thm:diagrcoh-s} are given in \cref{sec:doub-quad}.
In particular, see \cref{rmk:symmetric-coherence-example}, \cref{example:cursed-cyclic} and \cref{rmk:cursed-n}.

\section{Explication: Pseudomorphism classifiers}\label{sec:psmorclass}

In this section we give an explicit description of the pseudomorphism classifiers
\[
  \Q\cn \Talg \to \Talgs
\]
for each 2-monad $\T \in \{\M,\S,\B\}$ of \cref{notn:monoidal-variants}.
We present a unified construction, noting minor differences in the three cases where appropriate.
In these applications, we work with the strict monoidal 2-monads $\T$, instead of the general $\Tg$, in order to highlight the essential features.
Equivalent results hold for the general monoidal variants by \cref{cor:str}.
Here and in \cref{sec:applications} we make use of the following.

\begin{notn}\label{notn:TA}
  Suppose $\T \in \{\M,\S,\B\}$ and suppose $(A,\bcdot,e)$ is a $\T$-algebra with monoidal unit $e$ and multiplication denoted as $\bcdot$ or with juxtaposition.
  Recall from \cref{notn:monoidal-variants} that the objects of $\T A$ are given by tuples of objects from $A$.
  The morphisms of $\T A$ are generated by tuples of morphisms from $A$ together with, in the symmetric and braided cases, permutations and braidings, respectively.

  We will use the following notation for objects and morphisms in $\T A$ that are given by tuples of objects and morphisms in $A$:
  \begin{align}
    \begin{split}
    \ang{a} & = \ang{a_i}_{i=1}^n = (a_1,\ldots,a_n)\\
    \ang{s} & = \ang{s_i}_{i=1}^n = (s_1,\ldots,s_n)
    \end{split}
  \end{align}
  where $a_i$ and $s_i$ are objects and morphisms, respectively, in $A$ and $n \ge 0$.
  \begin{itemize}
  \item The number $n$ is called the \emph{length} of $\ang{a}$.
  \item The empty tuple is denoted $\ang{}$ and has length 0.
  \item For a tuple $\ang{a}$ of length $n$, we write
    \[
      a_\bullet = a_1 \!\bcdots a_n
    \]
    to denote the product in $A$ of the entries $a_i$.
  \item For tuples $\ang{a^1}$ and $\ang{a^2}$ of length $n_1$ and $n_2$, respectively, we denote concatenation with a semicolon $;$ and write
    \[
      \ang{a^{1;2}} = \ang{a^1};\ang{a^2} = \ang{a^1;a^2}
    \]
    to denote the tuple whose first $n_1$ entries are those of $\ang{a^1}$ and whose final $n_2$ entries are those of $\ang{a^2}$.
  \end{itemize}
  This same terminology and notation is used for tuples of morphisms $\ang{s}$.

  We also denote the image of a general morphism $t$ under the multiplication $\T A\fto{\bcdot} A$ as $t_\bullet$.
  For example, $t$ may be a permutation or braiding if $\T \in \{\S,\B\}$.
  In such a case, $t_\bullet$ is the corresponding symmetry or braid isomorphism in $A$.

  Thus, the composite
  \[
    \T A \fto{\bcdot} A \fto{\eta_A} \T A
  \]
  is denoted as a length-one tuple with subscript $\scriptstyle{\bullet}$, so we write
  \begin{equation}\label{eq:mul-bullet}
    \begin{split}
      \ang{a} & \mapsto (a_\bullet), \\
      \ang{s} & \mapsto (s_\bullet), \andspace \\
      t & \mapsto (t_\bullet)
    \end{split}
  \end{equation}
  where $\ang{a}$ and $\ang{s}$ are tuples of objects and morphisms, respectively, and $t$ is a general morphism of $\T A$.
\end{notn}

Using \cref{notn:TA}, we now define the pseudomorphism classifier $\Q$ for each of the three cases $\T \in \{\M,\S,\B\}$.
\begin{defn}\label{defn:QA}
  Suppose $A$ is a category.
  Define a $\T$-algebra $\Q A$ as follows.
  \begin{description}
  \item[Objects:] The objects of $\Q A$ are those of $\T A$.
  \item[Free morphisms:] The morphisms of $\T A$ are included as morphisms of $\Q A$, and are called \emph{free morphisms} there.
    The inclusion of objects and free morphisms is denoted
    \begin{equation}\label{eq:iota-free}
      \iota\cn \T A \hookrightarrow \Q A.
    \end{equation}
    When describing individual objects or morphisms, we will often suppress $\iota$ and identify objects and morphisms of $\T A$ with their images in $\Q A$.
  \item[Adjoined isomorphisms:] For each object $\ang{a}$ in $\ob \bigl( \Q A \bigr) = \ob \bigl( \T A \bigr)$, there is an \emph{adjoined isomorphism}
    \[
      \qsf_{\ang{a}} \cn \ang{a} \fto{\iso} (a_\bullet) \inspace \Q A.
    \]
  \end{description}

  The morphisms of $\Q A$ are generated under composition and concatenation by the free morphisms and adjoined isomorphisms, subject to the following axioms.
  In the symmetric or braided cases $\T \in \{\S,\B\}$, the symmetry or braiding isomorphism of $\Q A$ is given by the corresponding free morphism from $\T A$. 
  \begin{description}
  \item[Free composites and products:]
    The inclusion $\iota$ is a strict $\T$-map.
    Thus, composites or products of free morphisms are given by those of $\T A$.
  \item[Naturality of $\qsf$:]
    The adjoined isomorphisms $\qsf$ are natural with respect to free morphisms.
    That is, using the notation \cref{eq:mul-bullet} and suppressing $\iota$, the following diagram commutes for each morphism $t \cn \ang{a_i}_{i=1}^n \to \ang{a'_i}_{i = 1}^n$ in $\T A$.
    \begin{equation}\label{eq:natq}
      \begin{tikzpicture}[x=20ex,y=8ex,vcenter]
        \draw[0cell] 
        (0,0) node (a) {\ang{a}}
        (a)++(1,0) node (a') {\ang{a'}}
        (a)++(0,-1) node (b) {(a_\bullet)}
        (a')++(0,-1) node (b') {(a'_\bullet)}
        ;
        \draw[1cell] 
        (a) edge node {t} (a')
        (b) edge node {(t_\bullet)} (b')
        (a) edge['] node {\qsf_{\ang{a}}} (b)
        (a') edge node {\qsf_{\ang{a'}}} (b')
        ;
      \end{tikzpicture}
    \end{equation}
  \item[Associativity of $\qsf$:]
    The following diagrams commute for tuples $\ang{a^1}$, $\ang{a^2}$, and $\ang{a^3}$ in $\Q A$, where the diagram at left uses the fact that $e$ is a strict unit for $A$.
    \begin{equation}\label{eq:unassocq}
      \begin{tikzpicture}[x=13ex,y=10ex,baseline=(x.base),xscale=1.2]
        \draw[0cell] 
        (0,0) node (a) {\ang{a^1};\ang{};\ang{a^2}}
        (a)++(1,0) node (b)  {\ang{a^1};\ang{a^2}}
        (a)++(-.2,-1) node (c) {\ang{a^1};(e);\ang{a^2}}
        (b)++(.2,-1) node (d) {(a^{1;2}_\bullet)}
        ;
        \draw[1cell] 
        (a) edge[equal] node {} (b)
        (a) edge['] node[pos=.7] (x) {1;\qsf_{\ang{}};1} (c)
        (c) edge node {\qsf} (d)
        (b) edge node[pos=.6] {\qsf} (d)
        ;
      \end{tikzpicture}
      \qquad
      \begin{tikzpicture}[x=21ex,y=10ex,baseline=(x.base),xscale=1.2]
        \draw[0cell] 
        (0,0) node (a) {\ang{a^1};\ang{a^2};\ang{a^3}}
        (a)++(1,0) node (b) {\ang{a^1};(a^{2;3}_\bullet)}
        (a)++(0,-1) node (c) {(a^{1;2}_\bullet);\ang{a^3}}
        (b)++(0,-1) node (d) {(a^{1;2;3}_\bullet)}
        ;
        \draw[1cell] 
        (a) edge node {1;\qsf_{\ang{a^{2;3}}}} (b)
        (b) edge node {\qsf} (d)
        (a) edge['] node (x) {\qsf_{\ang{a^{1;2}}};1} (c)
        (c) edge node {\qsf} (d)
        (a) edge node {\qsf} (d)
        ;
      \end{tikzpicture}
    \end{equation}

  \item[Normality of $\qsf$:]
    For a tuple of length one, $(a)$ with $a \in A$, we have
    \begin{equation}\label{eq:normq}
      \qsf_{(a)} = 1_{(a)} = (1_a).
    \end{equation}
  \end{description}
\end{defn}

\begin{defn}\label{defn:Qf}
  Suppose given a $\T$-map $f \cn A \zzto B$ between $\T$-algebras $A$ and $B$.
  Define a strict $\T$-map
  \[
    \Q f \cn \Q A \to \Q B
  \]
  as follows.
  For a tuple of objects $\ang{a}$, define
  \[
    (\Q f)\ang{a_i}_{i=1}^n = \ang{f(a_i)}_{i=1}^n.
  \]
  For a free morphism $t \cn \ang{a_i}_{i=1}^n \to \ang{a'_i}_{i = 1}^n$, define $\Q f$ as $\T f$.
  That is, define
  \[
    \bigl(\Q f\bigr)(\iota t) = \iota\bigl( \bigl( \T f \bigr)t\bigr) \cn \ang{f(a_i)}_{i=1}^n \to \ang{f(a'_i)}_{i=1}^n.
  \]
  For an adjoined isomorphism $\qsf_{\ang{a}}$, where $\ang{a} = \ang{a_i}_{i=1}^n$, define $\bigl(\Q f\bigr)\qsf_{\ang{a}}$ as the composite
  \begin{equation}\label{eq:fka}
    \ang{f(a_i)} \fto{\qsf_{\ang{f(a_i)}}} ([f(a_i)]_\bullet) \fto{(f_\bullet)} (f(a_\bullet)),
  \end{equation}
  where $[f(a_i)]_\bullet$ denotes the product of the entries $f(a_i)$ and 
  \[
    f_\bullet \cn [f(a_i)]_\bullet \to f(a_\bullet)
  \]
  is the notation of \cref{eq:Tmap-2cell} to indicate any composite of, respectively,
  \begin{itemize}
  \item monoidal constraints $f_2$, if $n \ge 2$,
  \item unit constraints $f_0$, if $n = 0$, or
  \item identities $1_{f(a)}$, if $n = 1$.
  \end{itemize}

  This defines $\Q f$ on the objects and generating morphisms of $\Q A$.
  Then, $\Q f$ is defined to be functorial with respect to formal composition $\circ$ and strict monoidal with respect to concatenation $;$ in $\Q A$ and $\Q B$.
  In the symmetric or braided cases, $\T \in \{\S,\B\}$, the definition of $\Q f$ on free morphisms implies that $\Q f$ satisfies the additional braid axiom \cref{eq:braid-axiom} of a $\T$-map.
  In all three cases for $\T$, we have $\Q f \circ \iota = \iota \circ \T f$ as strict $\T$-maps.

  To verify that $\Q f$ is well defined with respect to the relations \cref{eq:natq,eq:unassocq,eq:normq}, one uses the corresponding relations in the codomain $\T$-algebra $B$ together with functoriality of $f$ and naturality of $f_\bullet$.
  Furthermore, naturality of $\qsf$ and the definition of composition for monoidal functors shows that $\Q$ is functorial with respect to identities and composites of $\T$-maps.
\end{defn} 
\begin{defn}\label{defn:Qal}
In each of the cases $\T \in \{\M, \S, \B\}$, the $\T$-algebra 2-cells are monoidal transformations.
  If $\al\cn f \to f'$ is a monoidal transformation between $\T$-maps $f,f'\cn A \zzto B$, then 
  \[
    \Q \al \cn \Q f \to \Q f'
  \]
  is defined componentwise for objects $\ang{a} = \ang{a_i}_{i=1}^n$ by
  \begin{equation}\label{eq:Qal-a}
    \bigl( \Q \al \bigr)_{\ang{a}} = \ang{\al_{a_i}}_{i=1}^n.
  \end{equation}
  The monoidal transformation axioms for $\Q \al$ hold because concatenation of tuples is strictly associative and unital.
  Similarly, 2-functoriality of $\Q$ with respect to identities and horizontal or vertical composites of monoidal transformations is verified componentwise.
\end{defn}

Together, \cref{defn:QA,defn:Qf,defn:Qal} define a 2-functor
\[
  \Q \cn \Talg \to \Talgs.
\]
Recall from \cref{defn:effectiveQi} that a pseudomorphism classifier $(\Q,\itt,\ze,\de)$ is effective if the unit/counit pair $(\ze,\de)$ is componentwise an adjoint surjective equivalence.
The following will be used in \cref{prop:Qm-effpsmorcl} below to show that $\Q$ is an effective pseudomorphism classifier for $\T$.
\begin{defn}\label{defn:effecQ}
  In the context of \cref{defn:QA,defn:Qf,defn:Qal} above, there are 2-natural transformations $\ze$ and $\de$ together with an invertible monoidal transformation $\Theta$ defined as follows.
  \begin{description}
  \item[Unit:] For a $\T$-algebra $A$, define a $\T$-map
    \[
      \ze_A \cn A \zzto \itt\Q A
    \]
    by sending each object and morphism of $A$ to the corresponding length-one tuple in $\Q A$.
    The monoidal and unit constraints of $\ze$ are given by the adjoined isomorphisms $\qsf$.
    Thus, in the symmetric and braided cases $\T \in \{\S,\B\}$, $\ze_A$ satisfies the braid axiom \cref{eq:braid-axiom}.
    Naturality of $\ze$ with respect to $\T$-maps $f$ holds because $\Q f$ is defined by $\T f$ on tuples $\ang{a}$ and free morphisms $t$.
    Likewise, 2-naturality with respect to monoidal transformations follows from \cref{eq:Qal-a}.
  \item[Counit:] For a $\T$-algebra $B$, define a strict $\T$-map
    \[
      \de_{B} \cn \Q \itt B \to B
    \]
    by sending each tuple of objects $\ang{a}$ to their product $a_\bullet$ in $B$, each free morphism $t$ to $t_\bullet$, and each adjoined isomorphism $\qsf$ to an identity. 
    Thus, in the symmetric or braided cases $\T\in\{\S,\B\}$, $\de_B$ satisfies the braid axiom \cref{eq:braid-axiom}.
    This is a strict $\T$-map because the monoidal product in $B$ is strictly associative and unital.
    Naturality of $\de$ with respect to strict $\T$-maps holds because such $\T$-maps strictly preserve monoidal units and products.
  \item[Efficacy:]
    For each $\T$-algebra $B$, define an invertible monoidal transformation
    \[
      \Theta \cn \ze_B \de_B \fto{\iso} 1_{\Q B}
    \]
    with components
    \[
      \Theta_{\ang{b}} = \qsf^\inv_{\ang{b}} \cn (b_\bullet) \to \ang{b}
      \forspace \ang{b} \in \Q B.
    \]
    Monoidal naturality of $\qsf$, and hence also $\Theta$, is equivalent to the conditions \cref{eq:natq,eq:unassocq}. 
  \end{description}
\end{defn}
  
\begin{prop}\label{prop:Qm-effpsmorcl}
  For each $\T \in \{\M,\S,\B\}$, the 2-functor
  \[
    \Q \cn \Talg \to \Talgs
  \]
  is an effective pseudomorphism classifier for $\T$.
\end{prop}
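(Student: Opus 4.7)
The plan is to verify directly that $(\Q, \itt, \ze, \de, \Theta)$ as constructed in \cref{defn:QA,defn:Qf,defn:Qal,defn:effecQ} has the universal property of an effective pseudomorphism classifier. Existence of some pseudomorphism classifier for these finitary 2-monads on $\Cat$ is already guaranteed by \cref{thm:BKP313}, so the content of the statement is to identify that abstract construction with this explicit one and to exhibit the efficacy. The key step is to construct, for each $\T$-map $f \cn A \zzto \itt Y$ with $Y \in \Talgs$, a unique strict $\T$-map $f^\bot \cn \Q A \to Y$ satisfying $f^\bot \circ \ze_A = f$.

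On an object $\ang{a} = (a_1,\ldots,a_n)$ of $\Q A$ I would set $f^\bot \ang{a} = [f(a_i)]_\bullet = f(a_1) \bcdot \cdots \bcdot f(a_n)$, the monoidal product in $Y$, with the empty tuple sent to the monoidal unit. On a free morphism $\iota t$ with $t \cn \ang{a} \to \ang{a'}$ in $\T A$, define $f^\bot(\iota t) = \Ymul \circ (\T f)(t)$, using the $\T$-algebra structure $\Ymul \cn \T Y \to Y$; this handles tuples of morphisms of $A$ and, in the cases $\T \in \{\S,\B\}$, free permutations or braidings. On an adjoined isomorphism $\qsf_{\ang{a}}$, set $f^\bot(\qsf_{\ang{a}})$ to be the iterated monoidal constraint $f_\bullet \cn [f(a_i)]_\bullet \to f(a_\bullet)$, and extend to all morphisms of $\Q A$ by formal composition and concatenation. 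Well-definedness against the relations of \cref{defn:QA} follows from standard monoidal functor arithmetic: naturality of $f_\bullet$ yields the naturality axiom \cref{eq:natq}; the multiplication and unit axioms for the $\T$-map $f$, together with strict associativity and unitality of the product in $Y$, give the associativity condition \cref{eq:unassocq}; and \cref{eq:normq} is immediate. The map $f^\bot$ is strict by construction, since concatenation in $\Q A$ corresponds to the strictly associative and unital product in $Y$, and for $\T \in \{\S,\B\}$ the braid axiom \cref{eq:braid-axiom} for $f^\bot$ reduces to that for $f$. The equation $f^\bot \circ \ze_A = f$ holds by direct inspection on the generating objects, free morphisms, and monoidal constraints $(\ze_A)_\bullet = \qsf$; uniqueness follows because the values of any such $f^\bot$ on the generators are forced. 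Naturality of $f \mapsto f^\bot$ in both variables, together with 2-naturality checked componentwise via \cref{eq:Qal-a}, then yields the 2-adjunction $\Q \dashv \itt$ with $\ze$ as unit, and by uniqueness the counit is $\de_Y = (1_Y)^\bot$, matching \cref{defn:effecQ}.

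For effectiveness, the 2-cell $\Theta$ of \cref{defn:effecQ} is invertible by construction, and its monoidal naturality is precisely the content of \cref{eq:natq,eq:unassocq}. The triangle identity $\de_Y \ze_Y = 1_Y$ holds because on objects the composite is $y \mapsto (y) \mapsto y$, and its monoidal constraint $\de_Y * (\ze_Y)_\bullet = \de_Y * \qsf$ is an identity since $\de_Y$ sends adjoined isomorphisms to identities. The adjoint condition $\Theta * \ze_Y = 1_{\ze_Y}$ then reduces componentwise to $\qsf^\inv_{(y)} = 1$, which holds by normality \cref{eq:normq}. The main obstacle is the bookkeeping required to verify that $f^\bot$ respects the relations defining $\Q A$ across the three cases $\T \in \{\M,\S,\B\}$, and that the assignment $f \mapsto f^\bot$ is 2-functorial; once these verifications are in hand, the adjunction, triangle identity, and adjoint surjective equivalence structure all follow essentially by construction.
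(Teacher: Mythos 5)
Your proposal is correct, but it verifies the adjunction $\Q \dashv \itt$ by a different mechanism than the paper. The paper's proof is short: having already established in the construction of \cref{defn:effecQ} that $\ze$ and $\de$ are 2-natural, it checks only the two triangle identities, $\de_B \circ \ze_B = 1_B$ and $\de_{\Q A}\circ(\Q\ze_A) = 1_{\Q A}$ (the latter by the computation $\ang{a} \mapsto \ang{(a_i)}_i \mapsto \ang{a}$), and then derives $\Theta * \ze_B = 1_{\ze_B}$ from normality \cref{eq:normq} exactly as you do. You instead verify the hom-set form of the universal property by explicitly constructing the mate $f^\bot$ of an arbitrary $\T$-map $f\cn A \zzto \itt Y$ and proving existence and uniqueness. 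This costs more bookkeeping (well-definedness of $f^\bot$ against the relations \cref{eq:natq,eq:unassocq,eq:normq}, and the uniqueness argument, which correctly hinges on the fact that a strict $g$ with $g\ze_A = f$ must send $\qsf_{\ang{a}}$ to $f_\bullet$ because the algebra constraint of $g\ze_A$ is $g * \qsf$), but it buys the explicit formula $f^\bot = \de_Y \circ \Q f$ as a byproduct, which the paper records separately in \cref{defn:QA-etc}. The effectiveness half of your argument --- invertibility and monoidal naturality of $\Theta$ from \cref{eq:natq,eq:unassocq}, and the adjoint condition from normality --- coincides with the paper's. One minor quibble: the braid axiom for the strict map $f^\bot$ on free braidings does not really ``reduce to that for $f$''; it holds because $\T f$ carries formal braidings to formal braidings and $\Ymul$ realizes them, with no axiom on $f$ needed. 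This does not affect the validity of the argument.
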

\begin{proof}
  The 2-functor $\Q$, unit $\ze$, counit $\de$, and isomorphism $\Theta$ are given in \cref{defn:QA,defn:Qf,defn:Qal,defn:effecQ,}.
  For $\T$-algebras $A$ and $B$, the definitions of $\de$ and $\ze$ yield the following computations:
  \begin{align*}
    \de_B \bigl( \ze_B(b) \bigr)
    & = b
    & & \forspace b \in B\\
    \de_{\Q A} \bigl( \bigl( \Q\ze_A \bigr)\ang{a} \bigr)
    & = \de_{\Q A} \ang{(a_i)}_{i=1}^n
    = \ang{a}
    & & \forspace \ang{a} = \ang{a_i}_{i=1}^n \in \Q A.
  \end{align*}
  A similar computation holds for morphisms, using the fact that the monoidal constraints of $\ze$ are the adjoined isomorphisms $\qsf$.
  Thus, $\ze$ and $\de$ satisfy the triangle identities
  \[
    \de_B \circ \ze_B = 1_B
    \andspace
    \de_{\Q A} \circ (\Q \ze_A) = 1_{\Q A}.
  \]
  so that $(\Q,\itt,\ze,\de)$ is a 2-adjunction.
  
  Furthermore, the normality condition \cref{eq:normq} for $\qsf$ implies
  \begin{equation}\label{eq:Theta-ze-1}
    \Theta * \ze_B = 1_{\ze_B}.
  \end{equation}
  This completes the proof.
\end{proof}

The explicit description of $\Q$, above, will be helpful in \cref{sec:applications} below.
The following alternative description of $\Q$ is more abstract, but highlights some of its characteristic properties.
\begin{rmk}\label{rmk:Q-alt}
  The strict $\T$-map $\iota\cn \T A \to \Q A$ from \cref{eq:iota-free} is the identity on objects and factors the monad structure morphism $\T A \to A$ as shown at left below.
  Furthermore, there is a $\T$-map $\ze_A \cn A \zzto \Q A$ such that the adjoined isomorphisms $\qsf$ are the components of an invertible monoidal transformation as shown at right below.
  \begin{equation}\label{eq:MA-QmA}
    \begin{tikzpicture}[x=13ex,y=8ex,vcenter,xscale=1.2]
      \draw[0cell] 
      (0,0) node (a) {\T A}
      (a)++(1,-1) node (d) {\Q A}
      (d)++(0,1) node (b) {A}
      ;
      \draw[1cell] 
      (a) edge node {\bcdot} (b)
      (a) edge['] node {\iota} (d)
      (d) edge['] node {\de_A} (b)
      ;
    \end{tikzpicture}
    \qquad
    \begin{tikzpicture}[x=13ex,y=8ex,vcenter,xscale=1.2]
      \draw[0cell] 
      (0,0) node (a) {\T A}
      (a)++(1,-1) node (d) {\Q A}
      (d)++(-1,0) node (c) {A}
      ;
      \draw[1cell] 
      (a) edge['] node {\bcdot} (c)
      (a) edge node {\iota} (d)
      ;
      \draw[zz1cell]
      (c) to['] node {\ze_A} (d)
      ;
      \draw[2cell]
      (c)++(50:.45) node[rotate=225,2label={below,\qsf}] {\Rightarrow}
      ;
    \end{tikzpicture}
  \end{equation}
  The normality condition \cref{eq:normq} for $\qsf$ is equivalent to the equality
  \begin{equation}\label{eq:normq-alt}
    \qsf * \eta_A = 1_{\ze_A}.
  \end{equation}
  That is, the whiskering of $\qsf$ with the unit $\eta_A \cn A \to \T A$ is the identity transformation of $\ze_A$.
  In this context, the requirement in \cref{defn:effecQ}, that $\de$ sends the adjoined isomorphisms $\qsf$ to identities, is equivalent to the requirement that $\de_A \ze_A = 1_A$ as a strict $\T$-map.
\end{rmk}

\begin{rmk}\label{rmk:power-lack}
  The description in \cref{rmk:Q-alt} indicates how the elementary presentation above in \cref{defn:QA,defn:Qf,defn:Qal} relates to the method of Power \cite[Theorem~3.4]{power1989coherence}, which constructs a pseudomorphism classifier $\Q$ in greater generality by factoring the multiplication morphism of a $\T$-algebra (or pseudo algebra) $(X,\Xmul)$ as a
  \emph{bijective-on-objects} functor $\iota$ followed by a \emph{full and faithful} functor $\delta_A$. 
  In our applications, the left side of \cref{eq:MA-QmA} provides this factorization.
  Power's work can be extended in greater generality via Lack's codescent for pseudo-algebras \cite[Theorem~4.10]{Lac02Codescent}.
\end{rmk}

\begin{defn}\label{defn:QA-etc}
  In the context of \cref{defn:QA,defn:Qf,defn:Qal}, the following associated constructions are of interest.
  These are special cases of the general constructions in \cref{defn:fbot,lem:zeflat,rmk:free-flex}.
  \begin{enumerate}
  \item Referring to the adjunction $\Q \dashv \itt$, each $\T$-map $f \cn A \zzto B$, has a unique strict mate $f^\bot$, making the triangle at left below commute in $\Talg$.
    Recalling \cref{defn:Qf,defn:effecQ}, one verifies that $f^\bot$ is defined by the triangle at right below.
    \[
      \begin{tikzpicture}[x=12ex,y=8ex,vcenter,xscale=1.2]
        \draw[0cell] 
        (0,0) node (a) {A}
        (a)++(0,1) node (qa) {\Q A}
        (qa)++(1,0) node (b) {B}
        ;
        \draw[zz1cell] 
        (a) -- node['] {f} (b)
        ;
        \draw[zz1cell] 
        (a) -- node {\ze_A} (qa)
        ;
        \draw[1cell] 
        (qa) edge node {f^\bot} (b)
        ;
      \end{tikzpicture}
      \qquad
      \begin{tikzpicture}[x=12ex,y=8ex,vcenter,xscale=1.2]
        \draw[0cell] 
        (a)++(0,1) node (qa) {\Q A}
        (qa)++(1,0) node (b) {B}
        (qa)++(1,1) node (qb) {\Q B}
        ;
        \draw[1cell] 
        (qa) edge node {f^\bot} (b)
        (qa) edge node {\Q f} (qb)
        (qb) edge node {\de_B} (b)
        ;
      \end{tikzpicture}
    \]
  \item In the case $A = \T C$ for a category $C$, there is a strict $\T$-map
    \begin{equation}\label{eq:QA-zeflat}
      \ze^\flat \cn \T C \to \Q \T C
    \end{equation}
    that sends a tuple of objects $\ang{a_i}_{i = 1}^n$ in $\T C$ to the corresponding tuple of length-one tuples $\ang{(a_i)}_{i=1}^n$ in $\Q \T C$.
    In the case $n = 0$, $\ze^\flat$ sends the empty tuple $\ang{} \in \T C$ to the empty tuple $\ang{} \in \Q \T C$.
    The assignment on morphisms is given in the same way, and $\ze^\flat$ is a strict monoidal functor.
  \item There is an invertible monoidal transformation
    \[
      \Theta^\flat \cn \ze^\flat_{\T C} \de_{\T C} \fto{\iso} 1_{\Q \T C}
    \]
    defined as in \cref{eq:Thetaflat}.
    For an object
    \[
      \ang{w} = \ang{w_j}_{j=1}^m \in \Q \T C,
    \]
    where $w_j = \ang{a^j_i}_{i=1}^{n_j}$ is an object of $\T C$ for each $j \in \{1,\ldots,m\}$, the component
    \[
      \Theta^\flat_{\ang{w}} \cn \ze^\flat \de \ang{w} \to \ang{w}
    \]
    is given by the composite 
    \begin{equation}\label{eq:Thetaflat-q-qinv}
      \ze^\flat \de \ang{w} \fto{\qsf} (\ang{w_\bullet}) \fto{\qsf^\inv} \ang{w}
    \end{equation} 
    Here, each $\qsf$ is one of the adjoined isomorphisms in $\Q \T C$, the object $(\ang{w_\bullet})$ is the length-one tuple whose entry is the concatenation in $\T C$ of the tuples $w_j = \ang{a^j}$, and $\ze^\flat \de \ang{w} = \ang{(a^j_i)}_{j,i}$ is the tuple of length $N = \sum_j n_j$ whose entries are the length-one tuples $(a^j_i)$.

    If $\ang{w} = \ze^\flat \ang{a}$ for $\ang{a} \in \T C$, then the two components of $\qsf$ appearing in \cref{eq:Thetaflat-q-qinv} are the same.
    Hence, $\Theta^\flat * \ze^\flat = 1_{\ze^\flat}$ as required.
  \end{enumerate}
\end{defn}

\section{Explication: Universal pseudomorphisms}\label{sec:applications}

The hypotheses of \cref{thm:main1} hold for $\K = \Cat$ and each of the 2-monads for monoidal structures $\Tg$ and $\T$ in \cref{notn:monoidal-variants}, with $\T \in \{\M,\S,\B\}$.
Therefore, the comparison strict $\T$-maps
\[
  \Tg(C',\phi) \fto{\De} \Tg C'
  \andspace
  \T(C',\phi) \fto{\De} \T C'
\]
are equivalences for each $\phi \cn C \to C'$ in $\Cat$.

This section gives an explicit description of $\T(G',\phi)$ in \cref{defn:TGpphi}, where $\phi \cn G \to G'$ is a function between sets, treated as discrete categories.
Then, the universal $\T$-map $\wt{\phi}$ for $\T(G',\phi)$ and the comparison $\De$ are described in \cref{defn:wtphi-application,defn:De-application}, respectively.
In applications, $\phi$ is the underlying function-on-objects of a $\T$-map $f$.
In that case, the strict $\T$-map $\La$ of \cref{eq:La} is described in \cref{defn:La-application}.

To begin, it will be useful to record the following.
\begin{defn}\label{defn:disc-ob-ind}
  Let $\Mon$ denote the category of monoids in $\Set$.
  The \emph{set-of-objects} functor
  \[
    \ob\cn \Malg \to \Mon
  \]
  has both left and right adjoints
  \begin{equation}\label{eq:disc-ob-ind}
    \disc \dashv \ob \dashv \indisc
  \end{equation}
  defined as follows.
  \begin{itemize}
  \item $\disc \cn \Mon \to \Malgs$ is the \emph{discrete $\M$-algebra} functor, sending a monoid $G$ to the $\M$-algebra with underlying monoid $G$ and identity morphisms.
  \item $\indisc \cn \Mon \to \Malgs$ is the \emph{indiscrete $\M$-algebra}, sending a monoid $G$ to the $\M$-algebra with underlying monoid $G$ and a unique isomorphism between every pair of objects.
  \end{itemize}
  Below, we will apply $\disc$ implicitly and omit the notation.
\end{defn}

Recall from \cref{thm:finitary-udc} that each universal pseudomorphism for $\T \in \{\M,\S,\B\}$ can be obtained as a pushout of strict $\T$-maps \cref{eq:udc-pushout} shown here.
\begin{equation}\label{eq:udc-pushout-again}
  \begin{tikzpicture}[x=14ex,y=8ex,vcenter,xscale=1.2]
    \draw[0cell] 
    (0,0) node (a) {\T G}
    (a)++(1,0) node (b) {\T G'}
    (a)++(0,-1) node (c) {\itt \Q \T G}
    (c)++(1,0) node (d) {\T (G',\phi)}
    ;
    \draw[1cell] 
    (a) edge node {\T\phi} (b)
    (a) edge['] node {\ze^\flat} (c)
    (c) edge node {\wh{\phi}} (d)
    (b) edge node {\ka} (d)
    ;
  \end{tikzpicture}
\end{equation}
Recall that $\Q$ is described in \cref{defn:QA} using \cref{notn:TA}; recall $\ze^\flat$ from \cref{eq:QA-zeflat}.
Unpacking \cref{eq:udc-pushout-again} yields the following.
\begin{explanation}\label{defn:TGpphi}
  Suppose $\phi\cn G \to G'$ is a functor between discrete categories and $\T \in \{\M,\S,\B\}$.
  The $\T$-algebra $\T(G',\phi)$ in \cref{eq:udc-pushout-again} is given as follows.
  We begin by describing generating objects and their relations.
  Then, we describe generating morphisms and their relations

  The symmetric and braided cases $\T \in \{\S,\B\}$ have the same objects as the plain monoidal case.
  In the monoidal case, $\T = \M$, the functor $\ob$ is left adjoint to $\indisc$ in \cref{eq:disc-ob-ind} and therefore commutes with pushouts.

  Thus, in each of the cases $\T \in \{\M,\S,\B\}$, the objects of $\T(G',\phi)$ are given by the pushout \cref{eq:udc-pushout-again} on objects.
  Hence, the objects are generated under the monoidal product $;$ by those of $\T G'$ and $\Q\T G$, for which we use the following terms.
  \begin{description}
  \item[Free objects:] The \emph{free objects} of $\T(G',\phi)$ are those of $\T G'$.
    They are tuples
    \[
      w' = \ang{a'} = \ang{a'_i}_{i=1}^{n'}
    \]
    where $a_i' \in G'$ and $n' \ge 0$.
    On objects, the functor $\ka\cn \T G' \to \T(G',\phi)$ is the inclusion of free objects.
  \item[$\phi$-Objects:] The \emph{$\phi$-objects} of $\T(G',\phi)$ are tuples denoted
    \[
      \bang{[\phi]w} = \bang{[\phi]w_j}_{j=1}^m
    \]
    where each $\ang{w}$ is an object of $\Q\T G$, so $w_j = \ang{a^j_i}_{i=1}^{n_j}$ is an object of $\T G$, and $m \ge 0$.
    On objects, the functor $\wh{\phi}$ sends an object $\ang{w} \in \Q\T G$ to the $\phi$-object $\bang{[\phi]w}$.
  \end{description}
  These objects are subject to the following relation, identifying the two composites around \cref{eq:udc-pushout-again}. 
  \begin{description}
  \item[Object pushout relation:] If $\ang{w} = \ze^\flat\ang{a} = \bang{(a^j_1)}_{j=1}^m$ is a tuple of length-one tuples, then
    \begin{equation}\label{eq:len1-reln}
      \bang{[\phi](a^j_1)}_{j=1}^m = \ang{\phi(a^j_1)}_{j=1}^m,
    \end{equation}
    where
    \begin{itemize}
    \item the left hand side is the $\phi$-object associated to the tuple $\ang{w}$ whose entries are length-one tuples $(a^j_1)$, and
    \item the right hand side is the free object whose entries are $\phi(a^j_1)$.
    \end{itemize}
    In the case that $m = 0$, the empty $\phi$-object $\bang{[\phi]}$ is identified with the empty free object $\ang{}$.
  \end{description}
  This finishes the description of the objects of $\T(G',\phi)$.

  The morphisms of $\T(G',\phi)$ are likewise generated by those of $\T G'$ and $\Q\T G$ under composition $\circ$ and the product $;$.
  In the symmetric and braided cases $\T \in \{\S,\B\}$, there are additional formal braid isomorphisms.
  Thus, the morphisms of $\T(G',\phi)$ are generated by four types, for which we use the following terms.
  \begin{description}
  \item[Free morphisms:] The \emph{free morphisms} are those of $\T G'$.
    On morphisms, the functor $\ka$ is the inclusion of free morphisms.

  \item[$\phi$-Free morphisms:] The \emph{$\phi$-free morphisms} are denoted
    \begin{equation}\label{eq:phi-free}
      [\phi]u \cn \bang{[\phi]w} \to \bang{[\phi]v}
    \end{equation}
    where $u \cn \ang{w} \to \ang{v}$ is a free morphism of $\Q \T G$.
    Thus, $u$ is either
    \begin{itemize}
    \item a tuple of morphisms $t_j\cn w_j \to v_j$ in $\T G$;
    \item a permutation or braiding, in the symmetric and braided cases $\T \in \{\S,\B\}$; or
    \item a composite of such morphisms. 
    \end{itemize}
    In the former case, since $G$ is discrete, each $t_j$ is either a tuple of identity morphisms or, in the cases $\T \in \{\S,\B\}$, a permutation or braiding in $\T G$.

  \item[$\phi$-Adjoined isomorphisms:] The \emph{$\phi$-adjoined morphisms} are denoted
    \begin{equation}\label{eq:phi-adjoined}
      [\phi]\qsf_{\ang{w}}\cn \bang{[\phi]w} \to \bigl([\phi]w_\bullet\bigr)
    \end{equation}
    where $\ang{w} = \ang{w_j}_{j=1}^m$ is an object of $\Q\T G$ and $w_\bullet$ denotes the concatenation in $\T G$ of the tuples $w_j = \ang{a^j_i}_{i=1}^{n_j}$.
    Thus, $w_\bullet = \ang{a^\bullet}$ is a tuple of length $N = \sum_j n_j$ whose $\ell$th entry, $a^\bullet_\ell$, is $a^J_i$, where 
    \[
      J \in \{1,\ldots,m\} \andspace i \in \{1,\ldots,n_{J}\}
    \]
    are the unique natural numbers such that
    \begin{equation}\label{eq:abullet}
        \ell = \left[\sum_{j=1}^{J-1} n_j\right] + i .
    \end{equation}
  \item[Formal Morphisms:]
    In the symmetric and braided cases, $\T \in \{\S,\B\}$, there are formal permutation and braid morphisms, respectively.
    The formal morphisms between free objects are identified with the corresponding free morphisms given by permutation or braid morphisms in $\T G'$.
    The formal morphisms between $\phi$-objects are identified with the corresponding $\phi$-free morphisms given by permutation or braid morphisms in $\Q \T G$.
  \end{description}

  The morphisms of $\T (G',\phi)$ are freely generated under composition $\circ$ and the product $;$ so that the $\T$-algebra structure on $\T(G',\phi)$ extends that of $\T G'$ and $\Q \T G$, subject to the following axioms.
  \begin{description}
  \item[Composites and products:] The structure morphisms
    \[
      \ka \cn \T G' \to \T(G',\phi)
      \andspace
      \wh{\phi} \cn \Q\T G \to \T(G',\phi)
    \]
    are both strict $\T$-maps.
    Thus, the composites or products of free, respectively $\phi$-, morphisms are given by those of $\T G'$, respectively $\Q\T G$.

  \item[Morphism pushout relation:] For each morphism $t \cn \ang{a} \to \ang{b}$ in $\T G$, where $\ang{a} = \ang{a_i}_{i=1}^n$ and $\ang{b} = \ang{b_i}_{i=1}^n$, the images of $t$ under the two composites around \cref{eq:udc-pushout-again} are identified.
    Thus, the free morphism
    \[
      \bigl(\T\phi\bigr)(t) \cn \bigl(\T\phi\bigr)(\ang{a}) \to \bigl(\T\phi\bigr)(\ang{b})
    \]
    is identified with the $\phi$-free morphism
    \[
      [\phi]\ol{t} \cn \bang{[\phi](a_i)}_{i=1}^n \to \bang{[\phi](b_i)},
    \]
    where $\ol{t} = \ze^\flat t$ is the free morphism induced by $t$, between tuples of length-one tuples $\ang{(a_i)}_{i=1}^n$ and $\ang{(b_i)}_{i=1}^n$.

    Since $G$ is discrete, this relation is trivial if $\T = \M$, in which case $t$ is a tuple of identity morphisms.
    If $\T \in \{\S,\B\}$, then $(\T\phi)t$, $\ol{t}$, and $[\phi]\ol{t}$ are the respective permutation or braiding morphisms determined by $t$.
  \end{description}
  This finishes the description of objects, morphisms, and $\T$-algebra structure of $\T(G',\phi)$.
\end{explanation}

\begin{prop}\label{prop:TGpphi}
  The $\T$-algebra described in \cref{defn:TGpphi} is a model for the pushout $\T(G',\phi)$ in \cref{eq:udc-pushout-again}.
\end{prop}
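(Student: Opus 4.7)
The plan is to verify the universal property of the pushout directly from the generators-and-relations description in \cref{defn:TGpphi}. First, I would confirm that the proposed data assemble into a well-defined $\T$-algebra. The objects and morphisms are freely generated from those of $\T G'$ and $\Q\T G$ under the monoidal product $;$ and composition, subject only to the object and morphism pushout relations that identify $\T\phi$ and $\ze^\flat \circ \iota$ on the overlap coming from $\T G$. Because $G$ is discrete, each morphism of $\T G$ is either a tuple of identities or, when $\T \in \{\S,\B\}$, a permutation or braiding; in each case the pushout relation identifies the same permutation/braiding on both sides, so the quotient is consistent with the $\T$-algebra structure inherited freely from $\T G'$ and $\Q\T G$. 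By construction, $\ka$ and $\wh{\phi}$ are strict $\T$-maps, and the square \cref{eq:udc-pushout-again} commutes on generators and therefore everywhere.

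Second, for the universal property, suppose $H$ is a $\T$-algebra together with strict $\T$-maps $k \cn \T G' \to H$ and $h \cn \Q\T G \to H$ satisfying $k \circ \T\phi = h \circ \ze^\flat$. I would define a strict $\T$-map $\ol{k,h} \cn \T(G',\phi) \to H$ on generators by sending each free object $w'$ to $k(w')$, each $\phi$-object $\bang{[\phi]w}$ to $h(\ang{w})$, each free morphism to its image under $k$, each $\phi$-free morphism $[\phi]u$ in \cref{eq:phi-free} to $h(u)$, and each $\phi$-adjoined isomorphism $[\phi]\qsf_{\ang{w}}$ in \cref{eq:phi-adjoined} to $h(\qsf_{\ang{w}})$. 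Formal permutation and braid morphisms between free, respectively $\phi$-objects, are sent to $k$, respectively $h$, of their counterparts. Extend this strictly monoidally and functorially; the values on generators are forced by the required commutativities with $\ka$ and $\wh{\phi}$, so uniqueness is automatic.

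Third, I would verify that this assignment respects the pushout relations. The object pushout relation \cref{eq:len1-reln} and the morphism pushout relation both follow from the hypothesis $k \circ \T\phi = h \circ \ze^\flat$, together with the explicit description of $\ze^\flat$ in \cref{eq:QA-zeflat} as the inclusion of tuples as tuples of length-one tuples. Strict monoidality, the braid axiom \cref{eq:braid-axiom} when $\T \in \{\S,\B\}$, and functoriality all transfer from the corresponding properties of $k$ and $h$, since the generating relations in $\T(G',\phi)$ beyond the pushout relations are exactly the axioms already satisfied inside $\T G'$ and $\Q\T G$.

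The main obstacle I anticipate is the bookkeeping in the symmetric and braided cases, where the formal permutation/braid morphisms interact with the $\phi$-adjoined isomorphisms. One must check that sending $[\phi]\qsf_{\ang{w}}$ to $h(\qsf_{\ang{w}})$ is compatible with the naturality, associativity, and normality relations \cref{eq:natq,eq:unassocq,eq:normq} of $\qsf$ inside $\Q\T G$. Since $h$ is a strict $\T$-map, it already sends these relations to valid identities in $H$, so the compatibility holds; however, writing this out carefully requires unpacking the definition of $\wh{\phi}$ on composites involving both free and adjoined morphisms. Once this verification is complete, the universal property follows, establishing that the described $\T$-algebra is the pushout \cref{eq:udc-pushout-again}.
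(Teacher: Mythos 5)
The paper gives no proof of this proposition: it is stated as a summary of \cref{defn:TGpphi}, which is itself presented as the result of ``unpacking'' the pushout \cref{eq:udc-pushout-again}, so the verification is left implicit. Your proposal supplies exactly the standard argument that fills this gap---checking the universal property of the pushout directly against the generators-and-relations presentation---and it is essentially correct. Two small points of bookkeeping. First, the left leg of the pushout is $\ze^\flat\cn \T G \to \Q\T G$ itself, not $\ze^\flat\circ\iota$; the relevant factorization is $\ze^\flat = \iota\circ\T\eta_G$, where $\T\eta_G$ sends a tuple to the tuple of its length-one subtuples, and it is this composite whose image must be identified with that of $\T\phi$. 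Second, in the cases $\T\in\{\S,\B\}$ the formal permutation and braid morphisms are not confined to purely free or purely $\phi$-object sources and targets: a generic formal braiding interchanges a $;$-product mixing free objects and $\phi$-objects, and your case split omits these. Their images under the induced map are forced---they must go to the corresponding braiding in $H$ between the images of the factors, since the induced map is required to be a strict $\T$-map---but the well-definedness check then includes naturality of the braiding in $H$ against images of all four generator types, not just the compatibility of $[\phi]\qsf$ with the relations \cref{eq:natq,eq:unassocq,eq:normq} that you flag. Both issues are routine to repair and do not affect the soundness of the argument.
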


Now we describe the universal pseudomorphism
\[
  \wt{\phi}\cn \T G \zzto \T(G',\phi).
\]
Recalling \cref{eq:wtphi-kaphi}, $\wt{\phi}$ is equal to the composite $\wh{\phi} \circ \ze$ shown below.
\begin{equation}\label{eq:wtphi-again}
  \begin{tikzpicture}[x=18ex,y=12ex,vcenter,xscale=1.2,yscale=1.1]
    \draw[0cell] 
    (0,0) node (a) {\T G}
    (a)++(1,0) node (c) {\T(G',\phi)}
    (a)++(.5,-.5) node (b) {\Q\T G}
    ;
    \draw[1cell] 
    (b) edge['] node {\wh{\phi}} (c)
    ;
    \draw[zz1cell]
    (a) to['] node {\ze_{\T C}} (b)
    ;
    \draw[zz1cell]
    (a) to node {\wt{\phi}} (c)
    ;
  \end{tikzpicture}
\end{equation}
Recall $\ze$ is the unit in \cref{defn:effecQ}.
\begin{explanation}\label{defn:wtphi-application}
  In the context of \cref{defn:TGpphi,eq:alt-udc-again}, the $\T$-map
  \[
    \wt{\phi}\cn \T G \zzto \T(G',\phi)
  \]
  in \cref{eq:wtphi-again} is given as follows.
  \begin{enumerate}
  \item For a tuple $w = \ang{a_i}_{i=1}^n \in \T G$, with each $a_i \in G$,
    \[
      \wt{\phi}w = ([\phi]w) 
    \]
    is the $\phi$-object of length one whose only entry is $[\phi]w$.
  \item For a morphism $t \cn w \to v$ in $\T G$,
    \[
      \wt{\phi}t = [\phi]t \cn ([\phi]w) \to ([\phi]v)
    \]
    is the $\phi$-free morphism of length one whose entry is either the identity, if $\T = \M$, or the permutation or braid morphism corresponding to $t$ if $\T \in \{\S,\B\}$.
  \item The unit constraint of $\wt{\phi}$ is given by the $\phi$-adjoined isomorphism for the empty tuple:
    \[
      [\phi]\qsf_{\ang{}} \cn \bang{[\phi]} = \ang{} \to \bigl([\phi]\ang{}\bigr) = \bigl(\ang{}\bigr)
    \]
    where, on the right hand side, $\bigl([\phi]\ang{}\bigr) = \bigl(\ang{}\bigr)$ is the $\phi$-object of length one whose single entry is $[\phi]\ang{} = \ang{}$.
  \item The monoidal constraint of $\wt{\phi}$ is given, at a pair of objects $w_1,w_2\in \T G$, by the $\phi$-adjoined isomorphism for the length-two tuple $\ang{w} = \ang{w_i}_{i=1}^2$:
    \[
      [\phi] \qsf_{\ang{w}} \cn \bang{[\phi]w_i}_{i=1}^2 \to
      \bigl( [\phi]w_\bullet \bigr)
      = \bigl( [\phi]\ang{w_{1;2}} \bigr).
    \] 
  \end{enumerate}
  The description of the unit and monoidal constraints of $\wt{\phi}$ follows from those of $\ze$ in \cref{defn:effecQ}.
\end{explanation}

Now we describe the strict $\T$-map
\[
  \De \cn \T(G',\phi) \fto{\hty} \T G'.
\]
By \cref{rmk:alt-udc}, $\De$ is an adjoint surjective equivalence that is determined by the pushout \cref{eq:udc-pushout-again}, as indicated by the dashed arrow below.
\begin{equation}\label{eq:alt-udc-again}
  \begin{tikzpicture}[x=14ex,y=8ex,vcenter,xscale=1.2]
    \draw[0cell] 
    (0,0) node (a) {\T G}
    (a)++(1,0) node (b) {\T G'}
    (a)++(0,-1) node (c) {\itt \Q\T G}
    (c)++(1,0) node (d) {\T(G',\phi)}
    (d)++(1,-1) node (y) {\T G'}
    (c)++(0,-1) node (x) {\T G}
    ;
    \draw[1cell] 
    (a) edge node {\T\phi} (b)
    (a) edge['] node {\ze^\flat} (c)
    (c) edge node {\wh{\phi}} (d)
    (b) edge node {\ka} (d)
    (c) edge['] node {\de} (x)
    (d) edge[dashed] node {\exists!} node['] {\De} (y)
    (b) edge[bend left=18] node {1} (y)
    (x) edge node {\T\phi} (y)
    ;
  \end{tikzpicture}
\end{equation}
Recall $\de$ is the counit in \cref{defn:effecQ}.
Using the description of $\T(G',\phi)$ in \cref{defn:TGpphi} and commutativity of \cref{eq:alt-udc-again} yields the following.
\begin{explanation}\label{defn:De-application}
  In the context of \cref{defn:TGpphi,eq:alt-udc-again}, the strict $\T$-map
  \[
    \De \cn \T (G',\phi) \to \T G'
  \]
  is given as follows.

  \begin{enumerate}
  \item For free objects $\ang{a'}, \ang{b'} \in \T G'$ and free morphisms $t'\cn \ang{a'} \to \ang{b'}$,
    \[
      \De t' = t' \cn \ang{a'} \to \ang{b'}.
    \]
  \item For $\phi$-objects $\bang{[\phi]w} = \bang{[\phi]w_j}_{j=1}^m$ with $w_j = \ang{a^j} = \ang{a^j_i}_{i=1}^{n_j} \in \T G$,
    \[
      \De \bang{[\phi]w} = \bigl( T\phi \bigr)\de\ang{w}
      = \bigl( T\phi \bigr) w_\bullet
      = \bang{\phi(a^\bullet_\ell)}_{\ell = 1}^N
    \]
    where $\ang{a^\bullet} = w_\bullet$ is the concatenation in $\T G$ of the tuples $w_j = \ang{a^j}$ as in \cref{eq:abullet}.
  \item\label{defn:De-application-iii} For $\phi$-free morphisms $[\phi]u\cn \bang{[\phi]w} \to \bang{[\phi]v}$ where $u\cn \ang{w} \to \ang{v}$ is a free morphism of $\Q\T G$,
    \[
      \De \bigl([\phi]u\bigr) = \bigl( T\phi \bigr)\de u
    \]
    is the corresponding identity, permutation, or braiding morphism 
    \[
      \bigl( T\phi \bigr)u \cn \bigl( T\phi \bigr)w_\bullet \to \bigl( T\phi \bigr)v_\bullet
    \]
    in $\T G'$.
  \item\label{defn:De-application-iv} For $\phi$-adjoined isomorphisms $[\phi]\qsf_{\ang{w}}\cn \ang{[\phi]w} \to ([\phi]w_\bullet)$,
    \[
      \De \bigl([\phi]\qsf_{\ang{w}}\bigr) =
      \bigl( T\phi \bigr)\de\qsf_{\ang{w}} =
      1\cn \bigl( T\phi \bigr)w_\bullet \to \bigl( T\phi \bigr)w_\bullet.
    \]
  \item For formal morphisms, in the cases $\T \in \{\S,\B\}$, $\De$ is a strict $\T$-map and so it sends the formal permutation or braiding morphisms of $\T(G',\phi)$ to corresponding permutations or braidings in $\T G'$.
  \end{enumerate}
  This completes the description of $\De$.
\end{explanation}

\begin{example}\label{example:De-phiw}
  In the context of \cref{defn:De-application}, suppose given $\ang{w} = (w_1,w_2)$ with
  \[
    w_1 = \ang{a^1} = (a^1_1,a^1_2,a^1_3)
    \andspace
    w_2 = \ang{a^2} = (a^2_1,a^2_2).
  \]
  Then $w_\bullet = \ang{a^\bullet} = (a^1_1,a^1_2,a^1_3,a^2_1,a^2_2)$ and
  \[
    \De\bang{[\phi]w} = \bigl(\,
    \phi(a^1_1) \scs \phi(a^1_2) \scs \phi(a^1_3) \scs \phi(a^2_1) \scs \phi(a^2_2)
    \,\bigr).
  \]
  Each braiding of tuples $w_j$ or entries $a^j_i$ is sent by $\De$ to the corresponding braiding of entries $\phi(a^j_i)$.
\end{example}

Now we describe the strict $\T$-map from \cref{eq:La}
\[
  \La \cn \T(\ob A',\phi) \to A',
\]
where $f \cn (A,\bcdot) \zzto (A',\bcdot)$ is a $\T$-map and $\phi = f_\ob$ denotes the restriction of $f$ to objects.
Recalling \cref{eq:La-diagram}, $\La$ is the unique strict $\T$-map induced by the universal property \cref{eq:rmk-udc} and the inclusions of objects.
The proof of \cref{thm:finitary-udc} explains how the pushout description of $\T(\ob A',\phi)$, as in \cref{eq:udc-pushout-again}, satisfies the universal property \cref{eq:rmk-udc}.
In particular, recalling \cref{eq:olS-pushout} with $\ol{S} = \La$ and $\ol{R}$ being the composite $\T(\ob A) \to \T A \fto{\bcdot} A$, the following diagram identifies $\La$ via the universal property of the pushout in $\Talgs$.
\begin{equation}\label{eq:La-pushout}
  \begin{tikzpicture}[x=16ex,y=10ex,vcenter,xscale=1.2]
      \draw[0cell] 
      (0,0) node (tc) {\T (\ob A)}
      (tc)++(1,0) node (tc') {\T (\ob A')}
      (tc)++(0,-1) node (iqtc) {\Q\T (\ob A)}
      (tc')++(0,-1) node (tc'phi) {\T(\ob A',\phi)}
      (tc'phi)++(1,0) node (ty) {\T A'}
      (ty)++(0,-1) node (y) {A'}
      (iqtc)++(.5,-.5) node (iqtx) {\Q \T A}
      (iqtx)++(.5,-.5) node (iqx) {\Q A}
      ;
      \draw[1cell] 
      (tc) edge node {\T\phi} (tc')
      (tc) edge['] node {\ze^\flat} (iqtc)
      (tc') edge['] node {\ka} (tc'phi)
      (iqtc) edge node {\wh{\phi}} (tc'phi)
      (tc') edge node {} (ty)
      (ty) edge node {\bcdot} (y)
      (iqtc) edge['] node {} (iqtx)
      (iqtx) edge['] node {\Q \bcdot} (iqx)
      (iqx) edge['] node {f^{\bot}} (y)
      (tc'phi) edge[dashed] node {\La} node['] {\exists !}(y)
      ;
    \end{tikzpicture}
\end{equation}
In the above diagram, $\T(\ob A',\phi)$ is described in \cref{defn:TGpphi}, with $G' = \ob A'$.
The strict $\T$-map $\Q\bcdot$ is an instance of $\Q$ applied to a $\T$-map, as in \cref{defn:Qf}.
The mate $f^\bot$ \cref{eq:fbot} is the unique strict $\T$-map that factors $f$ as below.
\begin{equation}\label{eq:fbot-again}
  \begin{tikzpicture}[x=12ex,y=8ex,vcenter,xscale=1.2]
    \draw[0cell] 
    (0,0) node (iqx) {\Q A}
    (iqx)++(0,-1) node (x) {A}
    (iqx)++(1,0) node (x') {A'}
    ;
    \draw[1cell] 
    (iqx) edge node {f^\bot} (x')
    ;
    \draw[zz1cell]
    (x) to['] node {f} (x')
    ;
    \draw[zz1cell]
    (x) to node {\ze_A} (iqx)
    ;
  \end{tikzpicture}
\end{equation}
For $\T \in \{\M,\S,\B\}$, the unit $\ze$ is described in \cref{defn:effecQ}.
Unpacking these, the following gives an explicit description of $\La$ on objects and morphisms.

\begin{explanation}\label{defn:La-application}
  Suppose $\T \in \{\M,\S,\B\}$ and suppose
  \[
    f\cn (A,\bcdot) \zzto (A',\bcdot)
  \]
  is a $\T$-map.
  Let $\phi = f_\ob$ denote the restriction of $f$ to objects, and recall from \cref{notn:TA} that subscripts $\bullet$ denote the image of free objects or morphisms under the multiplication $\bcdot$.
  Then the strict $\T$-map $\La$ in \cref{eq:La,eq:La-pushout} is given as follows.
  \begin{enumerate}
  \item For free objects $\ang{a'}, \ang{b'} \in \T (\ob A')$ and free morphisms between them, $t'\cn \ang{a'} \to \ang{b'}$, we have
    \[
      \La t' = t'_\bullet \cn a'_\bullet \to b'_\bullet.
    \]
  \item For $\phi$-objects $\bang{[\phi]w} = \bang{[\phi]w_j}_{j=1}^m$ with $w_j = \ang{a^j} = \ang{a^j_i}_{i=1}^{n_j} \in \T (\ob A)$, we have
    \[
      \La \bang{[\phi]w} = f^\bot(\ang{a^j_\bullet}_{j=1}^m) = f(a^1_\bullet)\bcdots f(a^m_\bullet)
    \]
    because $f^\bot$ is strict monoidal.
  \item For $\phi$-free morphisms $[\phi]u\cn \bang{[\phi]w} \to \bang{[\phi]v}$, where
    \[
      \ang{w} = \ang{w_j}_{j=1}^m, \quad \ang{v} = \ang{v_j}_{j=1}^m,
    \]
    and $u\cn \ang{w} \to \ang{v}$ is a free morphism of $\Q\T (\ob A)$, we have
    \[
      \La \bigl([\phi]u\bigr) = f^\bot(u_\bullet) \cn
      f(a^1_\bullet) \bcdots f(a^m_\bullet)
      \to
      f(b^1_\bullet) \bcdots f(b^m_\bullet).
    \]
    Here, each $w_j = \ang{a^j}$ and each $v_j = \ang{b^j}$ as above.

    If $u$ is a tuple of morphisms $t_j \cn w_j \to v_j$ in $\T(\ob A)$, then $u_\bullet$ is their product (concatenation) in $\T(\ob A)$.
    If $u$ is a permutation or braiding in $\T^2(\ob A)$, in the cases $\T \in \{\S,\B\}$, then $u_\bullet$ is the corresponding block permutation or braiding in $\T(\ob A)$.
    In either case, since $f^\bot$ is strict monoidal, $[\phi]u$ is sent to either the product of the morphisms $f(t_j)$ or to the permutation or braiding of $f(a^1_\bullet) \bcdots f(a^m_\bullet)$ determined by $u$.
  \item For $\phi$-adjoined isomorphisms $[\phi]\qsf_{\ang{w}}\cn \ang{[\phi]w} \to ([\phi]w_\bullet)$,
    \[
      \La \bigl([\phi]\qsf_{\ang{w}}\bigr) = f^\bot(\ze_\bullet) = f_\bullet \cn
      f(a^1_\bullet)\bcdots f(a^m_\bullet) \to f(a^1_\bullet \bcdots a^m_\bullet)
    \]
    because the morphisms $\qsf_{\ang{w}}$ are the monoidal and unit constraints of $\ze$ and \cref{eq:fbot-again} is a diagram of $\T$-maps.
  \item For formal morphisms, in the cases $\T \in \{\S,\B\}$, $\La$ is a strict $\T$-map and so it sends the formal permutation or braiding morphisms of $\T(\ob A',\phi)$ to the corresponding permutations or braidings in $A'$.
  \end{enumerate}
  This completes the description of $\La$.
\end{explanation}

\begin{example}\label{example:La-phiw}
  Suppose, as in \cref{defn:La-application}, that $\T \in \{\M,\S,\B\}$ and
  \[
    f\cn (A,\bcdot) \zzto (A',\bcdot)
  \]
  is a $\T$-map.
  Let $\phi = f_\ob$ denote the restriction of $f$ to objects.

  Let $\ang{w} = (w_1,w_2)$ as in \cref{example:De-phiw}, with
  \[
    w_1 = \ang{a^1} = (a^1_1,a^1_2,a^1_3)
    \andspace
    w_2 = \ang{a^2} = (a^2_1,a^2_2).
  \]
  Then $w_\bullet = \ang{a^\bullet} = (a^1_1,a^1_2,a^1_3,a^2_1,a^2_2)$ and
  \[
    \La\bang{[\phi]w} = f(a^1_1 \bcdot a^1_2 \bcdot a^1_3) \,\bcdot\, f(a^2_1 \bcdot a^2_2).
  \]
\end{example}

\section{Examples for symmetric and braided monoidal functors}\label{sec:other}

In this section, we suppose that $\T$ is one of the 2-monads, $\S$ or $\B$, for symmetric or braided monoidal structures, respectively, that are strictly associative and unital (\cref{notn:monoidal-variants}).
In this section we say ``monoidal'' to mean strict monoidal structure for categories $A$ and $A'$.
Note, however, that the discussion here does apply to the corresponding general monoidal structures, $\Sg$ or $\Bg$, by the Monoidal Strictification \cref{thm:strictification}.

The Pseudomorphism Coherence \Cref{thm:main1,thm:main-msb} apply in these cases, and this section provides several examples using the \emph{dissolution}, as in \cref{thm:main-msb}, to determine whether a formal diagram commutes.
In these examples, we suppose given
\[
  f=(f,f_2,f_0) \cn (A, +, 0) \zzto (A', \bcdot, 1)
\]
as follows.
\begin{enumerate}
\item $(A,+,0,\be)$ and $(A',\bcdot,1,\be)$ are $\T$-algebras, i.e., symmetric or braided monoidal categories with the indicated notation for monoidal products, units, and braidings.
\item $f$ is a $\T$-map (\cref{rmk:Tmaps}), and we use the zigzag arrow notation of \cref{rmk:zzto}.
  Thus, $f$ is a symmetric or braided strong monoidal functor.
\end{enumerate}
All of our applications concern functors that are either strong or strict monoidal.
We will say that $f$ is a ``monoidal functor'' to mean strong monoidal functor.

\begin{example}\label{example:mystery1}
  The following diagram in $A'$ appears as \cref{eq:mystery1-intro} in the introduction.
  It involves $f$, the braiding isomorphisms of $A$ and $A'$, and an object $a \in A$.
  The two composites around the diagram apply a cyclic permutation to the objects, but combine with the monoidal constraints of $f$ in different ways.
  \begin{equation}\label{eq:mystery1}
    \begin{tikzpicture}[x=25ex,y=8ex,vcenter,xscale=1.2]
      \draw[0cell] 
      (0,0) node (s1) {f(a) \bcdot f(a) \bcdot f(a)}
      (s1)+(1,0) node (s2) {f(a+a) \bcdot f(a)}
      (s2)++(1,0) node (s3) {f(a) \bcdot f(a+a)}
      (s3)++(0,-1) node (s4) {f(a+a+a)}
      (s1)++(0,-1) node (t2) {f(a+a+a)}
      (t2)++(1,0) node (t3) {f(a+a+a)}
      ;
      \draw[1cell] 
      (s1) edge node {f_2 \bcdot 1} (s2)
      (s2) edge node {\beta} (s3)
      (s3) edge node {f_2} (s4)
      (s1) edge['] node {f_2} (t2)
      (t2) edge node {f(1 + \beta)} (t3)
      (t3) edge node {f(\beta +1)} (s4)
      ;
    \end{tikzpicture}
  \end{equation} 
  One can use the naturality of $f_2$ and $\be$, together with various axioms for $f$ and $\be$, to show that this diagram commutes.

  Alternatively, \cref{eq:mystery1} is an $f$-formal diagram, in the sense of \cref{defn:formal-diagram}.
  The following diagram is a lift to $\T(\ob A',\phi)$, where $\phi = f_\ob$ denotes the restriction of $f$ to objects.
  \begin{equation}\label{eq:mystery1-lift}
    \begin{tikzpicture}[x=27ex,y=10ex,vcenter,xscale=1.2]
      \draw[0cell] 
      (0,0) node (s1) {\Bigl(\; [\phi](a) \scs [\phi](a) \scs [\phi](a) \;\Bigr)}
      (s1)+(1.1,0) node (s2) {\Bigl(\; [\phi](a,a) \scs [\phi](a) \;\Bigr)}
      (s1)++(2,0) node (s3) {\Bigl(\; [\phi](a) \scs [\phi](a,a) \;\Bigr)}
      (s3)++(0,-1) node (s4) {\Bigl(\; [\phi](a,a,a) \;\Bigr)}
      (s1)++(0,-1) node (t2) {\Bigl(\; [\phi](a,a,a) \;\Bigr)}
      (t2)++(1,0) node (t3) {\Bigl(\; [\phi](a,a,a) \;\Bigr)}
      ;
      \draw[1cell] 
      (s1) edge node {[\phi]\qsf \sss 1} (s2)
      (s2) edge node {\beta} (s3)
      (s3) edge node {[\phi]\qsf} (s4)
      (s1) edge['] node {[\phi]\qsf} (t2)
      (t2) edge node {[\phi](1 \scs \beta)} (t3)
      (t3) edge node {[\phi](\beta \scs 1)} (s4)
      ;
    \end{tikzpicture}
  \end{equation} 
  To verify that the above diagram is a lift of \cref{eq:mystery1}, one uses the descriptions of $\T(\ob A',\phi)$ and $\La$ in \cref{defn:TGpphi} and \cref{defn:La-application}, respectively.
  In particular, the terminology of \cref{defn:TGpphi} applies as follows.
  \begin{itemize}
  \item The objects are \emph{$\phi$-objects}; each entry $[\phi](a,\ldots,a)$ is a lift of a term $f(a + \cdots + a)$.
  \item The morphisms $[\phi]\qsf$ are \emph{$\phi$-adjoined isomorphisms} \cref{eq:phi-adjoined} and are lifts of the monoidal constraints for $f$.
  \item The morphisms $[\phi](1,\be)$ and $[\phi](\be,1)$ are \emph{$\phi$-free morphisms} \cref{eq:phi-free} and are lifts of the corresponding morphisms $f(1 + \be)$ and $f(\be + 1)$.
  \item The morphism $\be$ is a \emph{formal morphism} lifting the corresponding braiding in \cref{eq:mystery1}.
  \end{itemize}

  Using the description of $\De$ in \cref{defn:De-application}, the \emph{dissolution} of \cref{eq:mystery1-lift} is the following diagram in $\T(\ob A')$.
  Here, $\phi = f_\ob$ is applied separately to each object, and the $\phi$-adjoined morphisms $[\phi]\qsf$ are sent to identities.
  \begin{equation}\label{eq:mystery1-dissolve}
    \begin{tikzpicture}[x=28ex,y=10ex,vcenter,xscale=1.2]
      \draw[0cell] 
      (0,0) node (s1) {\Bigl(\; f(a) \scs f(a) \scs f(a) \;\Bigr)}
      (s1)+(.84,0) node (s2) {\Bigl(\; f(a) \scs f(a) \scs f(a) \;\Bigr)}
      (s1)++(2,0) node (s3) {\Bigl(\; f(a) \scs f(a) \scs f(a) \;\Bigr)}
      (s3)++(0,-1) node (s4) {\Bigl(\; f(a) \scs f(a) \scs f(a) \;\Bigr)}
      (s1)++(0,-1) node (t2) {\Bigl(\; f(a) \scs f(a) \scs f(a) \;\Bigr)}
      (t2)++(1,0) node (t3) {\Bigl(\; f(a) \scs f(a) \scs f(a) \;\Bigr)}
      ;
      \draw[1cell] 
      (s1) edge node {1} (s2)
      (s2) edge node {\beta_{(\,f(a),f(a)\,)\scs f(a)}} (s3)
      (s3) edge node {1} (s4)
      (s1) edge['] node {1} (t2)
      (t2) edge node {\bigl(\, 1 \scs \beta \,\bigr)} (t3)
      (t3) edge node {\bigl(\, \beta \scs 1 \,\bigr)} (s4)
      ;
    \end{tikzpicture}
  \end{equation} 
  The two composites around the above diagram have the same underlying braid of the object $f(a)$: in the left-bottom composite, the first two instances of $f(a)$ are braided past the third, one at a time, and in the top-right composite they are braided past in one step.

  Therefore, the diagram \cref{eq:mystery1-dissolve} commutes in either case $\T = \S$ or $\T = \B$ by the Symmetric or Braided Coherence \cref{thm:diagrcoh}~\cref{it:S} or~\cref{it:B}, respectively.
  Since $\De$ is an equivalence by \cref{thm:main1}, this implies that the lift \cref{eq:mystery1-lift} commutes in $\T(\ob A',\phi)$ and hence  diagram \cref{eq:mystery1} commutes in $A'$.
\end{example}

The key feature of \cref{example:mystery1}, and of our other examples below, is that the dissolution diagram \cref{eq:mystery1-dissolve} replaces each monoidal constraint of $f$ in \cref{eq:mystery1} with an identity.
Thus, it also replaces objects such as $f(a+a)$ with tuples $\bigl( f(a) \scs f(a) \bigr)$ in the free algebra $\T(\ob A')$.
The lift \cref{eq:mystery1-lift} is what ensures that this can be done coherently.

As noted in \cref{rmk:what-it-means}, the composites around the dissolution diagram \cref{eq:mystery1-dissolve} determine morphisms in $A'$ that are generally \emph{not} equal to the respective morphisms from the original diagram \cref{eq:mystery1}.
Indeed, the morphisms in $A'$ determined by the composites around \cref{eq:mystery1-dissolve} do not have the same codomain as the composites around \cref{eq:mystery1}.
The purpose of the diagrammatic coherence theorems in this work is to determine:
\begin{enumerate}
\item how to construct a lift and corresponding dissolution of an $f$-formal diagram, and
\item conditions under which $\De$ is an equivalence, so that commutativity of the dissolution diagram implies that of the original diagram.
\end{enumerate}

\begin{example}[Monoidal naturality of $f_2$]\label{example:mystery2}
  The monoidal constraint $f_2$ is a natural transformation with components
  \[
    \bigl( f_2 \bigr)_{a,b} \cn  f(a) \bcdot f(b) \to f(a + b)
    \forspace a,b \in A.
  \]
  As a natural transformation, its domain and codomain are the respective composites in the following diagram, in which the products $A \times A$, $A' \times A'$ are given the componentwise monoidal structures.
  \[
    \begin{tikzpicture}[x=15ex,y=8ex,xscale=1.2]
      \draw[0cell] 
      (0,0) node (a) {A \times A}
      (a)++(1,.5) node (b) {A' \times A'}
      (b)++(1,-.5) node (d) {A'}
      (a)++(1,-.5) node (c) {A}
      ;
      \draw[1cell] 
      (a) edge node {f \times f} (b)
      (b) edge node {\bcdot} (d)
      (a) edge['] node {+} (c)
      (c) edge['] node {f} (d)
      ;
      \draw[2cell] 
      node[between=b and c at .5, rotate=-90, 2label={below,f_2}] {\Rightarrow}
      ;
    \end{tikzpicture}
  \]
  The composites above are monoidal functors, with the monoidal constraints of $+$ and $\bcdot$ given by the following for $a,b,c,d \in A$ and $a',b',c',d' \in A'$:
  \[
    a + b + c + d \fto{1 + \be_{b,c} + 1} a + c + b + d
    \andspace
    a' \bcdot b' \bcdot c' \bcdot d' \fto{1 \bcdot \be_{b',c'} \bcdot 1} a' \bcdot c' \bcdot b' \bcdot d'.
  \]
  
  The following diagram in $A'$ is the monoidal naturality axiom at objects $(a,b), (c,d) \in A \times A$, to check whether the natural transformation $f_2$ is a monoidal transformation.
  \begin{equation}\label{eq:mystery2}
    \begin{tikzpicture}[x=30ex,y=8ex,vcenter,xscale=1.2]
      \draw[0cell] 
      (0,0) node (s1) {f(a)\bcdot f(b)\bcdot f(c)\bcdot f(d)}
      (s1)++(0,-1) node (s2) {f(a)\bcdot f(c)\bcdot f(b)\bcdot f(d)}
      (s2)++(0,-1) node (s3) {f(a+c)\bcdot f(b+d)}
      (s3)++(1,0) node (s4) {f(a+c+b+d)}
      (s1)++(1,0) node (t1) {f(a+b)\bcdot f(c+d)}
      (t1)++(0,-1) node (t2) {f(a+b+c+d)}
      ;
      \draw[1cell] 
      (s1) edge['] node {1 \bcdot \be \bcdot 1} (s2)
      (s2) edge['] node {f_2 \bcdot f_2} (s3)
      (s3) edge node {f_2} (s4)
      (s1) edge node {f_2 \bcdot f_2} (t1)
      (t1) edge node {f_2} (t2)
      (t2) edge node {f(1 + \be + 1)} (s4)
      ;
    \end{tikzpicture}
  \end{equation}
  Again using \cref{defn:TGpphi,defn:La-application} followed by \cref{defn:De-application}, one can identify \cref{eq:mystery2} as an $f$-formal diagram and determine the requisite lift followed by its dissolution diagram, shown below.
  \begin{equation}\label{eq:mystery2-dissolve}
    \begin{tikzpicture}[x=30ex,y=9ex,vcenter,xscale=1.2]
      \draw[0cell] 
      (0,0) node (s1) {\Bigl(\; f(a) \scs f(b) \scs f(c) \scs f(d) \;\Bigr)}
      (s1)++(0,-1) node (s2) {\Bigl(\; f(a) \scs f(c) \scs f(b) \scs f(d) \;\Bigr)}
      (s2)++(0,-1) node (s3) {\Bigl(\; f(a) \scs f(c) \scs f(b) \scs f(d) \;\Bigr)}
      (s3)++(1,0) node (s4) {\Bigl(\; f(a) \scs f(c) \scs f(b) \scs f(d) \;\Bigr)}
      (s1)++(1,0) node (t1) {\Bigl(\; f(a) \scs f(b) \scs f(c) \scs f(d) \;\Bigr)}
      (t1)++(0,-1) node (t2) {\Bigl(\; f(a) \scs f(b) \scs f(c) \scs f(d) \;\Bigr)}
      ;
      \draw[1cell] 
      (s1) edge['] node {(1 , \be , 1)} (s2)
      (s2) edge['] node {1} (s3)
      (s3) edge node {1} (s4)
      (s1) edge node {1} (t1)
      (t1) edge node {1} (t2)
      (t2) edge node {(1 , \be , 1)} (s4)
      ;
    \end{tikzpicture}
  \end{equation}
  The two composites around \cref{eq:mystery2-dissolve} have the same underlying braid, given by passing $f(b)$ past $f(c)$.

  Therefore, \cref{eq:mystery2-dissolve} commutes in either case $\T = \S$ or $\T = \B$ by the Symmetric or Braided Coherence \cref{thm:diagrcoh}~\cref{it:S} or~\cref{it:B}, respectively.
  Since $\De$ is an equivalence by \cref{thm:main1}, the commutativity of the dissolution diagram \cref{eq:mystery2-dissolve} in $\T(\ob A')$ implies that the original diagram \cref{eq:mystery2} commutes in $A'$.
\end{example}

If $f$ is a symmetric or braided monoidal functor such that the monoidal constraint $f_2$ has components with nontrivial underlying braids, then the use of dissolution diagrams to determine commutativity of formal diagrams for $f$ is a nontrivial simplification.
For such functors $(f,f_2,f_0)$, the underlying braids of \cref{eq:mystery1,eq:mystery2} may be different from---generally more complex than---those of \cref{eq:mystery1-dissolve,eq:mystery2-dissolve}, respectively.
The significance of our diagrammatic coherence, when $\De$ is an equivalence, is precisely this simplification, summarized in the following variant of \cref{slogan:msb}.
\begin{slogan}
  When $\De$ is an equivalence, commutativity of formal diagrams for $f$ reduces to checking commutativity of the simpler dissolution diagrams, in which the $\T$-algebra constraints of $f$ are replaced by identities.
\end{slogan}

\begin{example}[Monoidal naturality of $\be_{f,f}$]\label{example:mystery3}
  Let $f \bcdot f$ denote the composite monoidal functor shown below, where \emph{diag} is the diagonal functor:
  \[
    A \fto{\text{diag}} A \times A \fto{f \times f} A' \times A' \fto{\bcdot} A'.
  \]
  So, $\bigl( f \bcdot f \bigr)(a) = f(a) \bcdot f(a)$ for objects $a \in A$, and likewise for morphisms.
  The monoidal constraint of $\bcdot$ is $1 \bcdot \be \bcdot 1$, described in \cref{example:mystery2}.
  The diagonal functor is strict monoidal because the monoidal sum in $A \times A$ is given componentwise.

  The braiding isomorphism $\be$ of $A'$ induces a natural transformation
  \begin{equation}\label{eq:betaff}
    \be_{f,f}\cn f \bcdot f \to f \bcdot f
  \end{equation}
  with components $\be_{f(a),f(a)}$ for $a \in A$.
  The diagram below is the monoidal naturality diagram at a pair of objects $a,b \in A$ to check whether $\be_{f,f}$ is a monoidal transformation.
  The left and right vertical composites are the monoidal constraints of $f \bcdot f$.
  \begin{equation}\label{eq:mystery3}
    \begin{tikzpicture}[x=37ex,y=10ex,vcenter,xscale=1.2]
      \draw[0cell] 
      (0,0) node (a') {f(a) \bcdot f(a) \bcdot f(b) \bcdot f(b)}
      (a')++(0,-1) node (c') {f(a) \bcdot f(b) \bcdot f(a) \bcdot f(b)}
      (c')++(0,-1) node (e') {f(a + b) \bcdot f(a + b)}
      (a')++(1,0) node (b') {f(a) \bcdot f(a) \bcdot f(b) \bcdot f(b)}
      (b')++(0,-1) node (d') {f(a) \bcdot f(b) \bcdot f(a) \bcdot f(b)}
      (d')++(0,-1) node (f') {f(a + b) \bcdot f(a + b)}
      (a')++(0,.5) node (a) {\bigl(f \bcdot f\bigr)(a) \bcdot \bigl(f \bcdot f\bigr)(b)}
      (b')++(0,.5) node (b) {\bigl(f \bcdot f\bigr)(a) \bcdot \bigl(f \bcdot f\bigr)(b)}
      (e')++(0,-.5) node (e) {\bigl(f \bcdot f\bigr)(a + b)}
      (f')++(0,-.5) node (f) {\bigl(f \bcdot f\bigr)(a + b)}
      ;
      \draw[1cell] 
      (a') edge node {\be \bcdot \be} (b')
      (e') edge node {\be} (f')
      (a') edge['] node {1 \bcdot \be \bcdot 1} (c')
      (c') edge['] node {f_2 \,\bcdot\, f_2} (e')
      (b') edge node {1 \bcdot \be \bcdot 1} (d')
      (d') edge node {f_2 \,\bcdot\, f_2} (f')
      (a') edge[equal] node {} (a)
      (b') edge[equal] node {} (b)
      (e') edge[equal] node {} (e)
      (f') edge[equal] node {} (f)
      ;
    \end{tikzpicture}
  \end{equation}
  The above is an $f$-formal diagram, and the following is a dissolution diagram for it.
  There, the morphism along the lower edge is the block braiding of the first two terms past the second two.
  \begin{equation}\label{eq:mystery3-dissolve}
    \begin{tikzpicture}[x=37ex,y=10ex,vcenter,xscale=1.2]
      \draw[0cell] 
      (0,0) node (a') {\Bigl( f(a) \scs f(a) \scs f(b) \scs f(b) \Bigr)}
      (a')++(0,-1) node (c') {\Bigl( f(a) \scs f(b) \scs f(a) \scs f(b) \Bigr)}
      (c')++(0,-1) node (e') {\Bigl( f(a) \scs f(b) \scs f(a) \scs f(b) \Bigr)}
      (a')++(1,0) node (b') {\Bigl( f(a) \scs f(a) \scs f(b) \scs f(b) \Bigr)}
      (b')++(0,-1) node (d') {\Bigl( f(a) \scs f(b) \scs f(a) \scs f(b) \Bigr)}
      (d')++(0,-1) node (f') {\Bigl( f(a) \scs f(b) \scs f(a) \scs f(b) \Bigr)}
      ;
      \draw[1cell] 
      (a') edge node {\bigl(\be \scs \be\bigr)}
      node['] {\si_1 \si_3} (b')
      (e') edge['] node {\be}
      node['] {\si_2 \si_1 \si_3 \si_2} (f')
      (a') edge['] node {\bigl(1 \scs \be \scs 1\bigr)}
      node['] {\si_2} (c')
      (c') edge['] node {1} (e')
      (b') edge node {\bigl(1 \scs \be \scs 1\bigr)}
      node['] {\si_2} (d')
      (d') edge node {1} (f')
      ;
    \end{tikzpicture}
  \end{equation}
  In the above diagram, the inner labels on each morphism are the underlying braids, where $\si_i$ is the elementary braiding of strand $i$ under strand $i+1$. 
  The left-bottom and top-right composites around the boundary of \cref{eq:mystery3} are shown in the following braid diagram.
  In these diagrams, the right-to-left composition of elementary braids is ordered bottom-to-top.
  \begin{center}
    \begin{tikzpicture}[baseline=(E.base)]
      \pic[
      stdbraidstyle,
      braid/number of strands = 4,
      name prefix=braid0,
      ] at (0,0)
      {braid={s_2 s_1 s_3 s_2 s_2}}
      ;
      \foreach \x/\y/\z in {1/f(a)/2,2/f(b)/4,3/f(a)/1,4/f(b)/3}
      \draw
      (braid0-\x-e) ++(0,-.5ex) node[text height=.6cm,scale=.8] {$\scriptstyle \y$}
      ++(0,-3ex) node {$\scriptstyle \z$}
      ;
      \node (E) at (0,-1.4) {}; 
      \draw
      node[between=braid0-4-e and braid0-1-e at .5, shift={(0,-5.5ex)}]
      {$\si_2 \si_1 \si_3 \si_2 \si_2$};
    \end{tikzpicture}
    \ {\large $\ne$}\quad 
    \begin{tikzpicture}[baseline=(E.base)]
      \pic[
      stdbraidstyle,
      braid/number of strands = 4,
      name prefix=braid0,
      ] at (0,0)
      {braid={1 s_2 1 s_1-s_3 1}}
      ;
      \foreach \x/\y/\z in {1/f(a)/2,2/f(b)/4,3/f(a)/1,4/f(b)/3}
      \draw
      (braid0-\x-e) ++(0,-.5ex) node[text height=.6cm,scale=.8] {$\scriptstyle \y$}
      ++(0,-3ex) node {$\scriptstyle \z$}
      ;
      \node (E) at (0,-1.4) {}; 
      \draw
      node[between=braid0-4-e and braid0-1-e at .5, shift={(0,-5.5ex)}]
      {$\si_2\si_1\si_3$};
    \end{tikzpicture}
  \end{center}
  Since these braids are not equal, $\be_{f,f}$ in \cref{eq:betaff} is generally not a monoidal transformation when $\T = \B$ and $f$ is a braided monoidal functor.
  For example, when $A = A' = \B\{a,b\}$ is the free braided monoidal category on two objects and $f$ is the identity, then $\be_{f,f}$ is not a monoidal transformation.

  However, since the underlying \emph{permutations} around the diagram \cref{eq:mystery3} are equal, the Symmetric Coherence \cref{thm:diagrcoh}~\cref{it:S} implies that \cref{eq:mystery3} does commute when $\T = \S$.  Thus, $\be_{f,f}$ is a monoidal transformation when $f$ is a symmetric monoidal functor between symmetric monoidal categories.
  Note, again, that this conclusion holds independently of whether the monoidal constraints $f_2$ have nontrivial underlying permutations.
\end{example}
\begin{rmk}\label{rmk:mystery3}
  In each of the above examples, one can also check commutativity directly, using naturality of the monoidal constraints $f_2$.
  Diagram \cref{eq:mystery3} is particularly straightforward, involving a single use of naturality to commute $\be$ with $f_2 \bcdot f_2$.
  Formal diagrams for $f$ are always amenable to such an approach.
  However, it can be a nontrivial task to determine \emph{which} combination of naturality and other axioms will reduce the commutativity of the original diagram to a computation in $\T(\ob A')$.
  The advantage of the dissolution approach is that it formalizes such a reduction by systematically replacing the monoidal constraints with identities.
\end{rmk}

\section{Non-example via quadrupling}\label{sec:doub-quad}
In this section we continue the context and conventions of \cref{sec:other}, so that $\T \in \{\S,\B\}$ is the monad for (strict) symmetric or braided monoidal categories.
We consider two specific functors $f$ and $h$ whose monoidal constraints have nontrivial underlying braids.

This section gives several examples of using \cref{thm:diagrcoh-s}, which is a refinement of the Symmetric Coherence \cref{thm:diagrcoh}~\cref{it:S}.
Then, \cref{example:cursed-cyclic,rmk:cursed-trivialities,lem:cursed-cyclic} discuss a formal diagram \cref{eq:cursed-cyclic} where the only lifts of interest reduce, in the sense of \cref{rmk:f-formal-vs-X-formal}, to $A$-formal lifts.
For such lifts, the dissolution diagram strategy of \cref{sec:other} does not provide any simplification.

\begin{defn}[Doubling functor]\label{defn:dbl}
  The \emph{doubling functor} $f \cn A \zzto A$ with unit and monoidal constraints $f_0$ and $f_2$, respectively, is defined as follows for objects $a,b \in A$ and morphisms $s$ in $A$.
  \begin{equation}\label{eq:double}
    \begin{gathered}
      f(a) = a + a
      \andspace
      f(s) = s + s,\\
      \begin{tikzpicture}[x=13ex,y=5ex,vcenter]
        \draw[0cell] 
        (0,0) node (a) {0}
        (a)++(1,.5) node (b) {f(0)}
        (a)++(1,-.5) node (d) {0+0}
        ;
        \draw[1cell] 
        (b) edge[equal] node {} (d)
        (a) edge node {f_0} (b)
        (a) edge['] node {1_0} (d)
        ;
      \end{tikzpicture}
      \qquad
      \begin{tikzpicture}[x=30ex,y=5ex,vcenter]
        \draw[0cell] 
        (0,0) node (a) {f(a) + f(b)}
        (a)++(1,0) node (b) {f(a + b)}
        (a)++(0,-1) node (c) {a + a + b + b}
        (b)++(0,-1) node (d) {a + b + a + b}
        ;
        \draw[1cell] 
        (a) edge[equal] node {} (c)
        (b) edge[equal] node {} (d)
        (a) edge node {f_2} (b)
        (c) edge node {1_a + \beta_{a,b} + 1_b} (d)
        ;
      \end{tikzpicture}
    \end{gathered}
  \end{equation}
  \ 
\end{defn}
We will show below that $f$ is a \emph{symmetric} monoidal functor in the symmetric case, where $\T = \S$ and $A$ is a symmetric monoidal category.
In the braided case, where $\T = \B$ and $A$ is merely braided monoidal, then $f$ is a monoidal functor, but generally not \emph{braided} monoidal.
Although these conclusions will be familiar to experts, we include them as preparation for the calculations in \cref{example:cursed-cyclic,rmk:cursed-n}.

The unity diagrams for the doubling functor are trivial since $f_0 = 1_0$.
The following example discusses the associativity and braid axioms for $f$.
\begin{example}[Axioms for doubling]\label{example:dbl-monoidal}
  Let $f$ be the doubling functor from \cref{defn:dbl}.
  The associativity diagram, for objects $a,b,c \in A$, is the following.
  \begin{equation}\label{eq:mult2-assoc}
    \begin{tikzpicture}[x=30ex,y=9ex,vcenter,xscale=1.2,yscale=1.1]
      \draw[0cell] 
      (0,0) node (a) {f(a) + f(b) + f(c)}
      (a)++(1,0) node (b) {f(a) + f(b + c)}
      (a)++(0,-1) node (c) {f(a + b) + f(c)}
      (b)++(0,-1) node (d) {f(a + b + c)}
      (a)++(0,.5) node (a') {a + a + b + b + c + c}
      (b)++(0,.5) node (b') {a + a + b + c + b + c}
      (c)++(0,-.5) node (c') {a + b + a + b + c + c}
      (d)++(0,-.5) node (d') {a + b + c + a + b + c}
      ;
      \draw[1cell] 
      (a) edge node {1 + f_2} (b)
      (c) edge node {f_2} (d)
      (b) edge node {f_2} (d)
      (a) edge['] node {f_2 + 1} (c)
      (a) edge[equal] node {} (a')
      (b) edge[equal] node {} (b')
      (c) edge[equal] node {} (c')
      (d) edge[equal] node {} (d')
      (a') edge[',bend right=20,transform canvas={xshift=-10ex}] node {\si_2} (c')
      (b') edge[bend left=20,transform canvas={xshift=10ex}] node {\si_3\si_2} (d')
      (a') edge node {\si_4} (b')
      (c') edge node {\si_3\si_4} (d')
      ;
    \end{tikzpicture}
  \end{equation}
  In the above diagram, the inner arrows are labeled as structure morphisms of $f$ and the outer arrows are labeled by their underlying braids, where $\si_i$ are the elementary braids as in \cref{example:mystery3}.
  The underlying braids for the left-bottom and top-right composites around the boundary of \cref{eq:mult2-assoc} are shown in the following braid diagrams.
  \begin{equation}\label{eq:mult2-assoc-braid}
    \begin{tikzpicture}[baseline=(E.base)]
      \pic[
      stdbraidstyle,
      braid/number of strands = 6,
      name prefix=braid0,
      ] at (0,0)
      {braid={s_3 s_4 s_2}}
      ;
      \foreach \x/\y/\z in {1/a/1,2/b/3,3/c/5,4/a/2,5/b/4,6/c/6}
      \draw
      (braid0-\x-e) node[text height=.6cm] {$\scriptstyle \y$}
      ++(0,-3ex) node {$\scriptstyle \z$}
      ;
      \node (E) at (0,-1) {}; 
      \draw
      node[between=braid0-2-e and braid0-5-e at .5, shift={(0,-5.5ex)}]
      {$\si_3\si_4\si_2$ };
    \end{tikzpicture}
    \ \text{\large =}\ 
    \begin{tikzpicture}[baseline=(E.base)]
      \pic[
      stdbraidstyle,
      braid/number of strands = 6,
      name prefix=braid0,
      ] at (0,0)
      {braid={s_3 s_2 s_4}}
      ;
      \foreach \x/\y/\z in {1/a/1,2/b/3,3/c/5,4/a/2,5/b/4,6/c/6}
      \draw
      (braid0-\x-e) node[text height=.6cm] {$\scriptstyle \y$}
      ++(0,-3ex) node {$\scriptstyle \z$}
      ;
      \node (E) at (0,-1) {}; 
      \draw
      node[between=braid0-2-e and braid0-5-e at .5, shift={(0,-5.5ex)}]
      {$\si_3\si_2\si_4$ };
    \end{tikzpicture}
  \end{equation}
  Since these braids are equal, the Braided Coherence \cref{thm:diagrcoh}~\cref{it:B} implies that \cref{eq:mult2-assoc} commutes when $\T = \B$, and thus also when $\T = \S$.

  The symmetry axiom for the doubling functor, for objects $a,b \in A$, is the following.
  \begin{equation}\label{eq:mult2-symm}
    \begin{tikzpicture}[x=30ex,y=9ex,vcenter,xscale=1.2,yscale=1.1]
      \draw[0cell] 
      (0,0) node (a) {f(a) + f(b)}
      (a)++(1,0) node (b) {f(b) + f(a)}
      (a)++(0,-1) node (c) {f(a + b)}
      (b)++(0,-1) node (d) {f(b + a)}
      (a)++(0,.5) node (a') {a + a + b + b}
      (b)++(0,.5) node (b') {b + b + a + a}
      (c)++(0,-.5) node (c') {a + b + a + b}
      (d)++(0,-.5) node (d') {b + a + b + a}
      ;
      \draw[1cell] 
      (a) edge node {\beta_{f(a),f(b)}} (b)
      (b) edge node {f_2} (d)
      (a) edge['] node {f_2} (c)
      (c) edge node {f(\beta_{a,b})} (d)
      (a) edge[equal] node {} (a')
      (b) edge[equal] node {} (b')
      (c) edge[equal] node {} (c')
      (d) edge[equal] node {} (d')
      (a') edge[',bend right=20,transform canvas={xshift=-6ex}] node {\si_2} (c')
      (b') edge[bend left=20,transform canvas={xshift=6ex}] node {\si_2} (d')
      (a') edge node {\si_2\si_1\si_3\si_2} (b')
      (c') edge node {\si_3\si_1} (d')
      ;
    \end{tikzpicture}
  \end{equation}
  The above diagram is labeled similarly to \cref{eq:mult2-assoc}, with inner arrows labeled via structure morphisms and outer arrows labeled by their underlying braids.
  The following diagrams use the same conventions as above and show the underlying braids for the left-bottom and top-right composites around the boundary of \cref{eq:mult2-symm}.
  \begin{equation}\label{eq:mult2-symm-braid}
    \begin{tikzpicture}[baseline=(E.base)]
      \pic[
      stdbraidstyle,
      braid/number of strands = 4,
      name prefix=braid0,
      ] at (0,0)
      {braid={1 s_3 s_1 s_2 1}}
      ;
      \foreach \x/\y/\z in {1/b/3,2/a/1,3/b/4,4/a/2}
      \draw
      (braid0-\x-e) node[text height=.6cm] {$\scriptstyle \y$}
      ++(0,-3ex) node {$\scriptstyle \z$}
      ;
      \node (E) at (0,-1.2) {}; 
      \draw
      node[between=braid0-4-e and braid0-1-e at .5, shift={(0,-5.5ex)}]
      {$\si_3\si_1\si_2$ };
    \end{tikzpicture}
    \ \text{\large $\ne$}\ 
    \begin{tikzpicture}[baseline=(E.base)]
      \pic[
      stdbraidstyle,
      braid/number of strands = 4,
      name prefix=braid0,
      ] at (0,0)
      {braid={s_2 s_2 s_1 s_3 s_2}}
      ;
      \foreach \x/\y/\z in {1/b/3,2/a/1,3/b/4,4/a/2}
      \draw
      (braid0-\x-e) node[text height=.6cm] {$\scriptstyle \y$}
      ++(0,-3ex) node {$\scriptstyle \z$}
      ;
      \node (E) at (0,-1.2) {}; 
      \draw
      node[between=braid0-4-e and braid0-1-e at .5, shift={(0,-5.5ex)}]
      {$\si_2\si_2\si_1\si_3\si_2$ };
    \end{tikzpicture}
  \end{equation}
  Since these braids are not equal, \cref{eq:mult2-symm} does not generally commute when $\T = \B$.
  That is, for a general braided monoidal category $A$, the doubling functor is not necessarily a \emph{braided} monoidal functor, although it is a plain monoidal functor.

  Note, however, that the underlying permutations of the braids above \emph{are} equal.
  Thus, the Symmetric Coherence \cref{thm:diagrcoh}~\cref{it:S} implies that \cref{eq:mult2-symm} does commute when $\T = \S$.
  That is, the doubling functor is a symmetric monoidal functor when $A$ is a symmetric monoidal category.
\end{example}

\begin{rmk}\label{rmk:symmetric-coherence-example}
  In the case $\T = \S$, there is a refinement of the Symmetric Coherence \cref{thm:diagrcoh}~\cref{it:S}, discussed in \cref{sec:symm-coh}.
  For finitely-generated formal diagrams in a symmetric monoidal category $A$, such as those of \cref{example:dbl-monoidal}, \cref{thm:diagrcoh-s} shows that it suffices to check the self-permutation of $x$, in the sense of \cref{defn:aperm}, for each generating object $x$.

  In \cref{eq:mult2-assoc}, it suffices to check the three self-permutations
  $\pidt_a$, $\pidt_b$, and $\pidt_c$, where $\wt{D}$ denotes the formal lift of \cref{eq:mult2-assoc} to the free monoidal category on three objects, $\S\{a,b,c\}$.
  Each self-permutation $\pidt_x$ is determined by projecting to the free symmetric monoidal category on the single object $x$, for $x \in \{a,b,c\}$.
  In the braid diagram \cref{eq:mult2-assoc-braid}, this corresponds to removing the strands for each object $y \ne x$, and then checking the underlying permutation of the resulting braid.

  In both the left-bottom and top-right composites around \cref{eq:mult2-assoc}, neither instance of $a$ is permuted past the other.
  That is, the strands labeled $a$ in \cref{eq:mult2-assoc-braid} do not cross.
  Thus, $\pidt_a = 1$ for each composite around \cref{eq:mult2-assoc}.
  Likewise, the self-permutations of $b$ and $c$ are identities for both composites.
  This is sufficient for \cref{thm:diagrcoh-s} to imply that \cref{eq:mult2-assoc} commutes.

  The same approach can be used for the composites around \cref{eq:mult2-symm}: the self-permutations of both $a$ and $b$ are trivial, for both composites around \cref{eq:mult2-symm}.
  This is sufficient for \cref{thm:diagrcoh-s} to imply that \cref{eq:mult2-symm} commutes.
\end{rmk}

Recall from \cref{rmk:f-formal-vs-X-formal}, for a symmetric or braided monoidal functor $f\cn A \to A'$, a lift $\wt{D}$ of an $f$-formal diagram is said to \emph{reduce to an $A'$-formal diagram} if it factors through the inclusion of free objects and morphisms 
\begin{equation}\label{eq:ka-again}
  \ka\cn \T(\ob A') \to \T(\ob A',\phi)
\end{equation}
where $\phi = f_\ob$ denotes the restriction to objects.
None of \cref{example:mystery1,example:mystery2,example:mystery3} factor through $\ka$, because the respective lifts involve the $\phi$-adjoined isomorphisms $[\phi]\qsf$, which are then mapped via $\De$ to identities.

The following provides an example of a monoidal naturality diagram that involves only braid isomorphisms and monoidal constraints and yet, except for certain trivialities, any lift to generating morphisms of $\T(\ob A, \phi)$ must factor through $\ka$ and hence reduce to an $A$-formal lift.
\Cref{rmk:cursed-trivialities,lem:cursed-cyclic} below explain some details and additional subtleties related to this case.
\begin{nonexample}[Cyclic braiding]\label{example:cursed-cyclic}
  Let $h$ denote the \emph{quadrupling} functor $h = f \circ f$, where $f$ is the doubling functor from \cref{defn:dbl,example:dbl-monoidal}.
  Thus, we have
  \[
    h(a) = a + a + a + a \forspace a \in A,
  \]
  and $h$ is a monoidal functor in either the symmetric or braided monoidal cases $\T \in \{\S,\B\}$.
  In the symmetric case, $\T = \S$, quadrupling is a symmetric monoidal functor.
  In other words (\cref{rmk:Tmaps}), $h$ is an $\S$-map, but generally not a $\B$-map.

  There is a natural transformation $\ga$ with components given by the cyclic braiding of the first summand past the other three:
  \begin{equation}\label{eq:ga-a}
    \ga_a = \beta_{a,(a+a+a)} \cn h(a) = a + a + a + a
    \to a + a + a + a = h(a).
  \end{equation}
  The following is the monoidal naturality diagram for $a,b\in A$, to determine whether $\ga$ is a monoidal transformation.
  Here, we use the notation
  \[
    \si_{i:k} = \si_{k}\si_{k-1}\cdots\si_i
  \]
  to denote the braiding of strand $i$ under strands $i+1$ through $k+1$.
  \begin{equation}\label{eq:cursed-cyclic}
    \begin{tikzpicture}[x=38ex,y=14ex,vcenter,xscale=1.2,yscale=1.1]
      \draw[0cell] 
      (0,0) node (a) {
        a+a+a+a + b+b+b+b
      }
      (a)++(0,-.8) node (c) {
        a+a + b+b + a+a + b+b
      }
      (c)++(0,-1) node (e) {
        a+b + a+b + a+b + a+b 
      }
      (a)++(1,0) node (b) {
        a+a+a+a + b+b+b+b
      }
      (b)++(0,-.8) node (d) {
        a+a + b+b + a+a + b+b
      }
      (d)++(0,-1) node (f) {
        a+b + a+b + a+b + a+b 
      }
      (a)++(0,.3) node (a') {h(a) + h(b)}
      (b)++(0,.3) node (b') {h(a) + h(b)}
      (e)++(0,-.3) node (e') {h(a+b)}
      (f)++(0,-.3) node (f') {h(a+b)}
      ;
      \draw[1cell] 
      (a) edge node {\ga_a + \ga_b}
      node['] {\si_{1:3}\,\si_{5:7}} (b)
      (e) edge['] node {\ga_{a+b}}
      node['] {\si_{1:6} \, \si_{2:7}} (f)
      (a) edge['] node {1 + \be_{a+a,b+b} + 1}
      node['] {\si_{3:4}\,\si_{4:5}} (c)
      (c) edge['] node {\genatop{1 + \be_{a,b} + 1}{ + 1 + \be_{a,b} + 1}}
      node['] {\si_{2}\si_{6}} (e)
      (b) edge node {1 + \be_{a+a,b+b} + 1}
      node['] {\si_{3:4}\,\si_{4:5}} (d)
      (d) edge node {\genatop{1 + \be_{a,b} + 1}{ + 1 + \be_{a,b} + 1}}
      node['] {\si_{2}\si_{6}} (f)
      (a) edge[equal] node {} (a')
      (b) edge[equal] node {} (b')
      (e) edge[equal] node {} (e')
      (f) edge[equal] node {} (f')
      ;
    \end{tikzpicture}
  \end{equation}
  The above is an $A$-formal diagram, in the sense of \cref{defn:formal-diagram}: it admits a lift to $\T(\ob A)$, with underlying braids shown on the inner labels.

  Composing with the inclusion of free objects and morphisms $\ka$ \cref{eq:ka-again}, with $\phi = h_\ob$, trivially yields a diagram in $\T(\ob A, \phi)$.
  However, as discussed in \cref{rmk:f-formal-vs-X-formal}, the resulting dissolution diagram is equal to the original lift and does not yield any simplification.

  The vertical morphisms in \cref{eq:cursed-cyclic} are the monoidal constraints for $h = f \circ f$, and one would obtain a simpler dissolution diagram if these were lifted to $\phi$-adjoined isomorphisms $[\phi]\qsf$.
  However, \cref{lem:cursed-cyclic} below shows that such morphisms are not generally composable with lifts of $\ga$.

  Here, we use the Braided and Symmetric Coherence \cref{thm:diagrcoh,thm:diagrcoh-s} directly on the $A$-formal lift of \cref{eq:cursed-cyclic}.
  The underlying braids of the left-bottom and top-right composites are shown below.
  These braids are distinct; strands 2 and 5 are linked on the left, but not on the right.
  \tikzset{lastbraidstyle/.style = {
      braid/height=-2mm,
      braid/width=5mm,
      braid/gap=.25, 
      braid/control factor=0.3, 
      braid/nudge factor=0.01, 
      braid/number of strands = 8,
      braid/every strand/.style={very thick},
      braid/strand 1/.style={brcola},
      braid/strand 3/.style={brcola},
      braid/strand 5/.style={brcola},
      braid/strand 7/.style={brcola},
      braid/strand 2/.style={brcolb},
      braid/strand 4/.style={brcolb},
      braid/strand 6/.style={brcolb},
      braid/strand 8/.style={brcolb},
    }
  }
  \[
    \begin{tikzpicture}[baseline=(E.base)]
      \pic[
      stdbraidstyle,
      lastbraidstyle,
      name prefix=braid0,
      ] at (0,0)
      {braid={s_6 s_5-s_7 s_4-s_6 s_3-s_5 s_2-s_4 s_1-s_3 s_2 
          1 s_2-s_6 s_4 s_3-s_5 s_4}}
      ;
      \foreach \x/\y/\z in {1/a/2,2/b/6,3/a/3,4/b/7,5/a/4,6/b/8,7/a/1,8/b/5}
      \draw
      (braid0-\x-e) node[text height=.6cm] {$\scriptstyle \y$}
      ++(0,-3ex) node {$\scriptstyle \z$}
      ;
      \node (E) at (0,-1.7) {}; 
      \draw
      node[between=braid0-7-e and braid0-6-e at .5, shift={(0,-6ex)}]
      {$\si_{1:6}\,\si_{2:7}\,\si_2\,\si_6\,\si_{3:4}\,\si_{4:5}$};
    \end{tikzpicture}
    \quad\text{\large $\ne$}\quad
    \begin{tikzpicture}[baseline=(E.base)]
      \pic[
      stdbraidstyle,
      lastbraidstyle,
      name prefix=braid0,
      ] at (0,0)
      {braid={1 1 s_2-s_6 s_4 s_3-s_5 s_4
          1 1 s_3-s_7 s_2-s_6 s_1-s_5 1}}
      ;
      \foreach \x/\y/\z in {1/a/2,2/b/6,3/a/3,4/b/7,5/a/4,6/b/8,7/a/1,8/b/5}
      \draw
      (braid0-\x-e) node[text height=.6cm] {$\scriptstyle \y$}
      ++(0,-3ex) node {$\scriptstyle \z$}
      ;
      \node (E) at (0,-1.7) {}; 
      \draw
      node[between=braid0-4-e and braid0-1-e at .5, shift={(0,-6ex)}]
      {$\si_2\,\si_6\,\si_{3:4}\,\si_{4:5}\,\si_{1:3}\,\si_{5:7}$};
    \end{tikzpicture}
  \]
  Therefore, the cyclic braiding $\ga$ is generally \emph{not} a monoidal transformation in the braided case $\T = \B$.
  For example, if $A = \B\{a,b\}$ is the free braided monoidal category on two objects, then $\ga$ will not be a monoidal transformation.

  In the symmetric case $\T = \S$, checking the underlying permutations in \cref{eq:cursed-cyclic} simplifies via \cref{thm:diagrcoh-s}.
  The vertical composites have trivial $a$-permutation and hence the self-permutation of $a$ under either the left-bottom or top-right composite is the cyclic permutation $(1\ 4\ 3\ 2)$.
  The same statements apply to $b$.
  This is sufficient for \cref{thm:diagrcoh-s} to imply that \cref{eq:cursed-cyclic} commutes.
  Thus, the natural transformation $\ga$ in \cref{eq:ga-a} is monoidal natural in the symmetric case $\T = \S$, but generally not in the braided case, $\T = \B$.
\end{nonexample}

\begin{rmk}\label{rmk:cursed-n}
  In the symmetric case $\T = \S$, \cref{example:cursed-cyclic} can be generalized to show that any permutation $\ga \in \Si_n$ determines a monoidal natural automorphism of the \emph{$n$-fold sum} functor
  \[
    h^n(a) = \underbrace{a + \cdots + a}_{n\text{ summands}}
    \forspace a \in A.
  \]
  Here, $h^n$ is a symmetric monoidal functor defined inductively as $h^n = (h^2 + 1) \circ h^{n-1}$, with $h^2 = f$ being the symmetric monoidal doubling functor.
  For objects $a,b \in A$, both the $a$-permutation and the $b$-permutation of the monoidal constraint
  \[
    h^n_2 \cn h^n(a) + h^n(b) \to h^n(a+b)
  \]
  are identities.
  Letting $\ga\cn h^n \to h^n$ also denote the natural transformation induced by $\ga \in \Si_n$, both morphisms $\ga_{a} + \ga_{b}$ and $\ga_{a + b}$ have $a$-permutation equal to $\ga \in \Si_n$, and likewise for $b$-permutations.
  Thus, \cref{thm:diagrcoh-s} shows that the monoidal naturality diagram for $\ga$ commutes for each $a,b \in A$.
\end{rmk}

In the following discussion, we restrict to the symmetric case, $\T = \S$, because the quadrupling functor is an $\S$-map, but not a $\B$-map.
Thus, \cref{defn:La} applies to $h$ in the case $\T = \S$, but not in the case $\T = \B$.
The details of this discussion will require the following observation and subsequent terminology to exclude certain lifts of the monoidal unit $0 \in A$ and its identity morphism.
\begin{rmk}\label{rmk:cursed-trivialities}
  In the context of \cref{example:cursed-cyclic}, there are several objects and morphisms of $\S(\ob A, \phi)$ that are nontrivial lifts of the monoidal unit $0$ and its identity morphism.
  In particular, there are $\phi$-adjoined isomorphisms that lift the unit constraint $h_0 = 1_0$; the monoidal constraints at $0$,
  \[
    (h_2)_{0,0} = 1_0 \cn h(0) + h(0) \to h(0 + 0) = 0;
  \]
  or other such combinations of unit and monoidal constraints of $h$ at $0$.

  More generally, $\phi$-objects of the form $\bigl( [\phi](0,\ldots,0) \bigr)$, or $;$ products of such objects, will be lifts of $0$.
  Morphisms between such objects will be lifts of $1_0$, and therefore do not make substantial contributions to lifts of interest for, e.g., the composites around \cref{eq:cursed-cyclic}.
\end{rmk}

\begin{defn}\label{defn:xfund}
  Suppose $A$ is a symmetric strict monoidal category with unit $0$, and $\phi\cn \ob A \to \ob A$ is a map of sets.
  An object $x \in \S(\ob A, \phi)$ is called \emph{tidy} if it has no $;$ factors of the form $\bigl( [\phi](0,\ldots,0) \bigr)$.
  A composite of morphisms in $\S(\ob A, \phi)$
  \begin{equation}\label{eq:xifund}
    x_0 \fto{\xi_1} x_1 \fto{\xi_2} \cdots \fto{\xi_r} x_r
    \forspace r \ge 1
  \end{equation}
  is called \emph{tidy} if each $x_i$ is a tidy object and each morphism $\xi_i$ is a $;$ product of generating morphisms.
\end{defn}
\begin{lem}\label{lem:cursed-cyclic}
  Suppose $A = \S\{a,b\}$ is the free symmetric monoidal category on two objects $a$ and $b$.
  In the context of \cref{example:cursed-cyclic}, any tidy lift of \cref{eq:cursed-cyclic} to $\S(\ob A,\phi)$, with $\phi = h_\ob$, reduces to an $A$-formal lift.
\end{lem}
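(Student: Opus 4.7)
I will perform a combinatorial analysis showing that the $4$-fold structure of $\phi = h_{\ob}$, combined with the nontrivial braidings in \cref{eq:cursed-cyclic}, forces every tidy lift to factor through $\ka\colon \S \ob A \to \S(\ob A, \phi)$. First, I will classify the tidy $\phi$-object lifts at each vertex: since $\phi(u) = uuuu$ is a $4$-fold concatenation and tidiness excludes $\phi$-entries whose $w^j_\bullet$ is the monoidal unit $0$, each entry $[\phi]w^j$ contributes $\phi(w^j_\bullet)$ of length at least $4$ to the $\La$-image. Given $h(a)+h(b) = aaaabbbb$ and $h(a+b) = abababab$, both of length $8$, the only tidy $\phi$-object lifts are either free objects, $\phi$-objects that reduce to free via the object pushout relation \cref{eq:len1-reln} such as $\bang{[\phi](a) \scs [\phi](b)} = (aaaa, bbbb)$, or single-entry tuples $\bang{[\phi]w^1}$ with $w^1_\bullet$ a specific short word such as $a$, $b$, or $ab$.

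Next, I will examine the generating morphisms of $\S(\ob A, \phi)$---free morphisms, $\phi$-free morphisms, $\phi$-adjoined isomorphisms $[\phi]\qsf$, and formal braidings---and verify that the cyclic braidings $\ga_a + \ga_b$ (top edge) and $\ga_{a+b}$ (bottom edge) cannot be lifted as tidy composites between non-reducible $\phi$-object lifts of their sources and targets. Formal braidings and $\phi$-free morphisms on single-entry $\phi$-objects correspond to morphisms in $\Q\S\ob A$ between single-entry tuples, and since the distinct-letter entries of those tuples cannot be braided into one another, only identities arise. Since the cyclic braidings in question are nontrivial, the horizontal edges must be lifted as free morphisms, forcing their source and target vertices to be free objects (after reduction).

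Finally, I will propagate this constraint through the entire diagram: since the horizontal edges lie in $\ka(\S \ob A)$, the vertices they connect must reduce to free objects, and the vertical composites---which are lifts of the monoidal constraints of $h$---must then go between free objects. The only generating morphisms between free objects are free morphisms, so the entire tidy lift factors through $\ka$. The main obstacle will be ruling out tidy but non-reducible $\phi$-objects with filler zero-entries, such as $\bang{[\phi](0, a)}$: these are tidy by definition yet not in $\ka(\S \ob A)$. I will handle this by tracking the directionality of generating morphisms---no generator carries a free object to such a filler $\phi$-object---so they cannot appear as vertices reachable from any free-object vertex in a tidy lift, and the only remaining tidy composites yield free-object lifts as required.
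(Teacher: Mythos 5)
Your proposal takes a different route from the paper (vertex-by-vertex and edge-by-edge enumeration rather than an invariant argument), but as outlined it has a genuine gap at its central step. A tidy lift of an edge of \cref{eq:cursed-cyclic} is not a single generating morphism: it is a tidy composite $x_0 \to x_1 \to \cdots \to x_r$ in which each $\xi_i$ is a $;$ product of generators and the intermediate objects $x_i$ need not be among your classified vertex lifts (they need not even map to the vertices of \cref{eq:cursed-cyclic} under $\La$). Your step 2 only rules out lifting $\ga_a + \ga_b$ and $\ga_{a+b}$ as a \emph{single} $\phi$-free or formal generator on a single-entry $\phi$-object; it says nothing about long composites that interleave $\phi$-adjoined isomorphisms (which pass between free objects and genuinely non-free $\phi$-objects such as $\bang{[\phi](a,b)}$), $\phi$-free morphisms, and free morphisms. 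Moreover the justification given there --- that ``distinct-letter entries cannot be braided into one another, only identities arise'' --- is not correct as stated: there are nontrivial $\phi$-free morphisms (e.g.\ $[\phi]$ applied to the transposition of $(a,b)$, which $\La$ sends to $h(\be_{a,b})$), and the real assertion you need is that no tidy composite through $\phi$-objects can have the odd underlying $a$-permutation $(1\ 4\ 3\ 2)$ of the cyclic braiding. Proving that requires an invariant that is stable under arbitrary composites, which your outline does not supply.

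This is exactly what the paper's proof provides: it introduces the $a$- and $b$-\emph{signatures} of objects of $\S(\ob A,\phi)$ and combines three facts --- every entry of the signature of a $\phi$-object is divisible by $4$ (since $h$ quadruples), the total signature of any vertex lift sums to $4$, and every generator out of an object with all-even signature entries has \emph{even} underlying $a$- and $b$-permutation --- against the observation that each composite around \cref{eq:cursed-cyclic} has odd underlying $a$- and $b$-permutation. This forces $x_0$ to have an odd signature entry, which together with tidiness excludes all $\phi$-object factors (including the zero-padded ones like $\bang{[\phi](0,a)\scs[\phi](b)}$ that your classification in step 1 omits), and the same parity bookkeeping then propagates freeness along the whole composite. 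Your concluding ``directionality'' remark gestures at the zero-padding issue but does not resolve it, and it implicitly assumes the source vertex is already known to be free, which is what must be proved. If you want to salvage your approach, you should replace steps 2 and 3 with a composite-stable invariant of this kind; without one, the argument does not close.
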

\begin{proof}
  We will use the description of
  \begin{equation}\label{eq:La-cursed}
    \La \cn \S(\ob A, \phi) \to A
  \end{equation}
  as shown in the following diagram, which is \cref{eq:La-pushout} applied to this case and explained further below.
  \begin{equation}\label{eq:La-pushout-cursed}
    \begin{tikzpicture}[x=16ex,y=10ex,vcenter,xscale=1.2]
      \draw[0cell] 
      (0,0) node (tc) {\S (\ob A)}
      (tc)++(1,0) node (tc') {\S (\ob A)}
      (tc)++(0,-1) node (iqtc) {\Q\S (\ob A)}
      (tc')++(0,-1) node (tc'phi) {\S(\ob A,\phi)}
      (tc'phi)++(1,0) node (ty) {\S A}
      (ty)++(0,-1) node (y) {A}
      (iqtc)++(.5,-.5) node (iqtx) {\Q \S A}
      (iqtx)++(.5,-.5) node (iqx) {\Q A}
      ;
      \draw[1cell] 
      (tc) edge node {\S\phi} (tc')
      (tc) edge['] node {\ze^\flat} (iqtc)
      (tc') edge['] node {\ka} (tc'phi)
      (iqtc) edge node {\wh{\phi}} (tc'phi)
      (tc') edge node {} (ty)
      (ty) edge node {+} (y)
      (iqtc) edge['] node {} (iqtx)
      (iqtx) edge['] node {\Q +} (iqx)
      (iqx) edge['] node {h^{\bot}} (y)
      (tc'phi) edge[dashed] node {\La} node['] {\exists !}(y)
      ;
    \end{tikzpicture}
  \end{equation}
  Here, the upper left square is a pushout of symmetric monoidal categories and symmetric strict monoidal functors.
  The dashed arrow $\La$ is the unique symmetric strict monoidal functor induced by the outer composites.

  Recalling \cref{defn:TGpphi}, the generating morphisms of $\S(\ob A, \phi)$ consist of free morphisms, $\phi$-morphisms, and formal morphisms, described as follows and explained further below.
  \begin{itemize}
  \item The \emph{free objects} and \emph{free morphisms} of $\S(\ob A, \phi)$ are those in the image of $\ka$.
  \item The \emph{$\phi$-objects} and \emph{$\phi$-morphisms} of $\S(\ob A, \phi)$ are those in the image of $\wh{\phi}$.
  \item The \emph{formal morphisms} of $\S(\ob A, \phi)$ are symmetry isomorphisms for the product $;$ induced by concatenation of tuples (\cref{notn:TA}).
  \end{itemize}
  Since $\S(\ob A, \phi)$ is obtained as a pushout, free objects and morphisms that are in the image of $\S \phi$ are identified with the corresponding $\phi$-objects and $\phi$-morphisms in the image of $\ze^\flat$.
  Furthermore, the symmetry isomorphisms in $\S(\ob A)$ and $\Q\S(\ob A)$ are identified with the corresponding formal morphisms of $\S(\ob A, \phi)$.
  In particular, formal morphisms between free objects are identified with the corresponding permutation isomorphisms of $\S(\ob A)$.

  Below, we will show that every lift of \cref{eq:cursed-cyclic} to a tidy composite in $\S(\ob A, \phi)$ factors through $\ka$.
  The argument uses the following two invariants that are associated to any map of sets $\phi \cn \ob A \to \ob A$.
  The hypothesis that $\phi$ is given by quadrupling will be used below, when we apply these invariants to the case of interest.
  \begin{itemize}
  \item\label{it:invar1} Each morphism in $A = \S\{a,b\}$ has an \emph{underlying $a$-permutation} and an \emph{underlying $b$-permutation}, described in \cref{defn:underlying}.
    Therefore, each morphism $\xi$ of $\S(\ob A, \phi)$ has underlying $a$- and $b$-permutations given by those of $\La\xi$.
  \item\label{it:invar2} Each object of $\S(\ob A, \phi)$ has an \emph{$a$-signature} and a \emph{$b$-signature} that are elements of $\S(\NN)$, explained below.
  \end{itemize}

  For objects of $A$, let $\nu^a$ denote the composite
  \[
    \ob A = \ob\bigl(\S(\{a,b\})\bigr) \to \S(\{a\}) \to \NN
  \]
  given first by sending $b$ to 0 and then taking isomorphism classes of objects.
  Let $\nu^b$ denote the similar composite that first sends $a$ to 0 and then takes isomorphism classes of objects.
  Each $\nu \in \{\nu^a,\nu^b\}$ induces a free functor
  \[
    \S(\ob A) \fto{\S\nu} \S\NN
  \]
  that is given by applying $\nu$ entry-wise to tuples of objects of $A$.
  The free functor $\S\nu$ is symmetric strict monoidal with respect to the concatenation of tuples, denoted $;$ as in \cref{notn:TA}.
  Define the \emph{$a$-signature} and \emph{$b$-signature} of an object $\ang{c} = \ang{c_i}_{i=1}^n$ in $\S(\ob A)$ as the tuples of natural numbers
  \begin{equation}\label{eq:signature}
    \begin{split}
      \sgn^a\ang{c} = (\S\nu^a)\ang{c}
      & = \bang{\nu^a(c_i)}_{i=1}^n \andspace \\
      \sgn^b\ang{c} = (\S\nu^b)\ang{c}
      & = \bang{\nu^b(c_i)}_{i=1}^n
    \end{split}
  \end{equation}
  for $c_i$ in $\ob A$.

  To define the $a$- and $b$-signatures of general objects $x \in \S(\ob A, \phi)$, recall from \cref{defn:TGpphi} that the upper square of \cref{eq:La-pushout-cursed} remains a pushout after taking the underlying monoid of objects.
  That is, applying the functor
  \[
    \ob\cn \Salgs \to \Mon
  \]
  as in \cref{eq:disc-ob-ind} preserves pushouts because it is left adjoint to $\indisc$.

  Now recall from \cref{defn:QA} that the objects of $\Q A$ are given by those of the free algebra $\S A$.
  Therefore, the monoid homomorphism $\S \phi$ in the following diagram of monoids induces the dashed arrow $\La'$, factoring
  \[
    \ob \La \cn \ob \bigl( \S(\ob A, \phi) \bigr) \to \ob \bigl( A \bigr)
  \]
  through $\ob\bigl(\S A\bigr)$.
  Here, the two unlabeled arrows are induced by inclusion of objects $\ob A \hookrightarrow A$.
  \begin{equation}\label{eq:La'}
    \begin{tikzpicture}[x=19ex,y=12ex,vcenter,xscale=1.2]
      \draw[0cell] 
      (0,0) node (tc) {\ob\bigl(\S (\ob A)\bigr)}
      (tc)++(1,0) node (tc') {\ob\bigl(\S (\ob A)\bigr)}
      (tc)++(0,-1) node (iqtc) {\ob\bigl(\Q\S (\ob A)\bigr)}
      (tc')++(0,-1) node (tc'phi) {\ob\bigl(\S(\ob A,\phi)\bigr)}
      (tc'phi)++(1,0) node (ty) {\ob\bigl(\S A\bigr)}
      (ty)++(0,-1) node (y) {\ob\bigl(A\bigr)}
      (ty)++(.7,0) node (nn) {\ob\bigl(\S\NN\bigr)}
      (iqtc)++(.5,-.5) node (iqtx) {\ob\bigl(\Q \S A\bigr)}
      (iqtx)++(.5,-.5) node (iqx) {\ob\bigl(\Q A\bigr)}
      (iqx)++(.4,.4) node (itx) {\ob\bigl(\S A\bigr)}
      ;
      \draw[1cell] 
      (tc) edge node {\S\phi} (tc')
      (tc) edge['] node {\ze^\flat} (iqtc)
      (tc') edge['] node {\ka} (tc'phi)
      (iqtc) edge node {\wh{\phi}} (tc'phi)
      (tc') edge node {} (ty)
      (ty) edge node {+} (y)
      (ty) edge[transform canvas={shift={(0,.5ex)}}] node {\S\nu^a} (nn)
      (ty) edge[',transform canvas={shift={(0,-.5ex)}}] node {\S\nu^b} (nn)
      (iqtc) edge['] node {} (iqtx)
      (iqtx) edge['] node {\Q +} (iqx)
      (iqx) edge[equal] node {} (itx)
      (itx) edge['] node {\S \phi} (ty)
      (iqx) edge['] node {h^{\bot}} (y)
      (tc'phi) edge[dashed] node[pos=.4] {\La'} node[',pos=.4] {\exists !} (ty)
      ;
    \end{tikzpicture}
  \end{equation}
  Define the \emph{$a$-signature} and \emph{$b$-signature} of a general object $x \in \S(\ob A, \phi)$ via the corresponding signature of $\La'(x)$, as follows:
  \begin{equation}\label{eq:signature2}
    \sgn^a(x) = (\S\nu^a)\La'(x)
    \andspace
    \sgn^b(x) = (\S\nu^b)\La'(x).
  \end{equation}
  This agrees with the previous definitions \cref{eq:signature} for free objects $x \in \S(\ob A)$ since commutativity of \cref{eq:La'} requires that $\La' \ka$ is the identity on objects.
  Note that these signatures are invariants of objects only; they do not extend to all morphisms of $\S(\ob A, \phi)$.
  This completes the definitions of $a$- and $b$- signature.

  Now we apply the underlying permutation and signature invariants to complete the proof.
  The following observations, for objects $x$ and $y$ in $\S(\ob A, \phi)$,
  make use of the hypothesis $\phi = h_\ob$ and details of the diagram \cref{eq:cursed-cyclic}.
  \begin{enumerate}
    \renewcommand{\theenumi}{\arabic{enumi}}
    \renewcommand{\labelenumi}{(\theenumi)}
  \item\label{it:obs3} If $x$ is a lift of an object in \cref{eq:cursed-cyclic}, or isomorphic to such a lift, then the sum of the entries in $\sgn^a(x)$, respectively the sum of the entries of $\sgn^b(x)$, is equal to four.
  \item\label{it:obs2} If $x$ is a $\phi$-object, then each entry of  $\sgn^a(x)$, respectively $\sgn^b(x)$, is divisible by four.
    This follows from \cref{defn:La-application} because $h$ is the quadrupling functor: $\La'$ sends each $\phi$-object to an object of $\S A$ for which each entry is $h(a^j_\bullet)$ for a certain object $a^j_\bullet \in A$.
  \item\label{it:obs8} If $x$ is a $\phi$-object such that each entry of $\sgn^a(x)$ and each entry of $\sgn^b(x)$ is zero, then $x$ is a $;$ product of objects of the form $\bigl( [\phi](0,\ldots,0) \bigr)$.
    This follows from the same explanation of $\La'$ as above, because every nonzero object of $\S A$ has nonzero $a$- or $b$-signature.
  \item\label{it:obs5} The underlying $a$-permutation of each composite around \cref{eq:cursed-cyclic} is $(1\ 4\ 3\ 2)$, which is an odd permutation.
    The same holds for the underlying $b$-permutations around \cref{eq:cursed-cyclic}.
  \item\label{it:obs4} If $\xi \cn x \to y$ is a free or formal morphism of $\S(\ob A, \phi)$, then $\sgn^a(x)$ and $\sgn^a(y)$ have the same set of entries, possibly in a permuted order.
    A similar observation holds for $\sgn^b(x)$ and $\sgn^b(y)$.
  \item\label{it:obs6} If $\xi \cn x \to y$ is a $\phi$-morphism in $\S(\ob A, \phi)$, then \cref{defn:La-application} shows that $\La\xi$ is given either by applying $h$ to certain permutations, or by the monoidal constraints of $h$.
    Since $h$ is given by quadrupling, and the underlying $a$-permutation of the monoidal constraint $h_2$ is trivial, the underlying $a$-permutation of $\La\xi$ is even in either case.
    Likewise, the underlying $b$-permutation of $\La\xi$ is also even.
  \item\label{it:obs7}
    If all the entries of $\sgn^a(x)$ are even, and $\xi\cn x \to y$ is a $;$ product of generating morphisms of $\S(\ob A, \phi)$, then the underlying $a$-permutation of $\xi$ is even.
    The $;$ factors of $\xi$ that are free or formal morphisms have underlying $a$-permutations that are even because they are given by permuting entries of tuples or factors of the $;$ product.
    The $;$ factors of $\xi$ that are $\phi$-morphisms have underlying $a$-permutations that are even by observation \cref{it:obs6}.
    A similar observation holds if all the entries of $\sgn^b(x)$ are even: then the underlying $b$-permutation of $\xi$ is even.
  \end{enumerate}

  Now suppose that 
  \begin{equation}\label{eq:lift-xi}
    x_0 \fto{\xi_1} x_1 \fto{\xi_2} \cdots \fto{\xi_r} x_r
  \end{equation}
  is a tidy composite in $\S(\ob A, \phi)$ lifting either of the composites around \cref{eq:cursed-cyclic}.
  Recalling \cref{defn:xfund}, the assumption that \cref{eq:lift-xi} is tidy means that each $\xi_i$ is a product of generating morphisms and none of the $x_i$ have $;$ factors of the form $\bigl( [\phi](0,\ldots,0) \bigr)$.
  The observations above lead to the following conclusions.
  \begin{enumerate}
  \item The $a$-signature $\sgn^a(x_0)$ must have at least one odd entry.
    If not---if all the entries of $\sgn^a(x_0)$ are even---then observations \cref{it:obs2,it:obs4,,it:obs7,it:obs6} imply that the underlying $a$-permutation of $\xi_1$ is even and that all the entries of $\sgn^a(x_1)$ are even.
    Repeating this reasoning, the underlying $a$-permutation of each $\xi_i$ is even, but this contradicts observation \cref{it:obs5}.
    Likewise, $\sgn^b(x_0)$ must have at least one odd entry.
  \item Observations \cref{it:obs3,it:obs2,,it:obs8}, combined with the previous conclusion, imply that any $\phi$-object factors of $x_0$ would have $a$- and $b$-signatures whose entries are all zeros.
    By \cref{it:obs8}, this would contradict the assumption that $x_0$ is a tidy object.
  \item Therefore, $x_0$ must be a free object such that each of
    $\sgn^a(x_0)$ and $\sgn^b(x_0)$ consists of entries that are all less than four and not all even.
  \item The previous conclusion implies that $\xi_1$ must be a free morphism, since a product of free objects or morphisms is free, and a formal morphism between free objects is identified with the corresponding free morphism.
  \item Hence, $x_1$ must be a free object such that each of
    $\sgn^a(x_1)$ and $\sgn^b(x_1)$ consists of entries that are all less than four and not all even.
  \item Repeating the conclusions above, each morphism $\xi_i$ and object $x_i$ in \cref{eq:lift-xi} is free.
  \end{enumerate}
  This completes the proof.
\end{proof}

\bibliographystyle{sty/amsalpha2}
\bibliography{sty/Refs}%

\end{document}